\newtheorem{prop}{Proposition}[section]
\newtheorem{theo}[prop]{Theorem}
\newtheorem{lemm}[prop]{Lemma}
\theoremstyle{definition}
\newtheorem{remk}[prop]{Remark}
\newcommand{\eqco}{\setcounter{equation}{0}}
\newcommand{\allco}{\eqco}
\newcommand{\lbl}{\label}
\newcommand{\Po}{{\cal P}}
\newcommand{\cN}{{\cal N}}
\newcommand{\M}{{\cal M}}
\newcommand{\tPo}{\tilde{{\cal P}}}
\newcommand{\tx}{{\tilde{x}}}
\newcommand{\tiu}{{\tilde{u}}}
\newcommand{\tiy}{{\tilde{y}}}
\newcommand{\tmu}{\tilde{{\mu}}}
\newcommand{\tv}{\tilde{{v}}}
\newcommand{\tw}{\tilde{{w}}}
\newcommand{\tgamma}{\tilde{{\gamma}}}
\newcommand{\ggamma}{\alpha}
\newcommand{\Q}{{\end{document}}}
\newcommand{\cM}{{\cal M}}
\newcommand{\m}{{\bf M}}
\newcommand{\tg}{\tilde{g}}
\newcommand{\sbeta}{\theta}
\newcommand{\nalpha}{u}
\newcommand{\N}{\mathbb{N}}
\newcommand{\bH}{\mathbb{H}}
\newcommand{\R}{\mathbb{R}}
\newcommand{\Z}{\mathbb{Z}}
\newcommand{\X}{\mathcal{X}}
\newcommand{\I}{\mathcal{I}}
\renewcommand{\Pr}{\mathbb{P}}
\renewcommand{\emptyset}{\varnothing}
\newcommand{\eqd}{\stackrel{{\cal D}}{=}}
\newcommand{\vvol}{\omega}  
\newcommand{\hH}{\hat{H}}
\newcommand{\lglg}{\log \log}
\newcommand{\LL}{{\cal L}}
\newcommand{\cH}{{\cal H}}
\newcommand{\card}{\#}
\newcommand{\diam}{{\rm diam}}
\newcommand{\Y}{{\cal Y}}
\newcommand{\cB}{{\cal B}}
\newcommand{\cA}{{\cal A}}
\newcommand{\cF}{{\cal F}}
\newcommand{\tR}{{\tilde{R}}}
\newcommand{\tD}{{\tilde{D}}}
\newcommand{\tJ}{{\tilde{J}}}
\newcommand{\eps}{\varepsilon}
\newcommand{\edm}{\end{displaymath}}
\def\benu{\begin{enumerate}}
\def\eenu{\end{enumerate}}
\def\beqn{\begin{equation}}
\def\eeqn{\end{equation}}
\def\be{\begin{equation}}
\def\ee{\end{equation}}
\def\bea{\begin{eqnarray}}
\def\eea{\end{eqnarray}}
\newcommand{\bean}{\begin{eqnarray*}}
\newcommand{\eean}{\end{eqnarray*}}
\newcommand{\bear}{\begin{eqnarray}}
\newcommand{\eear}{\end{eqnarray}}
\renewcommand{\epsilon}{\varepsilon}
\newcommand{\bbS}{\mathbb{S}}
\newcommand{\dist}{{\rm dist}}
\def\qed{\hfill\hbox{${\vcenter{\vbox{
    \hrule height 0.4pt\hbox{\vrule width 0.4pt height 6pt
    \kern5pt\vrule width 0.4pt}\hrule height 0.4pt}}}$}}
\begin{document}
\title{\bf Random coverage of a manifold with  boundary}

\author{
	Mathew D. Penrose$^{1,3}$ and Xiaochuan Yang$^{2,3}$ \\
{\normalsize{\em University of Bath and Banque Internationale \`a Luxembourg S.A.} }}

 \footnotetext{ $~^1$ Department of
Mathematical Sciences, University of Bath, Bath BA2 7AY, United
Kingdom: {\texttt m.d.penrose@bath.ac.uk} }

 \footnotetext{ $~^2$ {\texttt xiochuan.j.yang@gmail.com} }

 \footnotetext{ $~^3$ Supported by EPSRC grant EP/T028653/1 }






\maketitle

\begin{abstract}
Let $A$ be a compact $d$-dimensional $C^2$ Riemannian manifold with boundary, embedded in $\R^m$ where $m \geq d \geq 2$, and let $B$ be a nice subset of $A$ (possibly $B=A$).  Let $X_1,X_2, \ldots $ be independent random uniform points in $A$.  Define the {\em coverage threshold} $R_n$ to be the smallest $r$ such that $B$ is covered by the geodetic balls of radius $r$ centred on $X_1,\ldots,X_n$.  We obtain the limiting distribution of $R_n$ and also a strong law of large numbers for $R_n$ in the large-$n$ limit.  For example, if $A$ has Riemannian  volume 1 and its boundary has surface measure $|\partial A|$, and $B=A$, then if $d=3$ then $\Pr[n\pi R_n^3 - \log n - 2 \log (\log n) \leq x]$ converges to $\exp(-2^{-4}\pi^{5/3} |\partial A| e^{-2 x/3})$ and $(n \pi R_n^3)/(\log n) \to 1$ almost surely, while if $d=2$ then $\Pr[n \pi R_n^2 - \log n - \log (\log n) \leq x]$ converges to $\exp(- e^{-x}- |\partial A|\pi^{-1/2} e^{-x/2})$.

	We generalize to allow for multiple coverage.  For the strong laws of large numbers, we can relax the requirement that the underlying density on $A$ be uniform. For the limiting distribution, we have a similar result for Poisson samples.  Our results still hold if we use Euclidean rather than geodetic balls.


\end{abstract}

\section{Introduction}
\label{SecIntro}

The {\em random coverage} problem asks whether
a  set of $n$ balls in a metric space $A$, with centres
placed independently at random according to
some probability distribution 
$\mu$ on $A$, fully covers a specified set
$ B \subset A$ (possibly $B=A$).
Restricting attention to balls of equal radius, one may ask
what is the minimum radius required to achieve coverage.
This is a random variable (determined by the locations of the centres),
often called the {\em coverage threshold}, and here denoted $R_n$. 
The exact distribution of $R_n$ is often intractable, motivating
the study of large-$n$ asymptotics.

An important special case of this problem arises when
$\mu$ is the uniform distribution on 
a compact subset $A$ of Euclidean space $\R^d$, $d \geq 2$, 
and
the closure of $B$ is contained in the interior of $A$.
The large-$n$ limiting distribution
for $R_n$ (suitably scaled and centred)
was derived for this case by Janson \cite{Janson}, and also Hall 
\cite{HallZW}. For $d=1$, the exact distribution was
determined much earlier in \cite{Stevens}.

Another case arises where $\mu$ is uniform
on compact $A \subset \R^d$ as before, but
now $B=A$ so boundary effects come in to play.
For this case,
the limiting  distribution for $R_n$ was determined 
 in \cite{ECover}, for the case where $A$ has a smooth boundary.
 It turns out that
 if $d \geq 3$ then
 $R_n$ scales differently in this case than in the
 previous case, and if $d=2$ then the limiting distribution
 is different from in the previous case.

 A third case arises where $A$ is a compact $d$-dimensional Riemannian manifold
 (without boundary)
 and $\mu$ is uniform on $A$, with $B=A$.
 For this case  the limiting distribution for $R_n$ was also determined
 by Janson in \cite{Janson}.

 It would be desirable to
 generalize all of the cases above, by 
 taking
 $A$ to be a compact $d$-dimensional Riemannian manifold, possibly
 with boundary, and $B$ a closed subset of $A$
 (possibly $B=A$)
 and with $\mu$  uniform on $A$.  

 In this article we derive the limiting 
 distribution of $R_n$ in this general setting,  provided
 (i) $A$ is a submanifold-with-boundary
 of a further Riemannian  manifold $\cM$  (without boundary);
 (ii) $\cM$ is isometrically embedded 
 in the Euclidean space $\R^m$ for some $m \geq d$, and (iii)
 either $B \cap \partial A$ has non-zero surface measure or
 $B$ is bounded away from $\partial A$.

 As well as weak convergence results for $R_n$, we also
 derive a strong law of large numbers (SLLN), exhibiting
 a sequence of constants $c_n$ such that $R_n/c_n \to 1$
 as $n \to \infty$, almost surely. For the SLLN we
 consider more general probability density functions than
 just the uniform on $A$.

 Since we take $\cM \subset \R^m$, there  
 is a choice of two natural measures of distance  on $\cM$
 (used to define the balls); we can either
 use the geodetic distance induced by the Riemannian metric,
 or the Euclidean distance in the ambient space $\R^m$.
 We generally use the Riemannian distance, but will also comment
 on the Euclidean distance.
	 Our results hold whichever of these two
 choices of distance we use.
 We write $R_n^*$ to identify the
 coverage threshold using balls  with the Euclidean rather than
 geodetic distance.

 While the Euclidean coverage problem is rather classical (see
  \cite{Aldous,Flatto,HallZW,HallBk,Janson,Stevens}),
 more recent impetus for investigating
 these problems on manifolds comes from the modern 
 discipline of topological data analysis (TDA).
 One goal of TDA is to learn about the topological
 structure an underlying space from a random sample
 of points in that space. Often the space is a lower-dimensional
 submanifold embedded in Euclidean space $\R^m$. One common
way to create a topological structure from the sample of points
is to place Euclidean balls of radius $r$ around the points, and use these  
 to create a simplicial complex such as the \v{C}ech or 
Vietoris-Rips complex.
 See \cite{BK} for a survey.

 Niyogi et al. \cite[Proposition 3.1]{NSW} have shown
 that in the case $A = B = \cM$, if
 $r$ is below a certain constant $\tau(\cM)$  but greater than $R_n^*$,
 the homology of the union of balls
 equals that of $\cM$; Wang and Wang \cite{Wang2}
 extend this to the case where $A=B$ is a manifold with boundary.
 These results have created a further incentive to understand the probabilistic 
 behaviour of $R_n$ and $R_n^*$. Motivated by applications to
 TDA, Bobrowski and Weinberger \cite{Bob22,BW} 
 have derived (among other things) 
 results on the coverage threshold
 for manifolds without boundary, that are similar
 to those of Janson \cite{Janson}, while Kergorlay et al. 
 \cite{KTV} derive a weaker result
 for manifolds with boundary (we discuss this further later on). 

 We would expect some of the methods developed here to be relvant to investigateing
 other geometrical quantities of inerest in TDA (and elsewhere)
 for large samples in manifolds. Consider for example
 the {\em connectivity threshold} (i.e., the smallest $r$ such that
 the union of balls of radius $r/2$ centred on sample points is connected) and
 the {\em largest nearest-neighbour link} (i.e. the smallest $r$ such
 that each sample point is within distance $r$ of at least one other sample point).
 The large-sample asymptotic distribution for these quantities is established
 in \cite{PYCon} in the case where $A \subset \R^d$ is compact with smooth boundary,
 and it could be of interest to extend these results to more general manifolds.

 \section{Statement of results}

 \subsection{Mathematical framework}
 \label{s:MathFrame}

To be able to state the results of this paper,
we need to elaborate on the mathematical setup which was sketched
in the introduction.
We shall review definitions of concepts relating to manifolds
in Section \ref{secAltManif}.

 Let $d,m \in \N$ with $d \leq m$. Suppose 
 $\cM$ is a Riemannian manifold (without boundary)
 of dimension $d$, 
 and moreover that $\cM$ is isometrically embedded in the Euclidean
 space $\R^m$; more precisely, we shall assume 
 $\cM$ is a $C^2$- submanifold of $\R^m$.
 Suppose $A$ is a compact submanifold-with-boundary
 of $\cM$.
 Let $B \subset A$ (often we shall take $B=A$).
 Assume $B$ is closed (and hence compact).


Let $f$ be a probability density function on $\cM$, with respect
to the Riemannian volume measure; we assume throughout that
$f$ is supported by $A$. Let $\mu $ be the measure on
$A$ with density $f$.
Suppose on some probability space $(\Omega,\cF,\Pr)$ that
$X_1,X_2,\ldots$ are independent, identically distributed
random elements of $A$ with common distribution $\mu$.
For $x \in \cM$ and $r>0$ let $B(x,r):= \{y \in \cM:\dist(x,y) \leq r\}$,
where
$\dist(x,y)$ is the geodetic distance.
For $n \in \N$,  let $\X_n:= \{X_1,\ldots,X_n\}$.
Given also $k \in \N$, we
 define the {\em $k$-coverage threshold} $R_{n,k}$  by
\bea
R_{n,k} : =
 \inf \left\{ r >0: 
 \card(\X_n   \cap B(x,r))
 \geq k ~~~~ \forall x \in B \right\},
~~~ n,k  \in \N,
\label{Rnkdef}
\eea 
where for any point set  $\X \subset \R^m$
we let $\card(\X) $  denote the number of points of
$\X$,
 and we use the convention $\inf\{\} := +\infty$.
In particular $R_n : = R_{n,1}$ is the coverage threshold.  Observe that
$R_n  = \inf \{ r >0: B \subset \cup_{i=1}^n B(X_i,r) \}$.
In the case $B=A$, $R_n$ is the Hausdorff distance between the
sample $\X_n$ and the region $A$.

Thus, $R_{n,k}$ is a random variable, whose value is determined
by $(X_1,\ldots,X_n)$.  We investigate its  probabilistic behaviour 
as $n$ becomes large.  In particular, we would like to determine
the limiting behaviour of $\Pr[R_{n,k} \leq r_n]$
for any fixed $k$ and any sequence of numbers $(r_n)$.  Equivalently,
we  are concerned with the limiting distribution of $R_{n,k}$, suitably
scaled and centred, as $n \to \infty$ with $k$ fixed.

We also consider similar  results for a high-intensity {\em Poisson}
 sample in $A$, which may be more relevant in some applications.
Let  $(Z_t,t\geq 0)$ be a unit rate Poisson counting
process, independent of $(X_1,X_2,\ldots)$
and on the same probability space 
$(\Omega,\cF,\Pr)$
(so $Z_t$ is Poisson distributed with mean $t$ for each $t >0$).
The point process $\Po_t:= \{X_1,\ldots,X_{Z_t}\}$  is 
a Poisson point process in $\R^d $ with intensity measure $t \mu$
 (see e.g. \cite{LP}).
For $t \in (0,\infty), k \in \N$ we define a secondary $k$-coverage threshold
\bea
R'_{t,k} : = R_{Z_t,k} :=
 \inf \left\{ r >0: 
 \card( \Po_t \cap B(x,r))
 \geq k 
~~~~ \forall x \in B \right\},
~~~ t >0,
\label{Rdashdef}
\eea
with $R'_t:= R'_{t,1}$.

We mention further notation used throughout. For
$D \subset \cM$, let $\overline{D}$ 
denote the closure of $D$
and let $D^o := \cM \setminus (\overline{\cM \setminus D})$,
the interior of $D$ with respect to $\cM$.
Let $\partial D := \overline{D} \setminus D^o$,
the boundary of $D$
and let $\partial_{\partial A} (D \cap \partial A) := \overline{D \cap \partial A}
\cap \overline{\partial A \setminus D}$, the boundary
of $D \cap \partial A$ relative to $\partial A$.
Let $v(D)$ denote the Riemannian volume of $D$, when defined, and
if $D \subset \partial A$,
let $\tv(D)$ denote the Riemannian surface measure of
$D$ (see Section \ref{secAltManif}
for further details).
Given $t >1$, we write $\lglg t$ for $\log (\log t)$.
Given two  sets $\X,\Y \subset \R^d$, we
write $\X \oplus \Y$ for the set $\{x+y: x \in \X, y \in \Y\}$.
%
Given $x,y \in \R^d$, we denote by $[x,y]$ the line segment from
$x$ to $y$, that  is, the convex hull of the set $\{x,y\}$.
%
%
Given $d \in \N$, define the constant
\bea
c_d := \frac{1}{d!} \left( \frac{\sqrt{\pi} \; \Gamma(1 + d/2) }{ \Gamma((d+1)/2) }
\right)^{d-1}.
\label{cdef}
\eea
Note that $c_1=c_2=1$, and  $c_3 =3 \pi^2/32$.
Set $\vvol_d := \pi^{d/2}/\Gamma(1+ d/2)$,
the volume of the unit ball in $\R^d$, with $\vvol_0 := 1$.
Given also $k \in \N$, for $d \geq 2$ set
\bea
c_{d,k} := \left( \frac{c_{d-1}}{(k-1)!} \right)
\vvol_d^{2-d-1/d} \vvol_{d-1}^{2d-3}
\vvol_{d-2}^{1-d} 
(1- 1/d)^{d+k-3 + 1/d} 2^{-1+1/d}.
\label{cdkdef}
\eea
Note that $c_{2,k}
= 2^{1-k} \pi^{-1/2}/(k-1)! $ and $c_{3,k}= 2^{k-5} 3^{1-k} \pi^{5/3}/(k-1)!$.
In all of our limiting statements,
$n$ takes values in $\N$ while $t$ takes values in $(0,\infty)$.

Given $n \in \N$, we write $[n]$ for $\{1,\ldots,n\}$.

	Sometimes, given $a \in (0,1)$ and
	$b >0$, we shall write $(1 \pm a)b$ for the
	interval $[b-ab, b+ ab]$.

\subsection{Main results}

From now on, we always assume $d \geq 2$.
Our first result is a weak limit for the coverage threshold
of a manifold $A$ with boundary or subset thereof, when the
centres are uniformly distributed on $A$.

\begin{theo}
\label{th:weak}
	Set  $f_0 := 1/v(A)$ and assume $f \equiv f_0$ on $A$.
	Assume
	$v(\partial B)=0$ and $\tv(\partial_{\partial A} ( B \cap \partial A )) =0$.
	Let $k \in \N$, $\zeta \in \R$.  Then
\bea
	\lim_{n \to \infty} \Pr \left[ \left( \frac{n \vvol_d f_0 R_{n,k}^d}{2}
	\right)- 
	\left(\frac{d-1}{d}\right) \log (n f_0) -
\left( d+k-3+1/d \right)
	\lglg n \leq \zeta \right]
\nonumber \\
=
	\lim_{t \to \infty} \Pr \left[ \left( \frac{t \vvol_d 
	f_0 (R'_{t,k})^d}{2}
	\right)- 
	\left(\frac{d-1}{d}\right) \log (t f_0) -
\left( d+k-3+1/d \right) 
	\lglg t \leq \zeta \right]
\nonumber \\
=
	\begin{cases}
		\exp \left( - v(B) e^{-2 \zeta}
		- c_{2,1}  \tv( B \cap \partial A) e^{- \zeta}  
		\right) 
		&{\rm if} ~ d=2, k =1 \\
		\exp \left( - c_{d,k}  \tv( B \cap \partial A) e^{- \zeta}  
		\right) 
		&{\rm otherwise.}
	\end{cases}
		~~~~~~~~~~~~~~~
\label{eqmain3}
\eea
\end{theo}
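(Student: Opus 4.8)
The plan is to establish the weak limit via a Poissonization argument followed by a second-moment/Chen-Stein analysis of the "uncovered" region, localized near $\partial A$. First I would prove the Poisson statement (the middle line of \eqref{eqmain3}) and then transfer it to the binomial sample $\X_n$ by a standard de-Poissonization, using the monotonicity of $R_{n,k}$ in $n$ together with the fact that $Z_t$ concentrates around $t$; since the scaling involves only $\log t$ and $\lglg t$, the $O(\sqrt t)$ fluctuation of $Z_t$ is negligible and the two limits coincide. So the core is the Poisson case: writing $r = r_t(\zeta)$ for the value of $r$ making the bracketed expression equal $\zeta$, I want to show $\Pr[R'_{t,k} \le r_t] = \Pr[\,\forall x \in B,\ \card(\Po_t \cap B(x,r_t)) \ge k\,]$ converges to the stated double-exponential. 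The event of non-$k$-coverage is $\{\exists x \in B : \card(\Po_t \cap B(x,r_t)) \le k-1\}$; the key geometric reduction is that it suffices to test this over a suitable finite net of "deepest holes," and that as $t \to \infty$ the dominant contributions come from points $x$ lying on or very close to $\partial A$, where a ball $B(x,r)$ captures only about half its mass (plus a lower-order correction governed by the curvature and the $(d-1)$-dimensional geometry of $\partial A$), whereas interior holes (with full-mass balls) contribute only in the $d=2,k=1$ regime, giving the extra $v(B)e^{-2\zeta}$ term.

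Next I would set up the main estimate. Cover a neighbourhood of $B \cap \partial A$ in $A$ by finitely many coordinate charts in which $\partial A$ is locally a graph and the Riemannian volume is comparable to Lebesgue measure; by compactness and the hypotheses $v(\partial B)=0$, $\tv(\partial_{\partial A}(B\cap\partial A))=0$, the contribution of the chart overlaps and of the "bad" boundary pieces is negligible in the limit. Within a chart, the probability that a small geodesic ball $B(x,r)$ centred at a boundary point $x$ contains fewer than $k$ Poisson points is, since $\Po_t$ is Poisson of intensity $t f_0$ with respect to Riemannian volume, equal to $\Pr[\mathrm{Poisson}(t f_0 v(B(x,r))) \le k-1] = e^{-t f_0 v(B(x,r))} \sum_{j=0}^{k-1} (t f_0 v(B(x,r)))^j/j!$, and $v(B(x,r)) = \tfrac12 \theta_d r^d (1+o(1))$ for $x \in \partial A$. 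The delicate part is to count the "effective number" of nearly-independent such bad events. Following the method used for the Euclidean case in \cite{ECover} (which handles $A \subset \R^d$ with smooth boundary), one discretizes $\partial A$ at the appropriate scale, controls the local maxima of the empirical "vacancy depth" function over $B \cap \partial A$, and shows that the count of deep holes converges to a Poisson random variable whose mean is exactly $c_{d,k}\,\tv(B\cap\partial A)e^{-\zeta}$; here the combinatorial constant $c_{d,k}$ of \eqref{cdkdef} arises from the $(d-1)$-dimensional packing geometry of how many roughly independent half-balls of a given radius fit, combined with the $j$-th term weighting from the $k$-coverage Poisson tail and the optimization over the ball radius near the boundary. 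For $d=2,k=1$ one must additionally track interior holes, which contribute a Poisson mean $v(B)e^{-2\zeta}$ (the classical Janson-type term), and the limiting non-coverage probability factorizes over the two independent families.

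The technical engine throughout will be a de-Poissonized Chen-Stein / second-moment argument: I would define, at scale $r_t$, a grid of test points, let $N_t$ be the number of grid cells containing an uncovered point, show $\E N_t \to \lambda := c_{d,k}\tv(B\cap\partial A)e^{-\zeta}$ (plus $v(B)e^{-2\zeta}$ when $d=2,k=1$), and show that $\E[N_t(N_t-1)] \to \lambda^2$ (clustering of holes is suppressed because two distant holes are asymptotically independent and two nearby ones collapse to one), whence $N_t \tod \mathrm{Poisson}(\lambda)$ and $\Pr[R'_{t,k}\le r_t] = \Pr[N_t = 0] \to e^{-\lambda}$. To pass from "uncovered grid point" to "uncovered point of $B$" one needs the standard continuity/blocking lemma: if every grid point is $k$-covered by slightly larger balls then all of $B$ is $k$-covered by balls of radius $r_t$, with the inflation factor $\to 1$; this uses compactness of $B$ and uniform bounds on the geometry of $\cM$ and $A$ (bounded curvature, positive reach, $C^2$ boundary), which also justify replacing geodesic balls by Euclidean balls to get the corresponding statement for $R^*_{n,k}$.

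The main obstacle I anticipate is the sharp asymptotics of $v(B(x,r))$ and of the geometry of small geodesic balls centred exactly at, or at distance $o(r)$ from, $\partial A$: one needs that the leading correction to the naive "half of $\theta_d r^d$" does not disturb the limit, i.e. that the second-order terms (from curvature of $\cM$, from the second fundamental form of $\partial A$, and from the offset of $x$ into the interior) enter only at order $o(1)$ in the exponent $t f_0 v(B(x,r))$ after the $\log n$-type centring. Getting this uniformly over the (growing) net of test points near $\partial A$, so that the Chen-Stein mean computation is not corrupted, is where the bulk of the careful estimation lies; the reduction to charts and the bounded-geometry estimates from Section~\ref{secAltManif} are exactly what is needed to push this through.
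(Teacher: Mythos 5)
Your proposal is correct in outline but takes a genuinely different route from the paper's. The paper does not redo a Chen--Stein/second-moment argument on the manifold: it \emph{reduces} the manifold problem to the already-proved Euclidean case of \cite{ECover}. Concretely, it covers a neighbourhood of $B$ by finitely many charts, slices each chart into tiny pieces (``cubettes'' for the interior regime, ``slabettes'' for the boundary regime, together with a thin ``tartan'' remainder), uses Lemma~\ref{lemcompare3} to show each piece is mapped by a pointwise-adapted linear map $\psi_{t,j}$ to a nearly-rectilinear block of $\R^d$ or $\bH$ with distortion $O(t^{-\eta})$, establishes stochastic domination (Lemmas~\ref{lemPPdom2} and~\ref{lemPPdom}) between the image point process and a homogeneous Poisson process of nearby intensity, \emph{reassembles} the translated blocks into a macroscopic Euclidean cube or slab $\Gamma_t$, and then invokes \cite[Lemmas~7.2 and~7.4]{ECover} verbatim; the tartan remainder is shown to be covered w.h.p.\ (Lemmas~\ref{lemplaid},~\ref{lemJuly},~\ref{lemDelta}), and the interior and boundary contributions are shown to be asymptotically independent via a buffering argument, yielding the product formula in the $d=2,k=1$ case. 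Your plan instead discretizes $B\cap\partial A$, counts deep holes $N_t$, and proves $N_t \tod \mathrm{Poisson}(\lambda)$ by a first-and-second-moment/Chen--Stein computation carried out directly in the manifold geometry. That is in spirit the same engine ECover uses \emph{in $\R^d$}, re-run on the manifold rather than imported via charts; it would work, but you would effectively be re-proving the Euclidean weak limit in a harder geometric setting. The paper's transplant buys modularity (the delicate Poisson-approximation analysis, including the derivation of $c_{d,k}$ in \eqref{cdkdef} from the integral over the offset of the test point into the interior against the half-space volume, is done once in flat space), at the cost of substantial carpentry; your route would be more self-contained but you would need to carry out that offset integral and the pairwise-clustering bound in the manifold volume form, uniformly over the growing net, which is precisely where the chart-uniformity estimates of Section~\ref{secAltManif} must be applied with care and where your outline is currently hand-waving. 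Your reading of the interior-versus-boundary dichotomy, the $d=2,k=1$ exceptional term, and the de-Poissonization step all agree with the paper.
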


\begin{remk}
	Theorem \ref{th:weak}
	was previously proved  in
	\cite[Theorem 2.1]{ECover}  for the special case
where $A$ is a smoothly bounded subset of $\R^d$,
i.e. when $\cM = \R^d$, 
with the Euclidean inner product as Riemannian metric,
and where moreover $B=A$.
Even in this setting, our result generalises that one by allowing
	for more general $B$.
\end{remk}

\begin{remk}
When $d=2, k=1 $ and $B=A$,
the exponent in (\ref{eqmain3}) has two terms.
This is because
the location in $A$ furthest from the sample $\X_n$ might lie
	either in the interior of $A$, or on the boundary. 
If $d > 2$ or $k>1$ then for large $n$, the location in $A$
furthest from $\X_n$ is very likely to lie near the boundary of $A$. 
\end{remk}

\begin{remk}
	\label{rmkB}
	When $d > 2$ or $k > 1$,
	and $\tv(B \cap \partial A) =0$, the limit in
	(\ref{eqmain3}) equals 1, so in this case taking
	$\frac12 n \vvol_d f_0 R_{n,k}^d - (1-1/d) \log n - (d+k -3 + 1/d)
	\lglg n + {\rm const.}$
	is the `wrong' transformation of $R_{n,k}$
	to yield a sequence of random
	variables converging in distribution to
	a non-degenerate random variable. In
	the case where $\overline{B} \subset A^o$,
	the `right' transformation is identified in 
	Proposition  \ref{Hallthm} below, but in intermediate
	cases where $\tv(B \cap \partial A) = 0$
	but $B$ touches $\partial A$, finding the right sequence
	of transformations  remains an open problem.
\end{remk}
\begin{remk}
	One possible direction of future research would be to give a more
	quantitative version of Theorem \ref{th:weak}, specifically a
	bound on the rate of convergence (for either choice
	of pre-limit) in \eqref{eqmain3}. In \cite{PY25}
	we make a start in this direction by providing
	an $O((\log \log t)/\log t)$ rate of convergence for
	convergence of the second pre-limit in \eqref{0114a}
	below,
	in the special case where $\cM = \R^d$.
\end{remk}

Our next result provides
a  
SLLN for $R_{n,k}$. For this,
  we relax the condition that the density function
  $f$ be constant on $A$.
%
Moreover,  we  allow $k$ to vary with $n$.
	Assume we are given 
	a constant $\sbeta \in [0,\infty]$
	and a sequence $k: \N \to \N$ with
	\bea
	\lim_{n \to \infty} \left( k(n)/\log n \right) = \sbeta;
	~~~~~ \lim_{n \to \infty} \left( k(n)/n \right) = 0.
	\label{kcond}
	\eea
We make use of the following notation throughout:
\bea
f_0 := \inf_{x \in B} f(x); ~~~~~~~ f_1:= \inf_{x \in B \cap \partial A}
f(x);
\label{f0def}
\\
 H(t) := \begin{cases}  1-t + t \log t, ~~~ & {\rm if} ~ t >0 \\
	 1 ,  & {\rm if} ~ t =0.
 \end{cases}
	 \label{Hdef}
\eea
If $B \cap \partial A = \emptyset$ we interpret $f_1$ as $+\infty$
and $1/f_1$ as 0.
	Observe that $-H(\cdot)$ is unimodal with a maximum
value of $0$ at $t=1$. Given $a \in [0,\infty)$, we define
the
function $\hH_a: [0,\infty) \to [a,\infty)$ by
\bea
y = \hH_a(x)  \Longleftrightarrow y H(a/y) =x,~ y \geq a,
\label{hatHdef}
\eea
with $\hH_0(0):=0$.
Note that 
$\hH_a(x)$  is  increasing in  $x$,
and that 
$\hH_0(x)=x$.

	Throughout this paper, the phrase `almost surely' or 
	`a.s.' means `except on a set of $\Pr$-measure zero'.
%
Let $B_A^o : = B \setminus (\overline{A \setminus B})$, 
the interior of $B$ relative to $A$, and
 let $f|_A$ denote the restriction of $f$ to $A$.

\begin{theo}
\label{thm2}
Suppose 
 that $f_0 >0$, and that $f|_A$ is continuous on $A$.
	Assume also that $B= \overline{B^o}$, and
	$B\cap \partial A = \overline{B_A^o \cap \partial A}$
	with $\int_B f dv >0$,
	and that \eqref{kcond} holds.
Then as $n \to \infty$, almost surely
\bea
(n \vvol_d R_{n,k(n)}^d)/k(n) \to \max \left( 1/f_0, 2 /f_1 \right)
	~~~~~{\rm if}~ \sbeta =\infty;
\label{th2eq1}
\\
(n \vvol_d R_{n,k(n)}^d)/\log n \to 
	\max \left( \hH_\sbeta(1)/f_0, 2 \hH_{\sbeta}(1-1/d)/f_1  
\right),
	~~~~~{\rm if}~ \sbeta < \infty.
\label{th2eq}
\eea
In particular, if $k \in \N$ is a constant, then
as $n \to \infty$, almost surely 
\bea
	(n \vvol_d R_{n,k}^d)/\log n \to 
	\max \left( 1/f_0,  (2-2/d)/f_1 \right).
\label{th2eq0}
\eea
\end{theo}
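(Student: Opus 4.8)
The plan is to prove the two-sided bound by establishing a matching upper and lower bound for $R_{n,k(n)}$, treating separately the contribution from the interior of $B$ and the contribution from the boundary portion $B \cap \partial A$. The quantity $\max(1/f_0, 2/f_1)$ (resp. the $\hH$-version) reflects exactly this: at an interior point $x$ a ball $B(x,r)$ has Riemannian volume $\sim \theta_d r^d$, so we need $n f(x) \theta_d r^d$ to be of order $k(n)$ to see $k(n)$ points, whereas at a boundary point only a fraction $\tfrac12$ of the ball lies in $A$, which is what produces the factor $2/f_1$. First I would reduce everything to a Poisson sample: by a standard de-Poissonization argument (coupling $\X_n$ with $\Po_t$ and controlling $Z_t - t$ via Borel--Cantelli along a suitably fine subsequence of $t$-values, then using monotonicity of $r \mapsto R'_{t,k}$ and of the coverage event in $t$ and $k$), it suffices to prove the analogous almost-sure statement for $R'_{t,k(t)}$ with $t \to \infty$ through the reals.

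For the \textbf{upper bound}, I would cover $B$ by a finite collection of small geodesic caps, using compactness and the continuity of $f$: on each cap $f$ is within $(1\pm\eps)$ of its value at the centre. For a cap centred at an interior point, a ball $B(x,r)$ with $n\theta_d r^d$ slightly exceeding $\hH_\beta(1)/f_0 \cdot \log n$ (times $(1+\eps)$) contains, with probability $1 - n^{-(1+\delta)}$, at least $k(n)$ Poisson points --- this is a large-deviation (Chernoff) estimate for a Poisson random variable with mean $\sim n\theta_d r^d f(x)$, and the rate function is precisely $H$, which is how $\hH_\beta$ enters through \eqref{hatHdef}; summing the failure probabilities over the $O(n^{?})$ caps in a covering of mesh $\sim r$ and applying Borel--Cantelli gives the bound. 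For a cap meeting $\partial A$, the same computation applies but with the ball volume replaced by $\sim \tfrac12 \theta_d r^d f_1$, because near a $C^2$ boundary point the manifold-with-boundary looks locally like a half-space (this is where the hypothesis that $\cM$ is $C^2$ and $A$ a $C^2$ submanifold-with-boundary is used), yielding the factor $2$. The assumptions $B=\overline{B^o}$ and $B\cap\partial A = \overline{B_A^o\cap\partial A}$ ensure the caps we must cover are genuinely "fat" and that the boundary caps really do see the half-space geometry rather than some lower-dimensional sliver.

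For the \textbf{lower bound}, I would exhibit, for $r$ slightly \emph{below} the claimed limiting value, a point $x \in B$ (taken near where $f$ attains $f_0$, or near where $f|_{B\cap\partial A}$ attains $f_1$, using continuity and that the relevant infimum is attained on the compact set) such that $B(x,r)$ contains fewer than $k(n)$ Poisson points infinitely often. Here one uses the lower tail of the Poisson distribution together with the second Borel--Cantelli lemma along a sparse deterministic subsequence $n_j$ (e.g. $n_j = \lfloor e^{j}\rfloor$ or similar, chosen so the events are "independent enough" --- more precisely one uses disjointness of the relevant balls at different scales, or a direct computation that the events $\{|\Po_{t_j}\cap B(x,r_j)| < k(t_j)\}$ have summable-to-infinity probabilities and are independent since they concern disjoint regions of space-time). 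The constant $\hH_\beta(1)$ (resp.\ $\hH_\beta(1-1/d)$, resp.\ $1$ and $1-1/d$ when $\beta < \infty$ but $k$ is bounded, where $\hH_0(x)=x$) comes out of matching the Poisson large-deviation rate on both sides. Finally, \eqref{th2eq0} is the special case $\beta = 0$, $k$ constant: then $\hH_0(1)=1$ and $\hH_0(1-1/d) = 1-1/d$, giving $\max(1/f_0, (2-2/d)/f_1)$.

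The \textbf{main obstacle} I anticipate is the boundary analysis: making rigorous and uniform the statement that, for $x$ within geodesic distance $O(r)$ of $\partial A$, the Riemannian volume $v(B(x,r)\cap A)$ is $(1+o(1))\,c(x)\,\theta_d r^d$ with $c(x)$ interpolating between $1$ (deep interior) and $\tfrac12$ (on the boundary), with $o(1)$ uniform in $x$ as $r\to 0$. One must control the $C^2$ change of coordinates flattening $\partial A$ and straightening the metric simultaneously, and verify that the resulting distortion of both the metric balls and the volume element is $1+O(r)$ uniformly over the compact $B\cap\partial A$. A secondary subtlety is that the worst location need not be exactly on $\partial A$ nor exactly at an interior minimiser of $f$; one must optimise the trade-off $c(x)/f(x)$ over a neighbourhood of $\partial A$, but the continuity of $f$ plus the monotonicity $c(x)\in[\tfrac12,1]$ shows the supremum of $1/(c(x)f(x))$ over $B$ is exactly $\max(1/f_0, 2/f_1)$, after which the large-deviation bookkeeping goes through as above.
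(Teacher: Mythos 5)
Your high-level picture — interior versus boundary, local flattening of the manifold, Poisson large deviations governed by the rate function $H$, and the factor $2/f_1$ coming from the half-space volume near $\partial A$ — matches the paper in spirit, and you rightly identify the uniform volume estimate $v(B(x,r)\cap A)\sim c(x)\theta_d r^d$ as the main technical burden. However, your lower-bound argument would fail as written. You propose to fix a single point $x$ and apply the second Borel--Cantelli lemma to the events $E_j := \{\card(\Po_{t_j}\cap B(x,r_{t_j})) < k(t_j)\}$ along a sparse sequence $t_j$. Those events are \emph{not} independent ($\Po_{t_j}$ is increasing in $j$ and the balls $B(x,r_{t_j})$ are nested, so the counts share points), and in any case for $t_j$ growing geometrically the probabilities $\Pr[E_j]$ decay geometrically, so $\sum_j \Pr[E_j]<\infty$ and the second Borel--Cantelli lemma is simply the wrong tool; the remark about ``disjointness at different scales'' does not repair this. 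The mechanism that actually produces the lower bound is \emph{spatial} disjointness at a fixed time scale: one must exhibit a packing of $\Omega(r^{-b})$ disjoint balls of radius $r$, each of $\mu$-measure at most $a r^d$, so that the probability that all of them are $k$-covered is small. Here $b=d$ for the interior regime and $b=d-1$ for the boundary regime, and this exponent $b$ is precisely what produces the argument $b/d$ of $\hH_\beta$. That is the reason the boundary term carries $\hH_\beta(1-1/d)$ rather than $\hH_\beta(1)$ — a distinction your sketch asserts but never accounts for (your closing paragraph optimises only $1/(c(x)f(x))$, which ignores this dimensional effect). The paper packages the probabilistic step as Lemma~\ref{gammalem} (quoted from~\cite{ECover}), and the manifold-specific packing estimates are Lemmas~\ref{packlem} and~\ref{l:pack_b}.

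Separately, in both your upper- and lower-bound steps you invoke Borel--Cantelli, but the relevant failure probabilities are only $O(n^{-\eps})$, which is not summable over all $n$. One must use the subsequence trick: exploit the monotonicity $R_{n+1,k}\leq R_{n,k}\leq R_{n,k+1}$ to interpolate the almost-sure behaviour from a geometric subsequence of $n$ (where the sum does converge) to all $n$, at the cost of slightly perturbing $k$. The paper packages this as Lemma~\ref{lemtrick}. This also makes your proposed de-Poissonization an unnecessary detour for the strong law: the paper proves Theorem~\ref{thm2} directly for the binomial sample $\X_n$ and reserves Poissonization for the distributional limit, and a rigorous de-Poissonization of an almost-sure statement would itself require the same subsequence device you have omitted.
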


In the special case where
$\mu$ is the uniform distribution on
$A$ (i.e. $f(x) \equiv f_0$  on $A$),
the right hand
side of (\ref{th2eq0}) comes to $(2-2/d)/f_0$.

\begin{remk}
	We can compare the above result with \cite[Corollary 4.2]{KTV}.
	Translated into our notation, the latter result says 
	that when $k(n) \equiv 1$ (so $\sbeta =0$)
	and $f \equiv 1$, then
	$\limsup_{t \to \infty} \left( t \vvol_d (R'_t)^d / \log t
	\right) \leq 2^d $, in probability.

	Also related are results of Chai \cite{Chai}, which  show
	limiting behaviour  for the expected number of balls of radius $r$
	needed to cover $A$, and allow for other singularities of
	$\partial A$ rather than just considering smooth boundaries.
\end{remk}

\begin{remk}
	\label{rmkeuc}
	Since we are considering a submanifold $\cM$ of
	$\R^m$, a natural alternative,
	instead of the geodetic distance in $\cM$, is
	to consider the Euclidean distance in $\R^m$.
For $x \in \R^m, r \geq 0$ let $B_{\R^m}(x,r)$ denote
the closed Euclidean ball of radius $r $ in $\R^m$ centred on $x$.
	Then define $R^*_{n,k}$
	like $R_{n,k} $ at \eqref{Rnkdef} but using $B_{\R^m}(x,r)$
	instead of the geodetic ball $B(x,r)$.

	Theorems \ref{th:weak} 
	and \ref{thm2} still hold with $R^*_{n,k}$ instead of $R_{n,k}$
	and $R^*_{Z_t,k}$ instead of $R'_{t,k}$. 
	We shall justify this assertion in
	Remark \ref{rk:geovseuc}.
\end{remk}

\begin{remk}
	\label{rmknonsub}
	We conjecture that Theorems \ref{th:weak}
	and \ref{thm2} still hold if we assume only
that  $A$ is a compact Riemannian manifold-with-boundary,
 of dimension $d \geq 2$ and Riemannian volume $v(A) \in (0,\infty)$,
	dropping the assumption that $A$ is a submanifold of 
	a further manifold $\cM$ that is isometrically embedded
	in $\R^m$. 
\end{remk}
\begin{remk}
	When $B=A$, with some extra work one can
	relax the condition `$f|_A$ is continuous on $A$'
	to `$f|_A$ is continuous at all points of $\partial A$'
	in Theorem \ref{thm2}, provided we define $f_0$ as
	an essential infimum.
\end{remk}

\subsection{Coverage in the interior region}

Given $r >0$, we define the
`$r$-interior' of $A$ by
$
A^{(r)} := \{x \in A: B(x,r) \subset A^o\},
$
i.e.
 the set of points in $A$ at geodetic distance
(strictly) more than $r$ from $\cM \setminus A$. 

Given closed $B \subset A$
as before,
let $\tilde{R}_{n,k} $ be the smallest $r$ such that $ B \cap A^{(r)} $
is covered $k$ times by the balls of radius $r$ centred on the points of $\X_n$, i.e.  set
\bea
\tilde{R}_{n,k} : = \inf \left\{ r >0: 
\card(\X_n  \cap B(x,r))
\geq k ~~~ \forall x \in  B \cap A^{(r)} \right\},~~~ n,k \in \N,
\label{eqmaxspac}
\eea
with $\tR_n: = \tR_{n,1}$.  Also, for $t >0$ define
\bea
\tilde{R}_{Z_t,k} : = \inf \left\{ r >0: 
\card(\Po_t  \cap B(x,r)) 
\geq k ~~~ \forall x \in  B \cap A^{(r)}
\right\},~~~ n,k \in \N.
\label{eqmaxspacPo}
\eea

In Proposition \ref{Hallthm},
we give the limiting distribution  for $\tR_{n,k}$.
This is simpler than Theorem \ref{th:weak}
because boundary effects are avoided. We shall be 
using  this result to prove Theorem \ref{th:weak}.

\begin{prop}
\label{Hallthm}
	Suppose 
	$f(x)= f_0 = 1/v(A)$ for all $x \in A$, and
	$v(\partial B) =0$.  Let $k \in \N$ and $\beta \in \R$.
Then
\bea
\lim_{n \to \infty} \Pr[ n \vvol_d f_0  \tR_{n,k}^d   - \log (n f_0)  -
(d+k-2) \lglg n \leq \beta]
\nonumber \\
=
\lim_{t \to \infty} \Pr[ t \vvol_d f_0 (\tR_{Z_t,k})^d   - \log (t f_0)  -
(d+k -2) \lglg t \leq \beta]
\nonumber \\
	= \exp(-(c_d / (k-1)!) v(B)e^{-\beta} ).
\lbl{1228a}
\eea
        If moreover $B$ is compact with $B \subset A^o$, then
\bea
\lim_{n \to \infty} \Pr[ n \vvol_d f_0  R_{n,k}^d   - \log (n f_0)  -
(d+k-2) \lglg n \leq \beta]
\nonumber \\
=
\lim_{t \to \infty} \Pr[ t \vvol_d f_0 (R'_{t,k})^d   - \log (t f_0)  -
(d+k -2) \lglg t \leq \beta]
\nonumber \\
	= \exp(-(c_d / (k-1)!) v(B)e^{-\beta} ).
\lbl{0114a}
\eea
\end{prop}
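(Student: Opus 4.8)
The plan is to follow the classical strategy for coverage-threshold limit theorems (as in \cite{Janson, HallZW, ECover}): convert the statement $\tR_{n,k} \le r_n$ into a statement about the non-existence of an uncovered point, then estimate the probability of an uncovered point via a Poisson approximation for a suitable point process of ``vacant'' local maxima. First I would pass to the Poisson sample $\Po_t$, since de-Poissonization is routine once the Poisson version is in hand: the event $\{(\tR_{Z_t,k})^d \le r\}$ says that every $x\in B\cap A^{(r^{1/d}\text{-ish})}$ is $k$-covered by geodesic balls of radius $r^{1/d}$ around $\Po_t$. Writing $r = r_n$ for the scaling implied by the centring in \eqref{1228a}, so that $t\theta_d f_0 r^d = \log(tf_0) + (d+k-2)\lglg t + \beta + o(1)$, the key point is that on the $r$-interior $A^{(r)}$ the manifold-with-boundary looks locally like flat $\R^d$ up to lower-order corrections controlled by the $C^2$ embedding (bounded curvature, bounded second fundamental form), so that the local coverage probabilities match those of the Euclidean / boundaryless case.

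The heart of the argument is the Poisson approximation. Let $U_r$ be the set of points $x \in B\cap A^{(r^{1/d})}$ that are covered at most $k-1$ times by $\Po_t$ at radius $r^{1/d}$. A point $x$ fails to be $k$-covered iff the ball $B(x,r^{1/d})$ contains at most $k-1$ points of $\Po_t$; the relevant ``events'' are concentrated near the configurations where exactly $k-1$ points sit near the boundary sphere of such a ball, and one tracks a discrete ``witness'' point process $\cW$ built from $(k-1)$-tuples of $\Po_t$-points together with the circumscribed-ball condition. By the Mecke / multivariate Mecke formula one computes $\E[\card \cW]$, and a second-moment or Chen--Stein argument (using \cite{LP}) shows $\card\cW$ is asymptotically Poisson. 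The geometric constant $c_d$ in \eqref{cdef} enters exactly here: it is the expected (normalized) number of such local minima of the coverage function, the same constant appearing in Janson's boundaryless result and in Hall's work; one should be able to cite or adapt that computation rather than redo it, since on $A^{(r^{1/d})}$ there is no boundary contribution. The factor $1/(k-1)!$ is the combinatorial count of the $(k-1)$ uncovering points, and $v(B)$ is the volume over which one integrates the local intensity (the restriction to $A^{(r^{1/d})}$ versus $B$ costs only $o(1)$ since $v(\partial B)=0$ and the shell $B\setminus A^{(r^{1/d})}$ shrinks). This yields $\Pr[\card \cW = 0] \to \exp(-(c_d/(k-1)!)\,v(B)e^{-\beta})$, which is the first display.

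For the second display \eqref{0114a}, under the extra hypothesis $B$ compact with $B\subset A^o$, there is a fixed $\rho>0$ with $B\subset A^{(\rho)}$; since $\tR_{n,k}\to 0$ a.s. (standard: $n$ i.i.d. uniform points become dense), eventually $r_n^{1/d}<\rho$, and then $B\cap A^{(r_n^{1/d})}=B$, so the constraint defining $\tR_{n,k}$ in \eqref{eqmaxspac} coincides with that defining $R_{n,k}$ in \eqref{Rnkdef}. Hence $R_{n,k}=\tR_{n,k}$ for all large $n$ (resp.\ $R'_{t,k}=\tR_{Z_t,k}$ for large $t$), and \eqref{0114a} follows from \eqref{1228a}. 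Finally, de-Poissonization: couple $\Po_t$ with $\X_n$ via $Z_t$, use monotonicity of $R_{n,k}$ in $n$ together with the concentration of $Z_t$ around $t$ to transfer the Poisson limit to the binomial one; the $\lglg$ centring is coarse enough that the $O(\sqrt{t})$ fluctuation of $Z_t$ is negligible.

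The main obstacle I anticipate is the Poisson approximation step on the curved manifold: one must show that the local coverage geometry on $A^{(r^{1/d})}$ — geodesic balls, their intersections, the distribution of the number of $\Po_t$-points in a ball — agrees with the Euclidean picture up to errors that vanish after multiplication by the total intensity $\sim t$, using only $C^2$ regularity of the embedding. This requires uniform control (over the compact $A$, independent of the base point) of the volume of small geodesic balls, $v(B(x,s)) = \theta_d s^d(1 + O(s^2))$, and of the pairwise-intersection volumes, and then checking the $O(s^2)$ corrections are swamped since $s = r_n^{1/d} \to 0$. A secondary technical point is the second-moment bound ruling out clustering of near-uncovered points, which needs a quantitative lower bound on how fast the coverage count increases as one moves away from a local minimum — again a curvature-uniform estimate. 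Both are of the kind established in the boundaryless manifold literature (\cite{Janson, Bob22, BW}) and should transfer; the boundary of $A$ only enters through the harmless truncation to the $r$-interior.
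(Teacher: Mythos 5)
Your proposal takes a genuinely different route from the paper's. You propose to redo the classical Poisson-approximation computation directly on the manifold: set up a ``witness'' point process $\cW$ of $(k-1)$-tuples witnessing near-uncovered local minima, compute $\E[\card\cW]$ via the Mecke formula, and control clustering via a second-moment/Chen--Stein argument, with the curvature-uniform volume expansion $v(B(x,s)) = \theta_d s^d(1+O(s^2))$ supplying the Euclidean comparison. The paper does something quite different and, in a sense, more elementary: it avoids redoing any Poisson-approximation or Mecke-formula computation on the manifold. Instead, after the local flattening maps of Lemma \ref{lemcompare3}, it partitions $\phi(B^o \cap A^{(r_t)})$ into hypercubes $H_{t,j}$, pushes each forward by a second base-point-adapted map $\psi_{t,j}$ into a ``cubette,'' dissects each cubette into rectilinear ``mini-cubes'' $Q_{t,j,\ell}$, and \emph{reassembles} these mini-cubes (by translation) into a single approximate Euclidean cube $\Gamma_t$ of macroscopic size. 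A stochastic-domination argument (Lemma \ref{lemPPdom2}) sandwiches the pushed-forward Poisson process between homogeneous Poissons of slightly perturbed intensities, and the known Euclidean result \cite[Lemma 7.2]{ECover} is then applied to $\Gamma_t$ directly (Lemma \ref{lemfromCov2}); the leftover ``tartan'' regions between mini-cubes are shown to be covered with probability tending to one via Lemma \ref{lemmeta}(iii). Finally a patching/independence argument (Lemmas \ref{lemWexist2}, \ref{lemDelta2}) globalises the local statement. So where you would re-derive the limiting constant $c_d/(k-1)!$ and the $(d+k-2)\lglg t$ centring from scratch via Mecke and second moments, the paper \emph{imports} both from \cite{ECover} and concentrates all the manifold-specific work into the geometric carpentry and stochastic domination. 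Your strategy is sound in principle (it mirrors Janson and Bobrowski--Weinberger on boundaryless manifolds), but the step you defer --- establishing that the curved-space Poisson-approximation estimates transfer with errors $o(1)$ after multiplication by the intensity $\sim t$, uniformly over the compact $A$ --- is precisely where the hard work lies, and your sketch does not actually carry it out. The paper sidesteps that issue entirely by reducing to the flat case; the price it pays is the combinatorial bookkeeping of cubettes, mini-cubes, and reassembly. One concrete omission in your sketch: you do not address why the reduction to $A^{(r_t)}$ versus $B$ costs only $o(1)$ beyond asserting $v(\partial B)=0$; the paper handles this by the set-limit argument $v(U_t)\to v(B)$ inside Lemma \ref{lemsurf2}, which depends on the $B\in\cA$ hypothesis in a slightly more delicate way than a one-line appeal.
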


Proposition \ref{Hallthm}
is concerned with coverage of that part of $B$ which is not
close to $\partial A$, and   
 of interest in its own right.
The special cases where
$B=A = \cM$
(i.e. $A$ has no boundary)
 \cite[Theorem 1.2]{Janson} 
and where $\cM = \R^d$ \cite{HallZW, Janson, PY25}
were already known.


The following SLLN for
$\tR_{n,k}$ will be needed 
for proving Theorem \ref{thm2}.

\begin{prop}
\label{thm1}
	 Suppose that 
	$\mu(B) >0$,
	and $B = \overline{B^o}$,
	and $f|_A$ is continuous on $A$. 
	Assume (\ref{kcond}) holds.
	Then, almost surely,
	\begin{align}
		\lim_{n \to \infty} ( n \vvol_d \tR_{n,k(n)}^d/k(n)) & = f_0^{-1}
		&{\rm if}~ \beta = \infty;
\label{0617a}
\\
		\lim_{n \to \infty} (n \vvol_d  \tR_{n,k(n)}^d /\log n) & =  
	\hH_\beta(1)/f_0,
		&{\rm if}~ \beta < \infty.
\label{0315a}
	\end{align}
In particular, if $k \in \N$ is a constant then
\bea
 \Pr[ \lim_{n \to \infty} ( n \vvol_d f_0 \tR_{n,k}^d /\log n) = 1 ] =1.
\label{Strong1}
\eea
If also 
	$B \subset A^o$, all of the above almost sure limiting
 statements hold for $R_{n,k(n)}$ as well as for $\tR_{n,k(n)}$.
\end{prop}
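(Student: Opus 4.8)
The plan is to prove Proposition \ref{thm1} by a standard upper-bound/lower-bound argument, treating the Poisson case first and then de-Poissonizing, and finally transferring the statement from $\tR_{n,k}$ to $R_{n,k}$ under the extra hypothesis $B \subset A^o$.

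First I would establish the \emph{lower bound}. Fix $\eps>0$ and a point $x_0 \in B^o$ with $f(x_0)$ close to $f_0$ (using continuity of $f|_A$ and $B = \overline{B^o}$); since $x_0$ is interior to $A$, for small $r$ the ball $B(x_0,r)$ lies in $A^o$, so $x_0 \in B \cap A^{(r)}$ and hence any admissible $r$ in \eqref{eqmaxspac} must place at least $k(n)$ of the first $n$ sample points in $B(x_0,r)$. The number of points of $\X_n$ in $B(x_0,r)$ is $\Bin(n,\mu(B(x_0,r)))$, and by the $C^2$ structure $\mu(B(x_0,r)) = \theta_d f(x_0) r^d (1+o(1))$ as $r \downarrow 0$ (the Riemannian volume of a small geodetic ball is $\theta_d r^d(1+O(r^2))$). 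A Chernoff/large-deviation bound for the Binomial — this is exactly where the rate function $H$ of \eqref{Hdef} and the inverse function $\hH_\beta$ of \eqref{hatHdef} enter — shows that with $r^d = (1-\eps)\hH_\beta(1) \log n /(n \theta_d f_0)$ (resp. $r^d = (1-\eps)k(n)/(n\theta_d f_0)$ when $\beta=\infty$) one has $\Bin(n,\mu(B(x_0,r))) < k(n)$ eventually a.s., by Borel--Cantelli along the integers (monotonicity of $\tR_{n,k(n)}$ in $n$ handles the gaps, after checking $k(n)$ does not jump too fast, which follows from \eqref{kcond}). This gives the a.s. lower bound matching the claimed limit.

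Next the \emph{upper bound}, which is the harder direction and the main obstacle. Fix $\eps>0$ and set $r_n^d := (1+\eps)\hH_\beta(1)\log n/(n\theta_d f_0)$ (resp.\ $(1+\eps)k(n)/(n\theta_d f_0)$). I must show that a.s.\ for all large $n$, every $x \in B \cap A^{(r_n)}$ has $\card(\X_n \cap B(x,r_n)) \ge k(n)$. The route is: (i) pass to the Poisson process $\Po_t$ at a comparable intensity $t \sim n$, where spatial independence is available; (ii) cover $B$ by $O(r_n^{-d})$ geodetic balls $B(y_i, \delta r_n)$ with $\delta$ small, chosen so that if $x \in B(y_i,\delta r_n)$ and $x \in A^{(r_n)}$ then $B(y_i,(1-\delta)r_n) \subset B(x,r_n)$ and $B(y_i,(1-\delta)r_n) \subset A$, so $\mu(B(y_i,(1-\delta)r_n)) \ge \theta_d f_0 ((1-\delta)r_n)^d (1+o(1))$; (iii) for each such $y_i$ the count $\card(\Po_t \cap B(y_i,(1-\delta)r_n))$ is Poisson with mean $\ge (1-\delta)^d(1+\eps)\hH_\beta(1)\log n (1+o(1))$, and a Poisson lower-tail estimate (again via $H$) bounds $\Pr[\text{count} < k(n)]$ by $n^{-(1+\eps')}$ for suitable $\eps'>0$ once $\delta,\eps$ are chosen appropriately — here one checks that $(1-\delta)^d(1+\eps)\hH_\beta(1)H(\beta/((1-\delta)^d(1+\eps)\hH_\beta(1))) > 1$, which holds by strict monotonicity of $y \mapsto yH(\beta/y)$ and the defining relation $\hH_\beta(1)H(\beta/\hH_\beta(1))=1$; (iv) a union bound over the $O(r_n^{-d}) = O(n/\log n)$ balls, followed by Borel--Cantelli along a geometric subsequence of $n$ together with monotonicity, completes the Poisson upper bound. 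De-Poissonization is routine: sandwiching $Z_t$ between $(1\pm\eps)t$ with overwhelming probability transfers the Poisson a.s.\ statements to $\tR_{n,k(n)}$.

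Finally, for the last sentence, when $B$ is compact with $B \subset A^o$ there is $\rho>0$ with $B \subset A^{(\rho)}$, hence $B \cap A^{(r)} = B$ for all $r \le \rho$; since $\tR_{n,k(n)} \to 0$ a.s.\ (immediate from the limits just proved), eventually $\tR_{n,k(n)} \le \rho$ and the defining set in \eqref{eqmaxspac} coincides with that in \eqref{Rnkdef}, so $R_{n,k(n)} = \tR_{n,k(n)}$ for all large $n$, giving the same almost sure limits. The specialization \eqref{Strong1} follows by taking $k$ constant, so $\beta = 0$ and $\hH_0(1) = 1$. The one technical point warranting care throughout is the uniformity of the volume estimate $v(B(x,r)) = \theta_d r^d(1+O(r^2))$ over $x$ ranging in the compact set $A$ and of the comparison between geodetic balls centred at nearby points; these follow from compactness and the $C^2$ hypothesis, and are the facts reviewed in Section \ref{secAltManif}.
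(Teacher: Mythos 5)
Your upper-bound argument (cover by $O(r_n^{-d})$ balls, Chernoff / Poisson lower-tail estimate, union bound, Borel--Cantelli along a geometric subsequence combined with monotonicity, i.e.\ the subsequence trick) is essentially the paper's route, implemented via Lemma~\ref{lemcovering}, Lemma~\ref{lemMyBk}, Lemma~\ref{lemmeta} and Lemma~\ref{lemtrick}. Your transfer from $\tR_{n,k(n)}$ to $R_{n,k(n)}$ when $\overline{B}\subset A^o$ is also exactly the paper's argument. The de-Poissonization and uniform volume comparison points you flag are the ones the paper handles via Lemma~\ref{lemcompare3}/Lemma~\ref{lemcompare}.

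However, your lower-bound argument has a genuine gap in the case $\beta<\infty$ (which includes the headline case $k$ constant, $\beta=0$). You propose to watch a single fixed point $x_0\in B^o$ with $f(x_0)\approx f_0$, take $r_n^d=(1-\eps)\hH_\beta(1)\log n/(n\theta_d f_0)$, and argue that $\Bin(n,\mu(B(x_0,r_n)))<k(n)$ eventually a.s. This is false: with this choice $n\,\mu(B(x_0,r_n))\approx (1-\eps)\hH_\beta(1)\log n$, and since $\hH_\beta(1)>\beta$ strictly we have $n\,\mu(B(x_0,r_n))>k(n)\approx\beta\log n$ for small $\eps$; the undershoot $\{\Bin<k(n)\}$ is then a lower-tail event whose probability is of order $n^{-c}$ with $c=(1-\eps)\hH_\beta(1)H\bigl(\beta/((1-\eps)\hH_\beta(1))\bigr)<1$. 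In particular this probability tends to $0$, so the event holds with vanishing, not full, probability; neither Borel--Cantelli direction delivers ``eventually a.s.''. (For $\beta=\infty$ your single-point argument \emph{does} work, because there $k(n)/\log n\to\infty$ forces the Chernoff exponent above $M\log n$ for any $M$, making the failure probability summable.) What is needed for $\beta<\infty$ is a \emph{packing} of $\Omega(r_n^{-d})$ disjoint geodetic balls of radius $r_n$ centred in $B\cap A^{(\delta)}$, each with $\mu$-measure at most $(1+o(1))\theta_d f_0 r_n^d$: since each ball independently fails to contain $k(n)$ points with probability $\approx n^{-c}$, $c<1$, and there are $\Omega(n/\log n)$ of them, at least one fails with probability tending to $1$, and this can be bootstrapped to the a.s.\ lower bound. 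This is precisely what the paper does via Lemma~\ref{packlem} (constructing the packing near a point where $f$ is close to $f_0$) and Lemma~\ref{gammalem} with $b=d$, $a=\alpha\theta_d$.
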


\subsection{Overview of proofs}

The proofs all use the following fact about manifolds, presented in detail 
in Section \ref{s:ApproxEuc}.
Given $x_0 \in A$, we can find a neighbourhood $V$ of $x_0$ in $\cM$ and
for each $y \in V$, an injective mapping $\phi_{y}$ from $V$ to 
a region of  Euclidean $d$-space
such that the distortion of $\cM$ by $\phi_y$
in a small neighbourhood of
$y$ is itself small, and such that moreover  $\phi_y \circ 
\phi_{x_0}^{-1}$
can be extended to a linear map on $\R^d$ that is
fairly close (uniformly over $y \in V$) to the identity map. 
 If $x_0 \in \partial A$, we can arrange for $\phi_{y}(A \cap V)$ to be
 the intersection of $\phi_y(V)$ with the upper half-space of $\R^d$.

As we shall describe below, we can compute the limiting
probability of covering $A \cap V$ with balls of radius $r_t$ centred
on $\Po_t$, where $r_t$ is chosen so this limiting probability is
non-trivial;
at the end we can put together different regions $V$ to determine
the limiting probability of covering all of $A$ (or more generally,
a specified subset $B$ of $ A$).

To compute this  limiting probability, we fix
a small positive constant $\eta$ and given $t$, we
divide $\phi_{x_0}(A \cap V)$
into cubes $H_{t,i}$ of side $t^{-\eta}$
(along with a small boundary region denoted $D_t$ for now).
The sets $\phi_{x_0}^{-1}(H_{t,i})$ (along with $\phi_{x_0}^{-1}(D_t)$)
partition the set $A \cap V$, and
therefore the total volume of the regions $\phi_{x_j}(\phi_{x_0}^{-1}(H_{t,i}))$
approximates to that of $A \cap V$.

For each $i$ we select a point $x_i \in \phi_{x_0}^{-1}(H_{t,i})$, and
 consider also $\phi_{x_i}(\phi_{x_0}^{-1}(H_{t,i}))$, a region
of $\R^d$ which is not
a cube (it is a $d$-dimensional parallelogram) so we
subdivide it into
smaller cubes denoted $Q_{t,i,\ell}$ of side $t^{-\alpha}$, where $\alpha  \in
(\eta, 1/d)$ is a constant (along with a boundary region that is small because
$t^{-\alpha} \ll t^{-\eta}$).
Then
the total volume of all the cubes $(Q_{t,i,\ell})_{i,\ell}$
approximates the Riemannian volume of $A \cap V$.

By the theory of Poisson point processes, the restriction of $\phi_{x_i}(
\Po_t \cap  \phi_{x_i}^{-1}(Q_{t,i,\ell}))$ is a further Poisson process in
$Q_{t,i,\ell}$ that we can approximate to with
a homogeneous Poisson process, and the event that $A$ is covered
corresponds to the event that
the union of all the cubes $Q_{t,i,\ell}$ are covered. We can reassemble
these cubes into a macroscopic region of volume about $v(A \cap V)$.
There are boundary effects on 
the annular shells of thickness $r_t$ in each cube, but these effects can be
shown to be negligible in the limit $t \to\infty$ because
$r_t \ll t^{-\alpha}$. Thus we can show our coverage probability is
asymptotically equivalent to a Euclidean coverage problem for the
union of reassembled cubes.

We have to take into account the effect of the boundary $\partial A$,
if $V$ intersects this set.
In the asymptotically equivalent Euclidean coverage problem just described, 
this amounts to the probability of covering a region of the half-space in
$\R^d$ by balls centred on a Poisson process in the half-space, which can be
dealt with by noticing that the induced coverage process on the boundary of
the half-space is a lower-dimensional Boolean model coverage problem which is already well understood.

 For the laws of large numbers, we shall 
 use  asymptotic upper and lower bounds
 on coverage thresholds for a region in a general metric space endowed with a
 probability measure, given in
 \cite[Section 6]{ECover}. These bounds are based on having good upper
 bounds on the covering number of a set by small balls,
 and good lower bounds on a certain packing number, roughly
 the number of disjoint  small balls of measure less than a certain amount that can be found with disjoint centres in that region.  Using 
 can use the preceding fact one can show
 that given $\varepsilon > 0$, we can bound these covering and packing numbers
 the same way as in the Euclidean case to within a factor of $\varepsilon$,
 and this enables us to get the law of large numbers.

\subsection{Manifolds}
\label{secAltManif}
Our results refer to a number of concepts pertaining to manifolds,
and  we review these  now. We mainly follow the approach of 
\cite{PYmfld}.


For $k \in \N$, let $\| \cdot \|$ be the Euclidean norm in $\R^k$.
Let $o$ denote 
the origin of $\R^k$.

 Let $m \in \N$ and $d \in \N$ with $d \leq m$.
We write $\lambda_d$
for Lebesgue measure in $\R^d$, and $\lambda_{d-1}$ 
for $(d-1)$-dimensional Hausdorff measure.
Let $\bH$ denote the half-space $\R^{d-1} \times [0,\infty)$,
and set $\partial \bH:= \R^{d-1} \times \{0\}$.

A nonempty subset $\M$ of $\R^m$,
endowed with the subset topology,
 is called a $d$-dimensional $C^2$
submanifold of $\R^m$ if for each $y \in \M$ there exists an open
subset $U$ of $\R^d$ and a twice continuously differentiable
injection $g $ from
 $U $ to $\R^m$, such that (i) $y \in g(U) \subset \M$,
and (ii) $g$ is an open map from $U$ to $\M$, and
(iii) the linear map $ g'(u)$ has full rank for all $ u \in U$ (see
e.g. Theorem 2.1.2(v) of \cite{BG}).  The pair $(U,g)$ is called a
{\em chart}. Let ${\m}:= \m(d,m)$ denote the class of all
$d$-dimensional $C^2$ submanifolds of $\R^m$ which are also closed
subsets of $\R^m$.

Given $\M \in \m$, using a routine compactness argument we can
choose an index set $\I \subset \N$,
and  a set
$\{(y_i,\delta_i,U_i,g_i),i \in {\cal I} \}$ of ordered quadruples
with $y_i \in \M$, $\delta_i \in  (0, \infty)$, and
$(U_i,g_i)$ a chart for each $i$, such that
(i)  $\M \cap B_{\R^m}(y_i,3 \delta_i)
\subset g_{y_i}(U_i)$ for each
$i$, and (ii) $\M \subset \bigcup_{i \in {\cal I}} B_{\R^m}(y_i,\delta_i)$.

We refer to $((U_i,g_i), i \in {\cal I})$ as an  {\em atlas} for
$\M$. Given such an  atlas, we can find a {\em partition of unity}
$\{\psi_i\}$ subordinate to the atlas, that is, a collection of measurable
functions $(\psi_i,i \in {\cal I})$ from $\M$ to $[0,1]$, such that
$\sum_{i \in {\cal I}} \psi_i(y) =1$ for all $y \in \M$, and such
that for each $i$, $ \psi_i(y) = 0 $ for $y \notin g_i(U_i)$, and
 $\psi_i \circ g_i$ is a measurable function on $U_i$.
The more common definition of a partition of unity has some extra
continuity and differentiability conditions on $\psi_i$ but these are not needed
here. With our more relaxed definition, the existence of a partition
of unity is completely elementary to prove.

Given $i \in {\cal I} $ and $x \in U_i$, let $D_{g_i}(x):= \sqrt{
\det (J_{g_i,x}^TJ_{g_i,x}) }$, with $J_{g_i,x}$ standing for the
Jacobian of ${g_i}$, evaluated at $x$.
For bounded measurable $h: \M \to \R$, the integral $\int_{\M} h(y)
dy$ is defined by
\be
 \lbl{int-form}
 \int_{\M} h(y) dy = \sum_{i \in {\cal I}}
\int_{U_i} h(g_i(x)) \psi_i (g_i(x)) D_{g_i}(x) \lambda_d(dx) 
\ee 
which is
well-defined in the sense that it does not depend on the choice of
atlas or the partition of unity.
Also, if $h= {\bf 1}_D$ for some set $D \subset \M$, then set $
v(D) := \int_{\cM}  h(y) dy$, the 
Riemannian volume of $D$.

Here is further justification of Equation (\ref{int-form}).
For bounded measurable $A\subset U_i$,
and $x \in U_i$,
it is a fact from linear algebra
that
\bea
 \lambda_d(g'_i(x)(A)) = D_{g_i}(x) \lambda_d(A). \lbl{0508d}
\eea
Indeed, the columns of
the Jacobian matrix $J_{g_i,x}$ are the images under $g'(x)$ of
the standard basis vectors of $\R^d$ so (\ref{0508d})
clearly holds when the standard basis vectors map to an orthonormal
system, but then it can be deduced in the general case using
standard properties of determinants. Equation (\ref{0508d}) is the
basis of the formula (\ref{int-form}).

Given any manifold $\M \in \m$ and $A \subset \M$,
recall our notation $A^o := \cM \setminus (\overline{\cM \setminus A})$
and $\partial A := \overline{A} \setminus A^o$.
Also, set
$ \diam ( A)  := \sup\{\|x - y\|: \ x,y \in A  \}$
(thus we define diameter using Euclidean rather than geodetic distance).
We say
$A$ is a $d$-dimensional $C^2$
{\em submanifold-with-boundary} of $\M$ if for all $y \in \partial A$,
there exists a choice  of chart $(U,g)$ for $\M$
such that $o \in U$, and $g(o)=y$, and $g( U \cap \bH)
= g(U) \cap A$,
where we set $\bH := \R^{d-1} \times [0,\infty) $.
This includes the possibility that $\partial A = \emptyset$
(but if $\partial A \neq \emptyset$ then it implies $A$ is closed).

For such $A$, we claim $\partial A$ is a $(d-1)$-dimensional
submanifold of $\R^m$.
Indeed, let $y \in \partial A$ and
take a chart $(U,g)$ with $o \in U$, $g(o) =y$ and
$g(U \cap \bH) = g(U) \cap A$. Set $U' := U \cap \partial \bH$.
This is an open set in $\R^{d-1}$ (where we identify $\R^{d-1}$
with $\partial \bH$). Let $\tg$ denote the restriction of
$g$ to $U'$. Then $\tg$ is continuously differentiable
on $U'$ (since $g$ is continuously differentiable on $U$),
and $y = g(o) \in \tg(U') \subset \partial A$. Also,
if $W \subset U'$ is open, then $W' := W \cup (U \setminus U')$
is open in $U$, so $g(W')$ is  open in $\M$, 
so that $g(W) = g(W') \cap \partial A$ is open in $\partial A$;
hence, $\tg$ is an open map from $\R^{d-1}$ to $\partial A$.
Finally, for all $u \in U'$, since $g'(u)$ has full rank,
the $m \times d$ matrix $(\frac{\partial \tg_i}{\partial x_j}|_u)$  
has rank $d$, and therefore the same  matrix with the last
column removed must have rank $d-1$, i.e. $\tg'(u)$ has full rank.

By the preceding claim, we can define surface integrals over $\partial A$
analogously to the earlier definition given at (\ref{int-form}).
In particular, for measurable $E \subset \partial A$
the
surface measure $\tv(E)$ is given by
\be
 \lbl{surf-form}
 \tv(E ) = \sum_{i \in {\cal I}}
\int_{U_i \cap \partial \bH} 
{\bf 1}_E (g_i(x))
\psi_i (g_i(x)) \tD_{g_i}(x)\lambda_{d-1}(dx), 
\ee 
where $\lambda_{d-1}$ is  
is  $(d-1)$-dimensional Lebesgue measure,
and where, for $x \in U_i \cap \partial \bH$,
we set $\tD_{g_i}(x) = \sqrt{ \det (\tJ_{g_i,x}^T
\tJ_{g_i,x}) } $, with $\tJ_{g_i,x}$ standing for
the $m \times (d-1)$ matrix obtained by removing
the last column from $J_{g_i,x}$.

 Given  $\M \in \m$, a {\em probability density function}
on $\M$ is a non-negative scalar field
 $f$ on $\M$ satisfying $\int_{\M} f(y) dy = 1$.

\begin{remk}
	Theorems \ref{th:weak} and \ref{thm2}
	generalize results from \cite{ECover}
	(Theorems 3.1 and 4.1 there), which
	are concerned with the
	case where $\cM = \R^d$ and
	$A$ is a compact  subset of $\R^d$ with a $C^2$ boundary.
	It can be shown using the proof of 
	 \cite[Lemma 6.7]{ECover} that in this case $A$ is indeed
	 a submanifold-with-boundary of $\R^d$.
\end{remk}

\section{Approximating the manifold by Euclidean space}
	\label{s:ApproxEuc}
\allco

	In this section we write $\lambda_d$ for $d$-dimensional
	Lebesgue measure, either on the Euclidean space $\R^d$,
	or on any $d$-dimensional subspace of a higher-dimensional
	Euclidean space. We let $v_d$ denote $d$-dimensional Hausdorff
	measure, and $\bbS^{d-1} : = \{ e \in \R^d: \|e\| =1\}$,
	the $(d-1)$-sphere.


	Let $I_d$ denote the identity map from
	$\R^d$ to itself,  and let $\| \cdot \|_{d \times d}$
	be the operator norm on 
	linear
	maps from $\R^d$ (with the Euclidean norm) to itself.
	Denote the unit coordinate vectors in $\R^d$ by $e_1,\ldots, e_d$.

	To prove
	Theorems
	\ref{th:weak} and \ref{thm2},
	we need to quantify the idea that a $d$-dimensional
	manifold can be approximated locally by Euclidean $d$-space $\R^d$.
	The next result is key to doing this.
\begin{lemm}
	[Comparing manifold with Euclidean space]
	\label{lemcompare3}
	Let $x_0 \in \M$, $\eps \in (0,1/2)$.
	Then there exists an open set $V \subset \M$,
	with $x_0 \in V$, a constant $K_1 \in (0,\infty)$,
	and for each $\tx \in V$ an injective open map
	$\phi_\tx: V \to \R^d$, with
	$\phi_{\tx}(x_0) = o$ 
	such that:

	(a)
	If $\tx \in V \cap \partial A$ then
	$\phi_\tx(V \cap A)
	= \phi_\tx(V) \cap \bH$. 

	(b) The mapping $\phi_\tx \circ \phi_{x_0}^{-1}: 
	\phi_{x_0}(V) \to \R^d$
	extends to a linear mapping of of full rank (also
	denoted $\phi_\tx \circ \phi_{x_0}^{-1}$) from
	$\R^d$
	to itself, such that
	$\|\phi_\tx \circ \phi_{x_0}^{-1} - I_d\|_{d \times d} < \eps$
	and 
	$\phi_{\tx} \circ\phi_{x_0}^{-1}(\bH)= \bH$.

	(c) For all measurable  $F \subset V $  and $\tx \in F$,
	$$
	\left| \frac{\lambda_d(\phi_\tx(F))}{ v(F) } -1 \right|
	\leq K_1 \diam (F).
	$$

	(d) If $x_0 \in \partial A$  then
	for all  measurable $F \subset V \cap \partial A $  and $\tx \in F$,
	$$
	\left| \frac{\lambda_{d-1} (\phi_\tx(F))}{ \tv(F) } -1 \right| \leq K_1 \diam (F).
	$$

	(e) For all $\tx, y,z \in V$, we have  
	$$
	\left| \frac{\|\phi_\tx(y) - \phi_\tx(z) \|}{ \dist(y,z)  } -1 \right|
	\leq K_1 ( \dist(\tx,y) + \dist(y,z) ).
	$$



	(f) 
	We have
	$|(\|\phi_\tx(y)-\phi_\tx(z)\|/ \dist(y,z)) - 1| \leq \eps$
	for all distinct $y,z \in V$
	and  all $\tx \in V$.
\end{lemm}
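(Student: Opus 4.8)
The plan is to build the maps $\phi_\tx$ from a single well-chosen chart at $x_0$, composed with affine corrections that depend on $\tx$, and then to verify the seven conclusions more or less in the order listed, since each one is a quantitative refinement of the previous ones. First I would fix a $C^2$-chart $(U,g)$ for $\cM$ at $x_0$ with $g(o)=x_0$; if $x_0\in\partial A$ I take it adapted to the boundary so that $g(U\cap\bH)=g(U)\cap A$, as in the definition of submanifold-with-boundary. Composing $g^{-1}$ with the inverse of the linear map $g'(o)$ (which has full rank), I get a chart whose derivative at $o$ is the identity; call the resulting map $\psi_0:V_0\to\R^d$ on a suitably small neighbourhood $V_0$ of $x_0$. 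Because $g$ is $C^2$, the derivative of $\psi_0^{-1}$ is Lipschitz, so on a small enough ball $V\subset V_0$ we control $\|D\psi_0 - I_d\|$, and likewise the Jacobian determinant $D_{\psi_0^{-1}}$ is within $O(\diam)$ of $1$; this is the engine behind (III) and (IV) once we know $\lambda_d(\psi_0(F))=\int_F (\text{Jacobian factor})$.

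Next, for each $\tx\in V$ I would define $\phi_\tx := L_\tx\circ\psi_0$ where $L_\tx$ is the affine map of $\R^d$ that is the identity composed with a linear correction: specifically, let $L_\tx = (D\psi_0^{-1}(\psi_0(\tx)))\cdot(\text{orthogonalized})$ — more carefully, choose $L_\tx$ linear so that $\phi_\tx\circ\phi_{x_0}^{-1} = L_\tx$ is close to $I_d$ (this is automatic since both $\psi_0(\tx)$-derivatives are near $I_d$) and so that $L_\tx$ preserves $\bH$; this last requirement is why one needs the boundary-adapted chart, because then $\psi_0$ already sends $V\cap A$ into $\bH$ and $V\cap\partial A$ into $\partial\bH$, and the correcting linear map can be taken block-triangular with respect to the splitting $\R^{d-1}\times\R$, hence $\bH$-preserving. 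Conclusions (I), (II), (VII) then follow: (I) because $\phi_\tx$ and $\psi_0$ have the same image-of-$A$ structure up to the $\bH$-preserving $L_\tx$; (II) by the chain rule and the $C^2$ Lipschitz bound, shrinking $V$ so the operator-norm discrepancy is $<\eps$; (VII) is the infinitesimal/first-order version of the distance comparison, coming from the fact that $\dist(y,z)$ and $\|\psi_0(y)-\psi_0(z)\|$ agree to first order because $D\psi_0=I_d$ at $o$ and varies little on $V$.

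For (VI) I would write $\|\phi_\tx(y)-\phi_\tx(z)\| = \|L_\tx(\psi_0(y)-\psi_0(z))\|$ and compare $\psi_0$-chord length with geodesic distance using a second-order Taylor expansion of $g$ (equivalently $\psi_0^{-1}$): the geodesic from $y$ to $z$ deviates from the Euclidean chord by a curvature term of order $\dist(y,z)^2$, and $L_\tx$ contributes an error of order $\dist(\tx,y)$ because $L_\tx$ varies Lipschitz-ly in $\tx$ and equals $I_d$-type correction; summing gives the stated bound $K_1(\dist(\tx,y)+\dist(y,z))$. For (III) and (IV), the ratio $\lambda_d(\phi_\tx(F))/v(F)$ equals $|\det L_\tx|$ times the average of the Jacobian factor of $\psi_0^{-1}$ over $F$ relative to its value-normalization; both $|\det L_\tx - 1|$ and the oscillation of the Jacobian factor over $F$ are $O(\diam F)$ by $C^2$-smoothness, giving the bound; (IV) is identical with $\lambda_{d-1}$, the boundary surface measure, and the $(d-1)\times(d-1)$ block of $L_\tx$, which is where having chosen $L_\tx$ block-triangular pays off again. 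The main obstacle, I expect, is bookkeeping: one must choose a single $V$ and a single constant $K_1$ that simultaneously work for all seven estimates and for all $\tx\in V$ uniformly, and one must make the construction of $L_\tx$ explicit enough that its Lipschitz dependence on $\tx$ (needed for (VI)) and its exact $\bH$-preservation (needed for (I), (II), (IV)) are both transparent; the geometry is routine $C^2$ manifold calculus, but threading all the constraints through one construction without circularity is the delicate part.
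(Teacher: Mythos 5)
Your overall strategy matches the paper's: work in a single boundary-adapted chart $(U,g)$, define $\phi_\tx$ by post-composing $g^{-1}$ with an $\tx$-dependent linear map, and let the $C^2$-regularity of $g$ do the work via Lipschitz Jacobian estimates. However, two of the central steps are only gestured at, and one of them is the crux of the lemma.

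First, the $\bH$-preservation needed for (I)/(II). You write that the correcting linear map ``can be taken block-triangular with respect to $\R^{d-1}\times\R$, hence $\bH$-preserving,'' but this is asserted rather than proved, and ``block-triangular'' is the wrong condition: a \emph{lower} block-triangular matrix does \emph{not} preserve $\bH$ (the last coordinate picks up contributions from the first $d-1$). What is required is that the linear map be (upper) triangular with positive diagonal, so that $(Lx)_d = L_{dd}x_d$ with $L_{dd}>0$. The paper obtains this by an explicit Gram–Schmidt orthogonalization of the columns $g'(\tiu)(e_1),\dots,g'(\tiu)(e_d)$ in order, producing a linear isometry $\rho_\tx$ for which $\rho_\tx\circ g'(\tiu)$ is triangular with positive diagonal; continuity of the Gram–Schmidt factors in $\tiu$ then yields (II). Without an explicit QR-type construction of $L_\tx$, the compatibility of ``$L_\tx$ close to $I_d$'' with ``$L_\tx$ preserves $\bH$'' is unsubstantiated, and (I) is not established. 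You also need both inclusions $\phi_\tx(V\cap A)\subset \bH$ and $\phi_\tx(V\setminus A)\cap\bH = \emptyset$, which you do not address.

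Second, (VI). Your explanation attributes the error to ``a curvature term of order $\dist(y,z)^2$'' plus a Lipschitz contribution from $L_\tx$. That is a different mechanism from what is actually needed, and it is not obviously enough to give the stated bound $K_1(\dist(\tx,y)+\dist(y,z))$ as opposed to a cruder $O(\diam V)$ bound. The paper instead writes both $\dist(y,z)$ and $\|\phi_\tx(y)-\phi_\tx(z)\|$ as path-length integrals of $\sqrt{\langle\dot\gamma,\dot\gamma\rangle_u}$ — the first against the Riemannian metric at the moving point $\gamma(s)$, the second against the fixed metric at $g^{-1}(\tx)$ — and compares the integrands via a Lipschitz estimate on the metric coefficients $J_{g,u}^TJ_{g,u}$. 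This in turn requires two auxiliary facts: a two-sided comparison between $\|g^{-1}(y)-g^{-1}(z)\|$ and $\dist(y,z)$, and the observation that no minimizing geodesic from $y$ to $z$ strays further than $\dist(y,z)$ from $y$. Your ``curvature'' framing skips both, so the quantitative bound in (VI) — the one the paper actually needs downstream in Lemmas 5.3 and 6.2 — is not reached.
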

\begin{remk}
	\label{fromcompare2}
	Since we take $\eps \leq 1/2$, and
	$(1+x)^{-1} \geq 1-x$ and $(1-x)^{-1} \leq 1+2x$
	for all  $x \in [0,1/2]$, we have from (f)
	a similar bound for the ratio  the other way, namely 
	$|(\dist(y,z)  / \|\phi_\tx(y)-\phi_\tx(z)\|    ) - 1| \leq 2 \eps$;
	likewise for other ratios such as those in (c) and (d) when
	$\diam (F)$ is sufficiently small.
	We shall use this fact from time to time without further comment.
\end{remk}
\begin{proof}
	Let $(U,g)$ be a chart with
	$x_0 \in g(U)=: V$, so $U \subset \R^d$ and $g:U \to \R^m$
	is $C^2$. Without loss of generality, we can and do assume 
	that $o \in U$ and $g(o) =x_0$, and if $x_0 \in \partial A$, that 
	$g(U \cap \bH) = V \cap A$.
	Assume moreover that $U$ is convex. 

	Given $\tx \in V$, set $\tiu = g^{-1}(\tx)$ and
	let $\psi_{\tx} : V \to \R^d$ be given by
	$\psi_\tx := g'(\tiu) \circ g^{-1}$; note that
	the matrix representation of the linear map
	$g'(\tiu)$ is the Jacobian matrix $J_{g,\tiu}$ in
	the notation of Section \ref{secAltManif}.
	Let $\rho_\tx$
	be a linear isometry 
	from $\phi'(\tiu)(\R^d)$ to $\R^d$, to be specified below. 
	Let $\phi_{\tx} := \rho_{\tx} \circ \psi_{\tx}$.
	Then $\phi_{\tx}: V \to \R^d$ is an open map. Also
	$\phi_{x_0} = \rho_{x_0} \circ g'(o) \circ g^{-1}$.
	Note $g'(o):\R^d \to \R^m$ is linear and of full rank.
	Thus $\psi_{x_0}(V)= g'(o)(U)$ is part of the $d$-dimensional
	subspace $g'(o)(\R^d)$ of $\R^m$.

	We now specify the linear isometry
	$\rho_{\tx}$
	explicitly, for each $\tx$ in $V$,
	using a Gram-Schmidt type procedure.
	Set $L_\tiu := g'(\tiu)$
	(where $\tiu = g^{-1}(\tx)  \in U$). Then
	$L_{\tiu}$ is a linear
	map of full rank from $\R^d$ to $\R^m$.

	Set $v_1 =  L_\tiu(e_1)$, set
	$w_1 = \|v_1\|^{-1} v_1$, and
	set $\rho_{\tx}(w_1) = e_1$. 
	
	Set $v_2 = L_\tiu(e_2) - \langle L_\tiu(e_2) , w_1 \rangle w_1$,
	set $w_2 = \|v_2\|^{-1} v_2$ and 
	set $\rho_{\tx}(w_2 ) = e_2$.

	Continuing in this way if $d \geq 3$, for $3 \leq j \leq d$ set
	 $v_j = L_\tiu(e_j) - \sum_{i=1}^{j-1} \langle L_\tiu(e_j), w_i \rangle 
	 w_i$,
	 then set $w_j =  \|v_j\|^{-1} v_j $ and $\rho_{\tx}(w_j) = e_j$.
	 Since $L_\tiu$ has full rank, we have $v_j \neq o$ for all $j$.

	 Extend $\rho_{\tx} $ to all of $L_\tiu(\R^d)$ by linearity.
	 Since $w_1,\ldots,w_d$ are orthonormal and
	mapped to orthonormal vectors in $\R^d$,
	$\rho_\tx$ is an isometry.
	 Then the matrix representation of
	 $\rho_{g(\tiu)} \circ L_\tiu$,
	 with $(i,j)$ entry given by
	 $\langle \rho_{g(\tiu)} \circ L_\tiu (e_i),e_j \rangle$ for
	 each $i,j \in [d]$,
	 has entries which depend continuously on $\tiu$.
	 It is a lower triangular matrix with diagonal entries $\|v_1\|,\ldots,
	 \|v_d\|$.

	 Since its matrix representation
	 is  lower triangular with positive diagonal entries, 
	 $\rho_{g(\tiu)} \circ L_\tiu$ maps $\bH$ to itself, 
	that is, $\rho_\tx \circ L_\tiu (\bH) = \bH$.
	Since $g( U \cap \bH)= V \cap A$, if $y \in V \cap A$ then
	$g^{-1}(y) \in \bH$ and hence 
	$$
	\phi_\tx(y)
	= \rho_{\tx} \circ L_\tiu(g^{-1}(y)) \in \bH.
	$$
	Conversely, if $z \in V \setminus A$ then $g^{-1}(z) \notin \bH$,
	so $\phi_\tx(z) \notin \bH$. This yields part (a).

	By definition we have  $\phi_{x_0} = \rho_{x_0}
	\circ g'(o) \circ g^{-1}$
	and $\phi_{\tx} = \rho_{\tx} \circ g'(\tiu) \circ g^{-1}$.
	Therefore 
	\bea
	\phi_{\tx} \circ \phi_{x_0}^{-1} = 
	\rho_{\tx} \circ g'(\tiu) 
	\circ
	(\rho_{x_0} \circ g'(o))^{-1} .
	\label{psicompo2}
	\eea 
	This is a composition of linear maps 
	(defined on $\rho_{x_0}\circ g'(o)(U)$),
	and therefore extends to
	a linear map defined on the whole of $\R^d$ (and also
	denoted $\phi_\tx \circ \phi^{-1}$).

	Note $g(o)=x_0$.
	 By the continuity in $\tiu$
	 of $\rho_{g(\tiu)} \circ L_\tiu$, the matrix 
	 $(\rho_{g(\tiu)} \circ L_\tiu) \circ (\rho_{g(o)} \circ L_o)^{-1}$
	 is close to the identity matrix $I_d$, so all of
	 the entries of 
	 $(\rho_{g(\tiu)} \circ L_\tiu) \circ (\rho_{x_0} \circ L_o)^{-1} -I_d$
	 can be taken to be close to  zero, by taking $U$ small
	 enough. Hence by (\ref{psicompo2}), by taking $U$ to
	 be small enough we can arrange
	 that $\phi_{\tx} \circ \phi_{x_0}^{-1}$ is close
	 to the identity for all $\tx \in U$,
	 which yields  part (b).

	Given measurable $F \subset V$ and $\tx \in F$,
	we have $v(F)= \int_{g^{-1}(F)} D_g(w) dw$ by (\ref{int-form}).
	Also setting $\tiu = \phi^{-1}(\tx)$,
	we have that 
	$\lambda_{d}(\phi_{\tx}(F))
	=
	\lambda_{d}(\psi_{\tx}(F))
	= \int_{g^{-1}(F)} D_g(\tiu) dw$,
	and 
	$$
	\sup \left\{
		\left| \left( \frac{ D_g(w) }{D_g(\tiu)}
		\right)  -1 \right|:  
		w \in g^{-1}(F)
		\right\} 
	=  
	O(\diam(F)),
	$$
	because $D_g(w)$ is continuously differentiable in $w$ on 
	the convex set $U$, so by the Mean Value theorem
	  $w \mapsto D_g(w)$ is 
	Lipschitz on $U$, and is bounded away from zero. Thus,
	$|(\lambda_d(\phi_{\tx}(F))/v(F))  -1 | < K \diam(F)$.
	This yields part (c).

	Now suppose $F \subset \partial A \cap V $ and $\tx \in F$;
	again set $\tiu = g^{-1}(\tx)$. Then
	by (\ref{surf-form}),
	$\tv(F)= \int_{g^{-1}(F)} \tD_g(w) \lambda_{d-1}(dw)$. 
	Also 
	$\lambda_{d-1}(\phi_{\tx}(F)) =
	\lambda_{d-1}(\psi_{\tx}(F)) =
	\int_{g^{-1}(F)} \tD_g(\tiu) 
	\lambda_{d-1}(dw)$,
	and
	$$
	\sup \left\{
		\left| \left( \frac{ \tD_g(w) }{\tD_g(\tiu)}
		\right)  -1 \right|:  
		w \in g^{-1}(F) 
		\right\} 
	=  
	O(\diam (F)),
	$$
	because $\tD_g(w)$ is continuously differentiable and hence
	Lipschitz on $U \cap \partial \bH$, and is bounded away from zero,
	so that
	$|(\lambda_{d-1}(\phi_{\tx}(F))/\tv(F))  -1 | = O(\diam (F))$.
	This yields part (d).

	Now suppose $\gamma$ is a path in $U$, i.e. a
	continuously differentiable function from $[0,1]$ to $U$.
	Then $g(\gamma)$ is a path
	in $V$. 
	Writing $x(s) := g(\gamma(s))$ 
	and (for $j \in [d]$)  $\gamma_j(s) := \langle \gamma(s) , e_j
	\rangle $ (the $j$-th coordinate
	of $\gamma(s)$)
	for $0 \leq s \leq 1$,
	we have for $1 \leq i \leq m$ that $dx_i(s) = 
	\sum_{j=1}^d (\partial g_i/\partial u_j) d\gamma_j(s)$  
	and hence
	$
	|dx(s)| = \sqrt{ (d\gamma)' J_{g,\gamma(s)}^T J_{g,\gamma(s)}
	(d\gamma) },
	$
	where $J_{g,u} = ( \partial g_i/\partial u_j)_{i \leq m, j \leq d}$
	is the Jacobian matrix of $g$, evaluated at $u \in U$. Writing
	$\langle \cdot, \cdot \rangle_u$ for this inner product,
	i.e. $\langle w, \tilde{w} \rangle_u = w' J_{g,u}^T J_{g,u} \tilde{w} $
	for any $w, \tilde{w} \in \R^d$,
	we thus have that the length of the path $g(\gamma)$ is
	\bea
	|g(\gamma)| = \int_0^1 \sqrt{ \langle d\gamma/ds, d\gamma/ds 
	\rangle_{\gamma(s)}
	} \; ds,
	\label{0608a}
	\eea
	while  
	$\phi_{\tx}(g(\gamma))$, 
	the corresponding path in $g'(\tiu)(U)$, has length 
	\bea
	|\phi_{\tx}(g(\gamma))| =
	\int_0^1 \sqrt{ \langle d\gamma/ds, d\gamma/ds \rangle_{\tiu}
	} \; ds.
	\label{0608b}
	\eea
	For $y, z \in V$, $\dist(y,z)$ is the infimum over all
	such paths with $g(\gamma(0))=y$ and $g(\gamma(1))=z$,
	of $|g(\gamma)|$, while
	$\|\phi_{\tx}(y)- \phi_{\tx}(z)\|
	$
	is the infimum over all such paths, of $|\phi_{\tx}(g(\gamma))|$.

	We assert that if $K$ is taken sufficiently large 
	and $U$ sufficiently small, we have for 
	all $w,\tw \in U$  and any unit vector $e \in \bbS^{d-1}$,
	that
	\bea
	\left|
	\frac{ \langle e ,e \rangle_{w}  }{
		 \langle e ,e \rangle_{\tw}  }
	 -1
	\right|  \leq K \|w-\tw\|.
	\label{0607a}
	\eea
	We work towards proving (\ref{0607a}).
	Given $e \in  \bbS^{d-1}$ we have $\langle e,e \rangle_o >0$
	since $g'(o)$ has full rank. Also the mapping
	$(f,w) \mapsto \langle f,f \rangle_w$ is continuous on
	$\bbS^{d-1} \times U$ (since our manifold is $C^2$ so the relevant
	Jacobian matrices vary continuously with $w$), so
	given $e \in \bbS^{d-1}$, there exists 
	an open neighbourhood $\cN_e$
	in $\bbS^{d-1}$ of $e$,
	and open  $U_e$ with $o \in U_e \subset U$,
	and $\delta_e >0$
	such that $\delta_e \leq \langle f, f\rangle_w \leq \delta_e^{-1}$ for
	all $f \in \cN_e$ and all $w \in U_e$. Hence by a compactness
	argument,
	there exist $\delta >0$ and an  open ball
	$U'$ with $ o \in U' \subset U$,
	such that 
	\bea
	\delta \leq \langle f,f \rangle_w \leq \delta^{-1} , ~~~ \forall f \in 
	\bbS^{d-1}, ~ w \in U'.
	\label{0615a}
	\eea

	Next, for all $u \in U'$,
	$e \in \bbS^{d-1}$ we have $\langle e, e\rangle_{u}
	= e' J_{g,u}^T J_{g,u} e$.
	For  $i \leq m, j \leq d$ let
	$h_{i,j}(u) := \sum_{k=1}^m 
	( \frac{\partial g_k}{\partial u_i})
	( \frac{\partial g_k}{\partial u_j})$, which is
	the $(i,j)$-entry of $J_{g,u}^T J_{g,u}$, and is
	a continuously differentiable function of $u$.
	Then by the Mean Value theorem and the convexity of  $U'$, 
	for any $w,\tw \in U'$ we have
	$$
	h_{i,j}(w ) - h_{i,j}(\tw) = \langle \nabla h_{i,j}(v_{i,j})
	, w-\tw \rangle
	$$
	for some $v_{i,j} \in [w,\tw]$, so
	that for a suitable constant $K_{i,j}$
	we have
	$$
	| h_{i,j}(w) - h_{i,j}(\tw) | \leq  K_{i,j} \|w-\tw\|.
	$$
	Hence there exists  $K >0$ such that if
	$v= (v_1,\ldots,v_d)' \in \R^d$ with $\|v \| \leq 1$ then
	\bean
	|\langle v,v \rangle_w
	- \langle v,v \rangle_{\tw}
	|
	= \left| \sum_{i=1}^d \sum_{j=1}^d v_i  (h_{i,j}(w) - h_{i,j}(\tw) )
	 v_j \right| \leq K  \delta \|w-\tw\|, 
	\eean
	and then using (\ref{0615a}), we obtain 
	the claim (\ref{0607a}).

With $\gamma$ denoting a path in $U'$ as before,
	we have 
	$
	|\gamma| = \int _0^1 \sqrt {
		\langle d \gamma /ds, d\gamma/ds \rangle } ds.
	$
	Comparing this with (\ref{0608a}) and using (\ref{0615a}),
	we may deduce that 
	$$
	\delta | g(\gamma) |\leq |\gamma| \leq \delta^{-1} |g(\gamma)|.
	$$
	Set $V' := g(U')$. 
	Let $y,z \in V'$. Since $\| g^{-1}(y)-g^{-1}(z)\| $ is the infimum of
	$|\gamma|$ over all  paths $\gamma$ from $g^{-1}(y)$ to $g^{-1}(z)$, 
	\bea
	\delta \dist(y,z) \leq \|g^{-1}(y) - g^{-1}(z) \|
	\leq \delta^{-1} \dist ( y,z).
	\label{1009a}
	\eea
	Hence there is a further constant $K'$ such that
	for $w, \tw \in U'$
	the right side of (\ref{0607a}) is bounded by
	$K' \dist(g(w),g(\tw))$; in particular, for any  $e \in \bbS^{d-1}$
	and $w,\tw \in U'$,
	we have
	$$
	\langle e,e \rangle_w \leq (1+ K'\dist(g(w),g(\tw))) 
	\langle e,e \rangle_{\tw},
	$$
	and therefore since $(1+s)^{1/2} \leq 1+s/2$ for
	all $s >0$, 
	\bea
	\sqrt{ \langle e,e \rangle_w } \leq 
	(1+ K' \dist(g(w),g (\tw)))
	\sqrt{ 
	\langle e,e \rangle_{\tw} }.
	\label{0608c}
	\eea

		For any $\tx,y,z \in g(U')$,
	any path $\gamma$ in $U'$  with $g(\gamma(0))=y$ and
	$g(\gamma(1))=z$, and with $\dist(g(\gamma (s)),y) \leq 
	\dist(z,y)$ for all  $s \in [0,1]$,
	for all such $s$ we have 
	$$
	\dist(g(\gamma(s)), \tx) \leq \dist(g(\gamma(s)),y)
	+ \dist (y,\tx) 
	\leq \dist (z,y) + \dist (y,\tx).
	$$
	Therefore by setting 
		$w= \tiu = g^{-1}(\tx) $ and $\tw= \gamma(s) $
	in (\ref{0608c}),
	\bean
	\sqrt{ 
	\langle e,e \rangle_{\tiu} }
	\leq 
	(1+ K' [
		\dist (z,y) + \dist(y,\tx)
		])
	\sqrt{ \langle e,e \rangle_{\gamma(s)} } 
	.
	\eean
	Hence by (\ref{0608a}) and (\ref{0608b}), 
	$$
	|\phi_{\tx}(g(\gamma)) | 
	\leq 
	(1+ K_3 [ \dist (z,y) + \dist(y,\tx) ])
	|g(\gamma)|
	,
	$$
	and taking the infimum over all such paths, 
		using the fact that no geodesic from $y$ to
	$z$ passes through any point distant further than 
	$\dist(z,y)$ from $y$, yields
	that
	\bea
	\| \phi_{\tx}(y) - \phi_{\tx}(z)\|
	\leq
	(1+ K_3 [\dist(\tx,y) + \dist (y,z) ])
	\dist(y,z)
	. 
	\label{0608d}
	\eea
	To derive an inequality the other way, note that
	with $\tiu := g^{-1}(\tx)$, we have
	\begin{align}
	\|\phi_{\tx}(z) - \phi_{\tx}(y) \|  
		& = \|\psi_{\tx}(z) -\psi_{\tx}(y) \| = \|g'(\tiu)( g^{-1}(z)
	-g^{-1}(y))\|
	\nonumber \\
		& = \|J_{g,\tiu} (g^{-1}(z) - g^{-1}(y))\|
	\nonumber \\
		& = \sqrt{\langle g^{-1}(z) -g^{-1}(y), g^{-1}(z) - g^{-1}(y)
	\rangle_{\tiu}}.
	\label{1009b}
	\end{align}
	Suppose now that $\gamma$ is the straight-line path from
	$g^{-1}(y)$ to $g^{-1}(z)$, i.e. $\gamma(s) = 
	g^{-1}(s) + s(g^{-1}(z) - g^{-1}(y))$, for $0 \leq s \leq 1$.
	Then for all $s \in [0,1]$, we have
	$d \gamma/ ds = g^{-1}(z) - g^{-1}(y)$. Hence by (\ref{0608a}),
	\bea
	|g(\gamma)| = \int_0^1 \sqrt{
		\langle g^{-1}(z) - g^{-1}(y),g^{-1}(z) -g^{-1}(y)
	\rangle_{\gamma (s)} } ds,
	\label{1009c}
	\eea
	and moreover, for all $s \in [0,1]$,
	\begin{align*}
	\| \gamma(s) - \tiu \| 
	& \leq \|\gamma(s)-g^{-1}(y) \| + \|g^{-1}(y)- \tiu\|
		\\
	& \leq \|g^{-1}(z) - g^{-1}(y) \| + \|g^{-1}(y) - \tiu\|.
	\end{align*}
	Hence for any $e \in \bbS^{d-1}$, 
	by (\ref{0608c}) and (\ref{1009a}) we have
	\begin{align*}
	\sqrt{ \langle e,e \rangle_{\gamma(s)} }
		& \leq (1+ K' \dist (g(\gamma(s)),\tx)) \sqrt{\langle e,e\rangle_\tiu}
	\\
		& \leq (1+ K' \delta^{-1} \|\gamma(s)-\tiu\| ) \sqrt{\langle e,e\rangle_\tiu}
	\\
		& \leq (1+ K' \delta^{-1} (\|g^{-1}(z)- g^{_1}(y) \| 
	+ \|g^{-1}(y) -\tiu\|) ) \sqrt{\langle e,e\rangle_\tiu}\; .
	\end{align*}
	Using (\ref{1009a}) once more yields that
	\bean
	\sqrt{ \langle e,e \rangle_{\gamma(s)} }
	\leq (1+ K' \delta^{-2} 
	(\dist (z,y) + \dist(y, \tx))) 
	\sqrt{\langle e,e\rangle_\tiu} \; .
	\eean
	Thus, comparing (\ref{1009b}) with (\ref{1009c}) yields
	for this choice of $\gamma$ that
	\bean
	|g(\gamma)| 
	\leq (1+ K' \delta^{-2} (\dist (z,y) + \dist(y, \tx))) 
	\|\phi_{\tx}(z) - \phi_{\tx}(y) \|,
	\eean
	and therefore since $\dist(y,z) \leq |g(\gamma)|$,
	$$
	\dist(y,z) 
	\leq (1+ K' \delta^{-2} (\dist (z,y) + \dist(y, \tx))) 
	\|\phi_{\tx}(z) - \phi_{\tx}(y) \|.
	$$
	Together with (\ref{0608d}), this yields part (e).

	 Finally, since we can take $\diam(V)$ to be as small
	 as we please, we can clearly deduce part (f) from part (e).
\end{proof}

\begin{lemm}
	\label{lemcompare}
	Let $x \in \M$, $\eps >0$. Then there exists an open set $V \subset \M$,
	with $x \in V$, and an open map $\phi: V  \to \R^d$,
	with $\phi(x) = o$, such that for all distinct 
	$y,z \in V$ we have $|(\|\phi(y)-\phi(z)\|/ \dist(y,z)) - 1| \leq \eps$,
	and also for all measurable $ F \subset V$ we have 
	$|(\lambda_d(\phi(F)) / v(F))-1| \leq \eps$, and moreover 
	(i) if $x \in A^o$ then $V \subset A$, while
	(ii) if $x \in \partial A$, then
	$\phi(V \cap A) = \phi(V) \cap \bH$, and also
	for  all measurable $F \subset \partial A \cap V$,
	$|(\lambda_{d-1}(\phi(F))/\tilde v(F))-1| < \eps$.
\end{lemm}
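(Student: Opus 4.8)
The plan is to obtain Lemma~\ref{lemcompare} as a direct consequence of Lemma~\ref{lemcompare3}. Given $x \in \M$ and $\eps > 0$, first choose $\eps_0 \in (0, \eps/2)$ small enough that $(1 + 2\eps_0)^d(1 + \eps_0) \le 1 + \eps$ and $(1 - 2\eps_0)^d(1 - \eps_0) \ge 1 - \eps$ (and hence also $(1 \pm 2\eps_0)^{d-1}(1 \pm \eps_0) \in [1-\eps,\,1+\eps]$), and apply Lemma~\ref{lemcompare3} with $x_0 := x$ and $\eps_0$ in place of $\eps$. This produces an open set $V_0 \ni x$, a constant $K_1 \in (0,\infty)$, and a family of injective open maps $(\phi_\tx : V_0 \to \R^d)_{\tx \in V_0}$. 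Take $\phi := \phi_{x_0}$, so $\phi(x) = o$. Shrink $V_0$ to an open ball $V \ni x$ with $K_1 \diam(V) \le \eps_0$, and, when $x \in A^o$, additionally with $V \subset A^o$ (possible since $A^o$ is open in $\M$); all conclusions of Lemma~\ref{lemcompare3} persist on $V$ by restriction. This already gives conclusion~(i). For conclusion~(ii), when $x \in \partial A$ we have $x_0 = x \in V \cap \partial A$, so part~(I) of Lemma~\ref{lemcompare3}, applied with $\tx = x_0$, gives $\phi(V \cap A) = \phi(V) \cap \bH$. Finally, the distance estimate $|\,\|\phi(y)-\phi(z)\|/\dist(y,z) - 1\,| \le \eps_0 \le \eps$ for distinct $y,z \in V$ is exactly part~(VII) of Lemma~\ref{lemcompare3} specialised to $\tx = x_0$.

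It remains to convert the volume and surface-measure bounds of Lemma~\ref{lemcompare3}, which are stated for $\phi_\tx$ with $\tx \in F$, into bounds for the single chart $\phi = \phi_{x_0}$. For this I would work with the linear map $T_\tx := \phi_{x_0} \circ \phi_\tx^{-1}$: by part~(II) of Lemma~\ref{lemcompare3} we have $\|\phi_\tx \circ \phi_{x_0}^{-1} - I_d\|_{d\times d} < \eps_0 \le 1/2$, so a Neumann-series estimate gives $\|T_\tx - I_d\|_{d\times d} \le 2\eps_0$, whence $|\det T_\tx| \in [(1-2\eps_0)^d,\,(1+2\eps_0)^d]$. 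Given measurable $F \subset V$ (the statement being vacuous under the usual convention when $v(F)=0$), pick any $\tx \in F$; then $\phi(F) = T_\tx(\phi_\tx(F))$, so $\lambda_d(\phi(F)) = |\det T_\tx|\,\lambda_d(\phi_\tx(F))$, and combining with part~(III) (which gives $|\lambda_d(\phi_\tx(F))/v(F) - 1| \le K_1 \diam(F) \le \eps_0$) traps $\lambda_d(\phi(F))/v(F)$ in $[(1-2\eps_0)^d(1-\eps_0),\,(1+2\eps_0)^d(1+\eps_0)] \subset [1-\eps,\,1+\eps]$. When $x \in \partial A$, the bijection $T_\tx$ maps $\bH$ onto $\bH$ by part~(II), hence maps the hyperplane $\partial\bH \cong \R^{d-1}$ onto itself; its restriction $\tilde T_\tx$ to $\R^{d-1}$ satisfies $\|\tilde T_\tx - I_{d-1}\| \le 2\eps_0$, and for measurable $F \subset \partial A \cap V$ one has $\phi_\tx(F) \subset \partial\bH$ (since the homeomorphism $\phi_\tx$ carries $\partial A \cap V$ into $\phi_\tx(V) \cap \partial\bH$ by part~(I)), so $\phi(F) = \tilde T_\tx(\phi_\tx(F))$; the same argument with part~(IV) in place of part~(III) and $d-1$ in place of $d$ then yields $|\lambda_{d-1}(\phi(F))/\tilde v(F) - 1| < \eps$.

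There is no substantial obstacle here, since nearly everything is already packaged in Lemma~\ref{lemcompare3}. The one point requiring a short argument — and the step I would flag as the main one — is the passage from the family $(\phi_\tx)_{\tx \in V}$ to the fixed chart $\phi = \phi_{x_0}$; this costs only the near-identity Jacobian factors $|\det T_\tx|$ and $|\det \tilde T_\tx|$, and these, together with the $\diam$-dependent errors in parts~(III)--(IV), are absorbed by choosing $\eps_0$ small relative to $\eps$ and by shrinking $V$ so that $K_1\diam(V) \le \eps_0$.
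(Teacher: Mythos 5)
Your proposal is correct and is essentially a fully worked-out version of the paper's one-line proof, which simply says to apply Lemma~\ref{lemcompare3} with $x_0=x$ and take $\phi=\phi_x$. In particular you correctly noticed that parts (III) and (IV) of Lemma~\ref{lemcompare3} give volume and surface estimates for the moving chart $\phi_\tx$ with $\tx\in F$, rather than directly for the fixed chart $\phi_{x_0}$, and you filled this gap cleanly by invoking part (II) to control the Jacobian of the linear conversion $T_\tx=\phi_{x_0}\circ\phi_\tx^{-1}$ (and of its restriction to $\partial\bH$ in the surface case) via a Neumann-series bound, then absorbing the resulting $(1\pm 2\eps_0)^d$ factor into the choice of $\eps_0$; all other steps (shrinking $V$ so $K_1\diam(V)\le\eps_0$, reading off (i) from $V\subset A^o$, (ii) from part (I), and the metric estimate from part (VII)) are routine and sound.
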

\begin{proof}
	Apply Lemma \ref{lemcompare3}, taking $x_0= x$,  and
	take 
	$\phi = \phi_x$.
\end{proof}

\begin{remk}
	\label{rk:geovseuc}
We now describe  how to adapt Lemma \ref{lemcompare3}
to the case where the distance between $x$ and $y$,
for $x,y \in \cM$, is given by $\|x-y\|$ (the Euclidean
distance between them in $\R^m$) rather than by
$\dist(x,y)$ (the manifold distance between them). 
It is then possible to adapt the proofs of our main results to this
	modified setting, as asserted in Remark \ref{rmkeuc}.

Let $x \in \M$ and let
$(U,g)$ be a chart with
$ V := g(U)$ open (so $U \subset \R^d$ is open) and $x \in V$,
and with $V$ having the properties in Lemma \ref{lemcompare},
taking $\eps = 1/9$. Assume also that $U$ is convex.

Suppose $y,z, \in V$. As stated just after (\ref{0608b}), 
the manifold distance $\dist(y,z)$ is the infimum of $|g(\gamma)|$
over all paths $\gamma$ from $g^{-1}(y)$ to
$g^{-1}(z)$, provided $y$ and $z$ are further from $\partial V$ than
from each other.
Note that for such paths $\gamma$, $g(\gamma)$ is a path
in $\cM$ from $y$ to $z$. 
On the other hand $\|y-z\|$ is the infimum of
$|\tgamma|$ over all paths $\tgamma$ in $\R^m$ from $y$ to $z$, including
paths that stay in $\cM$ as well
as paths that do not. Therefore  $\|y-z\| \leq \dist(y,z)$.

For an inequality the other way, we consider a particular
path in $\cM$ from $y$ to $z$ and show that it is
not much worse than the straight-line path
from $y$ to $z$ in $\R^m$. The particular path 
we consider  is the image under $g$ of
the straight-line path $\gamma_0$ in $\R^d$ from 
$u $ to $v$, where we set
$u:=g^{-1}(y)$ and $v:=g^{-1}(z)$.
%
That is, set $\gamma_0(t) := u + t(v-u)$,
for $0 \leq t \leq 1$.
Then by definition $\dist(y,z) \leq |g(\gamma_0)|$.
Also
$d \gamma_0/d t =
(v-u)$ for all $t \in [0,1]$, so by (\ref{0608a}),
\bea
|g(\gamma_0)| = \int_0^1 \sqrt{ (v-u)' J_{g,\gamma_0(t)}^T J_{g,\gamma_0(t)}
(v-u) } \; dt.
\label{0825a}
\eea

We need to compare this with $\|g(v)-g(u)\|$.
	Let $[u,v]$ denote the line segment from $u$ to $v$ in 
	$\R^d$ (i.e., the convex hull of $\{u,v\}$).
	For $1 \leq i \leq m$, let $g_i(\cdot)$ denote the $i$th component of
	$g(\cdot)$.
	By the Intermediate Value theorem
there exists $w_i \in [u,v]$ such that
$g_i(v) - g_i (u) = \langle \nabla g_i(w_i) , (v-u) \rangle$.
Therefore $g(v)-g(u) = \tJ (v-u)$,
where $\tJ$ is the matrix $(\partial g_i /\partial u_j|_{w_i})_{i \leq m,
j \leq d}$. Hence
$$
\| g(v) - g(u) \|
= \sqrt{ (v-u)' \tJ^T \tJ (v-u) }.
$$
Therefore by (\ref{0825a}), setting $e = \|v-u\|^{-1} (v-u)$,
we have that
\bea
|g(\gamma_0)| -
\| g(v) - g(u)\| = \|v-u\|
\int_0^1 
\left(
\sqrt{e' J_{\gamma_0(t)}^T J_{\gamma_0(t)} e} - \sqrt{e' \tJ^T \tJ e } \right) dt.
\label{0825b}
\eea
By the continuous differentiability in $u$ of $\partial g_i /\partial u_j$,
and the Mean Value theorem,
there exists a constant $K$ such that
for each $i \leq m,j \leq m$ and $t \in [0,1]$ we have
\bean
|(J_{\gamma_0(t)} - \tJ)_{i,j} |  = |(
\partial g_i /\partial u_j|_{\gamma_0(t)} -
\partial g_i /\partial u_j|_{w_i} ) | \leq 
K \|u-v\|.
\eean
Thus
there is a further constant $K'$ such that 
for all $e \in \bbS^{d-1}$ and $t \in [0,1]$ we have
$$
|(\|J_{\gamma_0(t)} e \| - \|\tJ e\| )| \leq
\|(J_{\gamma_0(t)} - \tJ) e\| \leq K' \|u-v\|.
$$
Therefore since 
$\|J_{\gamma_0(t)} e\| - \|\tJ e\|$ is the integrand
in (\ref{0825b}), we obtain that 
\begin{align}
\dist(y,z) \leq |g(\gamma_0)| \leq \|y-z\| + K' \|v-u\|^2.
	\label{0811a}
\end{align}
By a further application of the Mean Value theorem,
there is  further constant $K''$ such that
\begin{align*}
	\|y-z - g'(u)(v-u) \| \leq K''  \|v-u\|^2.
\end{align*}
By the compactness of $\bbS^{d-1}$ there exists $\delta >0$ such
that 
$\|g'(o) e\| \geq 3 \delta $ for all $e \in \bbS^{d-1}$,
and by a further compactness argument, provided $U$ is taken 
small enough we have $\|g'(u) (e) \| \geq 2 \delta $
for all $u \in U, e \in \bbS^{d-1}$, so that (possibly
after taking $U$ even smaller) we have
$
\|y-z\| \geq \delta \|v-u\|
$
and hence by \eqref{0811a},
\bean
1 \leq \frac{\dist(y,z)}{\|y-z\|} 
\leq 1 +  K' \delta^{-2}\|y-z\|.
\leq 1 +  K' \delta^{-2}\dist(y,z).
\eean
This enables us to obtain parts (e) and (f) of
Lemma \ref{lemcompare3} with $\dist(y,z) $ replaced
by $\|y-z\|$.
\end{remk}

\section{Proof of strong laws of large numbers}
\allco


Throughout this section we are assuming we are  given a 
constant $\sbeta \in [0,\infty]$
and a sequence $(k(n))_{n \in \N}$ satisfying (\ref{kcond}). 
Recall that $\mu$ denotes the distribution of $X_1$,
and this has a density $f$ with support $A$, where
$A$ is a compact submanifold-with-boundary of $\cM$, and that $B \subset A$
is fixed and closed, and $R_{n,k}$ is defined at (\ref{Rnkdef}).

We shall repeatedly use the following  lemma. 
%
%
%

\begin{lemm}[Subsequence trick]
\label{lemtrick}
Suppose $(U_{n,k},(n,k) \in \N \times \N)$ is an array of random 
variables on a common probability space such that $U_{n,k} $ is
nonincreasing in $n$ and nondecreasing in $k$, that is, 
$U_{n+1,k} \leq U_{n,k} \leq U_{n,k+1} $ almost surely,
for all $(n,k) \in \N \times \N$. Let $\sbeta \in [0,\infty), \eps >0, c  >0$,
and suppose $(k(n),n \in \N)$
is an $\N$-valued sequence such that 
 $k(n)/\log n \to \sbeta$ as $n \to \infty$. 

	(a) Suppose
$
\Pr[n U_{n, \lfloor (\sbeta + \eps) \log n \rfloor } > \log n] \leq 
c n^{-\eps},
$
	for all but finitely many $n$. 
Then $\Pr[ \limsup_{n \to \infty} n U_{n,k(n)} /\log n \leq 1] =1$.

	(b)
	Suppose  $\eps < \sbeta $ and
$
\Pr[n U_{n, \lceil (\sbeta - \eps) \log n \rceil } \leq \log n] \leq 
c n^{-\eps},
$
	for all but finitely many $n$. 
Then $\Pr[ \liminf_{n \to \infty} n U_{n,k(n)} /\log n \geq 1] =1$.
\end{lemm}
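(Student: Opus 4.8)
The plan is a Borel--Cantelli argument run along a geometric subsequence, with the monotonicity of $U_{n,k}$ used to interpolate between consecutive subsequence points. We cannot apply Borel--Cantelli directly along all of $\N$, because $\sum_n n^{-\eps}$ diverges when $\eps\le 1$; but along a geometric subsequence the probabilities become summable, and the monotonicity then costs only a factor we can remove by a limiting argument. Note that ``for all but finitely many $n$'' in the hypotheses only deletes finitely many terms from the relevant series, so does not affect summability.

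\emph{Part (a).} Fix $\rho>1$ and put $n_j:=\lceil\rho^j\rceil$ for $j\in\N$. Since $n_j\ge\rho^j$ we have $\sum_j n_j^{-\eps}<\infty$, so by hypothesis $\sum_j\Pr[n_jU_{n_j,\lfloor(\beta+\eps)\log n_j\rfloor}>\log n_j]<\infty$, and Borel--Cantelli gives that, almost surely, $n_jU_{n_j,\lfloor(\beta+\eps)\log n_j\rfloor}\le\log n_j$ for all large $j$. Given large $n$, pick $j=j(n)$ with $n_j\le n<n_{j+1}$ (so $j(n)\to\infty$). Since $\log n_{j+1}-\log n_j\to\log\rho$ while $\log n_j\to\infty$, and $k(n)/\log n\to\beta$, for $n$ (hence $j$) large we have
\[
k(n)\le(\beta+\tfrac{\eps}{2})\log n\le(\beta+\tfrac{\eps}{2})\log n_{j+1}\le\lfloor(\beta+\eps)\log n_j\rfloor,
\]
the last inequality because $(\beta+\tfrac{\eps}{2})\log n_{j+1}-(\beta+\eps)\log n_j=-\tfrac{\eps}{2}\log n_j+O(1)\to-\infty$. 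Using that $U_{n,k}$ is nonincreasing in $n$ (and $n\ge n_j$) and nondecreasing in $k$,
\[
nU_{n,k(n)}\le nU_{n_j,k(n)}\le nU_{n_j,\lfloor(\beta+\eps)\log n_j\rfloor}\le\frac{n}{n_j}\log n_j\le\frac{n_{j+1}}{n_j}\log n
\]
for all but finitely many $n$, a.s. Hence $\limsup_{n\to\infty}nU_{n,k(n)}/\log n\le\lim_{j\to\infty}n_{j+1}/n_j=\rho$ a.s.; intersecting the a.s.\ events over $\rho=1+1/m$, $m\in\N$, yields $\Pr[\limsup_{n\to\infty}nU_{n,k(n)}/\log n\le1]=1$.

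\emph{Part (b).} This is the mirror image. Fix $\rho>1$, $n_j:=\lceil\rho^j\rceil$; since $\eps<\beta$, by hypothesis $\sum_j\Pr[n_jU_{n_j,\lceil(\beta-\eps)\log n_j\rceil}\le\log n_j]<\infty$, so a.s.\ $n_jU_{n_j,\lceil(\beta-\eps)\log n_j\rceil}>\log n_j$ for all large $j$. Given large $n$ with $n_j\le n<n_{j+1}$, and using $\tfrac{\eps}{2}<\beta$ (so $\beta-\tfrac{\eps}{2}>0$) together with $(\beta-\tfrac{\eps}{2})\log n_j-(\beta-\eps)\log n_{j+1}=\tfrac{\eps}{2}\log n_j+O(1)\to\infty$,
\[
k(n)\ge(\beta-\tfrac{\eps}{2})\log n\ge(\beta-\tfrac{\eps}{2})\log n_j\ge\lceil(\beta-\eps)\log n_{j+1}\rceil
\]
for $j$ large. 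Since $U_{n,k}$ is nonincreasing in $n$ (and $n\le n_{j+1}$) and nondecreasing in $k$,
\[
nU_{n,k(n)}\ge nU_{n_{j+1},k(n)}\ge nU_{n_{j+1},\lceil(\beta-\eps)\log n_{j+1}\rceil}\ge\frac{n}{n_{j+1}}\log n_{j+1}\ge\frac{n_j}{n_{j+1}}\log n
\]
for all but finitely many $n$, a.s. Thus $\liminf_{n\to\infty}nU_{n,k(n)}/\log n\ge\lim_{j\to\infty}n_j/n_{j+1}=1/\rho$ a.s., and letting $\rho\downarrow1$ along $1+1/m$ finishes the proof.

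\emph{Main obstacle.} The genuinely delicate points are the two index comparisons — $k(n)$ versus $\lfloor(\beta+\eps)\log n_j\rfloor$ in (a) and versus $\lceil(\beta-\eps)\log n_{j+1}\rceil$ in (b) — where one must make sure the direction of the inequality is compatible with the monotonicity of $U_{n,k}$ in $k$ being exploited, and that the geometric gap $\log n_{j+1}-\log n_j\to\log\rho$ is negligible against $\log n_j\to\infty$. Everything else is the standard subsequence-plus-Borel--Cantelli machinery.
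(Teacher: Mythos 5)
Your argument is correct and complete. Note that the paper at hand does not present a proof of this lemma at all; it simply refers the reader to \cite[Lemma 6.1]{ECover}. Your Borel--Cantelli-along-a-geometric-subsequence argument, with monotonicity in $n$ and in $k$ used to interpolate between subsequence points, is the standard machinery for such strong-law results and is what the cited proof uses as well. The two delicate steps you single out -- the index comparisons showing $k(n)\le\lfloor(\beta+\eps)\log n_j\rfloor$ for $n_j\le n<n_{j+1}$ in (a), and $k(n)\ge\lceil(\beta-\eps)\log n_{j+1}\rceil$ in (b), for $j$ large -- are handled correctly (the key observation being that $\log n_{j+1}-\log n_j\to\log\rho$ is bounded while $\tfrac{\eps}{2}\log n_j\to\infty$), and the final passage from $\rho$ to $1$ via a countable intersection over $\rho=1+1/m$ is valid. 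The remark that the ``all but finitely many $n$'' clause and the possible coincidences among the early $n_j$ do not affect summability is also a good catch; both are harmless for the same reason.
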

\begin{proof} See \cite[Lemma 6.1]{ECover}. 
\end{proof}

\subsection{{\bf General lower and upper bounds}}
\label{subsecgenbds}


In this subsection 
we present 
asymptotic lower and upper
bounds on $R_{n,k(n)}$, not requiring 
any extra
assumptions on $A$ and $B$. In fact, $A$ here can be any metric space
endowed with a probability  measure $\mu$, and $B$ can
be any subset of $A$. The definition of $R_{n,k}$ at (\ref{Rnkdef})
carries over in an obvious way to this general setting.

%

Later, we shall derive the results
stated in 
Theorem \ref{thm2} and Proposition \ref{thm1}
by  applying the results of this subsection to the different 
regions within $A$ (namely interior and boundary).

Given $r >0, a>0$,   define the `packing number' $ \nu(B,r,a) $
 be the largest number $m$ such that there exists a collection of $m$
 disjoint closed geodetic balls of radius $r$ centred on points of $B$,
each with $\mu$-measure at most $a$.

Recall that $H(\cdot)$ and $\hat{H}(\cdot)$ were defined at \eqref{Hdef}
and \eqref{hatHdef}.

\begin{lemm}[General lower bound]

	\label{gammalem}
	Let $a >0, b \geq 0$. Suppose
	$\nu(B,r,a r^d) = \Omega (r^{-b})$ as $r \downarrow 0$.
	If $\sbeta = \infty$ then
	$\liminf_{n \to \infty} \left(n  R_{n,k(n)}^d/k(n) \right) \geq
	1/a$, almost surely. If $\sbeta < \infty$ then
	$\liminf_{n \to \infty} \left(n  R_{n,k(n)}^d/\log n \right) 
	\geq a^{-1} \hH_\sbeta(b/d)$, almost surely.

\end{lemm}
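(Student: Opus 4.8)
\textbf{Proof plan for Lemma~\ref{gammalem}.}

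\emph{Step 1 (reduction, packing, negative association).} Since $\X_n$ is finite and the balls $B(x,r)$ are closed, one checks that for every $r>0$,
$$
\{R_{n,k}\le r\}=\bigcap_{x\in B}\bigl\{\card(\X_n\cap B(x,r))\ge k\bigr\},
$$
so a lower bound on $R_{n,k}$ is the statement that, with high probability, \emph{some} point of $B$ has fewer than $k$ sample points in its $r$-ball. Fix $r>0$ small and, using the hypothesis $\nu(B,r,ar^d)=\Omega(r^{-b})$, pick $M:=\lfloor c_0 r^{-b}\rfloor$ pairwise disjoint balls $B(x_1,r),\dots,B(x_M,r)$ with $x_i\in B$ and $\mu(B(x_i,r))\le ar^d$. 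Writing $N_i:=\card(\X_n\cap B(x_i,r))$, the vector $(N_1,\dots,N_M)$ is a coordinate projection of a multinomial vector, hence negatively associated, so (since $t\mapsto\mathbf 1\{t\ge k\}$ is nondecreasing and $\Bin(n,\cdot)$ is stochastically increasing)
$$
\Pr[R_{n,k}\le r]\ \le\ \Pr\Bigl[\bigcap_{i=1}^M\{N_i\ge k\}\Bigr]\ \le\ \prod_{i=1}^M\Pr[N_i\ge k]\ \le\ \bigl(1-p(n,k,r)\bigr)^{M},\qquad p(n,k,r):=\Pr[\Bin(n,ar^d)\le k-1].
$$

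\emph{Step 2 (binomial tail bounds).} With $\lambda:=nar^d$ we shall use the Chernoff bound $\Pr[\Bin(n,q)\ge k]\le\exp(-nq\,H(k/nq))$ (valid for $k\ge nq$), and, in the regime $ar^d\to0$, $k=O(\log n)$, the matching lower bound $p(n,k,r)\ge\Pr[\Bin(n,ar^d)=k-1]\ge(1+\log n)^{-O(1)}\exp(-\lambda\,H((k-1)/\lambda))$ obtained from a careful Stirling estimate of $\binom{n}{k-1}$ (a crude $\binom nj\ge(n/j)^j$ loses a polynomial factor and is not enough); when $k-1\ge\lambda$ one has simply $p(n,k,r)\ge1/3$.

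\emph{Step 3 (choice of scale and the role of $\hH$).} If $\beta=\infty$, take $r_n^d=(1-\eps)k(n)/(an)$, so $\lambda_n=(1-\eps)k(n)$; by Step~2, $1-p(n,k(n),r_n)\le\exp\bigl(-(1-\eps)H((1-\eps)^{-1})\,k(n)\bigr)$, and since $M\ge1$ and $k(n)/\log n\to\infty$ the bound of Step~1 is $\le n^{-2}$ for large $n$. Borel--Cantelli and $\eps\downarrow0$ give $\liminf n R_{n,k(n)}^d/k(n)\ge1/a$ a.s. If $\beta<\infty$, fix $\eps\in(0,1)$ and apply the subsequence trick of Lemma~\ref{lemtrick}(b) to $U_{n,k}=R_{n,k}^d/\gamma_\eps$ (for $\beta=0$ one uses instead an elementary geometric-subsequence Borel--Cantelli, reducing to constant $k$ via monotonicity of $R_{n,k}$ in $n$ and $k$). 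The required input is $\Pr[R_{n,\lceil(\beta-\eps)\log n\rceil}^d\le\gamma_\eps\log n/n]\le c\,n^{-\eps}$ for large $n$; with $r_n^d=\gamma_\eps\log n/n$, $k_n=\lceil(\beta-\eps)\log n\rceil$, $\lambda_n=a\gamma_\eps\log n$, Steps~1--2 yield
$$
\Pr[R_{n,k_n}^d\le r_n^d]\ \le\ \exp\bigl(-p(n,k_n,r_n)\,M_n\bigr),\qquad p(n,k_n,r_n)\,M_n\ \ge\ n^{\,b/d-a\gamma_\eps H((\beta-\eps)/(a\gamma_\eps))+o(1)},
$$
using $M_n\ge c_0\gamma_\eps^{-b/d}(n/\log n)^{b/d}$. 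By the defining property \eqref{hatHdef}, $y\mapsto yH(\beta/y)$ is the increasing inverse of $\hH_\beta$ on $[\beta,\infty)$ and $\hH_{\beta-\eps}(b/d)\,H\bigl((\beta-\eps)/\hH_{\beta-\eps}(b/d)\bigr)=b/d$; choosing $a\gamma_\eps$ slightly below $\hH_{\beta-\eps}(b/d)$ (and, in the degenerate case $b=0$, slightly below $\beta-\eps$, with a gap of order $\sqrt\eps$) makes the exponent $b/d-a\gamma_\eps H((\beta-\eps)/(a\gamma_\eps))$ strictly positive, so $p(n,k_n,r_n)M_n\ge\eps\log n$ for large $n$ (indeed a power of $n$ when $b>0$). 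This gives the bound $c\,n^{-\eps}$, hence $\liminf_n nR_{n,k(n)}^d/\log n\ge\gamma_\eps$ a.s.; since $\hH$ is continuous, $\gamma_\eps\to a^{-1}\hH_\beta(b/d)$ as $\eps\downarrow0$, and intersecting the a.s.\ events over a sequence $\eps\downarrow0$ completes the proof.

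\emph{Main obstacle.} Steps~1--2 are routine. The real content is the bookkeeping in Step~3: one must reconcile simultaneously the three exponents — the binomial large-deviation rate $\lambda_n H(k_n/\lambda_n)$, the packing exponent $b/d$ coming from $M_n\asymp r_n^{-b}$, and the exponent $\eps$ demanded by the subsequence trick — and recognise that the balance point is exactly the one encoded by $\hH_\beta(b/d)\,H(\beta/\hH_\beta(b/d))=b/d$, which is why $\hH$ appears in the answer. A secondary technical point is the sharp lower bound on the binomial left tail (only polylogarithmic loss) needed in Step~2.
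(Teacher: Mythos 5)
The paper's own proof of Lemma~\ref{gammalem} is by citation to \cite[Lemma~6.2]{ECover}, so there is no in-paper argument to compare against line by line; but the structure you use (pack $M\asymp r^{-b}$ disjoint small-measure balls, exploit negative association of the multinomial counts $(N_1,\dots,N_M)$, combine a Chernoff upper tail with a Stirling lower bound on $\Pr[\Bin(n,ar^d)\le k-1]$, then feed the resulting $O(n^{-\eps})$ estimate into the subsequence trick Lemma~\ref{lemtrick}(b)) is precisely what that cited lemma does, and your Steps~1--2 and the $\beta=\infty$ case are correct. You also correctly identify that a crude $\binom{n}{j}\ge(n/j)^j$ loses a factor $e^j=n^{O(1)}$ and that the sharp Stirling estimate is needed.

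There is, however, a genuine gap in Step~3 at $b=0$. You write $p(n,k_n,r_n)M_n\ge n^{\,b/d-a\gamma_\eps H((\beta-\eps)/(a\gamma_\eps))+o(1)}$ and then assert that the choice of $\gamma_\eps$ "makes the exponent $b/d-a\gamma_\eps H((\beta-\eps)/(a\gamma_\eps))$ strictly positive, so $pM_n\ge\eps\log n$". When $b=0$ this is false: you are choosing $a\gamma_\eps<\beta-\eps$, so $H((\beta-\eps)/(a\gamma_\eps))>0$ and the exponent is strictly \emph{negative}; since the packing hypothesis only guarantees $M_n=\Omega(1)$, the quantity $pM_n$ may be bounded and $\exp(-pM_n)$ does not decay. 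The correct route for $b=0$ is to discard $M_n$ altogether, use $(1-p)^{M_n}\le 1-p$ (as $M_n\ge1$), and bound $1-p=\Pr[\Bin(n,ar_n^d)\ge k_n]$ directly by the Chernoff upper tail $\exp(-\lambda_n H(k_n/\lambda_n))\le n^{-a\gamma_\eps H((\beta-\eps)/(a\gamma_\eps))+o(1)}$; then the $\Theta(\sqrt\eps)$ gap you mention forces this exponent above $\eps$. Your write-up gestures at this ("gap of order $\sqrt\eps$") but the displayed inequality and the "strictly positive exponent" claim do not deliver it. A second, smaller looseness: Lemma~\ref{lemtrick}(b) requires $\eps<\beta$, so it genuinely cannot be invoked when $\beta=0$; your parenthetical reduction to constant $k$ via monotonicity plus a geometric-subsequence Borel--Cantelli is the right idea, but as stated it is only a gesture. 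Both issues are confined to edge cases ($b=0$, resp.\ $\beta=0$) and are repairable in a line or two; in the paper's applications (Lemmas~\ref{lemliminf} and~\ref{lemliminfb}) one always has $b\in\{d-1,d\}>0$, so the main case of your argument is sound.
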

\begin{proof}
	See \cite[Lemma 6.2]{ECover}.
\end{proof}

Given  $r>0$, and $D \subset A$,
define the `covering number' 
\bea
\kappa(D,r): = \min \{m  \in \N: \exists x_1,\ldots,x_m \in D
~{\rm with} ~ D \subset \cup_{i=1}^m B(x_i,r) 
\}.
\label{covnumdef}
\eea
We need a complementary upper  bound.
For this,  we shall require a condition on the `covering number'
that is roughly dual to the condition on `packing number'
 used in Lemma \ref{gammalem}.
Instead of stating the lemma in terms  
of $R_{n,k}$ directly, we shall
state it in terms of 
the `fully covered' region 
$F_{k,r}(\X)$,
defined for 
$n,k \in \N$, $r>0$ and $\X \subset \cM$
by
\bea
F_{k,r}(\X):= \{x \in \cM : \card( \X \cap B(x,r)) \geq k\}.
\label{F3def}
\eea
 We can characterise the event
 $\{\tR_{n,k} \le r\} $ in terms of the set $F_{k,r}(\X_n)$
 as follows:
\bea
\tR_{n,k } \leq r {\rm~~~ if ~and~only~if~~~}
(B \cap A^{(r)} )
\subset F_{k,r}(\X_n).
\label{Fnequiv}
\eea
Indeed, the `if' part of this statement is clear from (\ref{eqmaxspac}).  
For the `only if' part, note that  if there exists $x \in (B \cap A^{(r)}) 
\setminus F_{k,r}(\X_n)$, then there exists $s >r$ with $x \in B \cap 
A^{(s)} \setminus F_{k,s}(\X_n)$. Then for all $s' <s$ we have
$x \in B \cap A^{(s')} \setminus F_{k,s}(\X_n)$, and therefore
$\tR_{n,k} \geq s > r$.


\begin{lemm}[General upper bound]
\label{lemmeta}
Suppose $r_0, a, b \in (0,\infty)$,  and
 a family of sets  $A_r \subset A,$ defined for $0 < r < r_0$, 
	are
	 such that  for all $r \in (0,r_0)$, $x \in A_r$ and
 $s \in (0,r)$ we have $\mu(B(x,s)) \geq a s^d$, and
moreover $\kappa(A_r,r) = O(r^{-b})$ as $r \downarrow 0$.

	(i) If $\sbeta = \infty$  
and  $r_n = (\nalpha k(n)/n)^{1/d}$, $ n \in \N$,
	for some fixed  $\nalpha > 1/a$,
then with probability one,  
	$A_{r_n} \subset F_{k(n),r_n}(\X_n)$ 
for all large enough $n$.

	(ii) If $\sbeta < \infty$, 
	and $r_n = (\nalpha (\log n)/n)^{1/d}$, $n \in \N$,
	for some fixed $\nalpha >a^{-1} \hH_\sbeta(b/d)$.
 then there exists $\eps >0$ such that
$
\Pr[\{A_{r_n} \subset
	F_{\lfloor (\sbeta + \eps) \log n\rfloor,r_n
	}(\X_n) \}^c ]
 = O(n^{-\eps})
$
as $n \to \infty$.

	(iii)
	If $r_t = (1+o(1))((u\log t)/t)^{1/d}$ as $t \to \infty$,
for some fixed
	$\nalpha> b/(ad)$, then given $k \in \N$,
	we have
$\Pr[\{A_{r_t}  \subset F_{k,r_t}(\Po_t)
	\}^c] \to 0 $
	as $t\to \infty$.
\end{lemm}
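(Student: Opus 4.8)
The plan is to reduce all three parts to a single geometric and probabilistic estimate: if we place a net of points in $A_r$ at spacing of order $r$ using the covering-number hypothesis, then each point of $A_r$ lies within a controlled distance of a net point, and the union of small balls about the net points covers $A_r$; so it suffices to guarantee that every ball $B(x, \rho r)$ for $x$ a net point and $\rho$ slightly less than $1$ contains at least $k(n)$ sample points. First I would fix $\rho \in (0,1)$ close to $1$ and, using $\kappa(A_r, \rho r) = O(r^{-b})$, pick a deterministic $\rho r$-net $\mathcal{N}_r \subset A_r$ with $|\mathcal{N}_r| = O(r^{-b})$; the triangle inequality then gives $A_r \subset \bigcup_{y \in \mathcal{N}_r} B(y, \rho r) \subset F_{k,r}(\X)$ on the event that every $B(y,\rho r)$, $y \in \mathcal{N}_r$, contains $k$ or more points of $\X$. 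The lower volume bound $\mu(B(y, \rho r)) \ge a \rho^d r^d$ (valid because $\rho r < r$ and $y \in A_r$) controls the Binomial/Poisson count in each such ball.

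Next I would carry out the union bound. For part (i), with $r_n = (u k(n)/n)^{1/d}$ and $u > 1/a$, the mean number of sample points in $B(y, \rho r_n)$ is at least $n \cdot a\rho^d r_n^d = a\rho^d u\, k(n)$, which exceeds $k(n)$ once $\rho$ is close enough to $1$ that $a\rho^d u > 1$; a Chernoff/Cramér bound for the Binomial$(n, \mu(B(y,\rho r_n)))$ variable gives that the probability the count is below $k(n)$ is at most $\exp(-c\, k(n))$ for some $c>0$, hence the probability that some $y \in \mathcal{N}_{r_n}$ fails is $O(r_n^{-b} e^{-c k(n)})$. Since $k(n)/\log n \to \beta = \infty$, this bound is summable in $n$, so Borel--Cantelli yields the almost sure conclusion. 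For part (ii), with $r_n = (u\log n/n)^{1/d}$, $u > a^{-1}\hH_\beta(b/d)$, and target count $\lfloor(\beta+\eps)\log n\rfloor$, the relevant large deviation rate for a Binomial with mean $\ge a\rho^d u \log n$ to fall below $(\beta+\eps)\log n$ is, by the standard Poisson-type rate function, $a\rho^d u\, H\!\big((\beta+\eps)/(a\rho^d u)\big)\log n$; the definition of $\hH_\beta$ at \eqref{hatHdef} is exactly what makes this rate exceed $(b+\eps')$ for suitable small $\eps, \eps'$ and $\rho$ close to $1$ (here one uses $u > a^{-1}\hH_\beta(b/d)$ together with continuity of $\hH$ and of $H$), so the per-point failure probability is $O(n^{-(b+\eps')/d \cdot \text{(something)}})$ — more precisely $O(n^{-(1+\eps')})$ after absorbing the $r_n^{-b} = O(n^{b/d})$ net-size factor — giving $O(n^{-\eps})$ after relabelling constants. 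Part (iii) is the same computation with $n$ replaced by a Poisson parameter $t$, a fixed target $k$, and $u > b/(ad)$: now the count in $B(y,\rho r_t)$ is Poisson with mean $\ge a\rho^d u \log t \to \infty$, so $\Pr[\text{count} < k] \to 0$ faster than any power of $t$, and the union over the $O(t^{b/d})$ net points still tends to $0$ provided $a\rho^d u > b/d$, which holds for $\rho$ near $1$ by the hypothesis on $u$.

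The main technical point to get right — and the step I expect to be the real obstacle — is the bookkeeping in part (ii): one must choose the auxiliary parameters $\rho \uparrow 1$, $\eps \downarrow 0$, and the slack $\eps'$ in the exponent \emph{in the correct order} so that the large-deviation rate $a\rho^d u\, H\big((\beta+\eps)/(a\rho^d u)\big)$ stays strictly above $b/d$ (after dividing through by $d$ appropriately to match $\kappa(A_r,r) = O(r^{-b})$ against $r_n^{-b} = O(n^{b/d})$) with a uniform positive margin, so that the resulting bound is genuinely $O(n^{-\eps})$ with the \emph{same} $\eps$ that appears in the count $\lfloor(\beta+\eps)\log n\rfloor$. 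This is where the precise form of $\hH_\beta$ and the monotonicity/continuity properties of $H$ recorded after \eqref{Hdef} and \eqref{hatHdef} are used; the inequality $u > a^{-1}\hH_\beta(b/d)$ is exactly the sharp threshold for this to be possible, and identifying the rate function of the lower tail of a Binomial (equivalently Poisson) count with $H$ is the standard Cramér computation. A minor point is that the net $\mathcal{N}_r$ must be taken \emph{inside} $A_r$ (not in $A$) so that the volume bound $\mu(B(y,\rho r)) \ge a(\rho r)^d$ applies at the net points; the covering number $\kappa(A_r,\rho r)$ by definition \eqref{covnumdef} does provide such an internal net. Everything else is routine.
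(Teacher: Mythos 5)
Your high-level strategy --- cover $A_r$ by a deterministic net inside $A_r$, reduce to a union bound over the net, and apply a Chernoff/Cram\'er estimate to the binomial (or Poisson) count in each slightly shrunken ball --- is the right one, and is in substance the argument behind \cite[Lemma 6.3]{ECover}, to which the paper defers for parts (i) and (ii), with part (iii) obtained by substituting a Poisson tail bound for the binomial one. The role you assign to $H$ and $\hH_\beta$, the continuity/monotonicity argument for the large-deviation rate, and the order in which $\rho$, $\eps$, and the slack in the exponent must be chosen in part (ii) are all correctly identified.

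There is, however, a concrete error in the geometric reduction step. You take a ``$\rho r$-net'' with $\rho$ close to $1$ and then claim that, for $x$ in the $\rho r$-ball around a net point $y$, the inclusion $B(y,\rho r)\subset B(x,r)$ holds. The triangle inequality only gives $B(y,\rho r)\subset B(x,2\rho r)$, so the claimed inclusion requires $\rho\le 1/2$; but you need $\rho$ close to $1$ to ensure $a\rho^d u$ exceeds the relevant threshold, so no single value of $\rho$ makes both steps work as you have written them. The standard fix is to decouple the two radii: take a net of \emph{mesh} $(1-\rho)r$ (every $x\in A_r$ is within $(1-\rho)r$ of some net point $y$) and shrink the \emph{balls} to radius $\rho r$; then $B(y,\rho r)\subset B(x,r)$ whenever $\dist(x,y)\le(1-\rho)r$, while $\mu(B(y,\rho r))\ge a\rho^d r^d$ as you require. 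With this correction you also need $\kappa(A_r,(1-\rho)r)=O(r^{-b})$, which is not literally the hypothesis (only $\kappa(A_r,r)=O(r^{-b})$ is assumed); it follows because each ball of a minimal $r$-cover of $A_r$ can itself be covered by $O_\rho(1)$ balls of radius $(1-\rho)r$ with centres re-chosen inside $A_r$, a doubling argument that is available here since Lemma~\ref{lemcompare} makes the manifold locally bi-Lipschitz to $\R^d$. Once these two adjustments are made, the union-bound and tail estimates you sketch go through as stated.
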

\begin{proof}
	See \cite[Lemma 6.3]{ECover} for (i) and (ii).
	The proof for (ii) 
	carries over
	to (iii) 
	by using \cite[Lemma 5.1(d)]{ECover} in
	place of \cite[Lemma 5.1(b)]{ECover}.
\end{proof}

	We shall use parts (i) and (ii) of Lemma \ref{lemmeta}
	to prove an upper bound for the strong LLN; we shall
	use (iii)
	later on to
	handle various boundary effects in the proof of the weak limit 
	result, with properly chosen $A$ and $A_r$.

\subsection{Proof of Proposition \ref{thm1}} 

Recall from Section \ref{s:MathFrame}
that we take closed $B \subset A$ with $A$ a compact submanifold-with-boundary
of the $d$-dimensional manifold $\cM$, 
 and from \eqref{f0def} that
 $f_0: = \inf_{x \in B} f(x)$.
 Let $\mu$ denote the probability measure on $A$ with density $f$.
 Assume, as in the statement of Proposition \ref{thm1}, that
 $B = \overline{B^o}$, $\mu(B) >0$ and $f|_A$ is continuous on $A$.

\begin{lemm}[Packing number away from $\partial A$]
\label{packlem}
Let $\alpha > f_0$.  There exists $\delta>0$ such that
	\begin{align} 
	\liminf_{r \downarrow 0} r^d \nu(B \cap A^{(\delta)}, r, \alpha \vvol_d r^d)
		> 0.
		\label{e:pack}
	\end{align}
\end{lemm}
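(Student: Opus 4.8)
The plan is to show that near a minimiser $x_* \in B$ of $f$ (where $f(x_*) = f_0$, using continuity and compactness of $B$), a positive-volume chunk of $B$ lies in $A^{(\delta)}$ and supports many disjoint small balls of $\mu$-measure not much larger than $f_0 \theta_d r^d$. Since $\alpha > f_0$, we have room to spare. First I would fix $\eta > 0$ small enough that $(f_0 + \eta)(1 + \eta)^? < \alpha$ with margin — more precisely, pick $\eta$ so that $f_0 + 2\eta < \alpha$. By continuity of $f|_A$ there is an open neighbourhood $W$ of $x_*$ in $\cM$ with $f < f_0 + \eta$ on $W \cap A$. We may also shrink $W$ so that $\overline{W} \cap A$ lies at positive geodetic distance $2\delta$ from $\cM \setminus A$, provided $x_* \in A^o$; if $x_*$ happens to lie on $\partial A$ we instead use that $B = \overline{B^o}$ to replace $x_*$ by a nearby interior point of $B$, at the cost of slightly increasing $f_0$ to, say, $f_0 + \eta$, which is still below $\alpha$ after re-choosing $\eta$. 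Thus without loss of generality we can assume we have a ball $B(x_*, 2\delta) \subset A^o$ on which $f < \alpha_0$ for some $\alpha_0$ with $f_0 \le \alpha_0 < \alpha$; moreover we can take $B(x_*,2\delta)\subset B$ because $B=\overline{B^o}$ and $x_*\in B^o$ can be arranged.

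Next I would invoke Lemma \ref{lemcompare} at the point $x_*$ with a small tolerance $\eps > 0$: this gives an open $V \ni x_*$ and an open map $\phi = \phi_{x_*} : V \to \R^d$ distorting geodetic distances and $d$-volumes by at most a factor $1 \pm \eps$ (or $1\pm 2\eps$ the other way, as in Remark \ref{fromcompare2}). We may shrink $\delta$ so that $B(x_*, 2\delta) \subset V$. Now the packing problem transfers to Euclidean space: a collection of disjoint Euclidean balls of radius $(1-\eps)r$ centred in $\phi(B(x_*,\delta))$ pulls back (via $\phi^{-1}$) to a collection of disjoint geodetic balls of radius at most $r$ centred in $B(x_*,\delta) \subset B \cap A^{(\delta)}$, each of $\mu$-measure at most $\alpha_0 \cdot \lambda_d(\phi(B(x,r))) \le \alpha_0 (1+2\eps) \theta_d r^d$, and by choosing $\eps$ small this is $\le \alpha \theta_d r^d$. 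In Euclidean space, the number of disjoint balls of radius $(1-\eps)r$ one can pack with centres in a fixed open set of volume $c_0 := \lambda_d(\phi(B(x_*,\delta))) > 0$ is $\Theta(r^{-d})$ as $r \downarrow 0$ — standard volume packing: place centres on a grid of spacing $2(1-\eps)r$, giving at least $\lfloor c_0 / (C r^d) \rfloor$ of them for a dimensional constant $C$. Hence $\nu(B \cap A^{(\delta)}, r, \alpha\theta_d r^d) \ge c_1 r^{-d}$ for small $r$ and some $c_1 > 0$, which gives $\liminf_{r\downarrow 0} r^d \nu(\cdots) \ge c_1 > 0$, proving \eqref{e:pack}.

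The main obstacle, I expect, is the bookkeeping around the case $x_* \in \partial A$ and making sure the chosen chunk genuinely lies in $B \cap A^{(\delta)}$: one must use the hypothesis $B = \overline{B^o}$ to slide the reference point into $B^o \cap A^o$ without letting $f$ there exceed $\alpha$, and then keep $\delta$ small enough simultaneously for (a) $B(x_*,2\delta)\subset A^o$ so the geodetic balls of radius $r \le \delta$ stay inside $A^o$ and the centres lie in $A^{(\delta)}$, (b) $B(x_*,2\delta)\subset V$ so Lemma \ref{lemcompare} applies, and (c) $f < \alpha_0$ on $B(x_*,2\delta)\cap A$. All three are open conditions satisfied at $x_*$, so a single small $\delta$ works. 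Everything else — the Euclidean grid-packing count and the volume/distance distortion bounds — is routine given Lemma \ref{lemcompare}.
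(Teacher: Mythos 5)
Your proposal is correct and takes essentially the same approach as the paper: choose an interior point $x_0 \in B^o \subset A^o$ where $f$ is strictly below $\alpha$ (using continuity, compactness of $B$, and $B=\overline{B^o}$), apply Lemma \ref{lemcompare} to get a chart, and pack $\Theta(r^{-d})$ disjoint Euclidean balls in a small image region, pulling back to disjoint geodetic balls of controlled $\mu$-measure. The paper picks $x_0$ directly rather than starting from a minimiser, and tracks the distortion constants a bit more explicitly, but these are cosmetic differences.
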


\begin{proof}
Let $\kappa = 4^{-d} \vvol_d$. 
	Choose $\eps \in (0,\frac19)$ 
	such that $(1-2 \eps)^{d+1}\alpha > f_0$.
	
	By the condition $B= \overline{B^o}$ and the continuity of $f$,
	we can and do choose $x_0 \in B^o \subset A^o$ such that
	$f(x_0) < (1-2 \eps)^{d+1}\alpha $. Choose $\delta >0$ such
	that $x_0 \in A^{(2 \delta)}$.
%

	Let $(V,\phi)$ be as in 
%
	Lemma \ref{lemcompare}, with $x_0 \in V$.
%
%
%
	Let $r_0 \in (0,\delta/2)$ be chosen such that  with
	$B_0:= B_{\R^d}(\phi(x_0),r_0)$, 
	we have $B_0 \subset \phi( B^o \cap A^{(\delta)} \cap V)$
	and moreover $f(x) < (1-2 \eps)^{d+1} \alpha
	$ for all $x \in \phi^{-1}(B_0)$

	For $r>0$ sufficiently small, we can and do take 
	a collection of disjoint closed Euclidean balls
	$B_{r,j}:= B_{\R^d}(x_{r,j},r(1+\eps)), 1 \leq j \leq \sigma(r)$,
	all contained in $B_0$,
	with $\sigma(r) >  \kappa (r_0/r)^d$.
	Indeed, we can take $(1+ o(1))\vvol_d r_0^d (3r)^{-d}$ 
	[as $r \downarrow 0$] disjoint cubes of side $3r$ inside $B_0$ 
	and can take $x_{r,j}$, $1 \leq j \leq \sigma(r)$,
	to be the centres of 
	these cubes.

	For each $j \in [\sigma(r)]$, we have
	$B_{r,j} \subset B_0$
	so $f(x) < (1-2 \eps)^{d+1}\alpha$ for all $x \in \phi^{-1}(B_{r,j})$,
	and also $B_{r,j} \subset \phi(B \cap A^{(\delta)} \cap V)$.
	Let $B'_{r,j} := B(\phi^{-1}(x_{r,j}),r) \subset \cM$.
	Then $B'_{r,j} \subset \phi^{-1}(B_{r,j})$ 
		by Lemma \ref{lemcompare}. 
	Therefore the balls $B'_{r,j}, 1 \leq j \leq \sigma(r)$ are disjoint.
	Also for $1 \leq j \leq \sigma(r)$, 
	$\phi^{-1}(x_{r,j}) \in B \cap A^{(\delta)}$ and
	\begin{align*}
	\mu(B'_{r,j})
		\leq \mu(\phi^{-1}(B_{r,j})) 
		\leq (1- 2\eps)^{d+1} \alpha v(\phi^{-1}(B_{r,j})).
	\end{align*}
	Also by Lemma \ref{lemcompare} again $\lambda_d(B_{r,j}) \geq
	(1- \eps) v(\phi^{-1}(x_{r,j}))$, so
	\begin{align*}
	\mu(B'_{r,j})
		\leq (1-2 \eps)^{d+1} \alpha \vvol_d r^d(1+2\eps)^{d+1} \leq \alpha \vvol_d r^d.
	\end{align*}
	Hence $\sigma'(r) \leq \nu(B \cap A^{(\delta)}, r,
	\alpha  \vvol_d r^d)$ 
	for $r$ sufficiently small, and the result follows.
\end{proof}

\begin{lemm}[Asymptotic a.s. lower bound for $\tR$]
\label{lemliminf}
	If $(k(n))_{n \geq 1}$ satisfies \eqref{kcond}, then
	\begin{align}
\Pr[ \liminf_{n\to\infty}(n \vvol_d \tR_{n,k(n)}^d/k(n) ) \geq 1/f_0]=1
	~~~~~{\rm if} ~\sbeta = \infty;
\label{0328a}
\\
 \Pr[  \liminf_{n\to\infty} 
	( n \vvol_d \tR_{n,k(n)}^d /\log n) \geq \hH_\sbeta(1)/f_0  ] =1
	~~~~~{\rm if} ~\sbeta < \infty.
\label{liminf1}
	\end{align}
\end{lemm}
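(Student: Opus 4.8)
The plan is to prove Lemma \ref{lemliminf} by applying the general lower bound of Lemma \ref{gammalem} and the packing estimate of Lemma \ref{packlem} to a \emph{fixed} interior region of $A$, and then transferring the resulting almost-sure bound to $\tR_{n,k(n)}$, whose defining domain $B \cap A^{(r)}$ in \eqref{eqmaxspac} varies with the radius $r$.

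First I would fix $\alpha > f_0$. Lemma \ref{packlem} supplies $\delta > 0$ with $\nu(B \cap A^{(\delta)}, r, \alpha\theta_d r^d) = \Omega(r^{-d})$ as $r \downarrow 0$. Set $S := B \cap A^{(\delta)}$, a fixed closed subset of $A$, and let $\hat R_{n,k}$ be the $k$-coverage threshold of $S$, i.e. the quantity defined exactly as in \eqref{Rnkdef} but with $B$ replaced by $S$; recall from the start of Section \ref{subsecgenbds} that Lemma \ref{gammalem} applies to an arbitrary subset of a metric space equipped with a probability measure. Applying Lemma \ref{gammalem} to $\hat R_{n,k(n)}$ with $a = \alpha\theta_d$ and $b = d$, and using $\hH_\beta(b/d) = \hH_\beta(1)$, yields almost surely
\[
\liminf_{n\to\infty} \frac{n\theta_d \hat R_{n,k(n)}^d}{k(n)} \geq \frac1\alpha \ \ (\beta=\infty), \qquad \liminf_{n\to\infty} \frac{n\theta_d \hat R_{n,k(n)}^d}{\log n} \geq \frac{\hH_\beta(1)}{\alpha} \ \ (\beta<\infty).
\]

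Next I would transfer these bounds to $\tR_{n,k(n)}$. Since a point at geodetic distance more than $\delta$ from $\cM \setminus A$ is at distance more than $r$ whenever $r \leq \delta$, we have $A^{(\delta)} \subset A^{(r)}$ for $r \leq \delta$. Consequently, on the event $\{\tR_{n,k} < \delta\}$, for every $r \in (\tR_{n,k},\delta)$ the equivalence \eqref{Fnequiv} gives $S \subset B \cap A^{(r)} \subset F_{k,r}(\X_n)$, whence $\hat R_{n,k} \leq r$ by the definition of $\hat R_{n,k}$ as an infimum; letting $r \downarrow \tR_{n,k}$ shows $\hat R_{n,k} \leq \tR_{n,k}$ there. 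Writing $c_n := k(n)$ if $\beta = \infty$ and $c_n := \log n$ if $\beta < \infty$, it follows that $n\theta_d \tR_{n,k(n)}^d / c_n \geq \min\bigl( n\theta_d \hat R_{n,k(n)}^d / c_n,\ n\theta_d \delta^d / c_n \bigr)$ for every $n$ — the first argument dominating on $\{\tR_{n,k(n)} < \delta\}$ and the second on its complement. Because \eqref{kcond} forces $c_n = o(n)$, the quantity $n\theta_d\delta^d/c_n$ tends to $\infty$, so taking $\liminf$ carries the two displayed bounds over to $\tR_{n,k(n)}$ unchanged. Finally, all of the above holds for every $\alpha > f_0$; letting $\alpha$ decrease to $f_0$ along a sequence (with the conventions $1/f_0 = +\infty$ and $\hH_\beta(1)/f_0 = +\infty$ when $f_0 = 0$) gives \eqref{0328a} in the case $\beta = \infty$ and \eqref{liminf1} in the case $\beta < \infty$.

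Since the ingredients are all in place, no step presents a genuine difficulty; the only point requiring attention is precisely the comparison carried out above between $\tR_{n,k}$, whose domain $B \cap A^{(r)}$ shrinks as $r$ increases, and the coverage threshold $\hat R_{n,k}$ of the fixed domain $S = B \cap A^{(\delta)}$, together with the (innocuous, since $c_n = o(n)$) bookkeeping on the event $\{\tR_{n,k} \geq \delta\}$.
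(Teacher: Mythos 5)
Your proof is correct and takes essentially the same route as the paper: fix $\alpha > f_0$, obtain $\delta$ from Lemma \ref{packlem}, apply Lemma \ref{gammalem} to the fixed set $B\cap A^{(\delta)}$ with $a=\alpha\theta_d$, $b=d$, and then transfer the a.s. lower bound to $\tR_{n,k(n)}$ using $A^{(\delta)}\subset A^{(r)}$ for $r\le\delta$ together with $k(n)/n\to 0$. The paper compresses the transfer into the phrase ``by monotonicity''; your argument with the two cases $\{\tR_{n,k(n)}<\delta\}$ and its complement (giving $\tR_{n,k(n)}\ge\min(\hat R_{n,k(n)},\delta)$) is exactly the content being invoked there.
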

\begin{proof}
	Let $\alpha>f_0$.  
	Choose $\delta >0$ such that \eqref{e:pack} holds.
	Define $r_n$ by $nr_n^d =(\alpha\vvol_d)^{-1} k(n)$ if $\sbeta= \infty$,
	and $nr_n^d = (\alpha \vvol_d)^{-1} \hat H_\sbeta(1) \log n$ if
	$\sbeta<\infty$. In either case  $B \cap 
	A^{(\delta)}\subset A^{(r_n)}\cap B$
	for all $n$ large,
	so by monotonicity it suffices to show the lower bound for
	the $k(n)$-coverage threshold of $B \cap A^{(\delta)}$. 
	The condition of Lemma \ref{gammalem} holds with $b=d$ and $a= \alpha \vvol_d$.
	Since $\alpha>f_0$ is arbitrary, the result follows.  
\end{proof}
\begin{lemm}[Covering number of $A$]
	\label{lemcovering}
	We have that
	(i) $\limsup_{r \downarrow 0} (r^d \kappa(A, r)) < \infty$,
	and (ii) 
	$\limsup_{r \downarrow 0} (r^{d-1} \kappa( A\setminus A^{(r)}, r)) < \infty$.
\end{lemm}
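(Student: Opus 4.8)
The plan is to exploit the standard duality between covering and packing numbers together with the local comparison with Euclidean space from Lemma~\ref{lemcompare3}. For any subset $D$ of $\cM$ and any $r>0$, a maximal collection $x_1,\dots,x_M$ of points of $D$ with pairwise geodetic distances exceeding $r$ is automatically an $r$-net of $D$ (else one could enlarge the collection), so $\kappa(D,r)\le M$; moreover the closed balls $B(x_i,r/2)$ are pairwise disjoint. Thus both parts reduce to bounding $M$ from above, which I would do by a volume argument.

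The preliminary ingredient is a pair of uniform local volume estimates. By a routine compactness argument, $A$ is covered by finitely many of the neighbourhoods $V$ furnished by Lemma~\ref{lemcompare3} (applied, say, with $\eps=1/4$), and one can arrange a uniform $s_0>0$ and uniform constants so that: (a) $c\,s^{d}\le v(B(x,s))\le C\,s^{d}$ for all $x\in A$, $s\in(0,s_0]$; and, covering the compact $(d-1)$-dimensional $C^2$ submanifold $\partial A$ (see Section~\ref{secAltManif}) by finitely many such neighbourhoods, (b) $c'\,s^{d-1}\le \tv(B(y,s)\cap\partial A)\le C'\,s^{d-1}$ for all $y\in\partial A$, $s\in(0,s_0]$. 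To prove the lower bound in (a) one checks via part~(VII) that $\phi_{x}(B(x,s))$ contains the Euclidean ball $B_{\R^d}(\phi_x(x),(1-\eps)s)$ once $s$ is small enough that this ball lies inside the open set $\phi_x(V)$, and then applies part~(III); the upper bounds and the surface-measure estimate (b) are analogous, using parts (III), (IV) and (VII) and Remark~\ref{fromcompare2}. The only mildly technical point here is making the neighbourhoods, $s_0$ and the constants uniform, which is a Lebesgue-number argument on a finite subcover.

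For part~(i): the disjoint balls $B(x_i,r/2)$, $i\le M$, all lie in the set $K:=\{x\in\cM:\dist(x,A)\le 1\}$ as soon as $r\le2$, and $K$ is closed and bounded in $\R^m$ (hence compact, since $\cM$ is a closed submanifold of $\R^m$ and $\dist\ge\|\cdot-\cdot\|$), so has finite Riemannian volume $v(K)$; since $v(B(x_i,r/2))\ge c(r/2)^d$, summing gives $M\le 2^d v(K)/(c\,r^d)$, which is part~(i). For part~(ii) I first deduce a tube estimate. A maximal $\sigma$-separated subset $\{y_1,\dots,y_{L}\}$ of $\partial A$ is a $\sigma$-net of $\partial A$, and the disjoint sets $B(y_j,\sigma/2)\cap\partial A$ have $\tv\ge c'(\sigma/2)^{d-1}$, so $L\le 2^{d-1}\tv(\partial A)/(c'\sigma^{d-1})$; since every point within geodetic distance $2\sigma$ of $\partial A$ lies in some $B(y_j,3\sigma)$, it follows that for all small $\sigma$,
\[
v\bigl(\{x\in\cM:\dist(x,\partial A)\le\sigma\}\bigr)\ \le\ \sum_{j=1}^{L} v\bigl(B(y_j,3\sigma)\bigr)\ \le\ L\,C(3\sigma)^{d}\ =\ O(\sigma).
\]
Next, a short topological argument shows $A\setminus A^{(r)}\subset\{x\in\cM:\dist(x,\partial A)\le r\}$: if $x\in A\setminus A^{(r)}$ then $B(x,r)\not\subset A^o$, so there is $z$ with $\dist(x,z)\le r$ and $z\in\cM\setminus A^o=\overline{\cM\setminus A}$; either $z\in\partial A$ and we are done, or $z\notin A$, in which case any path from $x$ to $z$ of length close to $\dist(x,z)$ starts in the closed set $A$ and ends outside it, hence meets $\partial A$ at a point within distance $\dist(x,z)\le r$ of $x$. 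Consequently the disjoint balls $B(x_i,r/2)$ with centres $x_i\in A\setminus A^{(r)}$ all lie in $\{x:\dist(x,\partial A)\le\tfrac32 r\}$, a set of volume $O(r)$ by the tube estimate, while each has volume at least $c(r/2)^d$; therefore $M=O(r^{-(d-1)})$, which is part~(ii).

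Everything reduces to routine packing/covering bookkeeping; the steps requiring a little care are the uniformity of the local comparison constants of Lemma~\ref{lemcompare3} over the compact sets $A$ and $\partial A$, and the tubular-neighbourhood volume bound $v(\{\dist(\cdot,\partial A)\le\sigma\})=O(\sigma)$, which is the main (though still elementary) obstacle.
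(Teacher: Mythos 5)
Your proof is correct, but it takes a genuinely different route from the paper's. The paper argues by direct covering: take a finite subcover of $A$ by chart neighbourhoods $V_i$ from Lemma~\ref{lemcompare}, cover each bounded Euclidean image $\phi_i(A\cap V_i)\subset\R^d$ (resp.\ $\phi_i(\partial A\cap V_i)\subset\partial\bH$) by $O(r^{-d})$ (resp.\ $O(r^{1-d})$) Euclidean balls of radius $r/2$, and pull each back through $\phi_i^{-1}$; the bi-Lipschitz property (VII) ensures each preimage sits inside a geodetic ball of radius comparable to $r$, and for (ii) one then uses, exactly as you observe, that $A\setminus A^{(r)}$ lies within geodetic distance $r$ of $\partial A$. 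Your argument instead bounds the size of a maximal $r$-separated subset of $A$ (resp.\ of $A\setminus A^{(r)}$) by a volume/packing argument: disjoint geodetic balls of radius $r/2$, each of volume at least a constant times $r^d$, all contained in a fixed compact set of finite volume (resp.\ in the $O(r)$-volume tube around $\partial A$). This requires you to first establish uniform lower bounds on the volume of small geodetic balls and on the surface measure of small geodetic discs on $\partial A$ --- effectively a light version of Lemma~\ref{lemMyBk}, which in the paper appears after this lemma and is not invoked in its proof --- plus the tubular-neighbourhood estimate $v(\{x\in\cM:\dist(x,\partial A)\le\sigma\})=O(\sigma)$, which the paper's route bypasses entirely. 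Both approaches rest on the local Euclidean comparison of Lemma~\ref{lemcompare} and on compactness; yours carries a little more overhead in preliminaries (and needs some care, which you flag, over the uniformity of the local constants and over when $B_{\R^d}(\phi_i(y),(1-\eps)s)$ sits inside $\phi_i(V_i)$), but it is the standard and self-contained packing--covering duality, whereas the paper's is shorter because it counts covering balls directly in the Euclidean image rather than comparing volumes back on the manifold.
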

\begin{proof}
	Using Lemma \ref{lemcompare} and the
	assumed compactness of $A$,
	let $(x_i,\phi_i,V_i)$, $1 \leq i \leq k$ be a finite collection
	of triples as in Lemma \ref{lemcompare}, taking $\eps =1/2$ there,
	with $A \subset \cup_{i=1}^k V_i$, and moreover
	with either $x_i \in \partial A$
	or $\overline{V_i} \subset A^o$ for each $i \in [k]$.
	Then for each $i$, $\phi(A \cap V_i)$ is a bounded set in 
$\R^d$ and can be covered by a collection of $O(r^{-d})$ Euclidean balls 
	$B_{i,j}$ of radius $r/2$. Then  $\phi^{-1}(B_{i,j})$ is contained in
	a geodetic  ball  of radius $(3/2)r/2$ in $V_i$, so 
	$A \cap V_i$ can be covered by $O(r^{-d})$ geodetic
	balls of radius $r$, and (i) follows.

	For each $i \in [m]$,
	since $\phi(\partial A \cap  V_i)$
	is a bounded set in
	$\partial \bH$,
	it can be covered
	 by $O(r^{1-d})$ balls of radius $r/2$. Then we can deduce
	 (ii) 
	 similarly to (i).
\end{proof}

\begin{lemm}[Estimating the volume within $A$ of small geodetic balls]
	\label{lemMyBk}
	Given $\eps >0$,
	there exists $\delta >0$ such that 
	 for all $s\in (0,\delta)$ 
	 we have: 
	\begin{align}
		v(B(y,s)\cap A) & > (1-\eps) \vvol_d s^d ~~~~
		\forall ~ y \in A^{(s)}; \label{e:volLB1} \\
	v(B(y,s) \cap A)  & > (1- \eps) \vvol_d s^d/2 ~~~~ \forall y \in A;
		\label{e:volLB2} \\
		v(B(y,s) \cap A)  & < (1+\eps) \vvol_d s^d/2  
		~~~~ \forall y \in \partial A.
		\label{e:volUB}
	\end{align}
\end{lemm}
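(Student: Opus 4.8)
The plan is to reduce everything to the local Euclidean comparison of Lemma~\ref{lemcompare}, applied over a finite atlas chosen by compactness. First I would fix $\eps \in (0,1)$ and pick $\eps_0 \in (0,1)$ small enough that $(1-\eps_0)^{d+1} > 1-\eps$ and $(1+\eps_0)^d/(1-\eps_0) < 1+\eps$. Applying Lemma~\ref{lemcompare} with this $\eps_0$ at each point of the compact set $A$ and extracting a finite subcover, I would obtain finitely many charts $(V_i,\phi_i)$, $i \in [N]$, and a constant $\delta_0 > 0$ such that: (a) each $\overline{V_i}$ is compact; (b) for every $y \in A$ there is $i$ with $B(y,2\delta_0) \subset V_i$; and (c) if moreover $\dist(y,\partial A) < \delta_0$, then $i$ may be chosen so that the chart is centred at a point of $\partial A$, so that part (ii) of Lemma~\ref{lemcompare} applies: $\phi_i(V_i \cap A) = \phi_i(V_i) \cap \bH$, and (since $\phi_i$ is a homeomorphism carrying $V_i \cap A$ onto $\phi_i(V_i) \cap \bH$) $\phi_i$ maps $\partial A \cap V_i$ into $\partial\bH$. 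I then set $\delta := \delta_0$ and fix $s \in (0,\delta)$ and $y \in A$, together with an index $i$ as in (b), (c).

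The key geometric claim, to be checked for each such $y,i$, is that $\phi_i$ carries $B(y,s)$ onto a set squeezed between the concentric Euclidean balls $B_{\R^d}(\phi_i(y),(1-\eps_0)s)$ and $B_{\R^d}(\phi_i(y),(1+\eps_0)s)$. The outer inclusion is immediate from the distance comparison in Lemma~\ref{lemcompare}. For the inner one, I would first show $B_{\R^d}(\phi_i(y),(1-\eps_0)s) \subset \phi_i(V_i)$: if some point of this ball were not in the open set $\phi_i(V_i)$, the segment joining it to $\phi_i(y)$ would meet $\partial\phi_i(V_i)$ at a point $w^*$ with $\|w^*-\phi_i(y)\| < (1-\eps_0)s$; writing $w^* = \lim_k \phi_i(z_k)$ with $z_k \in V_i$, the bi-Lipschitz bound forces $\dist(y,z_k) < 2\delta_0$ eventually, so $(z_k)$ lies in the compact set $B(y,2\delta_0)$ and has a limit $z^* \in V_i$ with $\phi_i(z^*) = w^* \in \phi_i(V_i)$, a contradiction. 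Given this, any $w$ in that Euclidean ball equals $\phi_i(z)$ for a unique $z \in V_i$ with $\dist(y,z) \le \|w-\phi_i(y)\|/(1-\eps_0) < s$, so $z \in B(y,s)$, proving $B_{\R^d}(\phi_i(y),(1-\eps_0)s) \subset \phi_i(B(y,s))$.

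The three estimates are then short, using the volume comparison $|\lambda_d(\phi_i(F))/v(F) - 1| \le \eps_0$ from Lemma~\ref{lemcompare}. For \eqref{e:volLB1}, when $y \in A^{(s)}$ we have $B(y,s) \subset A^o$, hence $B(y,s)\cap A = B(y,s) \subset V_i$, and the inner inclusion gives $v(B(y,s)) \ge (1-\eps_0)\lambda_d(\phi_i(B(y,s))) \ge (1-\eps_0)^{d+1}\theta_d s^d > (1-\eps)\theta_d s^d$. For \eqref{e:volUB}, with $y \in \partial A$ and a boundary chart as in (c) we have $\phi_i(y) \in \partial\bH$, so $\phi_i(B(y,s)\cap A) \subset B_{\R^d}(\phi_i(y),(1+\eps_0)s) \cap \bH$, a half-ball of volume $\tfrac12\theta_d((1+\eps_0)s)^d$, whence $v(B(y,s)\cap A) \le (1-\eps_0)^{-1}\tfrac12\theta_d(1+\eps_0)^d s^d < \tfrac12(1+\eps)\theta_d s^d$. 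For \eqref{e:volLB2}: if $y \in A^{(s)}$ invoke \eqref{e:volLB1}; otherwise a short connectedness argument along a path from $y$ to a point of $\cM\setminus A$ of length close to $\dist(y,\cM\setminus A)\le s$ shows $\dist(y,\partial A) \le s$, so a boundary chart as in (c) is available with $B(y,s)\cap A \subset V_i$; since $y \in A\cap V_i$ the last coordinate of $\phi_i(y)$ is $\ge 0$, so $\bH$ contains at least half of $B_{\R^d}(\phi_i(y),(1-\eps_0)s)$, and intersecting the inner inclusion with $\bH$ (using $\phi_i(V_i)\cap\bH = \phi_i(V_i\cap A)$) gives $v(B(y,s)\cap A) \ge (1-\eps_0)\cdot\tfrac12\theta_d((1-\eps_0)s)^d > \tfrac12(1-\eps)\theta_d s^d$.

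I expect the main obstacle to be the bookkeeping needed to make the estimates uniform in $y \in A$: arranging the finite atlas so that the whole ball $B(y,s)\cap A$ sits in a single chart, that this chart is a boundary chart whenever $y$ is near $\partial A$, and that its image contains the Euclidean ball of radius $(1-\eps_0)s$ about $\phi_i(y)$ that the lower bounds require. This is routine but must be handled carefully; once the local picture is in place, the ball-versus-half-ball volume computations are elementary.
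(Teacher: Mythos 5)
Your proposal is correct and follows essentially the same route as the paper: compactness of $A$ gives a finite atlas of charts with the properties of Lemma~\ref{lemcompare}, the chart image of $B(y,s)$ is squeezed between the concentric Euclidean balls of radii $(1\pm\eps_0)s$, and the three estimates then reduce to elementary ball/half-ball volume computations, with \eqref{e:volLB2} handled by splitting according to whether $y\in A^{(s)}$. The only minor difference is that the paper builds the requirement $B_{\R^d}(\phi_i(x_i),2\delta_i)\subset\phi_i(V_i)$ into the choice of the finite cover, which makes the needed inclusion $B_{\R^d}(\phi_i(y),(1-\eps_0)s)\subset\phi_i(V_i)$ automatic, whereas you verify it via a compactness/boundary-crossing contradiction; both are fine.
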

\begin{proof} 
	Let $\eps' \in (0,\frac12)$ with $(1+ 2 \eps')^{d+1} < 1+ \eps$
	and $(1- \eps')^{d+1} > 1- \eps$.  By compactness of $A$, 
	we can and do take a finite collection of triples
	$(x_i, V_i,\phi_i), 1 \leq i \leq \ell$ as
	in Lemma \ref{lemcompare} and $\delta_i >0, 1 \leq i \leq \ell$,
	such that $A \subset \cup_{i=1}^{\ell} B(x_i,\delta_i)$
	and for each $i$, 
	$B(x_i, 2 \delta_i) \subset V_i$,
	$B(\phi_i(x_i), 2 \delta_i) \subset \phi(V_i)$,
	and moreover if $x_i \in A^o$ then $B(x_i, 2 \delta_i) \subset A^o$.
	Take $\delta = \min\{\delta_1,\ldots,\delta_\ell\}$.

	Let $y \in A$, $s \in (0,\delta)$,
	and choose $i \in [\ell]$ such that
	$y \in B(x_i,\delta_i)$. Then $B(y,s) \subset B(x_i,2\delta_i )
	\subset V_i$. Also, using Lemma \ref{lemcompare},
	for $u \in B_{\R^d}(\phi_i(y),(1-\eps')s)$ we have
	  $\dist(\phi_i^{-1}(u),y) \leq (1-\eps')^{-1} \|u - \phi_i(y)\| 
	  \leq s$. 
	 Hence 
	 $ B_{\R^d}(\phi_i(y),(1- \eps')s) \subset \phi_i(B(y,s)) $.
	 Thus using Lemma \ref{lemcompare} again, 
	 we have
%
	\begin{align}
		v(B(y,s)\cap A)
		& \geq (1 + \eps')^{-1} \lambda_d(\phi_i(B(y,s) \cap A))
		\nonumber \\
		& \geq (1-\eps') \lambda_d( B_{\R^d}(\phi_i(y),(1-\eps')s)
		\cap \phi_i(A \cap V_i)).
		\label{e:forvolLB}
	\end{align}
	If $y \in A^{(s)}$ then $B(y,s) \subset A$ so that
	$v(B(y,s) \cap A) \geq (1-\eps')^{d+1} \vvol_d s^d$, and hence 
	\eqref{e:volLB1}.
	If $y \in A \setminus A^{(s)}$ then $B(x_i,2 \delta_i) \cap \partial
	A \neq \emptyset $ so $x_i \in \partial A$ and
	$\phi_i(y) \in \bH$, and by \eqref{e:forvolLB},
	$$
	v(B(y,s) \cap A) \geq (1-\eps') \lambda_d(B_{\R^d}(\phi_i(y),(1-\eps')s)
	\cap \bH) \geq (1-\eps')^{d+1} (\vvol_d/2) s^d.
	$$
%
Combined with \eqref{e:volLB1} this yields \eqref{e:volLB2}.
%
	If $y \in \partial A$ then
	$\phi_i(y) \in \partial \bH$ and
	by Lemma \ref{lemcompare},
	\bean
	v(B(y,s) \cap A)
	& \leq & (1-\eps')^{-1} \lambda_d( \phi_i(B(y,s) \cap A))
	\nonumber \\
	& \leq & (1+2 \eps')
	\lambda_d(  B_{\R^d}(\phi_i(y), (1+\eps') s ) \cap \phi_i(V_i \cap A))
	\nonumber \\
	& = & (1+2 \eps')
	\lambda_d(  B_{\R^d}(\phi_i(y), (1+\eps') s ) \cap \bH)
	\\
	& \leq &  (1+2\eps')^{d+1}  (\vvol_d/2) s^d.
	\eean
	This gives us \eqref{e:volUB}.
	\end{proof}

\begin{lemm}[Asymptotic a.s. upper bound for $\tR$]
\label{lemlimsup}
	It is almost surely the case that
	\begin{align}
	\limsup_{n\to\infty} ( n \vvol_d \tR_{n,k(n)}^d /k(n)) & \leq 1/f_0, 
		& {\rm if~} \sbeta = \infty;
	\label{eqsubseq3}
\\
	\limsup_{n\to\infty} ( n \vvol_d \tR_{n,k(n)}^d /\log n) & \leq \hH_\sbeta(1)/f_0 ,
		& {\rm if~}  \sbeta < \infty.
\label{eqsubseq2}
		\end{align} 
	Also if $\sbeta < \infty$, given $\alpha > \hH_\sbeta(1)$,
	there exists $\eps>0$ 
	such that 
	\bea
	\Pr[n \vvol_d f_0 \tR_{n,\lfloor (\sbeta + \eps) \log n \rfloor}^d 
	> \alpha \log n]
	= O( n^{-\eps}) 
	~~~~{\rm as} ~ n \to \infty.
	\label{0910a}
	\eea
\end{lemm}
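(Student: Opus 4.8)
The plan is to derive all three assertions from the general upper bound, Lemma~\ref{lemmeta}, applied with $A_r := B \cap A^{(r)}$, together with the equivalence \eqref{Fnequiv} characterising $\{\tR_{n,k} \le r\}$ and, for the almost sure statements, the subsequence trick of Lemma~\ref{lemtrick}. We may assume $f_0 > 0$, since if $f_0 = 0$ then $1/f_0 = \hH_\beta(1)/f_0 = +\infty$ and \eqref{eqsubseq3}, \eqref{eqsubseq2} and \eqref{0910a} all hold trivially.

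The one point requiring work is the volume hypothesis of Lemma~\ref{lemmeta}. Fix small $\eps_1 \in (0,1)$ and $\eps_2 \in (0,f_0)$ and set $a := (1-\eps_1)(f_0-\eps_2)\theta_d$, a quantity slightly below $f_0\theta_d$. I would show there is $r_0 > 0$ such that $\mu(B(x,s)) \ge a s^d$ for all $x \in A_r$ and $0 < s < r < r_0$: if $x \in A_r = B \cap A^{(r)}$ and $s < r$ then $x \in A^{(s)}$, so $B(x,s) \subset A^o$ and hence $\mu(B(x,s)) = \int_{B(x,s)} f\,dv \ge \big(\inf_{B(x,s)} f\big)\,v(B(x,s))$; by Lemma~\ref{lemMyBk}\eqref{e:volLB1}, $v(B(x,s)) = v(B(x,s)\cap A) \ge (1-\eps_1)\theta_d s^d$ once $s$ is below some $\delta_1$, while $f|_A$, being continuous on the compact set $A$, is uniformly continuous there, so $\inf_{B(x,s)} f \ge f(x) - \eps_2 \ge f_0 - \eps_2$ once $s$ is below some $\delta_2$; take $r_0 := \min(\delta_1,\delta_2)$. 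This is exactly where the possibility that $B(x,s)$ protrudes from $B$ into $A \setminus B$ is handled: continuity of $f$ on all of $A$ keeps $f$ uniformly close to $f(x) \ge f_0$ on $B(x,s)$ regardless. The remaining hypothesis of Lemma~\ref{lemmeta}, namely $\kappa(A_r, r) = O(r^{-b})$ with $b := d$, follows readily from Lemma~\ref{lemcovering}(i).

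Now apply Lemma~\ref{lemmeta}. For $\beta = \infty$, part~(i) with any fixed $u > 1/a$ and $r_n := (u k(n)/n)^{1/d}$ (so $r_n \to 0$ by \eqref{kcond}) gives, almost surely for all large $n$, $B \cap A^{(r_n)} = A_{r_n} \subset F_{k(n),r_n}(\X_n)$, hence $\tR_{n,k(n)} \le r_n$ by \eqref{Fnequiv}, hence $\limsup_n (n\theta_d \tR_{n,k(n)}^d/k(n)) \le \theta_d u$; intersecting over a countable sequence with $u \downarrow 1/a$ and $\eps_1,\eps_2 \downarrow 0$, so $\theta_d u \to 1/f_0$, gives \eqref{eqsubseq3}. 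For $\beta < \infty$, fix $\alpha > \hH_\beta(1)$; choosing $\eps_1,\eps_2$ small enough and then $u \in (a^{-1}\hH_\beta(1),\ \alpha/(\theta_d f_0))$ (possible since $\hH_\beta(1) f_0/((1-\eps_1)(f_0-\eps_2)) \to \hH_\beta(1) < \alpha$) and setting $r_n := (u\log n/n)^{1/d}$, part~(ii) (with $\hH_\beta(b/d) = \hH_\beta(1)$ as $b = d$) yields $\eps > 0$ with $\Pr[\{A_{r_n} \subset F_{\lfloor(\beta+\eps)\log n\rfloor, r_n}(\X_n)\}^c] = O(n^{-\eps})$; by \eqref{Fnequiv} this event equals $\{\tR_{n,\lfloor(\beta+\eps)\log n\rfloor} > r_n\}$, which contains $\{n\theta_d f_0 \tR_{n,\lfloor(\beta+\eps)\log n\rfloor}^d > \alpha\log n\}$ because $n\theta_d f_0 r_n^d = \theta_d f_0 u\log n < \alpha\log n$, and \eqref{0910a} follows. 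Finally \eqref{eqsubseq2} follows by applying Lemma~\ref{lemtrick}(a), whose hypothesis is precisely \eqref{0910a}, to the array $U_{n,k} := \theta_d f_0 \tR_{n,k}^d/\alpha$ (nonincreasing in $n$, nondecreasing in $k$): this gives $\limsup_n (n\theta_d f_0 \tR_{n,k(n)}^d/\log n) \le \alpha$ almost surely, and letting $\alpha \downarrow \hH_\beta(1)$ along a countable sequence concludes. The only genuine obstacle is ensuring the constant $a$ in the volume estimate can be pushed arbitrarily close to $f_0\theta_d$, so that the limiting constants come out as $1/f_0$ and $\hH_\beta(1)/f_0$ and not something larger; everything else is bookkeeping with the $\eps$'s and direct appeals to Lemmas~\ref{lemmeta}, \ref{lemcovering}(i) and \ref{lemtrick} and to \eqref{Fnequiv}.
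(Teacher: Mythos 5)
Your proof is correct and follows essentially the same route as the paper's: verify the volume hypothesis of Lemma~\ref{lemmeta} for $A_r = B \cap A^{(r)}$ using Lemma~\ref{lemMyBk}\eqref{e:volLB1} together with continuity of $f|_A$, pair this with Lemma~\ref{lemcovering}(i), invoke Lemma~\ref{lemmeta}(i) (resp.~(ii)) and \eqref{Fnequiv}, and finish via Lemma~\ref{lemtrick}. The only differences are cosmetic (additive versus multiplicative $\eps$'s, and the explicit dismissal of the trivial case $f_0=0$, which the paper leaves implicit).
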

\begin{proof}
	Let $\eps' \in (0,\frac12)$.  For all small enough $r$ and for
	$x \in B \cap A^{(r)}$,
	using the definition of $f_0$ and (if $B \neq A$) also the
	assumed continuity of $f$ on $A$,
	and then
	\eqref{e:volLB1} in Lemma \ref{lemMyBk},
	we have 
	\begin{align}
		\mu(B(x,r)) \geq
	f_0(1-\eps'/4) v(B(x,r)) \geq (1- \eps'/2) f_0 \vvol_d r^d.
\label{ctyclaim}
	\end{align}

	
	We shall apply Lemma \ref{lemmeta} with $A_r= B \cap A^{(r)}$.
	By Lemma \ref{lemcovering}-(i) and \eqref{ctyclaim}, 
	the conditions of Lemma \ref{lemmeta} hold 
	with $b=d$ and $a= (1-\eps'/2) f_0\vvol_d $.	
	Define $r_n$ by $nr_n^d = uk(n)$ with $u= (1+\eps')(f_0\vvol_d)^{-1} $
	if $\sbeta= \infty$,  and $nr_n^d = u' \log n$ with 
	$u' = (1+\eps')(f_0 \vvol_d)^{-1} \hat H_\sbeta(1) $
	if $\sbeta<\infty$. Then 
	$u >1/a$  and $u' >a^{-1}\hat H_\sbeta(1)$.

	If $\sbeta = \infty$, then by Lemma \ref{lemmeta}-(i), 
	we have almost surely that 
	for all large enough $n$,
	$B \cap A^{((u k(n)/n)^{1/d})} \subset F_{k(n),(uk(n)/n)^{1/d}}(\X_n)$
	and by \eqref{Fnequiv}, $\tR_{n,k(n)} \leq (u k(n)/n)^{1/d}$.
	Since $\eps' \in (0,1)$ is arbitrary this yields 
	\eqref{eqsubseq3}. 

	If $\sbeta < \infty$ then by Lemma \ref{lemmeta}-(ii)
	and \eqref{Fnequiv},
	there exists $\eps >0$ such that
	\begin{align*}
		& \Pr[ \tR_{n,\lfloor (\sbeta + \eps) \log n \rfloor }
		 \leq (u' (\log n)/n)^{1/d}] 
		 \\
		 =
		& 	\Pr[B \cap A^{((u' (\log n)/n)^{1/d})}
	\subset F_{\lfloor (\sbeta + \eps) 
	\log n
	\rfloor
		,( u' (\log n)/n)^{1/d}}(\X_n) ] 
		 = O(n^{-\eps})
		\end{align*}
	and hence   \eqref{0910a}. Then
	\eqref{eqsubseq2} follows from \eqref{0910a} by Lemma \ref{lemtrick}.
\end{proof}

\begin{proof}[Proof of Proposition \ref{thm1}]
	By Lemmas  \ref{lemliminf} and
 \ref{lemlimsup},  (\ref{0617a}) holds if $\sbeta = \infty$ and
(\ref{0315a}) holds if $\sbeta < \infty$.
It follows that almost surely
 $\tR_{n,k(n)} \to 0$ as $n \to \infty$, 
and therefore if also 
	$B \subset A^o$, recalling that $B$ is compact
	we have $\tR_{n,k(n)} = R_{n,k(n)}$ for all
large enough $n$.  Therefore in this case (\ref{0617a}) (if $\sbeta =\infty$)
or (\ref{0315a}) (if $\sbeta < \infty)$ still holds with $\tR_{n,k(n)}$
replaced by $R_{n,k(n)}$.
\end{proof}

\subsection{Proof of  Theorem \ref{thm2}}
\label{secproofstrong}




Recall that we are assuming that
$k(n)/\log n \to \sbeta \in [0,\infty]$ and $k(n) /n \to 0$,
 as $n \to \infty$,
and that
$f_1 := \inf_{B \cap \partial A} f$.

\begin{lemm}[Lower bound for packing number] \label{l:pack_b}
	Suppose that
	$B \cap \partial A = \overline{B_A^o \cap
	\partial A} \neq \emptyset$
	and $f|_{A} $ is continuous on $A$.
For any $\alpha>f_1$, we have 
\bea
	\liminf_{r \downarrow 0} 
	r^{d-1} \nu(B, r,\alpha\vvol_d r^d/2) >0.
	\label{0406a}
	\eea
\end{lemm}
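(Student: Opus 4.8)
The plan is to adapt the proof of Lemma~\ref{packlem} to the boundary, using the half-ball volume estimate \eqref{e:volUB} in place of the full-ball one. The key point is that for $y\in\partial A$ the geodetic ball $B(y,r)$ meets $A$ in a set of volume at most about $\theta_d r^d/2$, so its $\mu$-measure is at most about $f_1\theta_d r^d/2<\alpha\theta_d r^d/2$; and a fixed $(d-1)$-dimensional patch of $\partial A$ on which $f$ stays below $\alpha$ accommodates $\Omega(r^{-(d-1)})$ points that are pairwise more than $2r$ apart in geodetic distance, hence that many disjoint geodetic $r$-balls centred in $B$.

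Concretely, first fix $\eps\in(0,\tfrac19)$ small enough that $(1+\eps)f_1<\alpha$ --- possible since $f_1<\alpha$, and automatic when $f_1=0$. The hypothesis $B\cap\partial A=\overline{B_A^o\cap\partial A}$ (which in particular makes $B_A^o\cap\partial A$ nonempty and relatively open in $\partial A$) together with continuity of $f|_A$ gives $\inf_{B_A^o\cap\partial A}f=\inf_{B\cap\partial A}f=f_1$, so one may choose $x_0\in B_A^o\cap\partial A$ with $(1+\eps)f(x_0)<\alpha$. Using again that $x_0\in B_A^o$ and continuity of $f|_A$ at $x_0$, pick $\rho>0$ so small that $B(x_0,2\rho)\cap A\subset B$ and $f(x)<\alpha/(1+\eps)$ for all $x\in B(x_0,2\rho)\cap A$. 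Then take $(V,\phi)$ as in Lemma~\ref{lemcompare} applied at $x_0$ with this $\eps$, and (as we may) with $V\subset B(x_0,\rho)$; recall $\phi(x_0)=o$ and $\phi(V\cap A)=\phi(V)\cap\bH$, and note that, $\phi$ being a homeomorphism of $V$ onto the open set $\phi(V)$ which carries $V\cap A$ onto $\phi(V)\cap\bH$, it carries $V\cap\partial A$ onto the boundary $\phi(V)\cap\partial\bH$, a relatively open neighbourhood of $o$ in $\partial\bH$.

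Next, fix $r_0>0$ with $B_{\R^{d-1}}(o,r_0)\subset\phi(V)\cap\partial\bH$. For each small $r>0$, let $z_{r,1},\dots,z_{r,\sigma(r)}$ enumerate $3r\,\Z^{d-1}\cap B_{\R^{d-1}}(o,r_0/2)$; these are pairwise at Euclidean distance at least $3r$, and $\sigma(r)\ge c_0 r^{-(d-1)}$ for some constant $c_0=c_0(d,r_0)>0$. Put $y_{r,j}:=\phi^{-1}(z_{r,j})\in V\cap\partial A$; then $y_{r,j}\in B(x_0,\rho)\cap A\subset B$, and by the bi-Lipschitz bound in Lemma~\ref{lemcompare},
\[
\dist(y_{r,i},y_{r,j})\;\ge\;(1+\eps)^{-1}\|z_{r,i}-z_{r,j}\|\;\ge\;3r/(1+\eps)\;>\;2r ,
\]
so the closed geodetic balls $B(y_{r,j},r)$, $1\le j\le\sigma(r)$, are pairwise disjoint. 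For $r<\rho$ one has $B(y_{r,j},r)\cap A\subset B(x_0,2\rho)\cap A$, hence $f<\alpha/(1+\eps)$ there, while $y_{r,j}\in\partial A$ and $r<\delta$ (the $\delta=\delta(\eps)$ of Lemma~\ref{lemMyBk}) give $v(B(y_{r,j},r)\cap A)<(1+\eps)\theta_d r^d/2$ by \eqref{e:volUB}; therefore
\[
\mu\bigl(B(y_{r,j},r)\bigr)\;\le\;\frac{\alpha}{1+\eps}\,v\bigl(B(y_{r,j},r)\cap A\bigr)\;<\;\alpha\,\theta_d r^d/2 .
\]
Consequently $\nu(B,r,\alpha\theta_d r^d/2)\ge\sigma(r)\ge c_0 r^{-(d-1)}$ for all sufficiently small $r$, which yields \eqref{0406a}.

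The new ingredients, compared with Lemma~\ref{packlem}, are the half-ball bound \eqref{e:volUB} and the use of the hypothesis $B\cap\partial A=\overline{B_A^o\cap\partial A}$: the latter is what guarantees that $B_A^o\cap\partial A$ is a nonempty, relatively open subset of $\partial A$ along which $\inf f=f_1$, so that $x_0$ can be chosen with an honest relative neighbourhood inside $B\cap\partial A$ on which $f<\alpha$. I do not anticipate any real difficulty beyond keeping the smallness conditions on $\rho$ and on $r$ mutually consistent, though one should be a little careful with constants --- e.g. that $\eps<\tfrac12$ already forces $3r/(1+\eps)>2r$, and that the slightly enlarged ball $B(x_0,2\rho)$ still sees only values of $f$ below $\alpha/(1+\eps)$.
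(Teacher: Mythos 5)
Your argument is correct and is essentially the same as the paper's: pick $x_0\in B_A^o\cap\partial A$ with $f(x_0)$ close to $f_1$, work in a chart $(V,\phi)$ from Lemma~\ref{lemcompare} around $x_0$ so that the pre-image of a $\Theta(r)$-spaced lattice in $\partial\bH$ yields $\Omega(r^{1-d})$ pairwise disjoint geodetic $r$-balls centred in $B\cap\partial A$, and then bound each ball's $\mu$-measure by $f\cdot v(B(\cdot,r)\cap A)$ via continuity of $f$ and the half-ball volume estimate~\eqref{e:volUB}. The only differences from the paper's proof are cosmetic (multiplicative vs.\ additive control of $f$, lattice spacing $3r$ vs.\ $4r$); in fact your bookkeeping makes more explicit the constraint $\eps<\alpha-f_1$ (or its multiplicative analogue) that the paper leaves implicit when choosing $\eps\in(0,1/(9d))$.
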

\begin{proof}
Let $\eps \in (0,1/(9d))$. 
	Choose $x \in B_A^o \cap \partial A$
	with $f(x) < f_1 + \eps/4$.
	Then choose $\delta_1 >0$ such that
	$B(x,\delta_1) \cap A \subset B$
	and
	$f(y) \leq f(x) + \eps/2$ for all
	$y \in B(x,\delta_1) \cap A$.
	Then using \eqref{e:volUB} in
	Lemma \ref{lemMyBk}, choose 
	$\delta \in (0, \delta_1/2)$ such that
	\bea
	\mu(B(y, s)) < (f_1+ \eps) \vvol_d s^d/2,
	~~~
	\forall ~(y,s) \in (\partial A \cap B(x,\delta))
	\times (0,\delta).
	\label{0910b}
	\eea

	With $\eps$ and  $x$ as above,
	let $(V,\phi)$ be as in Lemma
	\ref{lemcompare}.
 Let $r \in (0,\delta)$.	Within  $\partial \bH$ consider the lattice
	$\LL_r := 
	( 4r \Z)^{d-1}
	\times 
	\{0\} 
	$.
	Then as $r \downarrow 0$,
	$
	\card (\LL_r \cap \phi(V)) = \Omega(r^{1-d}).
	$
	For distinct $z,z' \in \LL_r \cap \phi(V)$,
	set $y= \phi^{-1}(z),  y'= \phi^{-1}(z')$.
	Then  $\|\phi(y) -\phi(y') \| 
	\geq 4 r$, and therefore
	by Lemma \ref{lemcompare},
	$\dist(y,y') \geq 3r$. Hence the geodetic
	balls $B(y,r), y \in \phi^{-1}(\LL_r \cap \phi( U \cap B(x,\delta)))$
	are pairwise disjoint. Moreover, each such $y$
	lies in $\partial A$, 
	and by (\ref{0910b}), each of these balls
	satisfies $\mu(B(y,r)) \leq (f_1+\eps) \vvol_d r^d /2$.
	This gives us (\ref{0406a}).
\end{proof}

\begin{lemm}[Asymptotic lower bound]
\label{lemliminfb}
	Under the assumptions of Theorem \ref{thm2},
	\begin{align}
		& \Pr[\liminf_{n \to \infty}
\left( n \vvol_d R_{n,k(n)}^d /k( n) \right)
	\geq
 2/ f_1  ] =1, 
		&{\rm if}~
 \sbeta = \infty; 
\label{liminfeq0}
	\\
		& \Pr[\liminf_{n \to \infty}
\left( n \vvol_d R_{n,k(n)}^d /\log n \right) 
\geq
		2 \hH_\sbeta(1-1/d)/ f_1 ] =1, 
		&{\rm if} ~
\sbeta < \infty .
\label{liminfeq}
	\end{align}
\end{lemm}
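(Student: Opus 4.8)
The plan is to obtain \eqref{liminfeq0} and \eqref{liminfeq} directly from the boundary packing estimate of Lemma \ref{l:pack_b} combined with the general lower bound of Lemma \ref{gammalem}; since all the geometry has already been packaged into Lemma \ref{l:pack_b}, what remains is essentially a change of variables. First, dispose of the degenerate case: if $B \cap \partial A = \emptyset$ then $f_1 = +\infty$ by convention, so the right-hand sides of \eqref{liminfeq0} and \eqref{liminfeq} both equal $0$ and the asserted inequalities hold trivially since $R_{n,k(n)} \ge 0$. So assume henceforth that $B \cap \partial A \neq \emptyset$. Then, since $B \cap \partial A \subset B$, since $f$ is finite-valued and continuous on $A$, and since $f_0 > 0$ by the hypotheses of Theorem \ref{thm2}, we have $f_1 = \inf_{B \cap \partial A} f \in [f_0, \infty) \subset (0,\infty)$; moreover the hypotheses of Theorem \ref{thm2} include $B \cap \partial A = \overline{B_A^o \cap \partial A}$ and continuity of $f|_A$ on $A$, which are precisely the hypotheses of Lemma \ref{l:pack_b}.

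Now fix $\alpha > f_1$. By Lemma \ref{l:pack_b}, $\nu(B, r, \alpha \theta_d r^d/2) = \Omega(r^{-(d-1)})$ as $r \downarrow 0$, so the hypothesis of Lemma \ref{gammalem} holds with $a := \alpha \theta_d/2$ and $b := d-1$; note $b/d = 1 - 1/d$. Lemma \ref{gammalem} then gives, almost surely, that $\liminf_{n \to \infty}(n R_{n,k(n)}^d/k(n)) \ge 1/a = 2/(\alpha \theta_d)$ if $\beta = \infty$, and $\liminf_{n \to \infty}(n R_{n,k(n)}^d/\log n) \ge a^{-1} \hH_\beta(b/d) = (2/(\alpha \theta_d)) \hH_\beta(1-1/d)$ if $\beta < \infty$. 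Multiplying through by $\theta_d$ yields $\liminf_{n \to \infty}(n \theta_d R_{n,k(n)}^d/k(n)) \ge 2/\alpha$ if $\beta = \infty$, and $\liminf_{n \to \infty}(n \theta_d R_{n,k(n)}^d/\log n) \ge (2/\alpha) \hH_\beta(1-1/d)$ if $\beta < \infty$. Finally, letting $\alpha$ decrease to $f_1$ along a countable sequence and intersecting the corresponding probability-one events (and using that $\hH_\beta(1-1/d)$ is a fixed finite constant, so $2/\alpha \to 2/f_1$ and $(2/\alpha)\hH_\beta(1-1/d) \to (2/f_1)\hH_\beta(1-1/d)$), we obtain \eqref{liminfeq0} when $\beta = \infty$ and \eqref{liminfeq} when $\beta < \infty$.

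There is essentially no real obstacle at this stage; the entire content lies in Lemma \ref{l:pack_b}, which shows that near a point $x \in B_A^o \cap \partial A$ one may pack $\Omega(r^{1-d})$ pairwise disjoint geodetic $r$-balls centred on points of $B$, each of $\mu$-measure only about $(f_1/2)\theta_d r^d$. The factor $\tfrac{1}{2}$ comes from \eqref{e:volUB} (a ball centred on $\partial A$ captures only half of the Euclidean volume $\theta_d r^d$) and the exponent $d-1$ (rather than $d$) reflects that the packing is spread along the $(d-1)$-dimensional boundary $\partial A$; these are exactly the two features that produce the factor $2$ and the argument $1-1/d$ in the stated bounds, in contrast with the interior lower bound of Lemma \ref{lemliminf}.
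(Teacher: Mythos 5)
Your proof is correct and follows exactly the same route as the paper: verify the packing condition of Lemma \ref{gammalem} via Lemma \ref{l:pack_b} with $a=\alpha\theta_d/2$ and $b=d-1$ for $\alpha>f_1$, then let $\alpha\downarrow f_1$. The only addition over the paper's terse proof is your explicit handling of the degenerate case $B\cap\partial A=\emptyset$, which is a sensible inclusion.
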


\begin{proof}
	By Lemma \ref{l:pack_b}, the condition in 
Lemma \ref{gammalem} holds with $b= d-1$ and $a=\alpha \vvol_d/2$ for all $\alpha>f_1$. The result follows
by Lemma \ref{gammalem}.
\end{proof}


\begin{lemm}[Asymptotic upper bound]
\label{lemlimsupb}
	Under the assumptions of Theorem \ref{thm2},
\bea
\Pr[\limsup_{n \to \infty}
\left( n \vvol_d R_{n,k(n)}^d /k( n) \right) \leq 
	\max(1/f_0,2/ f_1)  ] =1, ~~{\rm if}~
 \sbeta = \infty;
	~~~~~~~~~~~~
\label{limsupeq0}
\\
\Pr \left[ \limsup_{n \to \infty}
\left( n \vvol_d R_{n,k(n)}^d /\log n \right) \leq 
	\max \left( \frac{\hH_\sbeta(1)}{f_0}, \frac{2 \hH_\sbeta(1- 1/d) }{ 
	f_1}  \right) 
	\right] =1, ~~ {\rm if} ~ 
\sbeta < \infty .
\nonumber \\
\label{limsupeq}
\eea
\end{lemm}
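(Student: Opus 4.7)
The plan is to decompose $B$ as $(B \cap A^{(r_n)}) \cup (B \setminus A^{(r_n)})$ and bound the $k(n)$-coverage threshold on each piece separately, with $r_n$ chosen at the target scale. Focusing first on the case $\beta<\infty$, fix $\alpha$ slightly larger than $\max(\hH_\beta(1)/f_0,\ 2\hH_\beta(1-1/d)/f_1)/\theta_d$ and set $r_n := (\alpha \log n/n)^{1/d}$. The interior piece is handled directly by Lemma~\ref{lemlimsup}: the a.s.\ upper bound \eqref{eqsubseq2} combined with the equivalence \eqref{Fnequiv} yields that almost surely, $B \cap A^{(r_n)} \subset F_{k(n), r_n}(\X_n)$ for all sufficiently large $n$.

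For the boundary layer $B \setminus A^{(r_n)}$, I would apply Lemma~\ref{lemmeta} with $A_r := B \setminus A^{(r)}$. The covering-number bound $\kappa(A_r,r) = O(r^{1-d})$ follows from Lemma~\ref{lemcovering}(ii) since $A_r \subset A\setminus A^{(r)}$, so one takes $b=d-1$. With $b=d-1$ and with $a$ close to $f_1\theta_d/2$, part (ii) of Lemma~\ref{lemmeta} produces the rate $a^{-1}\hH_\beta(b/d) = 2\hH_\beta(1-1/d)/f_1$, matching the boundary term in \eqref{limsupeq}.

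The main technical obstacle is verifying the ball-volume hypothesis of Lemma~\ref{lemmeta}, namely $\mu(B(x,s)) \geq (f_1-\eps)\theta_d s^d/2$ for $x \in A_r$ and $s\in(0,r)$ whenever $r$ is sufficiently small. To do so I would argue that the function $g(\delta) := \inf\{f(x) : x \in B,\ \dist(x,\partial A) < \delta\}$ satisfies $g(\delta) \to f_1$ as $\delta \downarrow 0$: any minimizing sequence $x_\delta$ has a limit point in $B \cap \partial A$ by compactness of $B$, and continuity of $f$ at that point yields $\liminf g(\delta) \geq f_1$, while the reverse inequality is trivial. Given $\eps>0$, choose $\delta_1$ with $g(\delta_1) \geq f_1 - \eps$, use uniform continuity of $f$ on the compact set $A$ to control $f(y)$ for $y$ near $x$, and combine with the volume estimate \eqref{e:volLB2} from Lemma~\ref{lemMyBk} to obtain the required lower bound on $\mu(B(x,s))$ for all $x \in A_r$ with $r$ sufficiently small.

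With both hypotheses verified, Lemma~\ref{lemmeta}(ii) yields $\Pr[\{A_{r_n} \subset F_{\lfloor(\beta+\eps)\log n\rfloor, r_n}(\X_n)\}^c] = O(n^{-\eps})$ for some $\eps>0$, and Borel--Cantelli (together with $k(n) \leq \lfloor(\beta+\eps)\log n\rfloor$ for all large $n$) gives $B \setminus A^{(r_n)} \subset F_{k(n), r_n}(\X_n)$ almost surely for all sufficiently large $n$. Combined with the interior coverage, this shows $R_{n,k(n)} \leq r_n$ almost surely for all sufficiently large $n$; letting $\alpha$ descend to the claimed maximum proves \eqref{limsupeq}. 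The case $\beta=\infty$ is analogous, using Lemma~\ref{lemmeta}(i) in place of (ii) and taking $r_n := (\alpha k(n)/n)^{1/d}$ with $\alpha$ slightly above $\max(1/f_0, 2/f_1)/\theta_d$, giving \eqref{limsupeq0}.
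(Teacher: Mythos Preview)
Your approach matches the paper's: decompose $B$ into the interior $B\cap A^{(r_n)}$ and the boundary layer $B\setminus A^{(r_n)}$, apply Lemma~\ref{lemmeta} with $A_r = B\setminus A^{(r)}$, $b=d-1$, $a$ close to $f_1\theta_d/2$ for the latter, and use Lemma~\ref{lemlimsup} for the former. Your verification of the ball-measure hypothesis via $g(\delta)\to f_1$ and \eqref{e:volLB2} is correct and in fact more explicit than the paper's one-line citation.

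There is, however, a genuine gap in the $\beta<\infty$ case. Lemma~\ref{lemmeta}(ii) only guarantees $\Pr[\{A_{r_n}\subset F_{\lfloor(\beta+\eps)\log n\rfloor,r_n}(\X_n)\}^c]=O(n^{-\eps})$ for \emph{some} $\eps>0$, with no control forcing $\eps>1$; indeed, as $\alpha$ is taken close to the threshold the exponent is typically small. Hence $\sum_n n^{-\eps}$ may diverge and Borel--Cantelli is not directly applicable. The paper avoids this by staying at the level of $O(n^{-\eps})$ bounds for \emph{both} pieces: it uses the quantitative estimate \eqref{0910a} (rather than the almost-sure \eqref{eqsubseq2}) for the interior, takes $\eps=\min(\delta,\delta')$, combines to get $\Pr[R_{n,\lfloor(\beta+\eps)\log n\rfloor}> (u(\log n)/n)^{1/d}]=O(n^{-\eps})$, and then invokes the subsequence trick Lemma~\ref{lemtrick}(a), which requires only $\eps>0$ and exploits the monotonicity of $R_{n,k}$ in $n$ and $k$. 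Your hybrid of an almost-sure interior bound with an $O(n^{-\eps})$ boundary bound does not feed into Lemma~\ref{lemtrick} cleanly; the simplest repair is to replace the appeal to \eqref{eqsubseq2} by \eqref{0910a} and apply Lemma~\ref{lemtrick} once to the combined event. Your $\beta=\infty$ argument is fine as written, since Lemma~\ref{lemmeta}(i) already delivers an almost-sure conclusion.
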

\begin{proof}
%
	By Lemma
	\ref{lemMyBk} (eq. \eqref{e:volLB2})
	and Lemma \ref{lemcovering}-(ii),
	we see that the condition of
Lemma \ref{lemmeta} holds when we take
	$A_r= B\setminus A^{(r)}$,
	$b= d-1$ and any $a<f_1\vvol_d/2$.
	
	Suppose $\sbeta = \infty$. Let 
	$u > \max(1/(f_0 \vvol_d), 2/(f_1 \vvol_d))$. 
	Then by Lemma \ref{lemmeta}(i),
	we have a.s. that
	for $n $ large enough,
	\begin{align}
	B \setminus A^{((uk(n)/n)^{1/d})} \subset F_{k(n),(u k(n)/n)^{1/d}}
	(\X_n).
		\label{e:0902a}
		\end{align}
		Also, by Proposition \ref{thm1} we have
		a.s. that for $n$ large enough $n \tR_{n,k(n)}^d/k(n)
		\leq u$, so that by \eqref{Fnequiv},
		$$
		B \cap A^{((uk(n)/n)^{1/d}) } \subset 
		F_{k(n),(u k(n)/n)^{1/d}} (\X_n).
		$$
		Combined with \eqref{e:0902a}
		this shows that a.s., for $n$ large enough
		$B\subset
		F_{k(n),(u k(n)/n)^{1/d}} (\X_n)$ and
		hence $R_{n,k(n) } \leq (u k(n)/n)^{1/d}$.
		This gives us \eqref{limsupeq0}.


Now suppose $\sbeta < \infty$, and  let
	$u > \max (\hat{H}_\sbeta(1)/(f_0 \vvol_d),  
	2 \hat{H}_{\sbeta}(1-1/d) / (f_1 \vvol_d))$.
	Then by Lemma \ref{lemmeta}(ii),
	there exists $\delta >0$ such that 
	\begin{align}
		\Pr[ \{B \setminus A^{((u (\log n)/n)^{1/d})} \subset
	F_{\lfloor (\sbeta + \delta) \log n \rfloor, (u (\log n)/n)^{1/d}}
		(\X_n) \}^c] 
	= O(n^{-\delta}).
		\label{0902b}
	\end{align}
	Also by \eqref{0910a} from Lemma \ref{lemlimsup} we also have
	for some $\delta' >0$
	that
	\begin{align*}
	\Pr[ n \tR_{n, \lfloor (\sbeta+ \delta')\log n \rfloor}^d
	> u \log n ] = O(n^{-\delta'}),
	\end{align*}
	so that by \eqref{Fnequiv}, 
	\begin{align}
	\Pr[ \{B \cap A^{((u (\log n)/n )^{1/d})} \subset
	F_{\lfloor (\sbeta + \delta') \log n, (u (\log n)/n)^{1/d}}
	(\X_n) \}^c] =O(n^{-\delta'}).
		\label{0902c}
	\end{align}
	Since
	\eqref{0902b} )(resp. \eqref{0902c})
	still holds if we reduce $\delta$ (resp. $\delta'$),
	taking $\eps = \min(\delta,\delta')$ we have
	$$
	\Pr \big[R_{n,\lfloor (\sbeta+ \eps) \log n \rfloor} \leq
	\big(\frac{u \log n}{n} \big)^{1/d} \big]
	=
	\Pr[ \{ B 
	\subset F_{ \lfloor (\sbeta + \eps) \log n\rfloor, (u (\log n)/n)^{1/d}
	\rfloor } (\X_n) \}^c ]
	= O(n^{-\eps}).
	$$
Hence by Lemma \ref{lemtrick}, 
a.s. for large enough  $n$
we have
$n R_{n,k(n)}^d / \log n \leq u$. This gives us \eqref{limsupeq}.
\end{proof}

\begin{proof}[Proof of Theorem \ref{thm2}]
	Since $R_{n,k} \geq \tR_{n,k}$ for all $n,k$,
	using Proposition 
	\ref{thm1} and Lemma \ref{lemliminfb} gives
	the required lower bound, and using Lemma \ref{lemlimsupb}
	gives the upper bound. 
\end{proof}

\section{Proof of Proposition \ref{Hallthm}}
\label{seclastpf}
\allco

Let $\cM, A$ be as in Section \ref{s:MathFrame}.
Define the class of sets
	\bea
	\label{cAdef}
	\cA := \{D: D \subset A, D ~{\rm closed}, v (\partial D) = 0 \}.
	\eea
Assume from now on that $f(x) = f_0 = 1/v(A)$ for all $x \in A$. Fix
$k \in \N$.  Let $\beta \in \R$ and let $(r_t)_{t >0}$ be strictly positive with
\bea
\lim_{t \to \infty} (  t \vvol_d f_0 r_t^d   - \log (t f_0)  -
(d+k -2) \lglg t ) = \beta.
\label{rt2}
\eea
Since $k$ is now fixed, from now on we write just $F_r(\X)$
instead of $F_{k,r}(\X)$, which was defined at
\eqref{F3def}.
Let $\eps' >0$
be a constant satisfying
\begin{align}
	1 - 2 \eps' >  \max \Big(
	(1- (8d)^{-1})^{1/(d+1)}, 9/10 \Big) .
	\label{e:defeps'}
\end{align}

\begin{prop}
	\label{propcompare2}
Let $x_0 \in A$. Then there exists an open set $V \subset \cM$ with $x_0 \in V$,
	and an injective open map $\phi: V \to \R^d$ having
	the properties described in Lemma \ref{lemcompare} with $\eps = \eps'$,
	such that moreover 
%
%
%
	for all $B \in \cA$ with $B  \subset V$ we have
	\bea
\lim_{t \to \infty}
	\Pr [ B \cap   A^{(r_t)}  \subset F_{r_t}(\Po_t) ]
	= \exp( -(c_{d}/(k-1)!) v( B  ) e^{-\beta}).
	\label{eqHausd2}
	\eea
%
%
%
%
\end{prop}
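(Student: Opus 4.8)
The plan is to transfer the problem into Euclidean space via the chart provided by Lemma~\ref{lemcompare} (and its refinement Lemma~\ref{lemcompare3}), invoke the known coverage asymptotics for Poisson balls in $\R^d$, and recover the \emph{exact} constant $c_d/(k-1)!$ by a subdivision argument that bypasses the fact that the distortion $\eps'$ does not tend to $0$.

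\textbf{Set-up.} Apply Lemma~\ref{lemcompare} at $x_0$ with $\eps=\eps'$ to obtain $V$ and $\phi\colon V\to\R^d$; after shrinking $V$ we may take $\phi(V)$ bounded. Any $B\in\cA$ with $B\subset V$ is compact (a closed subset of the compact $A$), so $\dist(B,\cM\setminus V)>0$ and for all large $t$ the $2r_t$-neighbourhood of $B$ lies in $V$. Since $A^{(r)}\uparrow A^o$ as $r\downarrow 0$ and $v(\partial A)=0$ we have $v(B\cap A^{(r_t)})\to v(B)$, while $v(\partial B)=0$ lets us squeeze $B\cap A^{(r_t)}$ between Jordan-measurable sets. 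The key geometric point is that for $x\in A^{(r_t)}$ the geodetic ball $B(x,r_t)$ lies in $A^o\cap V$, so there is no boundary truncation near such $x$: under $\phi$, the restriction of $\Po_t$ to a neighbourhood of any such $x$ looks like a genuine $d$-dimensional Poisson process of intensity $\approx t f_0$, which is why the interior constant $c_d$ appears rather than a boundary constant.

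\textbf{Subdivision and the Euclidean input.} Fix a mesh $\delta_t\downarrow 0$ with $r_t/\delta_t\to0$ and $\delta_t\log t\to0$ (e.g.\ $\delta_t=(\log t)^{-2}$), partition $B$ into $O(\delta_t^{-d})$ pieces $B_i$ of diameter $O(\delta_t)$ with $v(\partial B_i)=0$, and separate them by a ``mortar'' strip of width $2r_t$, writing $\tilde B_i\subset B_i$ for the shrunken bricks, so $v\big(B\setminus\bigcup_i\tilde B_i\big)=O(r_t/\delta_t)\,v(B)\to0$ and the $r_t$-neighbourhoods of the $\tilde B_i$ are pairwise disjoint. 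On each $B_i$ use the base-point map $\phi_{x_i}$ of Lemma~\ref{lemcompare3} with $x_i\in B_i$: on $B_i$ the distortions of distance, $d$-volume and hence of ball radii are $O(\diam B_i)=O(\delta_t)$ by parts (III) and (VI), so the perturbation of the centring quantity $t\theta_d f_0 r_t^d$ is $O(\delta_t\log t)=o(1)$. Hence, by the coverage asymptotics for Poisson balls in Euclidean space (the interior case, going back to \cite{HallZW,Janson} and recorded in \cite{ECover}; one needs it — together with the Mecke-formula bound on the expected number of uncovered components it is proved from — in a form uniform over regions whose diameter shrinks with $t$, equivalently with a convergence rate as in \cite{PY25}), for each $i$
\bean
\Pr[\tilde B_i\cap A^{(r_t)}\subset F_{r_t}(\Po_t)]
&=& 1-\frac{c_d}{(k-1)!}\,v(\tilde B_i\cap A^{(r_t)})\,e^{-\beta}\,(1+o(1)),
\eean
with the $o(1)$ uniform in $i$, while $\Pr[(B\setminus\bigcup_i\tilde B_i)\cap A^{(r_t)}\subset F_{r_t}(\Po_t)]\to1$ since the expected number of uncovered components in the mortar is $O(v(\mathrm{mortar}))\to0$.

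\textbf{Assembling and the main obstacle.} As $\Po_t$ restricted to disjoint regions is independent, $\Pr[\bigcup_i\tilde B_i\cap A^{(r_t)}\subset F_{r_t}(\Po_t)]=\prod_i\Pr[\tilde B_i\cap A^{(r_t)}\subset F_{r_t}(\Po_t)]$, and using $\max_i v(B_i)=O(\delta_t^d)\to0$ with $\sum_i v(\tilde B_i\cap A^{(r_t)})\to v(B)$ this product tends to $\exp\!\big(-\tfrac{c_d}{(k-1)!}v(B)e^{-\beta}\big)$; sandwiching $\{B\cap A^{(r_t)}\subset F_{r_t}(\Po_t)\}$ between $\{\bigcup_i\tilde B_i\cap A^{(r_t)}\subset F_{r_t}(\Po_t)\}$ and the latter intersected with $\{\mathrm{mortar}\subset F_{r_t}(\Po_t)\}$ yields \eqref{eqHausd2}. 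The real work is the Euclidean input in exactly the form used here: the coverage asymptotic must hold \emph{uniformly over the shrinking pieces}, i.e.\ one must estimate $\Pr[\text{small region not }k\text{-covered}]$ with a $(1+o(1))$-sharp constant, and one must rule out small uncovered components straddling the mortar. Both reduce to controlling the second factorial moment (clumping) of the point process of ``deepest holes'' — bounding the chance of two distinct such holes, whether nearby or far apart — and this is precisely where the quantitative smallness of $\eps'$ built into \eqref{e:defeps'} is used; it is the technical heart, inherited in a curvature-perturbed form from the second-moment estimates of \cite{Janson,HallZW,ECover}.
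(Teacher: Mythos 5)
Your overall scheme — localize to a chart at $x_0$, subdivide $B$ into small pieces, approximate the manifold by $\R^d$ on each piece with a piece-specific base-point map from Lemma~\ref{lemcompare3}, separate the pieces by a thin collar, and multiply the per-piece coverage probabilities using Poisson independence — is in the same spirit as the paper's but is genuinely a \emph{different route}, and it has a gap at exactly the step you identify as ``the technical heart.''

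The paper also partitions $B$ into $t$-dependent small pieces (at scale $t^{-\eta}$, $\eta<1/(2d)$), uses the per-piece maps $\psi_{t,j}=\phi_{x_{t,j}}$, and has a ``tartan'' collar playing the role of your mortar. But instead of applying an Euclidean coverage asymptotic \emph{per piece} and multiplying, it performs an extra step: each cubette $\psi_{t,j}\circ\phi^{-1}(H_{t,j})$ is chopped into rectilinear mini-cubes $Q_{t,j,\ell}$, and translations $\sigma_{t,j,\ell}$ reassemble all the mini-cubes into a single approximate macroscopic cube $\Gamma_t$ whose $\lambda_d$-volume tends to $v(B)>0$ (Lemma~\ref{lemsurf2}); stochastic domination (Lemma~\ref{lemPPdom2}) transports $\Po_t$ to a homogeneous Poisson process on $\Gamma_t$; and the Euclidean input \cite[Lemma 7.2]{ECover} is then invoked \emph{once}, for a region of non-degenerate volume (Lemma~\ref{lemfromCov2}). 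This reassembly trick is the whole point: it means the Euclidean black box is only ever called on a set whose volume is bounded away from $0$.

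Your version instead needs the Euclidean asymptotic in the form
\[
\Pr[\tilde B_i\cap A^{(r_t)}\not\subset F_{r_t}(\Po_t)]
=(1+o(1))\,\frac{c_d}{(k-1)!}\,v(\tilde B_i\cap A^{(r_t)})\,e^{-\beta},
\]
uniformly over pieces of volume $v(\tilde B_i)=O(\delta_t^d)\to0$. This is \emph{not} what \cite{HallZW,Janson,ECover} prove (they prove it for a fixed region), and it is \emph{not} equivalent to the convergence-rate bound in \cite{PY25}: that bound controls the additive error on the distribution function, which here would need to be $o(\delta_t^d)=o((\log t)^{-2d})$, far stronger than the $O((\log\log t)/\log t)$ rate proved there. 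To establish the per-piece estimate you would have to redo a Mecke/Bonferroni second-moment argument uniformly in the shrinking scale; this is nontrivial independent work that the proposal asserts but does not carry out and cannot import from the cited literature. In short, your route would be valid if the uniform shrinking-region Euclidean estimate were available; since it is not, the step where you write ``by the coverage asymptotics \dots uniform over regions whose diameter shrinks with $t$'' is a genuine gap. The paper's reassembly-and-domination construction is precisely the device that avoids having to prove any such uniform estimate, and that is what you are missing.

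Two small additional remarks. First, your bound ``$\Pr[(B\setminus\bigcup_i\tilde B_i)\cap A^{(r_t)}\subset F_{r_t}(\Po_t)]\to1$ since the expected number of uncovered components in the mortar is $O(v(\mathrm{mortar}))\to0$'' is also under-justified: the first moment of a holes process does not by itself control $\Pr[\exists\text{ hole}]$; the paper instead covers the tartan/mortar by $O(t^{\gamma d}(t^{-\gamma}/r_t)^{d-1})$ geodetic balls and applies a union bound via Lemma~\ref{lemmeta}(iii), and you would need the analogous covering-number argument. Second, the paper never ``controls the second factorial moment of the deepest holes'' in this proposition; that calculation lives entirely inside \cite[Lemma 7.2]{ECover}, which the reassembly step allows the paper to treat as a black box.
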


	Working towards a proof of Proposition \ref{propcompare2},
	fix $x_0 \in  A$.
Let
$V$ and $ \phi_\tx,$ $\tx \in V$, and $K_1$,
be as given by Lemma \ref{lemcompare3} (taking $\eps = \eps'$ there),
so that
$V \subset \cM$ is open with $x_0 \in V$, and
for each $\tx \in V $, $\phi_\tx$ is an
injection from $V$ to $\R^d$, with $\phi_{\tx}(x_0) = o $.
If $x \in A^o$, assume also that $V \subset A$.
Set $\phi = \phi_{x_0}$
With this choice of  $\phi$, taking a possibly smaller
set $V$ if needed, we  have all the properties
described in Lemma \ref{lemcompare}.


To complete the proof of  Proposition \ref{propcompare2}, it remains 
check that if $B \in \cA$ with $B \subset V$ then
\eqref{eqHausd2} holds.
This will take some effort. 
Until the end of the proof of Proposition \ref{propcompare2},
let $B \in \cA$ be fixed with  $B \subset V$.


Fix $\eta \in (0,1/(2d))$.
Given $t \geq 1$,  partition $\R^d$ into $d$-dimensional hypercubes
of side $t^{-\eta}$, and let those hypercubes in
the partition which are contained in
$\phi( B^o \cap A^{(r_t)})$ be
denoted $H_{t,1}\ldots,H_{t,\kappa_t}$.
 Also let those hypercubes in the partition which intersect
with $\phi(B )$ but are not contained in
$\phi(B^o \cap A^{(r_t)})$, be denoted 
 $H_{t,\kappa_t+1},\ldots,H_{t,\kappa^+_t}$.

Given $t \geq 1$,
	for each $j \in [\kappa_t^+]$,   
	let $y_{t,j}$ be the first point of  $ H_{t,j} \cap \phi(B)
	$ in the lexicographic ordering,
	 and set $x_{t,j} := \phi^{-1}(y_{t,j})$,
	so that $x_{t,j} \in B$.
	Then define $\psi_{t,j}: V \to \R^d$ by
	 $\psi_{t,j} := \phi_{x_{t,j}}$,
	as defined in Lemma \ref{lemcompare3}.

\begin{lemm}
	\label{lemcompare22}
	There exists $t_1 \in [1,\infty)$ such that
	for all $t \geq t_1$
	and each $j \in [\kappa_t]$:
	
	(a) for all measurable $ F \subset \phi^{-1}(H_{t,j})$ we have 
	$|(\lambda_d(\psi_{t,j}(F)) / v(F))-1| \leq  2dK_1 t^{-\eta}$;

	(b)
	$|(\|\psi_{t,j}(y)-\psi_{t,j}(z)\|/ \dist(y,z)) - 1| \leq  3dK_1
	t^{-\eta}$
	for all distinct $y,z \in \phi^{-1} ( H_{t,j})$;

	(c) the function  
	$\psi_{t,j} \circ \phi^{-1}: \phi(V) \to \R^d$
	extends to a linear map 
	(also denoted $\psi_{t,j} \circ \phi^{-1}$)
	from $\R^d $ to itself. This linear map satisfies
	$\psi_{t,j} \circ \phi^{-1}(\bH)= \bH$, and
	$\|\psi_{t,j} \circ 
	\phi^{-1} - I_d\|_{d \times d} < \eps'$.
\end{lemm}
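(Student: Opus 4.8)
The plan is to read off parts (a), (b) and (c) directly from parts (III), (VI) and (II) of Lemma~\ref{lemcompare3}, in each case taking the base point there to be $\tx=x_{t,j}$, and exploiting that the cell $H_{t,j}$ is small: being a cube of side $t^{-\eta}$ in $\R^d$ it has Euclidean diameter $\sqrt{d}\,t^{-\eta}$, so the diameter and distance factors that appear in those three estimates will be of order $t^{-\eta}$.

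The one quantitative input that drives everything is a diameter bound for $\phi^{-1}(H_{t,j})$. Fix $j\in[\kappa_t]$. Since $B\subset V$ we have $H_{t,j}\subset\phi(B^o\cap A^{(r_t)})\subset\phi(V)$, so $\phi^{-1}(H_{t,j})$ is a well-defined subset of $V$ and it contains $x_{t,j}=\phi^{-1}(y_{t,j})$. For $y,z\in\phi^{-1}(H_{t,j})$ the points $\phi(y),\phi(z)$ lie in the common cube $H_{t,j}$, so $\|\phi(y)-\phi(z)\|\le\sqrt{d}\,t^{-\eta}$; part~(VII) of Lemma~\ref{lemcompare3}, applied to $\phi=\phi_{x_0}$ with $\eps=\eps'$, then gives $\dist(y,z)\le(1-\eps')^{-1}\sqrt{d}\,t^{-\eta}$. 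Hence the geodetic diameter of $\phi^{-1}(H_{t,j})$, and a fortiori its Euclidean diameter (the ambient distance being dominated by $\dist$), is at most $(1-\eps')^{-1}\sqrt{d}\,t^{-\eta}$; since $\eps'<1/20$ by \eqref{e:defeps'} and $d\ge2$, this is bounded by $2d\,t^{-\eta}$.

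Given this, part~(c) is immediate: as $\psi_{t,j}=\phi_{x_{t,j}}$ with $x_{t,j}\in B\subset V$ and $\phi=\phi_{x_0}$, the three assertions about $\psi_{t,j}\circ\phi^{-1}=\phi_{x_{t,j}}\circ\phi_{x_0}^{-1}$ are exactly part~(II) of Lemma~\ref{lemcompare3} with $\eps=\eps'$, and hold whenever $H_{t,j}$ is defined. For part~(a), given measurable $F\subset\phi^{-1}(H_{t,j})$ I would apply part~(III) of Lemma~\ref{lemcompare3} with $\tx=x_{t,j}$ to the set $F':=F\cup\{x_{t,j}\}$; this enlargement is needed because part~(III) requires the base point to lie in the set, but it is harmless since a point has zero $v$- and $\lambda_d$-measure and adds nothing to the diameter, so that $|\lambda_d(\psi_{t,j}(F))/v(F)-1|=|\lambda_d(\psi_{t,j}(F'))/v(F')-1|\le K_1\diam(F')\le 2dK_1t^{-\eta}$ by the previous paragraph. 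For part~(b), I would apply part~(VI) of Lemma~\ref{lemcompare3} with $\tx=x_{t,j}$ and $y,z\in\phi^{-1}(H_{t,j})$ (all three lying in $V$); since $\dist(x_{t,j},y)$ and $\dist(y,z)$ are each at most $(1-\eps')^{-1}\sqrt{d}\,t^{-\eta}$, the right-hand side of the bound in (VI) is at most $2(1-\eps')^{-1}\sqrt{d}\,K_1t^{-\eta}$, which for $d\ge2$ is at most $3dK_1t^{-\eta}$. One may take $t_1$ to be any threshold past which the cells $H_{t,j}$ are defined (the claim being vacuous when $\kappa_t=0$), since no extra smallness of $t$ is used.

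There is no real obstacle here: the argument amounts to inserting the base point $x_{t,j}$ into the small cell and bookkeeping constants. The only points that require any care are the passage from the Euclidean diameter of the cube $H_{t,j}$ in $\R^d$ to the geodetic diameter of its $\phi$-preimage (carried out via part~(VII) of Lemma~\ref{lemcompare3}), the harmless enlargement of $F$ to $F'$ in part~(a) so that the base point lies in the set, and the elementary numerical inequalities needed to absorb $(1-\eps')^{-1}\sqrt d$ and $2(1-\eps')^{-1}\sqrt d$ into $2d$ and $3d$ respectively for $d\ge2$.
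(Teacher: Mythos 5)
Your proof is correct and follows essentially the same route as the paper: each of (a), (b), (c) is read off from parts (III), (VI), (II) of Lemma~\ref{lemcompare3} with base point $x_{t,j}$, using part (VII) to convert the Euclidean diameter $\sqrt d\,t^{-\eta}$ of $H_{t,j}$ into a geodetic diameter bound for $\phi^{-1}(H_{t,j})$. The only cosmetic difference is that you make explicit the harmless enlargement $F\mapsto F\cup\{x_{t,j}\}$ needed to meet the hypothesis ``$\tx\in F$'' in part (III), which the paper carries out implicitly by writing $\diam(F\cup\{x_{t,j}\})$ directly.
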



\begin{proof}
	For measurable 
	$F \subset \phi^{-1}(H_{t,j})$, by
	Lemma \ref{lemcompare3}(c)
	\bean
	\left| \frac{\lambda_d(\psi_{t,j}(F))}{v(F)} -1 \right| =
	\left| \frac{\lambda_d(\phi_{x_{t,j}}(F))}{v(F)} -1 \right| 
	\leq   K_1 \diam( F \cup \{x_{t,j} \}), 
	\eean
	and since $\phi(F \cup \{x_{t,j}\} ) \subset H_{t,j}$,
	by Lemma \ref{lemcompare3}(f)
	the above is bounded by $ 2  K_1
	\diam (H_{t,j})$ and hence by $2d K_1 t^{-\eta}$.
	This yields part (a).

	Next let $y,z \in \phi^{-1}(H_{t,j})$. Then
	by Lemma \ref{lemcompare3}(e),
	\bean
	\left| \frac{\|\psi_{t,j}(y)- \psi_{t,j}(z)\|}{\dist(y,z)} -1 \right|
	= \left| 
	\frac{\|\phi_{x_{t,j}}(y)- \phi_{x_{t,j}}(z)\|}{\dist(y,z)}
	-1 \right|
	\leq K_1 ( \dist( x_{t,j},y) + \dist(y,z) ),
	\eean
	and since $\phi(x_{t,j}), \phi(y),\phi(z)$ all lie in $H_{t,j}$
	using also Lemma \ref{lemcompare3}(f) we see that
	the last line is at most
	$2 \sqrt{d} (1+ 2 \eps') K_1  t^{-\eta}$, which yields part (b).

	Part (c) follows from Lemma \ref{lemcompare3}(b).
\end{proof}

We shall refer to the sets $\psi_{t,j} \circ \phi^{-1}(H_{t,j}), 1 \leq j \leq 
\kappa_t^+$,
as {\em cubettes}. 
We would like to reassemble these cubettes 
into a block of macroscopic size. However,
the cubettes are not quite rectilinear in general.
Therefore to make them
 fit together requires further carpentry.
 For $D \subset \R^d, r>0$, 
 we shall use notation $D^{(r)}: = \{x \in \R^d: B_{\R^d}(x,r) \subset D\}$
 (so if $D$ is closed then so is $D^{(r)}$ whereas we took $A^{(r)}$
 to be open in $\cM$).

Let $\ggamma = 3/(4d)$, so that
$\eta < (2d)^{-1} < \ggamma < d^{-1}$.
We shall divide the 
set $\psi_{t,j} \circ \phi^{-1}(H_{t,j}) $
into smaller rectilinear hypercubes  of side 
$
t^{-\ggamma} 
$,
called {\em mini-cubes}.

Partition $\R^{d} $ into half-open $d$-dimensional hypercubes 
of side $t^{-\ggamma}$. Given $j \in [\kappa_t^+]$, 
denote the hypercubes in the partition which are contained
in the reduced cubette 
$(\psi_{t,j}\circ \phi^{-1}(H_{t,j}))^{(50r_t)}$
by $Q_{t,j,1},\ldots, Q_{t,j,\nu_{t,j}}$.
Also let those hypercubes in the partition
which intersect with 
$\psi_{t,j}\circ \phi^{-1}(H_{t,j})$
but are not contained in
$(\psi_{t,j}\circ \phi^{-1}(H_{t,j}))^{(50r_t)}$
be denoted $Q_{t,j,\nu_{t,j}+1}, \ldots, Q_{t,j,\nu^+_{t,j}}.$
Then $Q_{t,j,1}, \ldots, Q_{t,j,\nu^+_{t,j}}$
are our mini-cubes associated with the cubette 
$\psi_{t,j}\circ \phi^{-1}(H_{t,j})$, as illustrated in Figure
\ref{f:cubette}.

 \begin{figure}[h]
                \centering
           \includegraphics[width=1.0\linewidth, 
	   ]{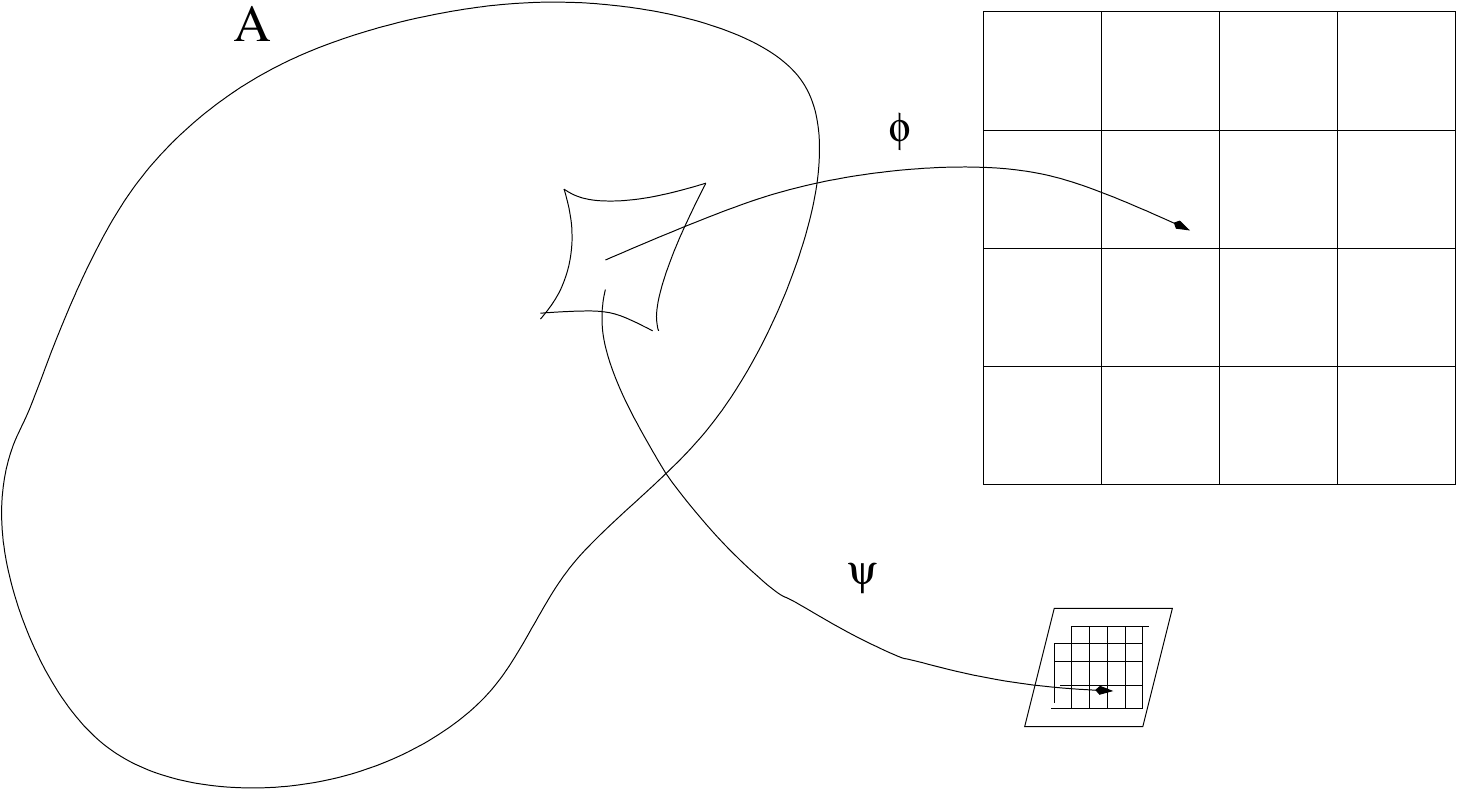}
                \caption{\label{f:cubette} The coarser grid
		shows some of the cubes $H_{t,j}$, and the
		parallelogram shows one of the
		cubettes $\psi_{t,j} \circ \phi^{-1}(H_{t,j})$.
		The finer grid shows the mini-cubes $Q_{t,j,\ell}$
	within that cubette.
	 }
        \end{figure}

We shall reassemble the mini-cubes to make a set  of macroscopic size.  
 Define  translations  $\sigma_{t,j,\ell},
 1 \leq j \leq \kappa_t^+, 1 \leq \ell \leq \nu_{t,j}^+$ of $\R^d$
such that
the translated  cubes $\sigma_{t,j,\ell}(Q_{t,j,\ell}),
  j \in [\kappa_t^+],   \ell \in [ \nu_{t,j}^+]$
are disjoint and the sets
   $ \Gamma_t := \cup_{j=1}^{\kappa_t} \cup_{\ell = 1}^{\nu_{t,j}} 
   \sigma_{t,j,\ell}(Q_{t,j,\ell} ) $
   and
   $ \Gamma^*_t := \cup_{j=1}^{\kappa_t^+} \cup_{\ell = 1}^{\nu_{t,j}^+} 
   \sigma_{t,j,\ell}(Q_{t,j,\ell} ) $,
   are both approximate  $d$-dimensional cubes, 
  where  an {\em approximate $d$-dimensional cube}
  is a union of packed rectilinear cubes of side
  $t^{-\alpha}$ that is contained in the cube   $(0,kt^{-\alpha}]^d$
  but contains  $(0,(k-1)t^{-\alpha}]^d$,
  for some $k \in \N$.
   
Let $\Gamma_t^- := \Gamma_t^{(2r_t)}$,
 the set of points in  $\Gamma_t$
 distant  at least  $2 r_t$ from $\partial \Gamma_t$.
\begin{lemm}[Volume or reassembled approximate cubes]
	\label{lemsurf2}
	It is the case that
\bea
	\lim_{t \to \infty} \lambda_d((\Gamma_t^*)^{(2r_t)}) =
	\lim_{t \to \infty} \lambda_d(\Gamma_t^*) =
	\lim_{t \to \infty} \lambda_{d}(\Gamma_t^-) 
	= 
	\lim_{t \to \infty} \lambda_{d}(\Gamma_t) 
	=
	v( B  ).
\label{0502a2}
\eea
\end{lemm}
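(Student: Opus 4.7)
The plan is to evaluate $\lambda_d(\Gamma_t)$ and $\lambda_d(\Gamma_t^*)$ by re-expressing them, up to negligible errors, as sums of the cubette measures $\lambda_d(C_{t,j})$ with $C_{t,j} := \psi_{t,j}\circ\phi^{-1}(H_{t,j})$; converting these to Riemannian volumes via Lemma~\ref{lemcompare22}(a); and identifying the limit as $v(B)$ by a pointwise/dominated convergence argument. The boundary-reductions $\Gamma_t^-$ and $(\Gamma_t^*)^{(2r_t)}$ are then handled by a short additional argument.

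First, since the translated mini-cubes $\sigma_{t,j,\ell}(Q_{t,j,\ell})$ are disjoint with $\lambda_d$-measure $t^{-d\ggamma}$ each,
\begin{align*}
\lambda_d(\Gamma_t) = \sum_{j=1}^{\kappa_t}\nu_{t,j}\, t^{-d\ggamma},\qquad \lambda_d(\Gamma_t^*) = \sum_{j=1}^{\kappa_t^+}\nu_{t,j}^+\, t^{-d\ggamma}.
\end{align*}
For each $j$, the union $\bigcup_{\ell\in[\nu_{t,j}^+]} Q_{t,j,\ell}$ covers $C_{t,j}$ up to a tube of width $\sqrt{d}\,t^{-\ggamma}$ around $\partial C_{t,j}$, and $\bigcup_{\ell\in[\nu_{t,j}]} Q_{t,j,\ell}$ exhausts $C_{t,j}^{(50r_t)}$ up to the same type of tube. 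By Lemma~\ref{lemcompare22}(c), $C_{t,j}$ is the image of a rectilinear cube of side $t^{-\eta}$ under a linear map within $\eps'$ of the identity, so $\partial C_{t,j}$ has $(d-1)$-dimensional measure $O(t^{-\eta(d-1)})$, and each tube contributes at most $O(t^{-\eta(d-1)-\ggamma})$. Summing over the $\kappa_t^+ = O(t^{\eta d})$ cubes gives total tube error $O(t^{\eta-\ggamma}) = o(1)$ (since $\eta<\ggamma$). Similarly $\lambda_d(C_{t,j}) - \lambda_d(C_{t,j}^{(50r_t)}) = O(r_t\, t^{-\eta(d-1)})$, with total $O(t^\eta r_t) = o(1)$, because $\eta<1/d$ and $r_t = O((\log t/t)^{1/d})$ by \eqref{rt2}.

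Second, Lemma~\ref{lemcompare22}(a) gives $\lambda_d(C_{t,j}) = (1+O(t^{-\eta}))\,v(\phi^{-1}(H_{t,j}))$ uniformly in $j$, and since the sets $\phi^{-1}(H_{t,j})$ are disjoint,
\begin{align*}
\sum_{j=1}^{\kappa}\lambda_d(C_{t,j}) = (1+O(t^{-\eta}))\, v\Bigl(\phi^{-1}\Bigl(\textstyle\bigcup_{j=1}^{\kappa}H_{t,j}\Bigr)\Bigr),\qquad \kappa\in\{\kappa_t,\kappa_t^+\}.
\end{align*}
I will verify that both $v$-quantities converge to $v(B)$. For $\kappa=\kappa_t^+$, any $x\in B$ automatically lies in $\phi^{-1}(\bigcup_j H_{t,j})$, whereas for $x\in V\setminus B$ the point $\phi(x)$ has positive distance from the compact set $\phi(B)$ and is therefore eventually outside every $H_{t,j}$. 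For $\kappa=\kappa_t$, any $x\in B^o\cap A^o$ admits an open neighbourhood of $\phi(x)$ inside the open set $\phi(B^o\cap A^o)$, together with a compact neighbourhood contained in $A^o$ and hence in $A^{(r_t)}$ for all large $t$ (because $r_t\to 0$); the cube containing $\phi(x)$ then eventually lies in $\phi(B^o\cap A^{(r_t)})$ and is counted among the first $\kappa_t$. Thus in both cases the indicators of $\phi^{-1}(\bigcup_j H_{t,j})$ converge pointwise to $\mathbf{1}_B$ off $B\setminus(B^o\cap A^o)\subset\partial B\cup(B\cap\partial A)$, which has $v$-measure zero by the hypothesis $v(\partial B)=0$ and the fact that $\partial A$ is $(d-1)$-dimensional. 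Dominated convergence, with dominator the indicator of a fixed compact neighbourhood of $B$ inside $V$, gives the desired convergence.

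Third, combining the previous steps yields $\lambda_d(\Gamma_t)\to v(B)$ and $\lambda_d(\Gamma_t^*)\to v(B)$. Since $\Gamma_t$ and $\Gamma_t^*$ are approximate $d$-cubes of side $kt^{-\ggamma}\to v(B)^{1/d}$, each of them contains the solid cube $(0,(k-1)t^{-\ggamma}]^d$, whose $2r_t$-reduction has volume $((k-1)t^{-\ggamma} - 4r_t)_+^d \to v(B)$. Hence $\lambda_d(\Gamma_t^-)$ and $\lambda_d((\Gamma_t^*)^{(2r_t)})$ are sandwiched between this lower bound and $\lambda_d(\Gamma_t^*)\to v(B)$, so also converge to $v(B)$. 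The main obstacle lies in the error accounting of the first step: the constraints $\eta<1/(2d)<\ggamma<1/d$ and the decay rate of $r_t$ imposed by \eqref{rt2} are exactly what is needed to make every correction ($O(t^{\eta-\ggamma})$, $O(t^\eta r_t)$, $O(t^{-\eta})$, and $O(r_t)$) vanish in the limit.
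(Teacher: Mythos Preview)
Your proof is correct and follows essentially the same approach as the paper: relate the mini-cube unions to the cubette volumes $\lambda_d(C_{t,j})$, convert these to $v(\phi^{-1}(H_{t,j}))$ via Lemma~\ref{lemcompare22}(a), and then show $v(\cup_j\phi^{-1}(H_{t,j}))\to v(B)$. The paper combines your tube error and your $50r_t$-reduction error into a single step by observing $50r_t+\sqrt{d}\,t^{-\ggamma}\le d\,t^{-\ggamma}$ for large $t$, whereas you estimate them separately; and the paper dismisses the limits for $\Gamma_t^-$ and $(\Gamma_t^*)^{(2r_t)}$ as ``clear'' where you invoke the approximate-cube structure explicitly, but these are presentational differences only.
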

\begin{proof}
	Let $t \geq t_1$, where $t_1$ is as in Lemma \ref{lemcompare2},
	and let  $j \in [\kappa_t^+]$.  Define  the cubette
	\bea
	G_{t,j} := \psi_{t,j} \circ \phi^{-1}(H_{t,j}) , 
	\label{eqGdef2}
	\eea
	and note that $\cup_{\ell=1}^{\nu_{t,j}} Q_{t,j,\ell} 
	\subset G_{t,j} \subset \cup_{\ell=1}^{\nu_{t,j}^+}
	Q_{t,j,\ell} $.
	Suppose $x \in
	\cup_{\ell =1}^{\nu_{t,j}^+} Q_{t,j,\ell}
	\setminus \cup_{\ell =1}^{\nu_{t,j}} Q_{t,j,\ell}.
	$
	Then  provided $t$ is large enough,
	$$
	\dist (x, \partial G_{t,j}) \leq 50 r_t + \sqrt{d} t^{-\ggamma}
	\leq  d t^{-\ggamma}.
	$$
	Then 
	by Lemmas \ref{lemcompare22} and \ref{lemcompare3}
	we have
	\bean
	\dist ( \phi \circ \psi_{t,j}^{-1} (x), \partial H_{t,j})
	\leq 2d t^{-\ggamma}.
	\eean
	That is, $ \phi \circ \psi_{t,j}^{-1} (
	\cup_{\ell =1}^{\nu_{t,j}^+} Q_{t,j,\ell} 
	\setminus \cup_{\ell =1}^{\nu_{t,j}} Q_{t,j,\ell} )$
	is contained in a $d$-dimensional hypercubic 
	annulus of thickness $4d t^{-\ggamma}$.
	Therefore
	\begin{align*}
	\lambda_{d}( \phi \circ \psi_{t,j}^{-1} ( 
	\cup_{\ell =1}^{\nu_{t,j}^+} Q_{t,j,\ell}
	\setminus 
	\cup_{\ell =1}^{\nu_{t,j}} Q_{t,j,\ell}))
	& \leq ((t^{-\eta} + 2d t^{-\ggamma})^{d} - 
	(t^{-\eta} - 2d t^{-\ggamma} )^{d}) 
	\\
	& = O(t^{\eta(1-d) - \ggamma}).
		\end{align*}
	Then using  Lemma \ref{lemcompare3} we obtain that
	\bea
	\lambda_{d} \big( (   
	 \cup_{\ell =1}^{\nu_{t,j}^+} Q_{t,j,\ell})
	\setminus \cup_{\ell =1}^{\nu_{t,j}} Q_{t,j,\ell} \big)
	\leq 
	c t^{\eta(1- d) - \ggamma} ,
	\label{0630a2}
	\eea
	where the constant $c$ does not depend on $t$, $j$ or $\ell$.
	Also
	$
	\lambda_{d}(\phi \circ \psi_{t,j}^{-1}(G_{t,j}) )
	= \lambda_{d} ( H_{t,j}  )   
	= t^{-\eta d},
	$
	so using Lemma \ref{lemcompare3} we obtain that
	$
	\lambda_{d}(G_{t,j} ) \geq (1/2) t^{-\eta d}$,
	and combining this with (\ref{0630a2}) we obtain the
	second inequality of the following (the first inequality is from
	set inclusion):
	\bea
	\lambda_{d}(G_{t,j}) \geq \lambda_{d} (\cup_{\ell=1}^{\nu_{t,j}}
	Q_{t,j,\ell}  ) \geq \lambda_{d}(G_{t,j})
	\left( 1- \frac{2c  t^{\eta  (1-d)
	- \ggamma}}{t^{- \eta d} } \right).
	\label{0913a}
	\eea
	Similarly
	\bea
	\lambda_{d}(G_{t,j}) \leq \lambda_{d} (\cup_{\ell=1}^{\nu_{t,j}^+}
	Q_{t,j,\ell}  ) \leq \lambda_{d}(G_{t,j})
	\left( 1 + \frac{2c  t^{\eta  (1-d)
	- \ggamma}}{t^{- \eta d} } \right).
	\label{0913b}
	\eea

	Since  $\eta < 1/(2d) < \ggamma$,
	we thus obtain from \eqref{0913a} that
	\bea
	\lambda_{d}(\Gamma_t) 
	= \sum_{j=1}^{\kappa_t}
	\lambda_{d}(\cup_{\ell=1}^{\nu_{t,j}} Q_{t,j,\ell} )
	\geq (1+o(1)) \sum_{j=1}^{\kappa_t} \lambda_{d}(G_{t,j})
	~~~ {\rm as}~ t \to \infty,
	\label{0630b2}
	\eea
	and by a similar argument  using (\ref{0913b}),
	\bea
	\lambda_d(\Gamma_t^*) \leq
	(1+ o(1)) \sum_{j=1}^{\kappa_t^+} \lambda_d(G_{t,j}).
	\label{0806b}
	\eea

	Set $U_t := \cup_{j=1}^{\kappa_t}\phi^{-1} (H_{t,j} )$
	and $U_t^+ := \cup_{j=1}^{\kappa_t^+}\phi^{-1} (H_{t,j} )$.
	 By  the definition, $U_t \subset  B^o $ and
	$\liminf_{t \to \infty} (U_t) =  B^o $,
	since  $ \phi( B^o ) \setminus \phi( U_t)$  is contained 
	in a region within distance $O( t^{-\eta})$
	of the boundary of $\phi( B^o \cap A^{(r_t)} )$.  
	Therefore $v(U_t) \to v( B^o )
	= v( B ) $ as $t \to \infty$ (the equality is because
	$B \in \cA$).

	 Also $ B \subset U_t^+ $ and
	$\limsup_{t \to \infty} (U_t^+) =  B $,
	since  $ \phi( U_t^+ \setminus   B ) $  is contained 
	in a region within 
	distance $O( t^{-\ggamma})$
	of the boundary of $\phi( B )$.
	Therefore $v(U_t^+) \to v( B )$ as $t \to \infty$.
	Then by \eqref{eqGdef2}
	and property (a) in Lemma \ref{lemcompare22},
	\begin{align*}
	\sum_{j=1}^{\kappa_t} \lambda_{d}
		(G_{t,j} ) & 
		\geq (1 - 2dK_1 t^{-\eta} )
	\sum_{j=1}^{\kappa_t} v(\phi^{-1}(H_{t,j}) )
		=
		(1 - 2dK_1 t^{-\eta})  v(U_t);  \\
	\sum_{j=1}^{\kappa_t^+} \lambda_{d}
		(G_{t,j} ) & 
		\leq (1 + 2dK_1 t^{-\eta} )
	\sum_{j=1}^{\kappa_t^+} v(\phi^{-1}(H_{t,j}) )
		= 
		(1 + 2dK_1 t^{-\eta})  
		v(U_t^+), 
		\end{align*}
	so that
	$ \lim_{t \to \infty} \sum_{j=1}^{\kappa_t} \lambda_{d}(
	G_{t,j}) = \lim_{t \to \infty} \sum_{j=1}^{\kappa_t^+} \lambda_{d}(
	G_{t,j}) = v ( B  ).
	$
	Combined with \eqref{0630b2} and \eqref{0806b},
	this yields the limit for $\lambda_d(\Gamma_t)$
	and for $\lambda_d(\Gamma_t^*)$
	in \eqref{0502a2},
	and then the limit for $\lambda_d(\Gamma_t^-)$ 
	and for $\lambda_d((\Gamma_t^*)^{(2r_t)})$
	is clear.
%
%
%
\end{proof}

Given $a >0$, 
Let $\cH_a$ denote a homogeneous Poisson point process of intensity
$a$ in $\R^d$.
%
For simple point processes $\X,\Y$ in $\R^d$,
we write $\X \eqd \Y$ if they have the
same distribution (see e.g. \cite{LP}). We
 write $\X \ll \Y$, and say $\Y$ stochastically dominates $\X$,
if
there exist coupled point processes $\X', \Y'$
such that $\X' \eqd \X$, and $\Y' \eqd \Y'$, and
$\X' \subset \Y'$ almost surely.

\begin{lemm}[Stochastic Domination lemma]
	\label{lemPPdom2}
There exists $t_2 \geq t_1$, where $t_1$ is as in Lemma 
	\ref{lemcompare22}, such that for all $t \geq t_2$,
	$H_{t,j} \subset \phi(V)$ for all $j \in [\kappa_t^+]$ and
\bea
\cup_{j=1}^{\kappa_t} \cup_{\ell =1}^{\nu_{t,j}}
	\sigma_{t,j,\ell} ( \psi_{t,j}(\Po_t \cap  \phi^{-1}(H_{t,j})
	) \cap Q_{t,j,\ell} )
\ll
\cH_{tf_0(1+ 4d K_1 t^{-\eta})} \cap \Gamma_t ,
\label{eqPPdom3}
\eea
and
\bea
	\cH_{tf_0(1- 4d K_1 t^{-\eta})} \cap \cup_{j=1}^{\kappa_t^+}
	\cup_{\ell = 1}^{\nu_t^+}
	\sigma_{t,j,\ell} (
	Q_{t,j,\ell}
	\cap 
	\psi_{t,j}(A \cap \phi^{-1}(H_{t,j}))
	)
	~~~~~~~~~~~~
	\nonumber \\
\ll
\cup_{j=1}^{\kappa_t^+} \cup_{\ell =1}^{\nu_{t,j}^+}
	\sigma_{t,j,\ell} ( \psi_{t,j}(\Po_t \cap \phi^{-1}(H_{t,j}))
	\cap Q_{t,j,\ell} ).
\label{eqPPdom32}
\eea
\end{lemm}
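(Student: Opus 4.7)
The plan is to analyse the intensity measure of the pushforward of $\Po_t$ on each mini-cube and then combine across $(j,\ell)$ by independence, reducing each stochastic domination to a standard Poisson thinning coupling.

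First I would note that since $H_{t,j}$, $j \in [\kappa_t^+]$, are pairwise disjoint hypercubes and $\phi$ is injective on $V$, the sets $\phi^{-1}(H_{t,j}) \subset V$ are pairwise disjoint, and therefore $\Po_t \cap \phi^{-1}(H_{t,j})$, $j \in [\kappa_t^+]$, are independent Poisson processes with intensity measures $tf_0\,v(\,\cdot\, \cap A \cap \phi^{-1}(H_{t,j}))$ (recall $f \equiv f_0$ on $A$ and $f \equiv 0$ off $A$). The technical requirement $H_{t,j} \subset \phi(V)$ holds for $t$ large because $\diam H_{t,j} = \sqrt{d}\,t^{-\eta} \to 0$ and each $H_{t,j}$ meets $\phi(B) \subset \phi(V)$; choose $t_2 \geq t_1$ so that this, together with $4dK_1 t^{-\eta} \leq 1/2$, holds for all $t \geq t_2$.

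Second, since the pushforward of a Poisson process under an injective measurable map is again Poisson, $\psi_{t,j}(\Po_t \cap \phi^{-1}(H_{t,j}))$ is Poisson on $\psi_{t,j}(\phi^{-1}(H_{t,j}))$, with some intensity measure $\nu_{t,j}$ that is supported on $\psi_{t,j}(A \cap \phi^{-1}(H_{t,j}))$. For any measurable $F \subset \phi^{-1}(H_{t,j})$, Lemma \ref{lemcompare22}(a) gives $|\lambda_d(\psi_{t,j}(F))/v(F) - 1| \leq 2dK_1 t^{-\eta}$, and since $(1-\varepsilon)^{-1} \leq 1 + 2\varepsilon$ for $\varepsilon \leq 1/2$, this upgrades to $(1 - 4dK_1 t^{-\eta})\lambda_d(\psi_{t,j}(F)) \leq v(F) \leq (1 + 4dK_1 t^{-\eta})\lambda_d(\psi_{t,j}(F))$ for $t \geq t_2$. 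Equivalently, the density of $\nu_{t,j}$ with respect to $\lambda_d$ satisfies
\[
tf_0(1 - 4dK_1 t^{-\eta}) \leq \frac{d\nu_{t,j}}{d\lambda_d}(x) \leq tf_0(1 + 4dK_1 t^{-\eta}), \qquad x \in \psi_{t,j}(A \cap \phi^{-1}(H_{t,j})),
\]
with density zero elsewhere. Translations preserve $\lambda_d$ and map Poisson to Poisson, so the same two-sided bound holds for the intensity of $\sigma_{t,j,\ell}(\psi_{t,j}(\Po_t \cap \phi^{-1}(H_{t,j})) \cap Q_{t,j,\ell})$ on the translated set $\sigma_{t,j,\ell}(Q_{t,j,\ell} \cap \psi_{t,j}(A \cap \phi^{-1}(H_{t,j})))$, with density zero off that set.

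Third, the translated mini-cubes $\sigma_{t,j,\ell}(Q_{t,j,\ell})$ are pairwise disjoint by construction, so combined with the independence from step one the left-hand side of \eqref{eqPPdom3} is a Poisson process on $\Gamma_t$ whose intensity is majorised everywhere by $tf_0(1 + 4dK_1 t^{-\eta})\lambda_d$. A standard thinning coupling -- realise this process as an independent $p(\cdot)$-thinning of $\cH_{tf_0(1 + 4dK_1 t^{-\eta})} \cap \Gamma_t$, where $p$ is the ratio of the two densities -- yields \eqref{eqPPdom3}. Symmetrically, the right-hand side of \eqref{eqPPdom32} is Poisson on $\Gamma_t^*$ whose intensity dominates $tf_0(1 - 4dK_1 t^{-\eta})\lambda_d$ precisely on $\bigcup_{j,\ell} \sigma_{t,j,\ell}(Q_{t,j,\ell} \cap \psi_{t,j}(A \cap \phi^{-1}(H_{t,j})))$, and the reverse thinning coupling presents $\cH_{tf_0(1 - 4dK_1 t^{-\eta})}$ restricted to that union as a sub-process of it.

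The main bookkeeping obstacle, in my view, is correctly tracking that $\nu_{t,j}$ vanishes off $\psi_{t,j}(A \cap \phi^{-1}(H_{t,j}))$, as $f$ is supported in $A$ and the cubes $H_{t,j}$ indexed by $j > \kappa_t$ straddle $\partial A$. This is precisely why the lower-bound union in \eqref{eqPPdom32} must be intersected with $\psi_{t,j}(A \cap \phi^{-1}(H_{t,j}))$ rather than with the full $Q_{t,j,\ell}$; once this observation is in place, the remainder is the routine Poisson thinning representation combined with the volume-distortion estimate from Lemma \ref{lemcompare22}(a).
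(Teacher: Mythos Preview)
Your proposal is correct and follows essentially the same approach as the paper: both arguments push $\Po_t$ forward by $\psi_{t,j}$ via the Mapping theorem, use the volume-distortion estimate of Lemma~\ref{lemcompare22}(a) to sandwich the resulting intensity density between $tf_0(1\pm 4dK_1 t^{-\eta})$ on $\psi_{t,j}(A\cap\phi^{-1}(H_{t,j}))$, and then invoke Poisson thinning/superposition together with the disjointness of the pieces (across $j$ via disjointness of $\phi^{-1}(H_{t,j})$, and across $\ell$ via disjointness of the $Q_{t,j,\ell}$) to assemble the global dominations. The only cosmetic difference is that the paper first records the single-$(j,\ell)$ domination and then takes unions, whereas you first identify the union as a Poisson process with bounded density and apply one global thinning; these are equivalent.
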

\begin{proof}
	Let $t \geq t_1$ and $j \in [\kappa_t^+]$, $\ell \in [\nu_{t,j}^+]$.
	Since $B \in \cA$, $B \subset V$
	and $\phi$ is an open map, we can and do
	assume $t$ is large enough that $H_{t,j} \in 
	\phi(V) $ for all $j \in [\kappa_t^+]$.
The point process $\Po_t \cap V$ is Poisson on $V \cap A$
with intensity measure $t f_0 v(dx)$,
where $v$ is the Riemannian volume measure on $\M$.
Hence by the Mapping theorem (see e.g. \cite{LP}),
	$\psi_{t,j}(\Po_t \cap \phi^{-1}(H_{t,j}))$ is
a Poisson point process on $\psi_{t,j} \circ \phi^{-1} (H_{t,j})$
with intensity measure $t f_0 \mu_{t,j}(\cdot)$, where
	the measure $\mu_{t,j}$ is given
	by $\mu_{t,j}(G)= v(\psi_{t,j}^{-1}(G) \cap A)$ for
	measurable $G \subset
	\psi_{t,j} \circ \phi^{-1}(H_{t,j})$.
	For such $G$, by Lemma \ref{lemcompare22}(a) 
$$
	\left| \frac{\lambda_d(G \cap \psi_{t,j}(A))}{\mu_{t,j}(G)} - 1\right| =
	\left| \frac{\lambda_d(\psi_{t,j}(\psi_{t,j}^{-1}(G) \cap A)
	)}{v(\psi_{t,j}^{-1}(G) \cap A)} - 1\right|  \leq 
	 2dK_1 t^{-\eta},
$$
	so that for large $t$, we
	have $\mu_{t,j}(G) 
	\in (1 \pm 4d K_1 t^{-\eta})\lambda_d(G \cap \psi_{t,j}(A))$.

	By the Radon-Nikodym theorem $\mu_{t,j}$ has
	a density with respect to $\lambda_d$, and this
	density lies in the range
	$(1\pm 4 d K_t t^{-\eta}){\bf 1}_{\psi_{t,j} (A)}$,
	$\lambda_d$-almost everywhere in $\psi_{t,j} \circ
	\phi^{-1}(H_{t,j})$. 
Therefore by the Superposition theorem (see e.g. \cite{LP}),
	one can obtain a Poisson point process with the same
	distribution as $\psi_{t,j}(\Po_t \cap \phi^{-1} (H_{t,j}))$
	by the superposition of $\cH_{tf_0(1 - 4 d K_1 t^{-\eta})}
	\cap \psi_{t,j}(A  \cap \phi^{-1}(H_{t,j}))$
	and a further independent 
	Poisson process. Hence
	\begin{align}
	\cH_{tf_0(1-4d K_1t^{-\eta})} 
	\cap \psi_{t,j} (A \cap \phi^{-1}(H_{t,j})) 
	\ll
\psi_{t,j}(\Po_t \cap 
	\phi^{-1}(H_{t,j})), 
		\label{ppdom1}
	\end{align}
	and by a similar argument
	\begin{align}
\psi_{t,j}(\Po_t \cap 
	\phi^{-1}(H_{t,j})) 
\ll  
	\cH_{tf_0(1+4d K_1t^{-\eta})} \cap \psi_{t,j} (A
	\cap \phi^{-1}(H_{t,j})).
\label{ppdom2}
	\end{align}

By the Mapping theorem,
for each $t,j,\ell$ and each $s >0$ we have
$\sigma_{t,j,\ell} (\cH_s) \eqd \cH_s$.
 Therefore using \eqref{ppdom1} and \eqref{ppdom2}
 we have that 
 \bea
\cH_{tf_0(1-4d K_1t^{-\eta})} \cap
\sigma_{t,j,\ell} (Q_{t,j,\ell} \cap
\psi_{t,j} (A \cap \phi^{-1}(H_{t,j}))) 
~~~~~~~~~~~~~~~~~~~~~~~~~
\nonumber \\ 
\ll
	\sigma_{t,j,\ell}( \psi_{t,j}(\Po_t \cap \phi^{-1}(H_{t,j}))
	\cap  Q_{t,j,\ell})
~~~~~~~~~~~~~~~~~~~~~~~~~
\nonumber	\\
~~~~~~~~~~~~~~~ \ll
\cH_{tf_0(1+4d K_1t^{-\eta})} \cap
\sigma_{t,j,\ell} (Q_{t,j,\ell} \cap
\psi_{t,j} (A \cap \phi^{-1}(H_{t,j}))). 
\label{0817a}
\eea
The sets $\phi^{-1}(H_{t,j}) \cap \psi_{t,j}^{-1}(Q_{t,j,\ell}),$
$j \in [\kappa_t^+], \ell \in [\nu_{t,j}^+]$ are disjoint,
since the sets $H_{t,1}, \ldots, H_{t,\kappa_t^+}$ are 
disjoint and for each $j$ the sets $Q_{t,j,1}\ldots,Q_{t,j,\nu_{t,j}^+}$
are  disjoint. Therefore the Poisson processes 
$\Po_t \cap \phi^{-1}(H_{t,j}) \cap \psi_{t,j}^{-1}(Q_{t,j,\ell})$,
$j \in [\kappa_t^+], \ell \in [\nu_{t,j}^+]$ are mutually independent.
Since the sets $\sigma_{t,j,\ell}(Q_{t,j,\ell}),  j \in
	[\kappa_t^+],  \ell \in [\nu_{t,j}^+]$, are  disjoint, 
by using the first relation in (\ref{0817a}),
taking unions over such $(j,\ell)$ and
using the Superposition theorem,
we obtain (\ref{eqPPdom32}).  Similarly using the 
second relation in (\ref{0817a}) and taking the union now
only over $j \in [\kappa_t], \ell \in [\nu_{t,j}]$ 
yields
	(\ref{eqPPdom3}). 
\end{proof}

Given $(r_t)_{t >0}$ satisfying \eqref{rt2},
we now define
\begin{align}
r_t^+:= r_t(1+ 8 d K_1 t^{-\eta});~~~~~~
r_t^-:= r_t(1- 8 d K_1 t^{-\eta}).
	\label{rtpmdef}
\end{align}


\begin{lemm}[Covering an approximate Euclidean cube]
	\label{lemfromCov2}
	It is the case that
\bea
\lim_{t \to \infty} 
\Pr[ 
	(\Gamma_t^*)^{(2r_t)}  \subset
	F_{r_t^-}( \cH_{tf_0(1-6d K_1t^{-\eta})}
	\cap  \Gamma_t^* ) ] 
=
\lim_{t \to \infty} 
	 \Pr[ 
	\Gamma_t^-  \subset
	F_{r_t^+}( \cH_{tf_0(1+6d K_1t^{-\eta})}
	\cap  \Gamma_t )] 
\nonumber \\
	 = \exp( -(c_{d}/(k-1)!) v( B ) e^{-\beta}).
	~~~~~~~~~~~
\label{0616c2}
\eea
\end{lemm}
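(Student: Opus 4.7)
The aim is to reduce both events in \eqref{0616c2} to instances of the classical Euclidean Poisson $k$-coverage problem, for which the Gumbel-type limit has intensity proportional to the volume of the target set. I sketch the argument for the first probability; the second is entirely analogous.

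\textbf{Scaling and removal of the restriction.} Set $\lambda_t^- := tf_0(1-6dK_1 t^{-\eta})$. Since $(r_t^-)^d = r_t^d(1+O(t^{-\eta}))$ and $\lambda_t^- = tf_0(1+O(t^{-\eta}))$, while \eqref{rt2} gives $t\theta_d f_0 r_t^d \sim \log t$ and $t^{-\eta}\log t \to 0$, a routine computation yields $\lambda_t^-\theta_d(r_t^-)^d - \log\lambda_t^- - (d+k-2)\log\log\lambda_t^- \to \beta$. Next, because $r_t^- < 2r_t$ for all large $t$, every $x \in (\Gamma_t^*)^{(2r_t)}$ satisfies $B_{\R^d}(x,r_t^-) \subset \Gamma_t^*$; hence $\card(\cH_{\lambda_t^-} \cap B_{\R^d}(x,r_t^-)) = \card((\cH_{\lambda_t^-} \cap \Gamma_t^*) \cap B_{\R^d}(x,r_t^-))$ for such $x$. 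So the first event in \eqref{0616c2} coincides with $\{(\Gamma_t^*)^{(2r_t)} \subset F_{r_t^-}(\cH_{\lambda_t^-})\}$, i.e.\ $k$-coverage of $(\Gamma_t^*)^{(2r_t)}$ by radius-$r_t^-$ balls around an \emph{unrestricted} homogeneous Poisson process on $\R^d$ of intensity $\lambda_t^-$.

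\textbf{Applying the Euclidean coverage limit.} I would then invoke the classical Euclidean Poisson $k$-coverage limit theorem (Janson, 1986; see also the techniques in \cite{ECover}), in the following uniform-in-domain form: for a uniformly bounded family of compact sets $D_t \subset \R^d$ with $\lambda_d(D_t) \to v$, and for parameters $\lambda_t$, $\rho_t$ satisfying $\lambda_t\theta_d\rho_t^d - \log\lambda_t - (d+k-2)\log\log\lambda_t \to \beta$, the number of points of $D_t$ that are not $k$-covered by the balls of radius $\rho_t$ about $\cH_{\lambda_t}$ converges in distribution to a Poisson random variable with mean $(c_d/(k-1)!) v e^{-\beta}$, whence the full-coverage probability tends to $\exp(-(c_d/(k-1)!) v e^{-\beta})$. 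Taking $D_t = (\Gamma_t^*)^{(2r_t)}$ and using Lemma~\ref{lemsurf2} to get $\lambda_d(D_t) \to v(B)$ delivers the first limit. The second follows identically with $D_t = \Gamma_t^-$, $\rho_t = r_t^+$, $\lambda_t = \lambda_t^+ := tf_0(1+6dK_1 t^{-\eta})$, and $\lambda_d(\Gamma_t^-) \to v(B)$.

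\textbf{Main obstacle.} The subtle point is that the target set $D_t$ varies with $t$, so a single-domain Janson theorem cannot be quoted verbatim. I would bridge this by adapting the usual Chen--Stein argument uniformly to the family $\{D_t\}$: partition $\R^d$ into cubes of side slightly larger than $r_t^{\pm}$, let $N_t$ be the number of such cubes containing a location of $D_t$ that is not $k$-covered, and verify (i) that the local dependence of the indicator events permits a Chen--Stein total-variation bound vanishing as $t \to \infty$, and (ii) that $\E N_t \to (c_d/(k-1)!) v(B) e^{-\beta}$. Both (i) and (ii) reduce to the same per-cube computations as in the fixed-domain setting, now summed over $D_t$; uniform boundedness of the $\Gamma_t^*$ and $\Gamma_t$ within a fixed compact region of $\R^d$ keeps the estimates uniform in $t$, and $\lambda_d(D_t) \to v(B)$ supplies the right constant in the exponent.
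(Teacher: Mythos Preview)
Your proposal is correct and follows the same approach as the paper: verify that the perturbed parameters still satisfy the scaling relation \eqref{rt2}, remove the restriction $\cap\,\Gamma_t^*$ (resp.\ $\cap\,\Gamma_t$) using the fact that $r_t^\pm < 2r_t$, apply Lemma~\ref{lemsurf2} for the volume limits, and then invoke a Euclidean Poisson $k$-coverage result for the approximate cubes $\Gamma_t^-$ and $(\Gamma_t^*)^{(2r_t)}$. The only difference is that the paper dispatches your ``Main obstacle'' in one line by citing \cite[Lemma~7.2]{ECover}, which is precisely a ready-made coverage limit for such families of approximate Euclidean cubes with varying $t$; your Chen--Stein sketch is essentially a re-derivation of that lemma.
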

\begin{proof}
The limiting statement \eqref{rt2}
still holds if we replace $r_t$ with $r_t(1\pm 8 d K_1t^{-\eta})$
and $t$ with $t(1 \pm 6d K_1t^{-\eta})$. Therefore 
	(\ref{0616c2})
follows by
	Lemma \ref{lemsurf2}
	and 
	\cite[Lemma 7.2]{ECover}.
\end{proof}

For $j \in [\kappa_t^+]$ and $\ell \in [\nu_{t,j}^+]$, set 
$Q_{t,j,\ell}^- := Q_{t,j,\ell}^{(2r_t)}$.
Given $t$, we shall sometimes write just $\cup_{j,\ell}$
for $\cup_{j=1}^{\kappa_t} \cup_{\ell=1}^{\nu_{t,j}}$ and
$\cup_{j,\ell}^*$ for $\cup_{j=1}^{\kappa_t^+} \cup_{\ell=1}^{\nu_{t,j}^+}$.
As in \cite{ECover}, we shall refer to the union in $\R^d$ of
sets $\sigma_{t,j,\ell}(Q_{t,j,\ell}
\setminus Q_{t,j,\ell}^-)$, and also to the union
in $A$ of their pre-images, as {\em tartan} (plaid).

\begin{lemm}[Covering a tartan set in $\R^d$]
	\label{lemplaid2}
	It is the case that
	\bea
	\lim_{t \to \infty}
	\Pr[ 
	\cup_{j,\ell} \sigma_{t,j,\ell} (
	Q_{t,j,\ell} \setminus
	Q_{t,j,\ell}^- ) \subset 
	F_{r_t^+} ( \cH_{tf_0(1+ 6d K_1 t^{-\eta})} ) ]
	=1.
	\label{0619c2}
	\eea
\end{lemm}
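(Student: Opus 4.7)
The plan is to cover the tartan by a moderately-sized $\rho_t$-net in $\R^d$, with $\rho_t$ slightly less than $r_t^+$, and show via Poisson tail bounds and the union bound that, with probability tending to $1$, every net point has at least $k$ Poisson points in a ball of radius $r_t^+ - \rho_t$ about it.

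First I would estimate the Lebesgue volume of the tartan. Each $Q_{t,j,\ell}$ is a cube of side $t^{-\ggamma}$ and $Q_{t,j,\ell} \setminus Q_{t,j,\ell}^-$ is the hypercubic annulus of thickness $2r_t$ at its boundary. From \eqref{rt2}, $r_t = O((\log t / t)^{1/d}) = o(t^{-\ggamma})$ since $\ggamma = 3/(4d) < 1/d$; hence each annulus has Lebesgue measure $O(r_t t^{-\ggamma(d-1)})$. By Lemma \ref{lemsurf2}, $\lambda_d(\Gamma_t^*)$ is bounded, so there are at most $O(t^{\ggamma d}) = O(t^{3/4})$ mini-cubes, and the tartan has Lebesgue measure $O(r_t t^{\ggamma}) = O(t^{-1/(4d)} (\log t)^{1/d})$. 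Now fix $\delta \in (0,1)$ small enough that $(1-\delta)^d > 1 - 1/(4d)$ (possible by continuity), set $\rho_t := \delta r_t$, and pick a deterministic set $\{y_1, \ldots, y_{N_t}\}$ such that every point of the tartan lies within Euclidean distance $\rho_t$ of some $y_i$; this is possible with $N_t = O(t^{1-1/(4d)} (\log t)^{1/d - 1})$.

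For each $i$ let $E_i$ denote the event that $B_{\R^d}(y_i, r_t^+ - \rho_t) \cap \cH_{tf_0(1+6dK_1 t^{-\eta})}$ contains at least $k$ points. By \eqref{rt2} and \eqref{rtpmdef}, the expected count is $\mu_t := tf_0(1+6dK_1 t^{-\eta}) \theta_d (r_t^+ - \rho_t)^d = (1-\delta)^d \log t + O(\log\log t)$, and the standard Poisson tail bound gives (for fixed $k$) $\Pr[E_i^c] = O(e^{-\mu_t} \mu_t^{k-1}) = O(t^{-(1-\delta)^d} (\log t)^{O(1)})$. On $\bigcap_i E_i$, every $x$ in the tartan lies within $\rho_t$ of some $y_i$, so $B_{\R^d}(x, r_t^+) \supset B_{\R^d}(y_i, r_t^+ - \rho_t)$ contains $\geq k$ Poisson points, and thus $x \in F_{r_t^+}(\cH_{tf_0(1+6dK_1 t^{-\eta})})$. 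By the union bound, $\Pr[\bigcup_i E_i^c] = O(t^{1 - 1/(4d) - (1-\delta)^d} (\log t)^{O(1)}) \to 0$ by the choice of $\delta$, which yields \eqref{0619c2}.

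The delicate point is the balance between $N_t$ and the per-point failure probability: one needs $(1-\delta)^d > 1 - 1/(4d)$ for some $\delta > 0$. This balance rests on the margin $\ggamma < 1/d$ built into the scale of the mini-cubes, which forces the tartan to be a vanishing fraction of the macroscopic bulk $\Gamma_t^*$; without this margin the union bound would not close.
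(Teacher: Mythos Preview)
Your proof is correct and follows essentially the same strategy as the paper: bound the covering number of the tartan (you get $N_t = O(t^{1-1/(4d)}(\log t)^{1/d-1})$, which matches the paper's $O(r_t^{-d(\alpha'+1-1/d)})$), apply a Poisson tail bound at each net point, and close with the union bound. The only difference is packaging---the paper invokes the general Lemma~\ref{lemmeta}(iii) (after passing to an auxiliary uniform measure dominated by the homogeneous process), whereas you unwind the same Chernoff-plus-union-bound argument directly for $\cH_{tf_0(1+6dK_1t^{-\eta})}$.
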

	\begin{proof}
		Let $\alpha' \in (\alpha, 1/d)$ and
		let $f_0^- \in (f_0(1 + \alpha'-1/d) , f_0)$.
		Let $\tmu$ be the uniform probability distribution on a fixed
		open cube of volume $1/f_0^-$ containing
		the cube $[0, f_0^{-1/d}]^d$. Given $t >1$,
		let $\tPo_t$ be a Poisson point process
		in $\R^d$ 
		with intensity $t \tmu$.
		Then $\tPo_t \ll \cH_{tf_0(1+ 6 d K_1 t^{-\eta})}$,
		and therefore it suffices to prove that
		\begin{align}
			\lim_{t \to \infty}
			\Pr[ 
			\cup_{j,\ell} \sigma_{t,j,\ell} (Q_{t,j,\ell}
			\setminus Q_{t,j,\ell}^- ) \subset 
		F_{r_t} ( \tPo_{t} ) ] =1.
			\label{e:0904a}
			\end{align}

		We shall apply Lemma  \ref{lemmeta}(iii)
		in the space $\R^d$, taking
		$\mu= \tmu$.  We claim that
 \bea
 		\kappa(
		\cup_{j,\ell} \sigma_{t,j,\ell}
		(Q_{t,j,\ell} \setminus Q_{t,j,\ell}^-) , r_t) 
		= O( t^{\ggamma d} \times (t^{-\ggamma}/r_t)^{d-1} )
 = O( r_t^{- d(\ggamma' +1 - 1/d)}).
		\label{0619a3}
 \eea
 Indeed, the number of sets in the union is
is $O(t^{\ggamma d})$, while for each $(j,\ell)$ the set 
 $\sigma_{j,\ell} ( Q_{t,j,\ell} \setminus Q_{t,j,\ell}^-) $
 is a $d$-dimensional hypercubic annular region of
 thickness $O(r_t)$ and diameter $O(t^{-\ggamma})$, and
 therefore can be covered by $O((t^{-\ggamma}/r_t)^{d-1})$ balls
		of radius $r_t$.
		This gives the first part of (\ref{0619a3}),
		and the second part comes from (\ref{rt2}).
		Also for $t$ sufficiently large $\tmu(B(x,s))
		\geq f_0^- \vvol_d s^d$ for all $x \in \Gamma_t^-$
		and $s \in (0,r_t]$.

		Thus the condition  $u>b/(ad)$ of
		Lemma \ref{lemmeta}(iii)  holds with $u= 1/(f_0\vvol_d)$, $b/d = \alpha'+1-1/d$ and $a= f_0^- \vvol_d$. Then \eqref{e:0904a}
		follows by Lemma \ref{lemmeta}(iii).
	\end{proof}

\begin{lemm}[Upper bound for covering interior region]
	\label{lem06192}
	It is the case that
	\bea
	\limsup_{t \to \infty}
	\Pr[   B \cap A^{(r_t)}  \subset F_{r_t}(\Po_t) ]
	\leq \exp(-(c_{d}/(k-1)!) v( B) e^{-\beta} ).
	\label{0619d2}
	\eea
\end{lemm}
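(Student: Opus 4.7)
The plan is to deduce from the event $\mathcal{E}_t := \{B \cap A^{(r_t)} \subset F_{r_t}(\Po_t)\}$, together with a high-probability ``tartan'' event, that $\Gamma_t^- \subset F_{r_t^+}(\cH_t \cap \Gamma_t)$, where $\cH_t := \cH_{tf_0(1+6dK_1t^{-\eta})}$; the desired $\limsup$ bound on $\Pr[\mathcal{E}_t]$ will then follow from Lemmas~\ref{lemfromCov2} and \ref{lemplaid2}. First, by Lemma~\ref{lemPPdom2}, couple $\Po_t$ and $\cH_t$ on a single probability space so that the reassembled point set
\begin{align*}
\tQ_t := \bigcup_{j=1}^{\kappa_t}\bigcup_{\ell=1}^{\nu_{t,j}} \sigma_{t,j,\ell}\bigl(\psi_{t,j}(\Po_t \cap \phi^{-1}(H_{t,j})) \cap Q_{t,j,\ell}\bigr)
\end{align*}
satisfies $\tQ_t \subset \cH_t \cap \Gamma_t$ almost surely.

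The heart of the argument is a transfer step: on $\mathcal{E}_t$, for each $j \in [\kappa_t]$, $\ell \in [\nu_{t,j}]$, and every $x \in Q_{t,j,\ell}^- := Q_{t,j,\ell}^{(2r_t)}$, I aim to show $x \in F_{r_t^+}(\psi_{t,j}(\Po_t \cap \phi^{-1}(H_{t,j})) \cap Q_{t,j,\ell})$. Setting $\tilde x := \psi_{t,j}^{-1}(x) \in \phi^{-1}(H_{t,j}) \subset B^o \cap A^{(r_t)}$, the event $\mathcal{E}_t$ gives $\card(\Po_t \cap B(\tilde x, r_t)) \geq k$. The $50r_t$ interior margin in the definition of the mini-cubes ($Q_{t,j,\ell} \subset G_{t,j}^{(50r_t)}$), combined with Lemma~\ref{lemcompare22}(b), ensures for $t$ large that $B(\tilde x, r_t) \subset \phi^{-1}(H_{t,j}) \subset V$, so $\psi_{t,j}$ may be applied to every point of $\Po_t$ in this geodetic ball; a second application of Lemma~\ref{lemcompare22}(b) places each such image within $B_{\R^d}(x, (1+3dK_1 t^{-\eta})r_t) \subset B_{\R^d}(x, r_t^+) \subset B_{\R^d}(x, 2r_t) \subset Q_{t,j,\ell}$. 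Applying the translations $\sigma_{t,j,\ell}$, taking unions, and using the coupling and monotonicity of $F_r(\cdot)$, we obtain on $\mathcal{E}_t$
\begin{align*}
\bigcup_{j=1}^{\kappa_t}\bigcup_{\ell=1}^{\nu_{t,j}} \sigma_{t,j,\ell}(Q_{t,j,\ell}^-) \subset F_{r_t^+}(\tQ_t) \subset F_{r_t^+}(\cH_t \cap \Gamma_t) \subset F_{r_t^+}(\cH_t).
\end{align*}

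Now introduce the tartan event $\mathcal{G}_t := \{\bigcup_{j,\ell} \sigma_{t,j,\ell}(Q_{t,j,\ell} \setminus Q_{t,j,\ell}^-) \subset F_{r_t^+}(\cH_t)\}$, the union being over $j \in [\kappa_t]$, $\ell \in [\nu_{t,j}]$. Since $\Gamma_t = \bigcup_{j,\ell}\sigma_{t,j,\ell}(Q_{t,j,\ell})$, every $z \in \Gamma_t^-$ lies in some translated mini-cube and hence either in $\sigma_{t,j,\ell}(Q_{t,j,\ell}^-)$ or in the tartan, so on $\mathcal{E}_t \cap \mathcal{G}_t$ we have $\Gamma_t^- \subset F_{r_t^+}(\cH_t)$. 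Because $\Gamma_t^- = \Gamma_t^{(2r_t)}$ and $r_t^+ < 2r_t$ for large $t$, any Euclidean ball of radius $r_t^+$ centred in $\Gamma_t^-$ lies inside $\Gamma_t$, whence $F_{r_t^+}(\cH_t) \cap \Gamma_t^- = F_{r_t^+}(\cH_t \cap \Gamma_t) \cap \Gamma_t^-$. Consequently
\begin{align*}
\Pr[\mathcal{E}_t] \leq \Pr\bigl[\Gamma_t^- \subset F_{r_t^+}(\cH_t \cap \Gamma_t)\bigr] + \Pr[\mathcal{G}_t^c],
\end{align*}
and Lemma~\ref{lemfromCov2} (for the first term) combined with Lemma~\ref{lemplaid2} (showing the second tends to zero) yields \eqref{0619d2}.

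The main obstacle is the transfer step: one must verify carefully that the $O(t^{-\eta})$ distortion in Lemma~\ref{lemcompare22}(b) and the $50r_t$ safety margin in the definition of the mini-cubes are compatible, so that the geodetic ball $B(\tilde x, r_t)$ lies inside the chart preimage $\phi^{-1}(H_{t,j})$ and its image under $\psi_{t,j}$ lands inside $B_{\R^d}(x, r_t^+) \cap Q_{t,j,\ell}$. Once the constants are reconciled, the remaining steps are bookkeeping: the monotonicity of $F_r$ in the underlying point set, the stochastic domination from Lemma~\ref{lemPPdom2}, and the two auxiliary covering results already in hand.
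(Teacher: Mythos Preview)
Your approach is essentially the same as the paper's: establish the event inclusion $\{B\cap A^{(r_t)}\subset F_{r_t}(\Po_t)\}\subset\{\cup_{j,\ell}\sigma_{t,j,\ell}(Q_{t,j,\ell}^-)\subset F_{r_t^+}(\tQ_t)\}$, pass to $\cH_t$ via Lemma~\ref{lemPPdom2}, then combine Lemmas~\ref{lemfromCov2} and \ref{lemplaid2}. The one technical point to tighten is your first invocation of Lemma~\ref{lemcompare22}(b): that lemma applies only to points already in $\phi^{-1}(H_{t,j})$, so using it to \emph{prove} $B(\tilde x,r_t)\subset\phi^{-1}(H_{t,j})$ is circular; instead, first get $B(\tilde x,r_t)\subset V$ by compactness of $B\subset V$, then use the coarse bound of Lemma~\ref{lemcompare3}(VII) (valid on all of $V$) to place $\psi_{t,j}(w)$ within $2r_t$ of $x\in G_{t,j}^{(50r_t)}$, which forces $w\in\phi^{-1}(H_{t,j})$ --- this is exactly what the paper does by citing Lemma~\ref{lemcompare3}(VI) directly.
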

	\begin{proof}
Let $t >0$.  Suppose that event
		$\{  B \cap A^{(r_t)}   \subset F_{r_t}(\Po_t) \}$ occurs.  Let
$
		y \in  \cup_{j,\ell}
		\sigma_{t,j,\ell}(Q_{t,j,\ell}^-).
$ 
Take $j_0 \in [\kappa_t]$, $\ell_0 \in [\nu_{t,j_0}]$
		and
		$z \in Q_{t,j_0,\ell_0}^-$
such that $y = \sigma_{t,j_0,\ell_0}(z)$. 
Since $Q_{t,j_0,\ell_0}
\subset \psi_{t,j_0} \circ \phi^{-1} (H_{t,j_0})$, we can and do take
 $u \in \phi^{-1}(H_{t,j})$ such that 
$z = \psi_{t,j_0}(u)$, and hence 
		$y = \sigma_{t,j_0,\ell_0} \circ \psi_{t,j_0}(u)$. 
		Then by the definitions just before Lemma \ref{lemcompare22},
		$H_{t,j_0} \subset \phi(B \cap A^{(r_t)})$,
		so that
\bean
		z \in Q_{t,j_0,\ell_0} 
		\subset \psi_{t,j_0}\circ \phi^{-1}(H_{t,j_0})
		\subset \psi_{t,j_0}(B \cap A^{(r_t)} ).
\eean
		Hence $ u  \in B \cap A^{(r_t)} $, so
		by our assumption
		$u \in F_{r_t}(\Po_t)$, and there are at least $k$ points
$w$ of $\Po_t \cap B(u,r_t)$. 
For each such point $w$, since
		$u \in B \cap  A^{(r_t)}$
		and $\dist(u,w) \leq r_t$, and $B \in \cA, B \subset V$,
we have		$w \in V \cap A$.
		Using Lemma \ref{lemcompare3}(e) and recalling the definition of $x_{t,j}$
		and $\psi_{t,j}$ from just before Lemma \ref{lemcompare22},
we have 
		\begin{align}
			\|\psi_{t,j_0}(w) - \psi_{t,j_0}(u)\| 
			& \leq r_t (1+  K_1
		(\dist(w,u) + \dist(u,x_{t,j_0})) )
\nonumber \\
			& \leq r_t ( 1+ K_1 ( r_t +  d t^{-\eta})),
\label{0616a2}
		\end{align}
where the second inequality uses the fact that both $u$ and $x_{t,j_0}$
lie in $\phi^{-1}(H_{t,j_0})$, so $\|\phi(u) - \phi(x_{t,j_0}) \|
		\leq \sqrt{d}
		t^{-\eta}$ and so by Lemma  \ref{lemcompare3} again,
$\dist(u,x_{t,j_0}) \leq d t^{-\eta}$.

In particular, $\|\psi_{t,j_0}(w) - \psi_{t,j_0}(u)\| \leq 2 r_t$, and
therefore since $z= \psi_{t,j_0}(u) \in Q_{t,j_0,\ell_0}^-$ 
		we have 
$\psi_{t,j_0}(w) \in Q_{t,j_0,\ell_0} \subset 
		\psi_{t,j_0} \circ \phi^{-1}(H_{t,j_0})$.
		Since $\sigma_{t,j_0,\ell_0}$ is
an isometry and $r_t \leq t^{-\eta}$ we also have from \eqref{0616a2}
		and \eqref{rtpmdef}
		for $t$ large that
$$
\|
\sigma_{t,j_0,\ell_0} \circ \psi_{t,j_0}(w) - y
\| \leq r_t(1+ 2dK_1 t^{-\eta}) \leq r_t^+,
$$
and hence 
$$
y \in
F_{r_t^+} ( \sigma_{t,j_0,\ell_0} 
( \psi_{t,j_0}(\Po_t) \cap Q_{t,j_0,\ell_0}))
\subset
F_{r_t^+} ( \cup_{j=1}^{\kappa_t} \cup_{\ell=1}^{\nu_{t,j}} \sigma_{t,j,\ell} 
( \psi_{t,j}(\Po_t \cap \phi^{-1}(H_{t,j})) \cap Q_{t,j,\ell})).
$$
Therefore we have the event inclusion 
\begin{align*}
	\{  B \cap A^{(r_t)} \subset F_{r_t}(\Po_t) \}
\subset \{
	&
	\cup_{j,\ell}
	\sigma_{t,j,\ell}(Q_{t,j,\ell}^-) 
	\nonumber \\ &
	\subset 
	F_{r_t^+} (
	\cup_{j,\ell}
	\sigma_{t,j,\ell} ( \psi_{t,j}(\Po_t \cap \phi^{-1}(H_{t,j})) \cap Q_{t,j,\ell} ) )
\}
.
\end{align*}
Using (\ref{eqPPdom3}) and the fact that $Q_{t,j,\ell} \subset
\psi_{t,j} \circ \phi^{-1}(H_{t,j}) $ for $\ell \in [\nu_{t,j}]$,
we obtain that
\begin{align}
\limsup_{t \to \infty} \Pr [  B \cap A^{(r_t)}  \subset
	F_{r_t}(\Po_t) ]  
\leq 
\limsup_{t \to \infty} \Pr[ 
\cup_{j,\ell} \sigma_{t,j,\ell}(Q^-_{t,j,\ell}) 
	\subset F_{r_t^+} (\cH_{tf_0(1+6d K_1 t^{-\eta})} 
) ].
\label{0616d2}
\end{align}

By the union bound
\begin{align}
\Pr[ 
	\cup_{j,\ell} & \sigma_{t,j,\ell} (Q_{t,j,\ell}^-)  
\subset F_{r_t^+}(\cH_{tf_0(1+6d K_1t^{-\eta}) }  ) ]
\leq \Pr[ \Gamma_{t}^-  
\subset F_{r_t^+} (\cH_{tf_0(1+6d K_1t^{-\eta}) })   ]
\nonumber \\
	& + \Pr [ \{ 
	 \Gamma_t^-  \setminus
	\cup_{j,\ell} \sigma_{t,j,\ell} (Q_{t,j,\ell}^- ) \subset 
	F_{r_t^+} ( \cH_{tf_0(1+ 6d K_1 t^{-\eta})} ) \} ^c ],
	\label{e:0917a}
\end{align}
and since $\Gamma_t \setminus \Gamma_t^- \subset
\cup_{j,\ell} \sigma_{t,j,\ell} (Q_{t,j,\ell} \setminus
Q_{t,j,\ell}^-)$,
it follows from Lemma \ref{lemplaid2} that the second term in
the right hand side of \eqref{e:0917a} tends to zero.
By Lemma \ref{lemfromCov2}, the first term tends to $\exp(- ( c_{d}/(k-1)!)
v(B) e^{-\beta})$, so by \eqref{0616d2} we obtain
%
\eqref{0619d2}.
\end{proof}

	\begin{lemm}[Covering a region in $A^{(r_t)}$ with tartan set 
		removed]
		\label{lemcovA2}
		It is the case that
		\bea
		\liminf_{t \to \infty}
		\Pr[ \cup_{j,\ell}^* (\psi_{t,j}^{-1 }(Q_{t,j,\ell}^-)
		\cap \phi^{-1}(H_{t,j}^{(2r_t)})) \cap A^{(r_t)}
		\subset F_{r_t}(\Po_t) ] 
		\nonumber \\
		\geq \exp(-(c_{d}/(k-1)!) 
		v(  B  ) e^{-\beta}).
		\label{0706a2}
		\eea
	\end{lemm}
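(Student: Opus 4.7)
The plan is to run the argument of Lemma \ref{lem06192} in reverse, combining the first limit of Lemma \ref{lemfromCov2} with the complementary stochastic domination \eqref{eqPPdom32}. Fix a pair $(j,\ell)$ with $j \in [\kappa_t^+]$ and $\ell \in [\nu_{t,j}^+]$ and a point $u \in \psi_{t,j}^{-1}(Q_{t,j,\ell}^-) \cap \phi^{-1}(H_{t,j}^{(2r_t)}) \cap A^{(r_t)}$, and set $y := \sigma_{t,j,\ell}(\psi_{t,j}(u))$. Since $\psi_{t,j}(u) \in Q_{t,j,\ell}^{(2r_t)}$ and translations are isometries, $y$ lies in $\sigma_{t,j,\ell}(Q_{t,j,\ell})^{(2r_t)} \subset (\Gamma_t^*)^{(2r_t)}$, and the disjointness of the translated mini-cubes together with $r_t^- < 2r_t$ yields $B_{\R^d}(y, r_t^-) \subset \sigma_{t,j,\ell}(Q_{t,j,\ell})$.

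Next I would assume the event $\{(\Gamma_t^*)^{(2r_t)} \subset F_{r_t^-}(\cH_{tf_0(1-6dK_1 t^{-\eta})} \cap \Gamma_t^*)\}$ from Lemma \ref{lemfromCov2} occurs; then $B_{\R^d}(y, r_t^-)$ contains at least $k$ points of $\cH_{tf_0(1-6dK_1 t^{-\eta})}$, all of which lie in $\sigma_{t,j,\ell}(Q_{t,j,\ell})$ by the inclusion just established. The crucial step is to show that every such point $w$ actually lies in the ``$A$-part'' $\sigma_{t,j,\ell}(Q_{t,j,\ell} \cap \psi_{t,j}(A \cap \phi^{-1}(H_{t,j})))$ of the cubette. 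Writing $\sigma_{t,j,\ell}^{-1}(w) = \psi_{t,j}(z)$ for the unique $z \in \phi^{-1}(H_{t,j})$ (legitimate because $\sigma_{t,j,\ell}^{-1}(w) \in Q_{t,j,\ell} \subset \psi_{t,j} \circ \phi^{-1}(H_{t,j})$ and $\psi_{t,j}$ is injective on $V$), one has $\|\psi_{t,j}(z) - \psi_{t,j}(u)\| = \|w - y\| \leq r_t^-$, and Lemma \ref{lemcompare22}(b) then yields $\dist(z, u) \leq r_t^-/(1 - 3dK_1 t^{-\eta}) \leq r_t$ for $t$ large, using the definition of $r_t^-$ in \eqref{rtpmdef}; the hypothesis $u \in A^{(r_t)}$ then forces $z \in A$, as claimed.

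I would then invoke the domination \eqref{eqPPdom32}, together with a Poisson thinning to realise $\cH_{tf_0(1-6dK_1 t^{-\eta})}$ as a sub-process of $\cH_{tf_0(1-4dK_1 t^{-\eta})}$ on a common probability space carrying a copy of $\Po_t$ with the correct distribution. Under this joint coupling the $k$ covering points of $y$ take the form $\sigma_{t,j,\ell}(\psi_{t,j}(z_i))$ for some $z_i \in \Po_t \cap \phi^{-1}(H_{t,j})$, and the distortion estimate already used in the previous step gives $\dist(z_i, u) \leq r_t$, so $u \in F_{r_t}(\Po_t)$. Since this argument is uniform over $u$ in the target region, on the coupled probability space the event in \eqref{0706a2} contains the coverage event of Lemma \ref{lemfromCov2}; passing to the limit gives the required lower bound.

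The main obstacle will be the bookkeeping needed to simultaneously (i) confine every covering $\cH$-point near $y$ to the single cubette $\sigma_{t,j,\ell}(Q_{t,j,\ell})$, which is handled by the $\psi_{t,j}^{-1}(Q_{t,j,\ell}^-)$ reduction, and (ii) guarantee that each such point is the image of a genuine $A$-point rather than a ``phantom'' $\cH$-point lying above a region of $\R^d$ not coming from $A$; this is precisely the role of the $A^{(r_t)}$ hypothesis combined with the sharp distortion estimates of Lemma \ref{lemcompare22}. Threading the gap between the two Poisson intensities $tf_0(1-6dK_1 t^{-\eta})$ and $tf_0(1-4dK_1 t^{-\eta})$ via thinning is routine but must be done explicitly; the additional $\phi^{-1}(H_{t,j}^{(2r_t)})$ restriction appears not to be used in the argument itself and is presumably imposed to facilitate the subsequent comparison of the target region with $B \cap A^{(r_t)}$ in the proof of Proposition \ref{propcompare2}.
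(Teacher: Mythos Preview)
Your overall strategy---use the coverage event of Lemma~\ref{lemfromCov2} for the homogeneous process, show the covering points near $y$ must land in the ``$A$-part'' of the cubette, and then transfer to $\Po_t$ via the domination \eqref{eqPPdom32}---is exactly the paper's approach (the paper packages the coupling slightly differently, defining an event $E_t$ in terms of the superposition of mapped $\Po_t$ with an \emph{independent} $\cH$ on the complement of the $A$-part, but this is equivalent to your thinning formulation).

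There is, however, a genuine gap in your bookkeeping, and it is precisely the piece you dismiss. Your assertion that $Q_{t,j,\ell}\subset\psi_{t,j}\circ\phi^{-1}(H_{t,j})$ is only true for the \emph{inner} mini-cubes $\ell\in[\nu_{t,j}]$; for $\nu_{t,j}<\ell\le\nu_{t,j}^+$ the mini-cube merely intersects the cubette and may protrude beyond it. Since the union $\cup_{j,\ell}^*$ runs over all $\ell\in[\nu_{t,j}^+]$, you cannot justify $z\in\phi^{-1}(H_{t,j})$ this way, and consequently you cannot invoke Lemma~\ref{lemcompare22}(b) (which requires both points to lie in $\phi^{-1}(H_{t,j})$). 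Nor can you conclude that $w$ lies in the $A$-part $\sigma_{t,j,\ell}(Q_{t,j,\ell}\cap\psi_{t,j}(A\cap\phi^{-1}(H_{t,j})))$, which is what the domination \eqref{eqPPdom32} actually controls.

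The fix is exactly the $\phi^{-1}(H_{t,j}^{(2r_t)})$ hypothesis. First use the coarse bound from Lemma~\ref{lemcompare3}(VII) to get $\dist(z,u)\le 2r_t$; then the refined estimate of Lemma~\ref{lemcompare3}(VI) (not Lemma~\ref{lemcompare22}(b), since it applies to all of $V$) sharpens this to $\dist(z,u)\le r_t$. Now $u\in A^{(r_t)}$ gives $z\in A$, and $u\in\phi^{-1}(H_{t,j}^{(2r_t)})$ together with $\|\phi(z)-\phi(u)\|\le(1+\eps')r_t<2r_t$ gives $z\in\phi^{-1}(H_{t,j})$. Both conclusions are needed: they place $w$ in the $A$-part and hence, under the coupling, identify it with a genuine mapped $\Po_t$-point. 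So the restriction you thought superfluous is doing real work in your own argument.
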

	\begin{proof}
		Let $t >0$.  For this proof, we define the event $E_t$ by
\bean
		E_t : = \{ (\Gamma_t^*)^{(2r_t)}  \subset
		F_{r_t^-} (\cup^*_{j,\ell} ( \sigma_{t,j,\ell}  
		[
		\psi_{t,j}(\Po_t \cap 
		\phi^{-1}(H_{t,j}))
		\cap
		Q_{t,j,\ell} 
		]
~~~~~~~~~~~~~~
		\\
~~~~~~~~~~~~~~
		\cup 
		[ \cH_{tf_0(1-6 d K_1 t^{-\eta})} 
		\cap
		\sigma_{t,j,\ell}(
		Q_{t,j,\ell}
		\setminus \psi_{t,j} (A \cap \phi^{-1}(H_{t,j})))]) )
\},
\eean
		where we here assume the Poisson  processes
	 $ \cH_{tf_0(1-6dK_1 t^{-\eta})} $ and $\Po_t$ are independent. 
		By (\ref{eqPPdom32}), and the Superposition theorem 
		\cite{LP},
	the point process
\bean
		\cup^*_{j,\ell} (  \sigma_{t,j,\ell}  
		[\psi_{t,j}(\Po_t \cap 
		\phi^{-1}(H_{t,j}))
		\cap Q_{t,j,\ell}
		]
		~~~~~~~~
		~~~~~~~~
		~~~~~~~~
		~~~~~~~~
		\\
		~~~~~~~~
		~~~~~~~~
		~~~~~~~~
		\cup [ 
		\cH_{tf_0(1-6dK_1 t^{-\eta})} 
		\cap \sigma_{t,j,\ell}(
		 Q_{t,j,\ell}
		\setminus \psi_{t,j} (A \cap \phi^{-1}(H_{t,j})))] 
		)
\eean
		stochastically dominates the point process
$		\cH_{tf_0(1-6dK_1 t^{-\eta})} \cap \Gamma_t^*$. 
		Hence by Lemma \ref{lemfromCov2},
\bea
\liminf_{t \to \infty} 
\Pr[ E_t ] 
		\geq \exp( - (c_{d}/(k-1)!) 
		v(  B  )  e^{-\beta}).
\label{0505a2}
\eea

Suppose $E_t$ occurs.
		Let $x \in \cup_{j,\ell}^* ( 
		\psi_{t,j}^{-1 }(Q_{t,j,\ell}^-) \cap \phi^{-1}(
		H_{t,j}^{(2r_t)}))
		\cap A^{(r_t)}$.
		Take $j_0 \in [\kappa_t^+]$ and
		$\ell_0 \in [\nu_{t,j_0}^+]$
		such that $x \in \psi_{t,j_0}^{-1}( Q_{t,j_0,\ell_0}^-) 
		\cap \phi^{-1}(H_{t,j_0}^{(2r_t)})$.
Then
$$
y:= \sigma_{t,j_0,\ell_0} \circ \psi_{t,j_0}(x)
\in \sigma_{t,j_0,\ell_0}(Q_{t,j_0,\ell_0}^-)
		\subset (\Gamma_t^*)^{(2r_t)} ,
		$$
		and since we assume $E_t$ occurs,
\bean
		\card ( B_{\R^d}(y, (1-3dK_1 t^{-\eta})r_t) 
\cap
		\cup^*_{j,\ell}
	(	\sigma_{t,j,\ell}
[\psi_{t,j}(\Po_t \cap \phi^{-1}(H_{t,j}))  \cap Q_{t,j,\ell}] 
~~~~~~
		\\
		~~~~~~
		\cup [ \cH_{tf_0(1-6dK_1 t^{-\eta})} 
		\cap
		\sigma_{t,j,\ell}(
		Q_{t,j,\ell}
		\setminus \psi_{t,j} (A \cap \phi^{-1}(H_{t,j})))] ))
		\geq k.
\eean
Since $\psi_{t,j_0}(x) \in Q_{t,j_0,\ell_0}^-$, 
		we have
$
B_{\R^d}(y,(1-3dK_1 t^{-\eta})r_t) 
\subset 
\sigma_{t,j_0,\ell_0}( Q_{t,j_0,\ell_0} ). 
$
	Since the sets $\sigma_{t,j,\ell}(Q_{t,j,\ell})$,
	$j \in [\kappa_t^+], \ell \in [\nu_{t,j}^+]$,
	are disjoint,  there are at least $k$ points $\tiy$ satisfying
\bea
\tiy \in B_{\R^d}(y,(1-3dK_1 t^{-\eta})r_t) \cap 
(
\sigma_{t,j_0,\ell_0}[
	\psi_{t,j_0}(\Po_t \cap 
\phi^{-1}(H_{t,j_0}))
	\cap
	Q_{t,j_0,\ell_0}
]
\nonumber \\
\cup 
		[\cH_{tf_0(1-6dK_1  t^{-\eta}) } \cap
		\sigma_{t,j,\ell_0}( Q_{t,j_0,\ell_0}
		\setminus \psi_{t,j_0}(A \cap \phi^{-1}(H_{t,j_0})) ) ]
		)
.
\label{0817c}
\eea
Given such a $\tiy$, set
$\tx := \psi_{t,j_0}^{-1} (\sigma_{t,j_0,\ell_0}^{-1}(\tiy)) $.
Then $\|\psi_{t,j_0}(x) -\psi_{t,j_0}(\tx) \|
= \| \sigma_{t,j_0,\ell_0}^{-1} (y) - \sigma_{t,j_0,\ell_0}^{-1}(\tiy) \| \leq
r_t$,
and by Lemma \ref{lemcompare3} (f), 
$\dist(x,\tx) \leq 2 r_t$. 
By Lemma \ref{lemcompare3}(e) and the definition of
$\psi_{t,j}$ just before Lemma \ref{lemcompare22},
\begin{align}
\left| \frac{\| \psi_{t,j_0}(x)- \psi_{t,j_0}(\tx)\|}{\dist(x,\tx)}
-1 \right|
	& \leq  K_1 (\dist(x,\tx) + \dist(x, x_{t,j_0}) ),  
\label{0806c}
\end{align}
where $x_{t,j_0}$ is an element of $\phi^{-1}(H_{t,j_0})$.

Since 
$x, x_{t,j_0} \in \phi^{-1}(H_{t,j_0})$,
we have
$
\dist(x, x_{t,j_0}) \leq \sqrt{d} \dist(\phi(x),
\phi(x_{t,j_0}) ) \leq d t^{-\eta}.
$
Also $\dist(x,\tx) \leq 2r_t = o(t^{-\eta})$.
Thus by (\ref{0806c}) we obtain that for $t$ large,
$$
\| \psi_{t,j_0}(x) - \psi_{t,j_0}(\tx) \| 
\geq  (1 -  2d K_1 t^{-\eta})
 \dist (x,\tx),
$$
and hence
\bean
\dist(\tx ,x ) \leq
(1+ 3d K_1 t^{-\eta})
\| \psi_{t,j_0}(\tx) - \psi_{t,j_0} (x)\|
= (1+ 3d K_1 t^{-\eta})
\|\tiy - y\|
\leq r_t,
\eean
and therefore since we assume $x \in \phi^{-1}(H_{t,j_0}^{(2r_t)}) 
\cap A^{(r_t)}$
we have $\tx \in \phi^{-1} (H_{t,j_0}) \cap A$,
so that $\tiy$ is not in the second set on the right hand side of 
 (\ref{0817c}),
and hence
$\tx \in \Po_t $. Therefore
$
x \in 
F_{r_t} (\Po_t ),
$
so the event on the left side of (\ref{0706a2}) occurs.
Then the result follows from (\ref{0505a2}).
\end{proof}

	\begin{lemm}[Covering tartan sets in $A^{(r_t)}$]
		\label{lemJuly2}
		As $t \to \infty$, we have:
		\begin{align}
			\Pr[
	 \cup_{j,\ell}^* \psi_{t,j}^{-1}(Q_{t,j,\ell} \setminus 
		Q_{t,j,\ell}^-) \cap A^{(r_t)} \subset 
		F_{r_t} ( \Po_{t} ) ]
		 \to 1;
			\label{e:0904b}
			\\
		\Pr[ \cup_{j=1}^{\kappa_t^+} \phi^{-1}(H_{t,j} \setminus 
		H_{t,j}^{(2r_t)}) \cap A^{(r_t)} \subset 
			F_{r_t} ( \Po_{t} ) ] \to 1. 
			\label{e:0904c}
		\end{align}
	\end{lemm}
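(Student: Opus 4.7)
The plan is to apply Lemma \ref{lemmeta}(iii) to each of \eqref{e:0904b} and \eqref{e:0904c}, working in the metric space $\cM$ with the probability measure $\mu$ supported on $A$. The crucial structural feature is that in both statements the set to be covered is intersected with $A^{(r_t)}$, so that for any $x$ in the set and any $s\in(0,r_t]$, the geodetic ball $B(x,s)$ is contained in $A$. Hence by \eqref{e:volLB1} of Lemma \ref{lemMyBk}, $\mu(B(x,s)) = f_0 v(B(x,s)) \geq a s^d$ with $a := f_0(1-\eps')\theta_d$. Moreover, \eqref{rt2} gives $r_t = (1+o(1))(u\log t/t)^{1/d}$ with $u=1/(f_0\theta_d)$, matching the scaling required by Lemma \ref{lemmeta}(iii).

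For \eqref{e:0904b}, set $T_t := \cup_{j,\ell}^* \psi_{t,j}^{-1}(Q_{t,j,\ell}\setminus Q_{t,j,\ell}^-)\cap A^{(r_t)}$. The plan is to estimate the geodetic covering number of $T_t$ at radius $r_t$ by transferring the Euclidean estimate from the proof of Lemma \ref{lemplaid2} via the near-isometry property Lemma \ref{lemcompare22}(b). The total number of mini-cubes is $O(t^{\ggamma d})$, and each Euclidean shell $Q_{t,j,\ell}\setminus Q_{t,j,\ell}^-$ is a hypercubic annulus of thickness $O(r_t)$ and diameter $O(t^{-\ggamma})$, so has Euclidean covering number $O((t^{-\ggamma}/r_t)^{d-1})$ by radius-$r_t$ balls. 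Pulling back under $\psi_{t,j}^{-1}$ and invoking Lemma \ref{lemcompare22}(b), the corresponding subset of $\cM$ is covered by the same number of geodetic balls of radius $(1+o(1))r_t$, hence (for $t$ large) by $O((t^{-\ggamma}/r_t)^{d-1})$ geodetic balls of radius $r_t$. Summing over $(j,\ell)$ and using \eqref{rt2} gives
\[
\kappa(T_t, r_t) = O\big(t^{\ggamma} r_t^{1-d}\big) = O\big(r_t^{-d(1-1/d+\ggamma)}\big),
\]
so we may take $b = d(1-1/d+\ggamma)$ in Lemma \ref{lemmeta}(iii). With $\ggamma = 3/(4d)$, the condition $u > b/(ad)$ reduces to $1-\eps' > 1 - 1/(4d)$, which is secured by the choice of $\eps'$ at \eqref{e:defeps'} (indeed \eqref{e:defeps'} forces $\eps' = O(1/d^2)$, well below $1/(4d)$). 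Lemma \ref{lemmeta}(iii) then yields \eqref{e:0904b}.

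The argument for \eqref{e:0904c} is structurally identical with $T'_t := \cup_{j=1}^{\kappa_t^+} \phi^{-1}(H_{t,j}\setminus H_{t,j}^{(2r_t)})\cap A^{(r_t)}$ in place of $T_t$; here the coarser cubes $H_{t,j}$ of side $t^{-\eta}$ play the role of the mini-cubes, and Lemma \ref{lemcompare} gives
\[
\kappa(T'_t, r_t) = O\big(t^{\eta d}(t^{-\eta}/r_t)^{d-1}\big) = O\big(r_t^{-d(\eta + 1 - 1/d)}\big),
\]
so the corresponding value of $b/d$ is $\eta + 1 - 1/d$. Since $\eta < \ggamma$, the condition $u > b/(ad)$ is satisfied a fortiori, and Lemma \ref{lemmeta}(iii) again applies.

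The main obstacle is ensuring uniformity of the covering-number estimate across all $O(t^{\ggamma d})$ mini-cubes, given that each $\psi_{t,j}$ is a different nearly-linear map between a neighbourhood in $\cM$ and $\R^d$. This uniformity is precisely what Lemma \ref{lemcompare22} provides, with distortion $O(t^{-\eta})$ that is independent of $j$; once it is used to transfer Euclidean covering numbers to geodetic ones with only a $(1+o(1))$ multiplicative loss, the remainder of the proof is the arithmetical verification of the condition $u > b/(ad)$, which holds by virtue of the careful choice of $\eps'$ at \eqref{e:defeps'}.
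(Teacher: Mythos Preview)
Your proof is correct and follows the same route as the paper: apply Lemma~\ref{lemmeta}(iii) with $u=1/(f_0\theta_d)$, covering-number exponent $b/d\approx 1-1/d+\ggamma$, and a volume lower bound on $\mu(B(x,s))$ coming from the condition $x\in A^{(r_t)}$. One small slip: the step $O(t^{\ggamma}r_t^{1-d})=O(r_t^{-d(1-1/d+\ggamma)})$ drops a factor of $(\log t)^{\ggamma}$, since $t^{\ggamma}\sim c\,r_t^{-d\ggamma}(\log t)^{\ggamma}$; the paper absorbs this by working with $\ggamma'=7/(8d)>\ggamma=3/(4d)$ in place of $\ggamma$, and your strict inequality $\eps'<1/(4d)$ leaves ample room for this adjustment. (The paper also uses the cruder distortion bound of Lemma~\ref{lemcompare3}(VII) rather than Lemma~\ref{lemcompare22}(b), and computes the volume bound directly via Lemma~\ref{lemcompare} with constant $(1-\eps')^{d+1}$ rather than via Lemma~\ref{lemMyBk}; your choices are equally valid.)
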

	\begin{proof}
		Let $\alpha' = \frac{7}{8d}$, so that
		$\alpha<\alpha'<1/d$ and $(1-\eps')^{d+1}>1+\alpha'-1/d$
		by \eqref{e:defeps'}. We claim that
 \bea
		\kappa(\cup^*_{j,\ell} \psi_{t,j}^{-1} (Q_{t,j,\ell}  
		\setminus Q_{t,j,\ell}^-) \cap A^{(r_t)} ,  r_t)
		= O( t^{\ggamma d} \times (t^{-\ggamma}/r_t)^{d-1} )
		= O(r_t^{-d(\alpha'+1-1/d)}).
		~
		\label{0619a4}
 \eea
	Indeed,  for each  $(j,\ell)$, as discussed in the proof 
	of Lemma \ref{lemplaid2}
	the set
	 $ (Q_{t,j,\ell} \setminus Q_{t,j,\ell}^-)$ 
	can be covered by $O((t^{-\ggamma}/r_t)^{d-1})$ Euclidean balls
	of radius $r_t/2$.
	The mapping $\psi_{t,j}$ changes  all distances by a factor of
	at most 2, and therefore  also
	$\psi_{t,j}^{-1}(Q_{t,j,\ell} \setminus Q_{t,j,\ell}^-)$
	can be covered by $O((t^{-\ggamma}/r_t)^{d-1})$ geodetic
	balls of radius $ r_t$.
	The number of sets 
	$\psi_{t,j}^{-1}(Q_{t,j,\ell} \setminus Q_{t,j,\ell}^-)$
	in the union is $O(t^{\ggamma d})$,  
	and the first part of the assertion (\ref{0619a4}) follows.
		Then the second part comes from (\ref{rt2}).

	Let $y\in\cup^*_{j,\ell} \psi_{t,j}^{-1} (Q_{t,j,\ell}  
	\setminus Q_{t,j,\ell}^-) \cap A^{(r_t)}$.
	Since $B$ is compact with $B \subset V$, and
		$\psi_{t,j}^{-1}(Q_{t,j,\ell}) \cap \phi^{-1}(H_{t,j}) \neq
		\emptyset$
		and  $H_{t,j} \cap \phi(B) \neq \emptyset$ for all $j,\ell$,
		for all large enough $t$ we have $B(y,r_t)\subset V$.
	Using  Lemma
		\ref{lemcompare},
 we have for $0 < s \leq r_t$ that
 \begin{align}
	 v(B(y, s)) & \geq (1-  \eps') \lambda_d(\phi(B(y, s)))
	 \nonumber
 \\
		& \geq (1- \eps') \lambda_d( B_{\R^d}(\phi(y),(1-\eps')s ))= (1-\eps')^{d+1} \vvol_d s^d.
		\label{e:vBLB}
 \end{align}
	Therefore, the condition $u >b/(ad)$ of Lemma \ref{lemmeta}(iii)
	holds with $u=1/(\vvol_d f_0),$ $ a=(1- \eps')^{d+1}f_0\vvol_d$ and 
		$b/d= 1+\alpha'-1/d$. Then  \eqref{e:0904b}  follows
		by Lemma \ref{lemmeta}(iii).
		
	The proof of \eqref{e:0904c} is similar, and we omit the details.
	\end{proof}


\begin{proof}[Proof of Proposition \ref{propcompare2}]
	Let $x \in   B \cap A^{(r_t)}  $.
	Then $\phi(x) \in \phi( B) 
	 \subset \cup_{j=1}^{\kappa_t^+}
	H_{t,j}$. Take $j_0 \in [\kappa_t^+]$ such that 
	$\phi(x) \in H_{t,j_0}$. Then $\psi_{t,j_0}(x)  \in
	\psi_{t,j_0} \circ \phi^{-1}(H_{t,j_0}) \subset
	\cup_{\ell=1}^{\nu_{t,j_0}^+}  (Q_{t,j_0,\ell})$, so
	$x \in \psi_{t,j_0}^{-1} (Q_{t,j_0,\ell_0})$ for some
	$\ell_0 \in [\nu_{t,j}^+] $. Thus
 $$
	 B \cap A^{(r_t)}    \subset
	\big( \cup_{j,\ell}^* \psi_{t,j}^{-1}(Q_{t,j,\ell}) \big)
	\cap \cup_{j=1}^{\kappa_t^+} \phi^{-1}(H_{t,j}). 
$$
Therefore by Lemmas
	 \ref{lemcovA2} 
	and \ref{lemJuly2},
	we obtain that
\bean
\liminf_{t \to \infty}
	\Pr [  B \cap A^{(r_t)}  \subset F_{r_t}(\Po_t) ]
	\geq \exp( -(c_{d}/(k-1)!) v(  B  ) e^{-\beta} ).
\eean
Combined with Lemma \ref{lem06192}, this yields 
	\eqref{eqHausd2}.
	In view of the discussion just
	after the statement  of
	Proposition \ref{propcompare2}, this
	completes  the proof
	of that result.
\end{proof}

\begin{lemm}[Neighbourhood with nice boundary]
	\label{lemWexist2}
	Let $x_0 \in V \subset \cM$, with $V$ open.
	There exists a compact subset 
	$W$ of $\cM$ with $v(\partial W ) =0$, $x_0 \in W^o$,
$W \subset V$, and
$\limsup_{r \downarrow 0} r^{d-1} \kappa(
	\partial W,r  ) < \infty$. 
\end{lemm}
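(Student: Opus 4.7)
The plan is to construct $W$ as the $\phi$-preimage of a small closed Euclidean ball, where $\phi$ is the chart map provided by Lemma \ref{lemcompare}. First I would apply Lemma \ref{lemcompare} at $x_0$ with $\eps = 1/2$ (say) to obtain an open neighbourhood $V_0 \subset V$ of $x_0$ in $\cM$ and an injective open map $\phi:V_0 \to \R^d$ with $\phi(x_0)=o$, bi-Lipschitz for both distance and volume. Choose $\rho >0$ small enough that $\overline{B_{\R^d}(o,2\rho)} \subset \phi(V_0)$. In the case $x_0 \in A^o$, first shrink $V_0$ so that $V_0 \subset A^o$, and set $W := \phi^{-1}(\overline{B_{\R^d}(o,\rho)})$. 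In the case $x_0 \in \partial A$, use the chart with $\phi(V_0 \cap A) = \phi(V_0) \cap \bH$ provided by Lemma \ref{lemcompare}(ii) and set $W := \phi^{-1}(\overline{B_{\R^d}(o,\rho)} \cap \bH)$.

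Next I would verify the three required properties. For membership, $W$ is the image of a compact subset of $\phi(V_0)$ under the homeomorphism $\phi^{-1}$, hence compact and contained in $A$; the point $x_0 = \phi^{-1}(o)$ lies in the (relative) interior of $W$, since $\phi^{-1}(B_{\R^d}(o,\rho))$ (respectively $\phi^{-1}(B_{\R^d}(o,\rho) \cap \bH)$) is an open neighbourhood of $x_0$ in $\cM$ (respectively in $A$) contained in $W$. For the volume of the boundary, observe that $\partial W$ is contained in $\phi^{-1}(S)$, where $S$ is either the Euclidean sphere $\partial B_{\R^d}(o,\rho)$ or the union of a closed hemisphere in $\bH$ and the closed $(d-1)$-disk $\overline{B_{\R^d}(o,\rho)} \cap \partial \bH$; in either case $S$ is a bounded set of finite $(d-1)$-dimensional Hausdorff measure, and since $\phi^{-1}$ is Lipschitz (Lemma \ref{lemcompare}), $\phi^{-1}(S)$ also has finite $(d-1)$-dimensional Hausdorff measure in $\cM$, and therefore zero Riemannian volume.

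For the covering estimate, I would cover $S \subset \R^d$ by $O(r^{1-d})$ Euclidean balls of radius $r/2$ (possible because $S$ is a bounded $(d-1)$-dimensional subset of $\R^d$, by a standard dyadic argument analogous to the proof of Lemma \ref{lemcovering}(ii)). Pulling each such ball back through $\phi^{-1}$ yields a set of diameter at most $r$, by the distance comparison in Lemma \ref{lemcompare}, and this set is therefore contained in a single geodetic ball of radius $r$ in $\cM$. Hence $\partial W$ can be covered by $O(r^{1-d})$ geodetic balls of radius $r$, giving $\limsup_{r \downarrow 0} r^{d-1}\kappa(\partial W,r) < \infty$, as required.

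The only non-routine point is the case $x_0 \in \partial A$, where one must keep track of the two pieces making up $\partial W$ (the curved hemispherical piece and the flat piece lying in $\partial A$) and verify that both are $(d-1)$-rectifiable with the desired covering bound; but since $\phi$ straightens $\partial A$ to $\partial \bH$, each piece reduces to a bounded $(d-1)$-dimensional set in Euclidean space, and the argument proceeds uniformly. All other steps are direct consequences of the chart machinery established in Section 3, so I do not anticipate any serious obstacle.
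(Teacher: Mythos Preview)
Your approach is essentially the paper's: take $W$ as the $\phi$-preimage of a small closed Euclidean ball and use the bi-Lipschitz properties of the chart (Lemma~\ref{lemcompare}) to check $v(\partial W)=0$ and the $O(r^{1-d})$ covering bound for $\partial W$. The paper makes no case distinction: it simply sets $W=\phi^{-1}(B_{\R^d}(\phi(x_0),r_0))$ regardless of whether $x_0$ lies in $A^o$ or on $\partial A$.

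Your boundary-case construction (intersecting the ball with $\bH$ to force $W\subset A$) introduces a gap: if $x_0\in\partial A$ and $W\subset A$, then $x_0\notin W^o$, because $W^o$ denotes the interior of $W$ in $\cM$ and every $\cM$-open neighbourhood of a boundary point meets $\cM\setminus A$. You acknowledge this by writing ``(relative) interior'', but the lemma requires the $\cM$-interior. Taking the full ball preimage, as the paper does, restores $x_0\in W^o$; strict containment $W\subset A$ then fails at boundary points, but this is harmless for the downstream application, where the patches $W_i$ are only ever used through their intersections with subsets of $A$. The cleanest fix is simply to drop your case distinction.
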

\begin{proof}
	Assume without loss of generality that there exists
	$\phi:V \to \R^d$ with the properties described
	in Lemma \ref{lemcompare}, with $\eps = 1/99$.
	(if not, one can take a smaller
	set $V$.)
	Then we can and do choose $r_0>0$ such that  $B_{\R^d}(\phi(x_0),2r_0)
	\subset
	\phi(V)$. Take $W = \phi^{-1}(B_{\R^d}(\phi(x_0),r_0))$.
	
	Then we claim $W $  has the required properties. Indeed,
	$\phi(\partial W  ) = \partial B(\phi(x_0),r_0) $; hence
	$\lambda_d(\phi(\partial W)) =0$
	so $v(\partial W) =0$ 
	by Lemma \ref{lemcompare},
	and $W $ is compact. 
 Moreover	the $(d-1)$-dimensional sphere
	$\phi(\partial W  ) $
	can be covered by $O(r^{1-d})$
balls of radius $r/2$, and the pre-images of these balls 
under $\phi$ are contained in geodetic balls of radius $r$ in $\cM$,
	by Lemma \ref{lemcompare},
	and cover $\partial W$. 
\end{proof}

By Proposition \ref{propcompare2}, Lemma \ref{lemWexist2},
and
a compactness argument we can and do take integers $m' \geq m \geq 1$,
and a finite collection of
quadruples  $(x_i, V_i,  W_i, \phi_i)$, $i \in [m'] $,
with $ \partial A \subset \cup_{i=1}^m W_i $ and
$A \subset \cup_{i=1}^{m'} W_i$, such that
for each $i \in [m']$ we have:

\begin{itemize}
	\item
$V_i$ is  open  in $\M$ and 
$W_i$ is  compact  in $\M$ 
with $v(\partial W_i ) =0$,  $x_i \in W_i^o$, $W_i \subset V_i$ and
\bea
		\limsup_{r \downarrow 0} r^{d-1} \kappa ( \partial W_i ,r) 
< \infty;
		\label{0913c}
\eea

\item
if  $i \leq m $ then $x_i \in  \partial A$, while
if $i >m $ then $V_i \subset A^o$;

\item
$\phi_i: V_i \to \R^d$ is an injective open map, with
	\bea
	|(\|\phi_i(y)-\phi_i(z)\|/ \dist(y,z)) - 1| \leq \eps',
	~~ \forall ~ y,z \in V_i, y \neq z;
	\label{0817d}
	\eea 
\item
		if $i \leq m$ then $\phi_i(V_i \cap A) = \phi_i(V_i) \cap \bH$; 
	\item
	 for all measurable $ F \subset V_i$ we have 
	$|(\lambda_d(\phi_{i}(F)) / v(F))-1| \leq \eps'$;
\item
for all   $B \in \cA$ with $ B \subset V_i$, we have (\ref{eqHausd2}).
%
\end{itemize}


We refer to $W_1,\ldots,W_{m'}$ as {\em patches} and 
	define the {\em interior patch boundary region} $\Delta_t^{\rm int}$
	(in the interior of $A$ but near the boundaries of the patches) by
	$$
	\Delta_t^{\rm int} := \{ x \in 	A^{(r_t)}:
		B(x,r_t) \cap (\cup_{i=1}^{m'} \partial W_i) \neq \emptyset
		 \}.
	$$

	\begin{lemm}[Covering the interior patch boundary region]
		\label{lemDelta2}
		It is the case that
		$\lim_{t \to \infty}
		\Pr[ \Delta_t^{\rm int} \subset F_{r_t}(\Po_t) ] =1.
		$
	\end{lemm}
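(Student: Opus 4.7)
The plan is to apply Lemma \ref{lemmeta}(iii) with $k$ the fixed integer and with the family
\[
A_r := \{x \in A^{(r)} : B(x,r) \cap (\cup_{i=1}^{m'} \partial W_i) \neq \emptyset\},
\]
so that $\Delta_t^{\rm int} = A_{r_t}$ and the desired convergence $\Pr[\Delta_t^{\rm int} \subset F_{r_t}(\Po_t)] \to 1$ follows directly from the conclusion of the lemma.

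First I will verify the covering-number hypothesis $\kappa(A_r, r) = O(r^{-b})$ with $b = d-1$. By \eqref{0913c}, for each $i \in [m']$ one can cover $\partial W_i$ by $O(r^{1-d})$ geodetic balls of radius $r$ centred at points $z_{i,j} \in \partial W_i$. If $y \in A_r$ then $B(y,r) \cap \partial W_i \neq \emptyset$ for some $i$; picking $w$ in that intersection and then the ball $B(z_{i,j}, r)$ of the covering containing $w$, one has $\dist(y, z_{i,j}) \leq 2r$. Hence $A_r \subset \cup_{i,j} B(z_{i,j}, 2r)$. Using the local comparison between geodetic and Euclidean distances in charts supplied by Lemma \ref{lemcompare} (which implies that, for $r$ small, each geodetic ball of radius $2r$ in $\cM$ can be covered by $O(1)$ geodetic balls of radius $r$), this yields $\kappa(A_r, r) = O(r^{1-d})$.

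Next I will check the measure lower bound. Fix $\epsilon \in (0, 1/d)$. For $x \in A_r$ and $s \in (0, r)$ one has $B(x,s) \subset B(x,r) \subset A^o$, so $x \in A^{(s)}$; by \eqref{e:volLB1} of Lemma \ref{lemMyBk}, for $r$ small enough, $v(B(x,s)) \geq (1-\epsilon)\theta_d s^d$. Since $f \equiv f_0$ on $A$, this gives $\mu(B(x,s)) \geq a s^d$ with $a := (1-\epsilon) f_0 \theta_d$, as required.

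Finally, from \eqref{rt2} we have $r_t = (1+o(1))(u \log t/t)^{1/d}$ with $u := 1/(f_0 \theta_d)$. The inequality $u > b/(ad)$ required by Lemma \ref{lemmeta}(iii) reduces, with our values, to $(1-\epsilon) d > d-1$, which holds by our choice $\epsilon < 1/d$. Applying Lemma \ref{lemmeta}(iii) then delivers $\Pr[\{A_{r_t} \subset F_{r_t}(\Po_t)\}^c] \to 0$, proving the claim. There is no serious obstacle; the subtle point is merely that $b = d-1$ is strictly less than $d$, which is what permits the strict inequality $u > b/(ad)$ to hold with a small margin---were the patch boundaries $d$-dimensional this method would break down, but $(d-1)$-dimensionality of $\partial W_i$, guaranteed by \eqref{0913c}, gives exactly the slack we need.
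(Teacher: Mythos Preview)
Your proof is correct and follows essentially the same approach as the paper: both apply Lemma~\ref{lemmeta}(iii) with $b=d-1$ via the covering estimate \eqref{0913c}, and check the inequality $u>b/(ad)$ using the asymptotics from \eqref{rt2}. The only cosmetic difference is that you invoke Lemma~\ref{lemMyBk} for the volume lower bound, whereas the paper repeats the chart computation from Lemma~\ref{lemcompare} directly; your version is arguably cleaner.
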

	\begin{proof}
		For all $t$ sufficiently large
		and  any $y\in \Delta_t^{\rm int}$, $s \in (0,r_t]$ we have
\bean
		v(B(y, s) ) \geq (1-  \eps') \lambda_d (\phi(B(y, s) ))
		\geq (1- \eps') \lambda_d( B_{\R^d}(\phi(y),(1-\eps') s) ).
 \eean
Also, using \eqref{0913c} we have $\kappa(\Delta^{\rm int}_t,r_t) = 
		O(r_t^{1-d})$.  Therefore by \eqref{e:defeps'}
		the condition $u>b/(ad)$ of Lemma \ref{lemmeta}(iii) holds 
		with $u=1/(f_0\vvol_d)$, 
		$a=(1- \eps')^{d+1} f_0 \vvol_d$, and $b/d=1-1/d$. The result follows.
	\end{proof}
\begin{proof}[Proof of Proposition \ref{Hallthm}]
		Define the sets
	$W_1^* := W_1$,
	and for
		$i = 2,3,\ldots,m'$ set $W_i^* :=
	W_i \setminus \cup_{j=1}^{i-1} W_j$.
	Then  $ A \subset \cup_{i=1}^{m'} W_i^*$,
	and $W_1^*,\ldots,W_{m'}^*$ are disjoint.
		For $i \in [m']$ let $W_{i,t} := \overline{W_i^*}
		\setminus \Delta_t^{\rm int}$.
	 Then for large enough $t$ we have 
	 \bea
		A^{(r_t)} \subset \cup_{i=1}^{m'} ( W_i \cap A^{(r_t)} )
		= \cup_{i=1}^{m'} ( W_i^* \cap A^{(r_t)} )
		\subset 
		\cup_{i=1}^{m'} (W_{i,t} \cap A^{(r_t)}) \cup 
		\Delta_t^{\rm int}.
	 \label{0709b2}
	 \eea
	 Let $B \in \cA$.
		We claim that for all $i \in [m']$ we have
	 \bea
	 \lim_{t \to \infty}
		\Pr[W_{i,t} \cap B \cap   A^{(r_t)}
		\subset F_{r_t}(\Po_t)]
		= \exp(- (c_{d}/(k-1)!) v( W_i^* \cap  B)e^{-\beta}).
	 \label{0709a2}
	 \eea
		Indeed, since $\partial(B \cap \overline{W_i^*})
		\subset \partial B \cup \cup_{\ell=1}^{m'} \partial W_\ell$, 
		we have 
		$v(\partial(B \cap \overline{W_i^*})) =0$ and
		$B \cap \overline{W_i^*} \in \cA$.
		Also $B \cap \overline{W_i^*} \subset V_i$.
		Hence  \eqref{eqHausd2} 
	from Proposition \ref{propcompare2}
		applies to $B \cap 
		\overline{W_i^*}$
		and yields
	 \begin{align*}
	 \lim_{t \to \infty}
		\Pr[\overline{W_{i}^*}  \cap B \cap   A^{(r_t)}
		\subset F_{r_t}(\Po_t)]
		 = \exp(- (c_{d}/(k-1)!) v( \overline{W_i^*}
		 \cap  B)e^{-\beta}),
	 \end{align*}
	 and combined with Lemma \ref{lemDelta2} and the fact
	 that $v(\partial W_i^*) =0$, this yields the
	 claim (\ref{0709a2}).

	 Next, suppose $i \in [m']$
	 and $t >0$.
	 Let $x \in W_{i,t} \cap   A^{(r_t)}$.
	 Since $\partial ( W_i^* \cap A)
	 \subset \cup_{\ell =1}^{m'} \partial W_\ell
	 \cup \partial A$,
	  and $x \notin \Delta_t^{\rm int} $, we have
	  $\dist(x, \partial (W_i^* \cap A))
	  \geq  r_t$. Therefore $B(x, r_t) \subset 
	 \overline{W_i^*} \cap  A$.
	   Hence the event $\{W_{i,t}\cap B \cap A^{(r_t)} 
	   \subset F_{r_t}(\Po_t)\}$ is in the completion of the $\sigma$-algebra
	   generated by the restriction of $\Po_t$ to 
	   $W_i^*$, using the fact that $v(\partial W_i^*) =0$.

	   This shows that the events $\{(W_{i,t} \cap B \cap A^{(r_t)})
	   \subset F_{r_t}(\Po_t)\},
	   1 \leq i \leq m'$ are mutually independent. Combined with
	   (\ref{0709b2}) and 
	   Lemma \ref{lemDelta2}, this shows that
	   \begin{align}
	   \lim_{t \to \infty} \Pr[ B \cap  A^{(r_t)} 
		   \subset F_{r_t}(\Po_t) ] & = \lim_{t \to \infty}
		   \Pr[ \cap_{i=1}^{m'} \{ W_{i,t} \cap B \cap  A^{(r_t)} 
		   \subset F_{r_t}(\Po_t) \} ]
	   \nonumber \\
		   & = \lim_{t \to \infty} \prod_{i=1}^{m'} \Pr [
		   W_{i,t} \cap B \cap  A^{(r_t)} 
		   \subset F_{r_t}(\Po_t)  ]
    \nonumber \\
		   & = \exp ( -( c_{d}/(k-1)!) v(B)e^{-\beta}),
		   \label{e:0829}
	   \end{align}
    where the last line comes from (\ref{0709a2}).
	By \eqref{Fnequiv} we have that
	\bea
	\tR_{Z_t,k} \leq r_t \Longleftrightarrow 
	B \cap A^{(r_t)} \subset F_{r_t}(\Po_t). 
	\label{0913d}
	\eea
	Choosing  $r_t$ so the pre-limit in \eqref{rt2},
	is actually equal to $\beta $ for $t$ large, 
	using \eqref{e:0829} and \eqref{0913d}  
	we obtain the last line of \eqref{1228a}.
		We then obtain 
		 the first line of \eqref{1228a} by 
		 \cite[Lemma 7.1]{ECover}, the proof of which
		 carries over to the present setting.

		 If $B$ is compact and $B \subset A^o$, then
		 there exists $r>0$ such that $B \subset A^{(r)}$;
		 hence
		 by \eqref{rt2} and \eqref{eqmaxspacPo},
		 for large enough $t$ we have $R_{Z_t} \leq r_t
		 \Longleftrightarrow \tR_{Z_t} \leq r_t$, so
		 (\ref{0913d}) holds for $R_{Z_t,k}$ as well, so
		 we can derive  (\ref{0114a}) in the same manner
		 as (\ref{1228a}).
\end{proof}


\section{Proof of Theorem \ref{th:weak}}
\label{s:ProofWeak}
\allco
Let $k \in \N$, $\zeta \in \R $, and
assume throughout this section that $(r_t)_{t >0}$ satisfy $r_t >0$ and
\bea
\frac{f_0 t \vvol_d r_t^d}{2} - \left(\frac{d - 1}{d}\right) \log (t f_0) -
\left( d+k-3+1/d \right)
\lglg t \to \zeta
~~~ {\rm as} ~ t \to \infty.
\label{rt1}
\eea
	As in Section \ref{seclastpf}, let $\eps' > 0$
	 satisfying \eqref{e:defeps'}, and write $F_{r}(\X)$
	 for $F_{k,r}(\X)$.
Define 
	\bea
	\label{cBdef}
	\cB := \{ D \subset A: D ~{\rm closed},
	D \cap \partial A \neq \emptyset,
	v (\partial D) = 0, 
	~
	\tv( \partial_{\partial A} (D \cap \partial A))=0
	\}.
	\eea



\begin{prop}
	\label{propcompare}
	Let $x_0 \in \partial A$. Then there exists
	an open set $V \subset \cM$ with $x_0 \in V$,
	and an injective
	function $\phi: V \to \R^d$ having the properties described
	in Lemma \ref{lemcompare}, taking $x=x_0$ and $\eps = \eps'$,
	such that moreover
%
%
%
%
	for all $B \in \cB$ with $B \subset V$,
	\begin{align}
	\lim_{t \to \infty} \Pr[  B \setminus A^{(r_t)} \subset
		F_{r_t}(\Po_t)  ] 
		& =
\lim_{t \to \infty}
\Pr [ 
	B \cap \partial A \subset F_{r_t}(\Po_t) ]
	\nonumber \\
		& = \exp( -c_{d,k} \tv(  B \cap \partial A ) e^{-\zeta}).
	\label{eqHausd}
		\end{align}
\end{prop}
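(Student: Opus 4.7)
The plan is to mimic the proof of Proposition \ref{propcompare2}, adapting the cubette dissection to the half-space structure near $\partial A$ and using the boundary scaling \eqref{rt1}. First I would take $\phi = \phi_{x_0}$ from Lemma \ref{lemcompare3} with $\eps = \eps'$, so that $V \cap A$ maps into $\phi(V) \cap \bH$ and $V \cap \partial A$ maps into $\partial \bH$. The crucial difference from Proposition \ref{propcompare2} is that the limiting exponent involves $\tv(B \cap \partial A)$ rather than $v(B)$, so the combinatorial dissection must be driven by cubes on $\partial \bH \cong \R^{d-1}$ rather than cubes filling $\bH$.

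Concretely, I would partition $\partial \bH$ into $(d-1)$-hypercubes of side $t^{-\eta}$ (for some $\eta \in (0,1/(2d))$); let $H_{t,j}$, $j \in [\kappa_t]$ be those contained in $\phi(B_A^o \cap \partial A)$ and bounded away from $\phi(\partial_{\partial A}(B \cap \partial A))$, and $H_{t,j}$, $\kappa_t < j \le \kappa_t^+$, those merely intersecting $\phi(B \cap \partial A)$. Fix representatives $x_{t,j} \in \partial A$ with $\phi(x_{t,j}) \in H_{t,j}$ and set $\psi_{t,j} := \phi_{x_{t,j}}$. Define slab-cubettes
\[
G_{t,j} := \psi_{t,j} \circ \phi^{-1}\bigl(H_{t,j} \times [0, L_t]\bigr), \qquad L_t := r_t(1+o(1)),
\]
chosen thick enough that $\{G_{t,j}\}_{j \le \kappa_t^+}$ covers $B \setminus A^{(r_t)}$. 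Each $G_{t,j}$ is then subdivided into mini-cubes of side $t^{-\ggamma}$ with $\ggamma \in (1/(2d),1/d)$, and reassembled via translations $\sigma_{t,j,\ell}$ into an approximate half-slab $\Gamma_t$ in $\bH$, of thickness $L_t$ and base-area converging to $\tv(B \cap \partial A)$. The analogue of Lemma \ref{lemsurf2} in this setting would invoke Lemma \ref{lemcompare3}(IV) to control surface-measure preservation along the base, and (III) for volume inside the slab.

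Stochastic domination as in Lemma \ref{lemPPdom2} then sandwiches $\psi_{t,j}(\Po_t \cap \phi^{-1}(H_{t,j}\times[0,L_t]))$ between $\cH_{tf_0(1\pm o(1))}$ restricted to (the reassembled) $\Gamma_t$ and $\Gamma_t^*$. Tartan seams between mini-cubes and between layers are handled by Lemma \ref{lemmeta}(iii), using the half-ball volume bound $v(B(y,s)\cap A) \ge (1-\eps')^{d+1}\theta_d s^d/2$ from Lemma \ref{lemMyBk} for $y$ near $\partial A$ — this reproduces the arguments of Lemmas \ref{lemplaid2} and \ref{lemcovA2} in the slab setting. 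The core asymptotic input is the Euclidean half-space boundary-coverage result (the analogue of \cite[Lemma 7.2]{ECover} that produces \cite[Theorem 2.1]{ECover}), which, under the scaling \eqref{rt1}, gives coverage probability $\exp(-c_{d,k}\,\tv(B\cap \partial A)\,e^{-\zeta})$ for an approximate half-slab of base-area $\tv(B\cap\partial A)$ in $\bH$ with a homogeneous Poisson process of intensity $tf_0$. The equality of the two pre-limits in \eqref{eqHausd} is automatic, since $B \cap \partial A \subset B \setminus A^{(r_t)}$ gives the inclusion $\{B\setminus A^{(r_t)} \subset F_{r_t}(\Po_t)\} \subset \{B\cap\partial A \subset F_{r_t}(\Po_t)\}$ one way, and the explicit construction above yields the matching lower bound for the larger event.

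The main obstacle is the anisotropic geometry of the slab-cubettes: tangential extent $t^{-\eta}$ but normal extent $\sim r_t = \Theta(((\log t)/t)^{1/d})$, so the two scales differ and the error estimates in the analogue of Lemma \ref{lemcompare22} need to be checked separately in each direction. Ensuring that reassembly preserves the $(d-1)$-dimensional surface measure of $B \cap \partial A$ up to $o(1)$ error, while simultaneously controlling $d$-dimensional volume inside the slab so that Poisson domination works uniformly, is the delicate step. A secondary issue is isolating from \cite{ECover} the precise half-space coverage result with the correct constant $c_{d,k}$ and normalisation $(d-1)/d$; if no citation fits directly, the proof of \cite[Lemma 7.2]{ECover} must be replayed on the half-space to confirm these constants.
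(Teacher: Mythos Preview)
Your outline matches the paper's approach closely: local chart $\phi=\phi_{x_0}$, a grid of $(d-1)$-cubes $H_{t,j}$ of side $t^{-\eta}$ on $\partial\bH$, slabs over these of thickness $O(r_t)$, local maps $\psi_{t,j}=\phi_{x_{t,j}}$, stochastic domination between $\psi_{t,j}(\Po_t)$ and homogeneous Poisson processes, and tartan corrections via Lemma~\ref{lemmeta}(iii) with the half-ball lower bound from Lemma~\ref{lemMyBk}.

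One technical point needs fixing: the subdivision cannot be into isotropic mini-cubes of side $t^{-\ggamma}$, since $r_t=o(t^{-\ggamma})$ for $\ggamma<1/d$ and such cubes would not fit inside a slab of thickness $O(r_t)$. The paper instead takes slabs $S_{t,j}=H_{t,j}\oplus[o,5r_te_d]$, shows (its Lemma~\ref{lemslabette}) that each slabette $\psi_{t,j}\circ\phi^{-1}(S_{t,j})$ contains a straightened rectilinear slab $L_{t,j}^-\oplus[o,4r_te_d]$ over a slightly reduced base $L_{t,j}^-\subset\partial\bH$, and then subdivides \emph{only the base} into $(d-1)$-cubes $Q_{t,j,\ell}$ of side $t^{-\ggamma}$, yielding anisotropic mini-slabs $T_{t,j,\ell}=Q_{t,j,\ell}\oplus[o,4r_te_d]$. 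The bases $Q_{t,j,\ell}$ are reassembled into an approximate $(d-1)$-cube $\Gamma_t\subset\partial\bH$ of area $(1+o(1))\,\tv(B\cap\partial A)$; the reassembled structure is then $\Gamma_t\oplus[o,4r_te_d]$. This cleanly resolves the anisotropy issue you flagged, and the $(d-1)$-dimensional reassembly is why Lemma~\ref{lemcompare3}(IV) enters. The Euclidean half-space coverage input is \cite[Lemma~7.4]{ECover} (not 7.2), which under scaling \eqref{rt1} gives the limit $\exp(-c_{d,k}\,\tv(B\cap\partial A)\,e^{-\zeta})$ directly.
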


	Working towards a proof of Proposition \ref{propcompare},
	fix $x_0 \in \partial A$.
Let
$V$,  and $ \phi_\tx, $ $\tx \in V$,
be as given by Lemma \ref{lemcompare3}, taking $\eps = \eps'$
so that
$V \subset \cM$ is open with $x_0 \in V$, and
for each $\tx \in V $, 
$\phi_\tx$ is an
injection from $V$ to $\R^d$, with
$\phi_{\tilde x}(x_0) =o$. Set $\phi := \phi_{x_0}$.
With this choice of $V$ and $\phi$ (possibly after taking a smaller $V$)
we have the properties described in Lemma \ref{lemcompare}.
In particular $\phi(V \cap A) = \phi(V) \cap \bH$.

To complete the proof of  Proposition \ref{propcompare}, it remains 
to show that \eqref{eqHausd} holds for all
 $B \in \cB$ with $B \subset V$.
This will take some effort. 
Fix such a set $B$.
%
 For $D \subset \partial A$, let $D^o_{\partial A}:=
 D \setminus (\overline{\partial A \setminus D})$
 the interior of $D$ relative to $\partial A$.
Define $\gamma_t $ for $ t > 0$  by
 \begin{align}
	 \gamma_t := \sup
	 \{
	 \dist( 
	 x , B \cap \partial A):
	 x \in B \setminus A^{(r_t)} \}
	 ,
	 \label{e:gammadef}
 \end{align}
 and observe that $\gamma_t \to 0$ as $t \to \infty$ by the compactness of $B$.

Fix $\eta \in (0,1/(2d))$.
Given $t \geq 1$,  partition $\partial \bH$ into half-open
$(d-1)$-dimensional hypercubes
of side $t^{-\eta}$, and let those hypercubes in
the partition which are contained in
the set $\phi( ( B \cap \partial A)^o_{\partial A} )$ 
 be denoted $H_{t,1},\ldots,H_{t,\kappa_t}$.
Let those hypercubes in
the partition which are not contained in
$\phi(( B \cap \partial A)_{\partial A}^o )$ 
but do lie within distance $2r_t + 2 \gamma_t$ of
$\phi( B \cap \partial A )$ 
 be denoted $H_{t,\kappa_t+1},\ldots,H_{t,\kappa^+_t}$.

For $1 \leq j \leq \kappa^+_t$,
set $H_{t,j}^{(2r_t)} :=
\{x \in H_{t,j}: B_{\R^d}(x,2r_t) \cap \partial \bH \subset H_{t,j}\}$,
and let 
\begin{align}
	S_{t,j} & :=
H_{t,j} \oplus [o,5r_t e_d] =
\{x + re_d: x \in H_{t,j}, r \in [0,5r_t] \};
\nonumber
\\
	S_{t,j}^* & :=  H_{t,j} \oplus  [o,2r_te_d] , 
~~~~~
	S_{t,j}^-  :=  H_{t,j}^{(2r_t)} \oplus  [o,2r_te_d] .
\label{eqStjdef}
\end{align}
Since $B \cap  \partial A$ is compact and contained in the open set $V$,
we can and do choose $t_0 \in [1,\infty)$
such that for all $t \geq t_0$
we have that 
\bea
\{x + u + h e_d:
x \in \phi( B  \cap \partial A ), u \in \bH,
\|u \| \leq \sqrt{d}t^{-\eta}, h \in [0,5r_t] \} \subset \phi(V), 
\label{0913f}
\eea
and hence for each $j \in [\kappa_t^+]$ we have
$S_{t,j} \subset  \phi(V)$;
since also $S_{t,j} \subset \bH$, in fact
$S_{t,j}^* \subset S_{t,j} \subset  
\phi(V \cap A)$.
We refer to the  sets $S_{t,j} $ as {\em slabs}.

Given $t \geq t_0$,
	for each $j \in [\kappa_t^+]$,   
	let $y_{t,j}$ be the first point of
	$H_{t,j}$
	in the lexicographic ordering,
	 and set $x_{t,j} := \phi^{-1}(y_{t,j})$,
	so that $x_{t,j} \in V$.
	Then define $\psi_{t,j}: V \to \R^d$ by
	 $\psi_{t,j} := \phi_{x_{t,j}}$,
	as defined in Lemma \ref{lemcompare3}.
	Let $K_1$ be as given in Lemma \ref{lemcompare3}.
%
\begin{lemm}
	\label{lemcompare2}
	There exists $t_3 \in [t_2,\infty)$, where $t_2$ is as in
	Lemma
	\ref{lemPPdom2},
	such that
	for all $t \geq t_3$
	and each $j \in [\kappa_t^+]$
	and all distinct 
	$y,z \in \phi^{-1} ( S_{t,j})$ we
	have:
	
	(a) for all measurable $ F \subset \phi^{-1}(S_{t,j})$ we have 
	$|(\lambda_d(\psi_{t,j}(F)) / v(F))-1| \leq  d K_1 t^{-\eta}$;

	(b) for all measurable
	$ F \subset \phi^{-1}(S_{t,j}) \cap \partial A$ we have 
	$|(\lambda_{d-1}(\psi_{t,j}(F)) / \tv(F))-1| \leq  d K_1 t^{-\eta}$;

	(c)
	$|(\|\psi_{t,j}(y)-\psi_{t,j}(z)\|/ \dist(y,z)) - 1| \leq  
	2 d K_1 t^{-\eta}$;

	(d) the function  
	$\psi_{t,j} \circ \phi^{-1}: \phi(V) \to \R^d$
	extends to a linear map 
	(also denoted $\psi_{t,j} \circ \phi^{-1}$)
	from $\R^d $ to itself. This linear map satisfies
	$\psi_{t,j} \circ \phi^{-1}(\bH)= \bH$, and
	$\|\psi_{t,j} \circ 
	\phi^{-1} - I_d\|_{d \times d} < \eps'$;

	(e)
	It is the case that
	$\psi_{t,j}(V \cap A)= \psi_{t,j}(V) \cap \bH$.
\end{lemm}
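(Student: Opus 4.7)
The plan is to deduce all five parts from a single observation about the position of $x_{t,j}$ and then feed it into Lemma \ref{lemcompare3} with base point $\tilde x = x_{t,j}$. Indeed, since $y_{t,j}$ is the first lexicographic point of $H_{t,j} \subset \partial \bH$ and $\phi(V \cap \partial A) = \phi(V) \cap \partial \bH$ (as noted just before Lemma \ref{lemcompare2}), we have $x_{t,j} \in V \cap \partial A$; the inclusion $H_{t,j} \subset S_{t,j}$ further gives $x_{t,j} \in \phi^{-1}(S_{t,j})$. This makes every clause of Lemma \ref{lemcompare3}, including the boundary-sensitive items (I) and (IV), available at $\tilde x = x_{t,j}$.

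The second ingredient is a uniform diameter bound. Since $S_{t,j}$ is a $(d-1)$-cube of side $t^{-\eta}$ extruded to height $5r_t$, its Euclidean diameter is $\sqrt{(d-1)t^{-2\eta}+25r_t^2}$; by \eqref{rt1} one has $r_t = O((\log t / t)^{1/d})$, and since $\eta < 1/(2d)$ this gives $r_t = o(t^{-\eta})$, so the Euclidean diameter is at most $\sqrt d\,t^{-\eta}$ for all large $t$. By Remark \ref{fromcompare2} applied to the chart $\phi$, the geodetic diameter of $\phi^{-1}(S_{t,j})$ is therefore $O(t^{-\eta})$, with a constant depending only on $d$ and $\eps'$. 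At the same time, taking $t_3 \geq t_2$ large enough guarantees the slab containment \eqref{0913f}, so all maps below are well defined.

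With these preliminaries, parts (a) and (b) follow from Lemma \ref{lemcompare3}(III) and (IV) at $\tilde x = x_{t,j}$, applied to $F \cup \{x_{t,j}\}$ in place of $F$: adjoining a single point changes none of $\lambda_d$, $\lambda_{d-1}$, $v$ or $\tv$, and the enlarged set still sits inside $\phi^{-1}(S_{t,j})$ of diameter $O(t^{-\eta})$. Part (c) follows from clause (VI), using that both $\dist(x_{t,j},y)$ and $\dist(y,z)$ are bounded by the geodetic diameter of $\phi^{-1}(S_{t,j})$. Part (d) is a direct reading of clause (II) via $\psi_{t,j} \circ \phi^{-1} = \phi_{x_{t,j}} \circ \phi_{x_0}^{-1}$, and part (e) is clause (I) at $\tilde x = x_{t,j}$, available because $x_{t,j} \in \partial A$.

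The only real obstacle is cosmetic: the diameter estimate naively yields coefficients of order $\sqrt d\,K_1$ in (a)--(c) rather than the cleaner $dK_1$ or $2dK_1$ stated. Since $K_1$ is merely an unspecified positive constant delivered by Lemma \ref{lemcompare3}, it may be enlarged without loss, and the mismatch is absorbed by passing to a slightly larger $K_1$ (or, equivalently, by taking $t_3$ large enough so that the extra multiplicative factor is at most one). Once this bookkeeping is done, nothing else in the statement requires work.
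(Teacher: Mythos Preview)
Your argument is correct and follows the same route as the paper: identify $x_{t,j}\in V\cap\partial A$, then read off parts (a)--(e) from clauses (III), (IV), (VI), (II), (I) of Lemma~\ref{lemcompare3} with $\tilde x=x_{t,j}$, using that $\phi(F\cup\{x_{t,j}\})\subset S_{t,j}$.

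One small correction on the constants: you do not need to enlarge $K_1$ (which is fixed once and for all by Lemma~\ref{lemcompare3} and reused later), nor does taking $t_3$ large help with a $t$-independent factor. The stated bounds hold as written. The point is that Lemma~\ref{lemcompare3} bounds in terms of the \emph{manifold} diameter of $F\cup\{x_{t,j}\}$, and by (VII) (see Remark~\ref{fromcompare2}) this is at most $(1+2\eps')$ times the Euclidean diameter of $S_{t,j}$, giving $(1+2\eps')\sqrt{d}\,t^{-\eta}$. Since \eqref{e:defeps'} forces $\eps'<1/20$ and $d\geq 2$, one has $(1+2\eps')\sqrt{d}\leq d$, so the bound $dK_1t^{-\eta}$ in (a)--(b) and $2dK_1t^{-\eta}$ in (c) are obtained directly.
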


\begin{proof}
	For measurable 
	$F \subset \phi^{-1}(S_{t,j})$, by
	Lemma \ref{lemcompare3}(c)
	\bean
	\left| \frac{\lambda_d(\psi_{t,j}(F))}{v(F)} -1 \right| =
	\left| \frac{\lambda_d(\phi_{x_{t,j}}(F))}{v(F)} -1 \right| 
	\leq  K_1 \diam( F \cup \{x_{t,j} \}), 
	\eean
	and since $\phi(F \cup \{x_{t,j}\} ) \subset S_{t,j}$,
	by Lemma \ref{lemcompare3}(f)
	the above is bounded by $K_1 \diam (S_{t,j})$ and hence
	by $d K_1 t^{-\eta}$.
	This yields part (a).

	The proof of (b) is almost identical, now using part (d)
	of Lemma \ref{lemcompare3}.

	Next let $y,z \in \phi^{-1}(S_{t,j})$. Then
	by Lemma \ref{lemcompare3}(e),
	\begin{align*}
	\left| \frac{\|\psi_{t,j}(y)- \psi_{t,j}(z)\|}{\dist(y,z)} -1 \right|
		& = \left| 
	\frac{\|\phi_{x_{t,j}}(y)- \phi_{x_{t,j}}(z)\|}{\dist(y,z)}
	-1 \right|
	\\
		& \leq K_1 ( \dist( x_{t,j},y) + \dist(y,z) ),
\end{align*}
	and since $\phi(x_{t,j}), \phi(y),\phi(z)$ all lie in $S_{t,j}$
	using also Lemma \ref{lemcompare3}(f) we see that
	the last line is at most
	$2 K_1 d t^{-\eta}$, which yields part (c).

	Part (d) follows from Lemma \ref{lemcompare3}(b),
	and part (e) comes from Lemma \ref{lemcompare3}(a).
\end{proof}

We shall refer to the sets $\psi_{t,j} \circ \phi^{-1}(S_{t,j}), 1 \leq j \leq 
\kappa_t$,
as {\em slabettes}. 
In general,
the slabettes are parallelepipeds rather than rectilinear slabs.
The thickness of these slabettes may be less than that of
the original slabs (which was $5r_t$) and be different for
different slabettes, but is at least $4r_t$, as we shall now prove.
We also show that if we reduce the base of the slabette slightly,
the product of the reduced base with the interval $[0,4r_t]$
(a `straightened out reduced slabette')
is contained in the original slabette. 

	Given $t \geq t_3$, $j \in [\kappa_t^+]$,
	define $L_{t,j}$  and $L_{t,j}^{-} $,
	 the `lower face' and `reduced lower face' (respectively)
	 of the slabette $\psi_{t,j} \circ \phi^{-1} (S_{t,j})$, by
	\bea
	L_{t,j}:= \psi_{t,j}\circ \phi^{-1}
	(H_{t,j});
	~~~~~
	L_{t,j}^{-} := \{x \in L_{t,j}: B_{\R^d}(x,50 r_t) \cap
	\partial \bH\subset L_{t,j} \}. 
	\label{eqDtjdef}
	\eea
\begin{lemm}[Straightened out reduced slabette lies inside the original slabette]
	\label{lemslabette}
	For all $t \geq t_3,$ $j \in [\kappa_t^+]$,
	we have $L^-_{t,j} \oplus [o,4r_t e_d] \subset 
	\psi_{t,j} \circ \phi^{-1}(S_{t,j})$.
\end{lemm}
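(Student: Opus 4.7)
Set $T := \psi_{t,j} \circ \phi^{-1}$. By Lemma~\ref{lemcompare2}(d), $T$ is linear, maps $\bH$ to $\bH$, and satisfies $\|T - I_d\|_{d \times d} < \eps'$; since $\eps' < 1/2$ a Neumann-series estimate gives $\|T^{-1} - I_d\|_{d \times d} \leq \eps'/(1-\eps') < 2\eps'$, and $T$ also preserves $\partial \bH$ (as a linear homeomorphism preserving the half-space). The plan is to show that $T^{-1}$ sends $L^-_{t,j} \oplus [o, 4r_t e_d]$ into $S_{t,j} = H_{t,j} \oplus [o, 5r_t e_d]$, which is equivalent to the desired inclusion.

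First I would decompose $T^{-1}(e_d) = \alpha e_d + w$ with $w \in \partial \bH$. Orthogonality together with $\|T^{-1}(e_d) - e_d\| < 2\eps'$ yields $|\alpha - 1| < 2\eps'$ and $\|w\| < 2\eps'$, while $T^{-1}(\bH) = \bH$ and $e_d \notin \partial \bH$ force $\alpha > 0$. For $x \in L^-_{t,j}$, write $x = T(y)$ with $y \in H_{t,j}$; then for $s \in [0, 4r_t]$,
\begin{align*}
T^{-1}(x + se_d) = y + s\, T^{-1}(e_d) = (y + sw) + s\alpha\, e_d.
\end{align*}
Membership of this point in $S_{t,j}$ reduces to two checks: (i) $s\alpha \in [0, 5r_t]$ and (ii) $y + sw \in H_{t,j}$.

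Item (i) is immediate from $s\alpha \leq 4r_t(1 + 2\eps') < 5r_t$, using that \eqref{e:defeps'} forces $\eps' < 1/20$. For item (ii), $y + sw$ lies in $\partial \bH$ with $\|sw\| \leq 8\eps' r_t$, and
\begin{align*}
\|T(y + sw) - x\| = \|T(sw)\| \leq (1+\eps')\|sw\| \leq 8\eps'(1+\eps')\, r_t,
\end{align*}
which is comfortably less than $50 r_t$. Since $T(y + sw) \in \partial \bH$, the defining property $B_{\R^d}(x, 50 r_t) \cap \partial \bH \subset L_{t,j} = T(H_{t,j})$ of $L^-_{t,j}$, combined with injectivity of $T$, gives $y + sw \in H_{t,j}$.

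The only subtlety is numerical bookkeeping: one needs the $50 r_t$ safety cushion in the definition \eqref{eqDtjdef} of $L^-_{t,j}$ to comfortably exceed the combined perturbation $(1+\eps')\|sw\|$, and the constraint \eqref{e:defeps'} makes this a routine check. In other words, the generous buffer $50 r_t$ (as opposed to something only marginally larger than $4 r_t$) is precisely what makes this lemma essentially formal rather than delicate.
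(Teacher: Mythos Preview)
Your proof is correct and follows essentially the same approach as the paper's: both compute the preimage of $x + s e_d$ under the linear map $T = \psi_{t,j}\circ\phi^{-1}$, split it into its $\partial\bH$-component and its $e_d$-component, and use the $50r_t$ buffer in the definition of $L_{t,j}^-$ together with $\|T - I_d\| < \eps'$ to verify that the projection lands back in $H_{t,j}$ while the height stays in $[0,5r_t]$. Your upfront decomposition $T^{-1}(e_d) = \alpha e_d + w$ is a slightly tidier bookkeeping device than the paper's post-hoc projection, but the substance is identical.
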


\begin{proof}
	Note that that the linear map $\psi_{t,j} \circ \phi^{-1}$
	has full rank. We assert that 
	\bea
	\| \psi_{t,j}  \circ \phi^{-1} \|_{d \times d} \leq 1 +  \eps';
	~~~~~~~
	\| (\psi_{t,j}  \circ \phi^{-1})^{-1} \|_{d \times d} \leq 1 + 2 \eps'.
	\label{0623a}
	\eea
	Indeed, the first inequality of \eqref{0623a}
	comes from Lemma
	  \ref{lemcompare2}(d), and
	given $x \in \R^d$, setting
	$y = \psi_{t,j}  \circ \phi^{-1}(x)$,
	by Lemma  \ref{lemcompare2}(d)
	we have $\|y-x\| \leq \eps' \|x\|$ so that
	$\|y\| \geq (1- \eps')\|x\|$ and the second inequality of
	\eqref{0623a} follows
	(using that $\eps' < 1/2$).

	
	Let $x \in L_{t,j}^-$ and $r \in [0,4 r_t].$
	Let $w = (\psi_{t,j} \circ \phi^{-1})^{-1}(x + r e_d)$,
	and let $w'$ be the orthogonal  projection of $w $ onto
	$\partial \bH$. 
	Since the linear map $\psi_{t,j} \circ \phi^{-1}$
	maps $\partial \bH $ onto itself,
	using linearity 
	(and that $\eps' <1/8$)
	we have
	\begin{align*}
		\| w- w'\|  = \langle w, e_d \rangle & =
	\langle (\psi_{t,j} \circ \phi^{-1})^{-1}(re_d), e_d \rangle 
	\\
		& \leq (1+  \eps') r \leq 5 r_t.
	\end{align*}
	Therefore  since
	$\eps' < 1/10$,
	$
	\| \psi_{t,j}\circ \phi^{-1}(w'-w) \| \leq (1+  \eps') 5r_t \leq 
	(5.5)r_t,
	$
	so that
	\begin{align*}
	\|\psi_{t,j} \circ \phi^{-1}(w') - x\|
		& \leq \| \psi_{t,j} \circ \phi^{-1} (w'-w) \|
	+ \| \psi_{t,j} \circ \phi^{-1} (w) -x\|
	\\
		& \leq 5.5 r_t + \| \psi_{t,j} \circ \phi^{-1}(re_d)   \|
	\leq 10 r_t.
	\end{align*}
	Thus $\psi_{t,j} \circ \phi^{-1}(w') \in L_{t,j}$,
	so that $w' \in H_{t,j} $. Therefore
	since $w = w' + r' e_d$ for some $r' \in [0, 5r_t]$,
	also $w \in S_{t,j}$ and thus $x + re_d \in \psi_{t,j} \circ
	\phi^{-1}(S_{t,j})$.
\end{proof}

We would like to reassemble the slabettes 
$\psi_{t,j} \circ \phi^{-1}(S_{t,j})$ 
into a slab of macroscopic size. However, if $d \geq 3$ the faces
of the slabettes are not quite rectilinear in general;
for example, if $d=3$ then they are parallelograms.
Therefore to make them
 fit together requires further carpentry, as in Section \ref{seclastpf}.

To do this, let $\ggamma = 3/(4d)$, so
that $\eta < (2d)^{-1} < \alpha < d^{-1}$.
We shall divide each region 
$(\psi_{t,j} \circ \phi^{-1}(S_{t,j})) \cap (\R^{d-1} \times [0,4r_t])$
into smaller rectilinear blocks of dimensions
$
t^{-\ggamma} \times \cdots \times t^{-\ggamma} \times 4 r_t
$,
called {\em mini-slabs} and denoted $T_{t,j,\ell}, 1 \leq
\ell \leq \nu_{t,j}$.

We define the {\em side boundary} 
 of the slabette
$\psi_{t,j} \circ \phi^{-1}(S_{t,j}) $
to consist of all faces of the slabette that are adjacent to
the lower face $L_{t,j}$.
Within the slabette there is a region  near the side boundary, 
and also a region  a distance greater than $4r_t$ from the lower face of the
slabette,   
which are not included in any mini-slabs
$T_{t,j,\ell}, 1 \leq \ell \leq \nu_{t,j}$.
We shall add some further mini-slabs to completely
cover the lower face.

Here is the definition of mini-slabs in detail.
Partition $\partial \bH $ into 
$(d-1)$-dimensional
hypercubes (or if $d=2$, intervals)
of side $t^{-\ggamma}$ and
denote the hypercubes in the partition which lie within 
$L_{t,j}^-$
by $Q_{t,j,1},\ldots,$ $ Q_{t,j,\nu_{t,j}}$.
Denote the hypercubes in the partition which intersect
$L_{t,j}$
but are not contained in $L_{t,j}^-$
by $Q_{t,j,\nu_{t,j}+1},\ldots, Q_{t,j,\nu^+_{t,j}}$.

Our mini-slabs associated
with the slabette $\psi_{t,j}\circ \phi^{-1}(S_{t,j})$
are the hyperrectangles
 \bea
 T_{t,j,\ell} :=
 Q_{t,j,\ell} \oplus [o,4r_te_d], 
 ~~~ 1 \leq \ell \leq \nu_{t,j}^+.
 \label{eqTdef}
 \eea
Given $(t,j,\ell)$, with $t >0$,
$j \in [ \kappa_t^+]$ and $\ell \in [ \nu_{t,j}^+]$,
define also the reduced mini-slab 
\bea
T_{t,j,\ell}^- :=
 Q_{t,j,\ell}^- \oplus [o,2r_te_d], 
\label{eqTminus}
\eea
where $Q_{t,j,\ell}^-:= \{x \in Q_{t,j,\ell}: B_{\R^d}(x,2r_t)
\cap \partial \bH \subset Q_{t,j,\ell}\}$.
For each $t >0$, $j \in [\kappa_t],$ $\ell \in [\nu_{t,j}]$ we
have 
$ Q_{t,j,\ell}
\subset L_{t,j}^-$
so that
by Lemma \ref{lemslabette},
\begin{align}
	T_{t,j,\ell} \subset \psi_{t,j} \circ \phi^{-1}(S_{t,j}).
	\label{e:0813a}
	\end{align}

We shall reassemble the mini-slabs 
to make a slab with a  base of macroscopic size.  
To do this, define  translations  $\sigma_{t,j,\ell},
 1 \leq j \leq \kappa_t^+, 1 \leq \ell \leq \nu_{t,j}^+$ of $\R^d$
such that $\sigma_{t,j,\ell}(Q_{t,j,\ell}),
  j \in [\kappa_t^+],   \ell \in [ \nu_{t,j}^+]$
are disjoint,  and such that  the sets
$\Gamma_t $ and $\Gamma_t^+$, given by
\bean
\Gamma_t 
:=
\cup_{j=1}^{\kappa_t} \cup_{\ell =1}^{\nu_{t,j}}
\sigma_{t,j,\ell}(Q_{t,j,\ell}); ~~~~~~~~~~ 
\Gamma_t^* := 
\cup_{j=1}^{\kappa_t^+} \cup_{\ell =1}^{\nu_{t,j}^+}
\sigma_{t,j,\ell}(Q_{t,j,\ell}),
\eean
are approximate  $(d-1)$-dimensional cubes 
	as described in Section \ref{seclastpf}.

For $F \subset \partial \bH$, let $\partial_{\partial \bH} F$ denote
the boundary of $F$ relative to $\partial \bH$.
Let $\Gamma_t^-$ be the set of points in  $\Gamma_t$
 distant  more than $2 r_t$ from $\partial_{\partial \bH} \Gamma_t$, and let
 $(\Gamma_t^*)^-$ be the set of points in  $\Gamma_t^*$
 distant  more than $2 r_t$ from $\partial_{\partial \bH} \Gamma_t^*$.
\begin{lemm}[Total surface measure of bases of mini-slabs]
	\label{lemsurf}
	It is the case that
	\begin{align}
	\lim_{t \to \infty} \lambda_{d-1}(\Gamma_t^-) 
	= 
	\lim_{t \to \infty} \lambda_{d-1}(\Gamma_t) 
	=
	\tv(B \cap \partial A );
\label{0502a}
\\
	\lim_{t \to \infty} \lambda_{d-1}((\Gamma_t^*)^-) 
	= 
	\lim_{t \to \infty} \lambda_{d-1}(\Gamma_t^*) 
	=
	\tv(B \cap \partial A ).
	\label{0819a}
	\end{align}
\end{lemm}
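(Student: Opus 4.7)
The plan is to follow closely the strategy used in the proof of Lemma~\ref{lemsurf2}, but working with the $(d-1)$-dimensional surface measure on $\partial A$ via the transformations $\psi_{t,j}\circ\phi^{-1}$ that map $\partial\bH$ to $\partial\bH$. The bases $Q_{t,j,\ell}$ of the mini-slabs live in $\partial \bH$, so $\lambda_{d-1}$ of the reassembled region $\Gamma_t$ (resp.\ $\Gamma_t^*$) equals the sum $\sum_{j,\ell}\lambda_{d-1}(Q_{t,j,\ell})$, and we must compare this to $\tv(B\cap \partial A)$.

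First I would exploit the fact that $L_{t,j}=\psi_{t,j}\circ\phi^{-1}(H_{t,j})\subset\partial\bH$ (by Lemma~\ref{lemcompare2}(d)) and relate $\lambda_{d-1}(L_{t,j})$ to $\tv(\phi^{-1}(H_{t,j})\cap\partial A)$ via Lemma~\ref{lemcompare2}(b): for each $j\in[\kappa_t^+]$,
\[
\lambda_{d-1}(L_{t,j}) \;=\; (1\pm dK_1 t^{-\eta})\,\tv\bigl(\phi^{-1}(H_{t,j})\cap\partial A\bigr).
\]
Summing, and using that $\tv(\partial_{\partial A}(B\cap\partial A))=0$ together with the fact that $\phi((B\cap\partial A)^o_{\partial A})\setminus \cup_{j=1}^{\kappa_t}H_{t,j}$ and $\cup_{j=1}^{\kappa_t^+}H_{t,j}\setminus \phi(B\cap\partial A)$ both shrink to regions within distance $O(t^{-\eta})$ of $\partial_{\partial\bH}\phi(B\cap\partial A)$, I obtain
\[
\sum_{j=1}^{\kappa_t}\lambda_{d-1}(L_{t,j}) \;\to\; \tv(B\cap\partial A),\qquad
\sum_{j=1}^{\kappa_t^+}\lambda_{d-1}(L_{t,j})\;\to\;\tv(B\cap\partial A).
\]

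Next I would compare $\lambda_{d-1}(L_{t,j})$ with the sum of the mini-cube bases. Since $L_{t,j}$ is the image of a cube of side $t^{-\eta}$ under a linear map whose distance from $I_d$ is at most $\eps'$, it is contained in a rectilinear annulus around its image with outer side $t^{-\eta}(1+\eps')$; the cubes $Q_{t,j,\ell}$ with $\ell\leq\nu_{t,j}$ fill $L_{t,j}^{-}$ up to an error of thickness $\sqrt{d-1}\,t^{-\alpha}$, while $L_{t,j}\setminus L_{t,j}^{-}$ is itself an annulus of thickness $50r_t$, both of which are $o(t^{-\eta})$ since $\eta<1/(2d)<\alpha<1/d$ and $r_t=O((\log t/t)^{1/d})$. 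Thus the symmetric difference between $L_{t,j}$ and $\cup_{\ell=1}^{\nu_{t,j}}Q_{t,j,\ell}$ (resp.\ $\cup_{\ell=1}^{\nu_{t,j}^+}Q_{t,j,\ell}$) has $\lambda_{d-1}$-measure $O(t^{-\eta(d-2)-\alpha})$. Since $\kappa_t^+=O(t^{\eta(d-1)})$, summing the discrepancies over $j$ gives a total error $O(t^{\eta-\alpha})=o(1)$, and therefore
\[
\lambda_{d-1}(\Gamma_t)=\sum_{j,\ell}\lambda_{d-1}(Q_{t,j,\ell})\to \tv(B\cap\partial A),\qquad \lambda_{d-1}(\Gamma_t^*)\to \tv(B\cap\partial A).
\]

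Finally, to pass from $\Gamma_t$ to $\Gamma_t^-$ (and similarly $\Gamma_t^*$ to $(\Gamma_t^*)^-$), I note that $\Gamma_t$ is an approximate $(d-1)$-dimensional cube whose side-length is bounded (since its $\lambda_{d-1}$-measure converges to a constant). The annulus $\Gamma_t\setminus \Gamma_t^-$ of thickness $2r_t$ in $\partial\bH$ therefore has $\lambda_{d-1}$-measure $O(r_t)=o(1)$. This yields both equalities in \eqref{0502a} and \eqref{0819a}. The main obstacle, as in Lemma~\ref{lemsurf2}, is book-keeping the various boundary error terms and checking that the exponents satisfy $\eta<\alpha<1/d$ so that all boundary contributions are negligible; once the exponent accounting is done, the result follows in the same pattern as Lemma~\ref{lemsurf2}.
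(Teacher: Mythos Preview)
Your proposal is correct and follows essentially the same approach as the paper's proof: first relate $\lambda_{d-1}(L_{t,j})$ to $\tv(\phi^{-1}(H_{t,j})\cap\partial A)$ via Lemma~\ref{lemcompare2}(b), then compare $\cup_\ell Q_{t,j,\ell}$ with $L_{t,j}$ by bounding the annular discrepancy of thickness $O(t^{-\alpha}+r_t)$, and sum the errors using $\kappa_t^+=O(t^{\eta(d-1)})$ and $\eta<\alpha$. The only minor imprecision is that $\cup_{j\leq\kappa_t^+}H_{t,j}$ lies within distance $O(t^{-\eta}+r_t+\gamma_t)$ (not just $O(t^{-\eta})$) of $\phi(B\cap\partial A)$, owing to the definition of the extra cubes $H_{t,\kappa_t+1},\ldots,H_{t,\kappa_t^+}$; since $r_t,\gamma_t\to 0$ this does not affect the argument.
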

\begin{proof}
	Let $t \geq t_3$ and $j \in [\kappa_t^+]$.
	Recalling the definition of $L_{t,j}$ at
	\eqref{eqDtjdef}, 
	note that 
	$\cup_{\ell=1}^{\nu_{t,j}} Q_{t,j,\ell}
	\subset L_{t,j} \subset
	\cup_{\ell=1}^{\nu_{t,j}^+} Q_{t,j,\ell}$.
	Suppose $x \in
	\cup_{\ell =1}^{\nu_{t,j}^+} Q_{t,j,\ell}
	\setminus 
	\cup_{\ell =1}^{\nu_{t,j}} Q_{t,j,\ell}$.
	Then  provided $t$ is large enough,
	$$
	\dist (x, \partial_{\partial  \bH}  
	L_{t,j}) \leq 50 r_t + (d-1) t^{-\ggamma}
	\leq  d t^{-\ggamma}.
	$$
	Then with $H_{t,j} $ defined at (\ref{eqStjdef}),
	by Lemmas \ref{lemcompare2} and \ref{lemcompare3},
	and \eqref{eqDtjdef},
	we have
	\begin{align*}
	\dist ( \phi \circ \psi_{t,j}^{-1} (x), 
	\partial_{\partial \bH} H_{t,j}
	)
	\leq 2d t^{-\ggamma}.
	\end{align*}
	That is, $ \phi \circ \psi_{t,j}^{-1} (
	\cup_{\ell=1}^{\nu_{t,j}^+} Q_{t,j,\ell}
	\setminus \cup_{\ell =1}^{\nu_{t,j}} Q_{t,j,\ell}
	)
	$
	is contained in a $(d-1)$-dimensional hypercubic 
	annulus of thickness $4d t^{-\ggamma}$.
	Therefore
	\begin{align*}
	\lambda_{d-1}( \phi \circ \psi_{t,j}^{-1} ( 
	\cup_{\ell=1}^{\nu_{t,j}^+} Q_{t,j,\ell}
	\setminus \cup_{\ell =1}^{\nu_{t,j}} Q_{t,j,\ell}))
		& \leq ((t^{-\eta} + 4d t^{-\ggamma} )^{d-1} - (t^{-\eta}
	- 2d t^{-\ggamma} )^{d-1}) 
	\\
		& = O ( t^{\eta (2-d) - \ggamma} ) .
	\end{align*}
	Then using  Lemma \ref{lemcompare3} we obtain that
	\bea
	\lambda_{d-1}(   
	\cup_{\ell =1}^{\nu_{t,j}^+} Q_{t,j,\ell}
	\setminus 
	\cup_{\ell =1}^{\nu_{t,j}} Q_{t,j,\ell})
	\leq c t^{\eta(2- d) - \ggamma} ,
	\label{0630a}
	\eea
	for some constant $c$ not depending on $t$ or $j$.  Also
	$
	= \lambda_{d-1} 
	(H_{t,j})
	= t^{\eta(1-d)},
	$
	so using \eqref{eqDtjdef} and
	Lemma \ref{lemcompare3},
	we obtain that
	$
	\lambda_{d-1}(L_{t,j} ) \geq (1/2) t^{\eta(1-d)}$,
	and combining this with (\ref{0630a}) we obtain the
	second inequality of the following (the first inequality is from
	set inclusion):
	\bea
	\lambda_{d-1}(L_{t,j}) \geq \lambda_{d-1} 
	(\cup_{\ell=1}^{\nu_{t,j}}
	Q_{t,j,\ell}  ) \geq \lambda_{d-1}(L_{t,j})
	\left( 1- \frac{2ct^{\eta  (2-d)
	- \ggamma}}{t^{\eta(1-d)} } \right).
	\label{0913g}
	\eea
	Similarly,
	\bea
	\lambda_{d-1}(L_{t,j}) \leq \lambda_{d-1} 
	(\cup_{\ell=1}^{\nu_{t,j}^+}
	Q_{t,j,\ell}  ) \leq \lambda_{d-1}(L_{t,j})
	\left( 1+ \frac{2ct^{\eta  (2-d)
	- \ggamma}}{t^{\eta(1-d)} } \right).
	\label{0913h}
	\eea
	Since $\eta < 1/(2d) < \ggamma$,
	we  obtain from (\ref{0913g}) that
	\bea
	\lambda_{d-1}(\Gamma_t) 
	= \sum_{j=1}^{\kappa_t}
	\lambda_{d-1}(\cup_{\ell=1}^{\nu_t}
	Q_{t,j,\ell})
	= (1+o(1)) \sum_{j=1}^{\kappa_t} \lambda_{d-1}(L_{t,j})
	~~~ {\rm as}~ t \to \infty,
	\label{0630b}
	\eea
	and similarly by \eqref{0913h},
	\bea
	\lambda_{d-1}(\Gamma_t^*) 
	= (1+o(1)) \sum_{j=1}^{\kappa_t^+} \lambda_{d-1}(L_{t,j})
	~~~ {\rm as}~ t \to \infty.
	\label{0819b}
	\eea

	Set $U_t := \cup_{j=1}^{\kappa_t}\phi^{-1} (H_{t,j })
	$ and $U_t^+ := \cup_{j=1}^{\kappa_t^+}\phi^{-1} (H_{t,j} )$.
	Then by \eqref{eqDtjdef}
	and property  (b) in Lemma \ref{lemcompare2},
	\begin{align*}
		\sum_{j=1}^{\kappa_t} \lambda_{d-1}(L_{t,j}
		)
		& \geq (1 - dK_1 t^{-\eta} )
	\sum_{j=1}^{\kappa_t} \tv(\phi^{-1}
	(H_{t,j}))
	\\
		& = (1 - dK_1 t^{-\eta})  \tv(U_t); \\ 
	\sum_{j=1}^{\kappa_t^+} \lambda_{d-1}(
	L_{t,j}
		) & \leq (1 + d K_1 t^{-\eta} )
	\sum_{j=1}^{\kappa_t^+} \tv(\phi^{-1}(H_{t,j}) )
	\\
		& = (1 + d K_1 t^{-\eta})  \tv(U^+_t), 
	\end{align*}
By the definition at \eqref{eqStjdef}, $U_t \subset  B \cap \partial A$
and also $\liminf_{t \to \infty} (U_t) \supset ( B \cap
\partial A)_{\partial A}^o$, since  $ \phi(  B \cap \partial A) \setminus
	\phi( U_t)$  is contained in a region within 
	distance $O( t^{-\ggamma})$ of  $\partial_{\partial \bH}
	\phi( B \cap \partial A)$.  Therefore $\tv(U_t) \to \tv( B
\cap \partial A)$ as $t \to \infty$.
	(Here we are using the assumption that
	 $\tv(\partial_{\partial A}(B \cap \partial A)) =0$.)
Also
	  $ B \cap \partial A \subset U_t^+ $ and
	$\limsup_{t \to \infty} (U_t^+) =  B \cap \partial A $,
	since  $ \phi( U_t^+  ) $  
	is contained 
	in a region within distance $O( t^{-\ggamma} +r_t + \gamma_t)$ of 
$
\phi(B \cap \partial A )$,
	where $\gamma_t$ was defined at \eqref{e:gammadef}.
	Therefore $\tv(U_t^+) \to \tv(B \cap \partial A )$ as $t \to \infty$.
	It follows that
%
	$$
	\lim_{t \to \infty} \sum_{j=1}^{\kappa_t} \lambda_{d-1}(
	L_{t,j}) 
	= \lim_{t \to \infty}
	\sum_{j=1}^{\kappa_t^+} \lambda_{d-1}(
	L_{t,j})
	= \tv ( B \cap \partial A  ).
	$$
	Combined with (\ref{0630b}) and
	(\ref{0819b}),
	this yields 
	(\ref{0502a})
	and (\ref{0819a}). 
\end{proof}

Recall from just before Lemma \ref{lemPPdom2}
the definition of $\X \ll \Y$ for  point processes.

\begin{lemm}[Second stochastic domination lemma]
	\label{lemPPdom}
There exists $t_4 \geq t_3$ such that for all $t \geq t_4$,
\bea
\cup_{j=1}^{\kappa_t} \cup_{\ell =1}^{\nu_{t,j}}
\sigma_{t,j,\ell} ( \psi_{t,j}(\Po_t \cap V) \cap T_{t,j,\ell} )
\ll
\cH_{tf_0(1+ 2d K_1 t^{-\eta})} \cap (\Gamma_t \oplus [o,4r_t e_d] ),
\label{eqPPdom}
\eea
and
\bea
\cH_{tf_0(1- 2d K_1 t^{-\eta})} \cap
	(	\cup_{j=1}^{\kappa_t^+} \cup_{\ell =1}^{\nu_{t,j}^+}
	\sigma_{t,j,\ell}
	(T_{t,j,\ell} \cap
	\psi_{t,j} \circ
	\phi^{-1}(S_{t,j}) ) )
	~~~~~~
	~~~~~~
	\nonumber \\
	~~~~~~
	~~~~~~
\ll
\cup_{j=1}^{\kappa_t^+} \cup_{\ell =1}^{\nu_{t,j}^+}
\sigma_{t,j,\ell} ( \psi_{t,j}(\Po_t \cap V) \cap T_{t,j,\ell} ).
\label{eqPPdom2}
\eea
\end{lemm}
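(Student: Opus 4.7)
The plan is to mirror the proof of Lemma \ref{lemPPdom2} almost verbatim, substituting slabs $S_{t,j}$ for the cubes $H_{t,j}$, mini-slabs $T_{t,j,\ell}$ for the mini-cubes $Q_{t,j,\ell}$, and applying Lemma \ref{lemcompare2} in place of Lemma \ref{lemcompare22}. First, since $B \cap \partial A$ is compact and contained in the open set $V$, we have already fixed $t_0$ (and hence $t_3, t_4$) large enough that the inclusion \eqref{0913f} forces $S_{t,j} \subset \phi(V)$ for every $j \in [\kappa_t^+]$. The restriction $\Po_t \cap \phi^{-1}(S_{t,j})$ is then a Poisson point process on $\phi^{-1}(S_{t,j}) \cap A$ with intensity measure $tf_0 \, v(\cdot)$, so by the Mapping theorem its image under $\psi_{t,j}$ is Poisson on $\psi_{t,j}\circ\phi^{-1}(S_{t,j})$ with intensity measure $tf_0 \mu_{t,j}(\cdot)$, where $\mu_{t,j}(G) := v(\psi_{t,j}^{-1}(G) \cap A)$.

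Next, for measurable $G \subset \psi_{t,j}\circ\phi^{-1}(S_{t,j})$, Lemma \ref{lemcompare2}(a) (applied to $F := \psi_{t,j}^{-1}(G) \cap A \subset \phi^{-1}(S_{t,j})$) together with part (e) of the same lemma (which guarantees $\psi_{t,j}(V \cap A) = \psi_{t,j}(V) \cap \bH$) gives
\begin{align*}
\Bigl| \frac{\lambda_d(G \cap \psi_{t,j}(A))}{\mu_{t,j}(G)} - 1 \Bigr| \leq dK_1 t^{-\eta},
\end{align*}
so by the Radon-Nikodym theorem the density of $\mu_{t,j}$ with respect to $\lambda_d$ lies in the range $(1 \pm 2dK_1 t^{-\eta}) \mathbf{1}_{\psi_{t,j}(A)}$, $\lambda_d$-almost everywhere on $\psi_{t,j}\circ\phi^{-1}(S_{t,j})$, for all $t$ large enough (say $t \geq t_4$). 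The Superposition theorem then yields the local domination
\begin{align*}
\cH_{tf_0(1-2dK_1 t^{-\eta})} \cap \psi_{t,j}\circ\phi^{-1}(S_{t,j}) \cap \psi_{t,j}(A) \ll \psi_{t,j}(\Po_t \cap \phi^{-1}(S_{t,j})) \ll \cH_{tf_0(1+2dK_1 t^{-\eta})} \cap \psi_{t,j}\circ\phi^{-1}(S_{t,j}).
\end{align*}

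To globalize, note that for $\ell \in [\nu_{t,j}]$ the inclusion \eqref{e:0813a} gives $T_{t,j,\ell} \subset \psi_{t,j}\circ\phi^{-1}(S_{t,j})$, so intersecting the upper domination with $T_{t,j,\ell}$ and applying the translation $\sigma_{t,j,\ell}$ (which is an isometry, so preserves $\cH_s$ in distribution by the Mapping theorem) produces $\sigma_{t,j,\ell}(\psi_{t,j}(\Po_t \cap V) \cap T_{t,j,\ell}) \ll \cH_{tf_0(1+2dK_1 t^{-\eta})} \cap \sigma_{t,j,\ell}(T_{t,j,\ell})$, where the latter is a subset of $\Gamma_t \oplus [o,4r_t e_d]$. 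For the lower bound we intersect with $T_{t,j,\ell} \cap \psi_{t,j}\circ\phi^{-1}(S_{t,j})$ for $\ell \in [\nu_{t,j}^+]$ (since without \eqref{e:0813a} the mini-slab may poke outside the slabette). Finally, the sets $\phi^{-1}(S_{t,j})$ are pairwise disjoint (because the slabs $S_{t,j}$ are disjoint in $\bH$ and $\phi$ is injective) and for each fixed $j$ the mini-slabs $T_{t,j,\ell}$ are disjoint; hence the restrictions $\Po_t \cap \phi^{-1}(S_{t,j}) \cap \psi_{t,j}^{-1}(T_{t,j,\ell})$ are mutually independent Poisson processes. Since the images $\sigma_{t,j,\ell}(T_{t,j,\ell})$ are also pairwise disjoint by construction, taking the union and invoking the Superposition theorem on both sides delivers \eqref{eqPPdom} and \eqref{eqPPdom2}.

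The only real obstacle is bookkeeping: one must check that each mini-slab with $\ell \in [\nu_{t,j}]$ sits inside the (possibly slanted) slabette so that the upper domination extends cleanly, and correspondingly that the lower domination is restricted to $T_{t,j,\ell} \cap \psi_{t,j}\circ\phi^{-1}(S_{t,j})$ when $\ell \in [\nu_{t,j}^+] \setminus [\nu_{t,j}]$; this asymmetry is exactly what produces the different index ranges on the two sides of \eqref{eqPPdom} versus \eqref{eqPPdom2}, and it is handled by the geometric control afforded by Lemma \ref{lemslabette}.
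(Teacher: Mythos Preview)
Your proposal is correct and follows essentially the same approach as the paper: Mapping theorem to push $\Po_t$ forward to a Poisson process on the slabette, Lemma~\ref{lemcompare2}(a) to bound the density against Lebesgue measure, Superposition theorem for the two-sided domination, translation invariance of $\cH_s$, and then disjointness of the pieces to assemble the union. The only cosmetic difference is that the paper notes $S_{t,j}\subset\phi(V\cap A)$ directly (so $\phi^{-1}(S_{t,j})\subset A$ and all your ``$\cap\,A$'' and ``$\cap\,\psi_{t,j}(A)$'' intersections are redundant, making the appeal to part~(e) unnecessary), but this does not affect the argument.
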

\begin{proof}
	Let $t \geq t_3$ and $j \in [\kappa_t^+]$.
	As remarked just after \eqref{0913f}, we
	have $S_{t,j} \subset \phi(V \cap A)$.
	The point process $\Po_t \cap \phi^{-1}(S_{t,j})$
is Poisson on $\cM$
	with intensity measure $t f_0 {\bf 1}_{\phi^{-1}(S_{t,j})} v(dx)$,
where $v$ is the Riemannian volume measure on $\M$.
Hence by the Mapping theorem,
	$\psi_{t,j}(\Po_t \cap \phi^{-1}(S_{t,j}))$ is
a Poisson point process on
	$\psi_{t,j} \circ \phi^{-1} (S_{t,j})$
with
	intensity $t f_0 \mu_{t,j}(\cdot)$, where
	the measure $\mu_{t,j}$ is given
	by $\mu_{t,j}(G)= v(\psi_{t,j}^{-1}(G))$
	for measurable 
%
	$G \subset \psi_{t,j} \circ \phi^{-1}(S_{t,j})$.
	For such $G$,
	by Lemma \ref{lemcompare2}(a),
$$
	\left| \frac{\lambda_d(G)}{\mu_{t,j}(G)} - 1\right| =
	\left| \frac{\lambda_d(\psi_{t,j}(\psi_{t,j}^{-1}(G)) )
	}{v(\psi_{t,j}^{-1}(G))} - 1\right|  \leq 
	 d K_1  t^{-\eta},
$$
	so that for large $t$, we
	have $\mu_{t,j}(G) \in (1 \pm 2 d K_1  t^{-\eta})\lambda_d(G)$.
Therefore by the Superposition theorem 
we obtain that 
	\begin{align}
\cH_{tf_0(1- 2d K_1 t^{-\eta})} \cap \psi_{t,j} \circ \phi^{-1}(S_{t,j})
		& \ll
\psi_{t,j}(\Po_t \cap \phi^{-1}(S_{t,j})) 
\nonumber \\
		& \ll  
\cH_{tf_0(1+ 2d K_1 t^{-\eta})} \cap \psi_{t,j} \circ \phi^{-1}(S_{t,j}).
\label{ppdom}
	\end{align}
	Hence
	we have for all $j \in [\kappa_t^+]$ and
	$\ell \in [ \nu_{t,j}^+]$ that 
 \bea
\cH_{tf_0(1- 2d K_1 t^{-\eta})} \cap
\sigma_{t,j,\ell} (T_{t,j,\ell} 
	\cap \psi_{t,j} \circ \phi^{-1}(S_{t,j})
	)
\nonumber \\
\ll
	\sigma_{t,j,\ell}( \psi_{t,j}(\Po_t \cap \phi^{-1}(S_{t,j}))
	\cap  T_{t,j,\ell})
\nonumber	\\
\ll
\cH_{tf_0(1+ 2d K_1 t^{-\eta})} \cap
\sigma_{t,j,\ell} (T_{t,j,\ell}).
\label{0820a}
\eea
Since the sets $\phi^{-1}(S_{t,j}) \cap \psi_{t,j}^{-1}(T_{t,j,\ell})$,
$j \in [\kappa_t^+], \ell \in [\nu_{t,j}^+]$, are 
disjoint, the Poisson processes $\Po_t \cap \phi^{-1}(S_{t,j})
\cap \psi_{t,j}^{-1}(T_{t,j,\ell}),$ $j \in [\kappa_t^+],
\ell \in [\nu_{t,j}^+]$, are mutually independent. 
Since the sets $\sigma_{t,j,\ell}(T_{t,j,\ell}),  j \in
	[\kappa_t^+],  \ell \in [\nu_{t,j}^+]$, are disjoint,
	taking unions over such $(j,\ell)$ and using the
	first relation of (\ref{0820a}) and the
	Superposition theorem yields 
	(\ref{eqPPdom2}). Similarly using the second relation
in (\ref{0820a}) and taking the union only over
$j \in [\kappa_t], \ell \in [\nu_t]$ yields
	(\ref{eqPPdom}). 
\end{proof}


As in Section \ref{seclastpf}, we define
	$r_t^+$ and $r_t^-$ by 
	\eqref{rtpmdef} (but now with $r_t$ satisfying \eqref{rt1}.)

\begin{lemm}[Covering reassembled mini-slabs in $\bH$]
	\label{lemfromCov}
	Given $a_1 >0$, we have
	\begin{align}
		& \lim_{t \to \infty} 
\Pr[ ((\Gamma_t^*)^{(2r_t)} \oplus [o,a_1 r_t e_d]) \subset
		F_{r_t^-}( \cH_{tf_0(1- 6dK_1 t^{-\eta})}
\cap \bH
	) ] 
\nonumber \\
		& =
\lim_{t \to \infty} 
		\Pr[ \Gamma_t  \subset F_{r^+_t}( \cH_{tf_0(1+ 6dK_1 t^{-\eta})}
	\cap \bH
	) ] 
\nonumber \\
		& = \exp( -c_{d,k} \tv(B  \cap \partial A ) e^{-\zeta}).
\label{0616c}
	\end{align}
\end{lemm}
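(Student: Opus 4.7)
The plan is to mimic the proof of Lemma \ref{lemfromCov2} from the interior case: first check that the perturbations of $r_t$ and of the Poisson intensity do not disturb the scaling \eqref{rt1}, then combine the surface-measure information from Lemma \ref{lemsurf} with the Euclidean half-space boundary coverage result from \cite{ECover} (the boundary counterpart of \cite[Lemma 7.2]{ECover} used in the interior case), and finally sandwich the approximate cubes $(\Gamma_t^*)^{(2r_t)}$ and $\Gamma_t$ between true rectilinear $(d-1)$-cubes.

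First I would observe that replacing $r_t$ by $r_t(1 \pm 8 d K_1 t^{-\eta})$ and $t f_0$ by $t f_0(1 \pm 6 d K_1 t^{-\eta})$ in the left-hand side of \eqref{rt1} produces an additional term of order $t^{-\eta} \log t = o(1)$, because the dominant term $\tfrac12 f_0 t \theta_d r_t^d$ is of order $\log t$. Hence the perturbed pairs $(r_t^{\pm}, t f_0 (1 \pm 6 d K_1 t^{-\eta}))$ still satisfy \eqref{rt1} with the same limit $\zeta$, and only the multiplicative constant $c_{d,k}\,\tv(B \cap \partial A) e^{-\zeta}$ need emerge from the Euclidean step.

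Next, by Lemma \ref{lemsurf} the sets $\Gamma_t$ and $(\Gamma_t^*)^{(2r_t)}$ are $(d-1)$-dimensional approximate cubes in $\partial \bH$ whose $(d-1)$-dimensional Lebesgue measures both converge to $\tv(B \cap \partial A)$. Each of them can be sandwiched between true rectilinear $(d-1)$-cubes $C_t^- \subset C_t^+$ in $\partial \bH$ with $\lambda_{d-1}(C_t^{\pm}) = \tv(B \cap \partial A) + o(1)$, because the boundary ``tartan'' discrepancy has measure $O(t^{\eta(2-d)-\ggamma}) \to 0$. Monotonicity of $F_r(\X)$ in $r$ and $\X$, applied to the slabs $C_t^{\pm} \oplus [o, c\, r_t e_d]$ (with $c = 4$ or $c=a_1$), then reduces each pre-limit in \eqref{0616c} to the coverage probability of such a rectilinear slab by a homogeneous Poisson process of intensity $t f_0(1 + o(1))$ in $\bH$. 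At that point I would invoke the Euclidean half-space $k$-coverage lemma from \cite{ECover} (the boundary analogue of \cite[Lemma 7.2]{ECover}, used inside the proof of \cite[Theorem 2.1]{ECover}): under the scaling \eqref{rt1}, a slab $C_t \oplus [o, c\, r_t e_d]$ over a rectilinear base $C_t \subset \partial \bH$ with $\lambda_{d-1}(C_t) \to \gamma$ is $k$-covered by $\cH_{\lambda_t}\cap \bH$ with probability tending to $\exp(-c_{d,k}\, \gamma\, e^{-\zeta})$; the thickness constant $c$ drops out because the uncovered regions that govern the limit live in a layer of height $o(r_t)$ above $\partial \bH$.

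I expect the main obstacle to be the verification that the thickness parameter $a_1$ (and the value $4$ in the second pre-limit) are irrelevant in the limit, i.e.\ that the cited half-space boundary coverage result applies uniformly for slab thicknesses that are any fixed positive multiple of $r_t$. This follows from the fact that, under the boundary scaling \eqref{rt1}, the leading-order ``vacancy'' statistics are driven by Poisson-paucity events within a thin layer near $\partial \bH$, so the base area $\tv(B \cap \partial A)$, not the thickness, fixes the intensity of the limiting Poisson process of uncovered points. Everything else reduces to bookkeeping: tracking which intensity/radius pairs dominate which, exploiting that all the relevant perturbations vanish on the $\log t$ scale, and identifying both pre-limits with $\exp(-c_{d,k}\, \tv(B \cap \partial A)\, e^{-\zeta})$.
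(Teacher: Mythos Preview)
Your proposal is correct and follows essentially the same route as the paper: the perturbation argument is identical, and the half-space input you describe as ``the boundary analogue of \cite[Lemma~7.2]{ECover}'' is exactly \cite[Lemma~7.4]{ECover}, which the paper invokes together with Lemma~\ref{lemsurf}. The only tactical difference is that instead of sandwiching between true rectilinear cubes, the paper applies \cite[Lemma~7.4]{ECover} directly to the approximate cubes $\Gamma_t$ and $\Gamma_t^*$ and then handles the discrepancy between $(\Gamma_t^*)^{(2r_t)}$ and $\Gamma_t^*$ by a separate argument (via Lemma~\ref{lemmeta}(iii)) showing that the annular slab $(\Gamma_t^* \setminus (\Gamma_t^*)^{(2r_t)}) \oplus [o,a_1 r_t e_d]$ is covered with probability tending to~$1$; your sandwich accomplishes the same thing. (A minor slip: for the $\Gamma_t$ pre-limit there is no slab, so ``$c=4$'' is irrelevant there.)
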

\begin{proof}
The limiting statement (\ref{rt1})
still holds if we replace $r_t$ with $r_t(1\pm   8dK_1 t^{-\eta})$
and $t$ with $t(1 \pm 6d K_1 
	t^{-\eta})$. Therefore the last line of (\ref{0616c})
follows by
(\ref{0502a}) and
	\cite[Lemma 7.4]{ECover}.
	Also, using (\ref{0819a}),
	we have
	$$
	\lim_{t \to \infty} \Pr [ (\Gamma_t^*)^{(2r_t)} 
	\subset F_{r_t^-} (\cH_{t f_0(1- 6dK_1 t^{-\eta})} \cap
	\bH ) ]= \exp(-c_{d,k} \tv (B \cap \partial A) e^{-\zeta} ).
	$$
	Then for the second line of (\ref{0616c}) we can also
	use \cite[Lemma 7.4]{ECover}, provided we have
	\bea
	\lim_{t \to \infty}
	\Pr[ (\Gamma_t^* \setminus  (\Gamma_t^*)^{(2r_t)}) \oplus 
	[o,a_1 r_t e_d]
	\subset F_{r_t} (\cH_{tf_0} \cap \bH) ] =1.
	\label{0624a}
	\eea
	
		We shall apply Lemma \ref{lemmeta} in the Euclidean setting to see \eqref{0624a}.
		Let $D:= [-1,(\tv(B \cap \partial A))^{1/(d-1)} +1]^{d-1}$.
		Then $D$ is a $(d-1)$-dimensional cube and
		by Lemma \ref{lemsurf}, $D$ contains
		$\Gamma_t^*$ and $\Gamma_t$ in
		its interior for all large enough $t$.
	Let $\mu^*$ denote the restriction of the measure
	$f_0 \lambda_d $ to $D \times [0,(f_0\lambda_{d-1}(D))^{-1}]$,
	i.e. the uniform probability measure on this set,
	and for $t>0$ let $\Po^*_t$ denote
	a Poisson process on $\R^d$ with intensity measure
	$t \mu^*$.
	Then $\Po^*_t \ll \cH_{tf_0} \cap \bH$, so to prove
	\eqref{0624a} it suffices to prove that
	\begin{align}
		\lim_{t \to \infty}
		\Pr[ (\Gamma_t^* \setminus  (\Gamma_t^*)^{(2r_t)}) \oplus 
	[o,a_1 r_t e_d]
		\subset F_{r_t} (\Po_t^*)  ] = 1. 
		\label{e:0904d}
	\end{align}

We can cover $(\Gamma_t^* \setminus (\Gamma_t^*)^{(2r_t)})
		\oplus [o,a_1 r_t e_d]$ by $m_t$ balls of radius $r_t$ with
		$m_t =  O(r_t^{-(d-2)})$.
	For any $x\in (\Gamma_t^* \setminus  (\Gamma_t^*)^{(2r_t)}) \oplus 
	[o,a r_t e_d]$ we have $\mu^*(B_{\R^d}(x,s)) \ge f_0\vvol_d s^d/2$ for 
	$s \in (0,r_t]$. 
	Recalling \eqref{rt1}, we observe that 
	the condition $ua>b/d$ of Lemma \ref{lemmeta}(iii)
	holds with $u=2(d-1)/(d f_0\vvol_d)$, 
	$a=\vvol_d f_0/2$ and $b=d-2$. Then
	\eqref{e:0904d} follows.
\end{proof}

Recall the definition of $Q_{t,j,\ell}^-$ near \eqref{eqTminus}.
Given $t$, we shall sometimes write just
$\cup_{j,\ell}$ for $\cup_{j=1}^{\kappa_t} \cup_{\ell=1}^{\nu_{t,j}}$
and
$\cup_{j,\ell}^*$ for $\cup_{j=1}^{\kappa_t^+} \cup_{\ell=1}^{\nu_{t,j}^+}$.

\begin{lemm}[Covering a tartan region in $\bH$]
	\label{lemplaid}
	It is the case that
	\bea
	\lim_{t \to \infty}
	\Pr[ \cup_{j=1}^{\kappa_t^+}
	\cup_{\ell =1}^{\nu_t^+} \sigma_{t,j,\ell}(Q_{t,j,\ell}
	\setminus Q_{t,j,\ell}^-)
	\subset 
	F_{r_t^+} ( \cH_{tf_0(1+ 6dK_1 t^{-\eta})} \cap \bH) ]
	=1.
	\label{0619c}
	\eea
\end{lemm}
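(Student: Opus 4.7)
The plan is to adapt the proof of Lemma \ref{lemplaid2} to the present setting, where the tartan pieces $Q_{t,j,\ell}\setminus Q_{t,j,\ell}^-$ are now $(d-1)$-dimensional subsets of $\partial\bH$ rather than $d$-dimensional subsets of $\R^d$, and the covering balls are centred at points of $\cH\cap\bH$ (so only half-balls contribute when the centre lies near $\partial\bH$). The argument is again a stochastic domination followed by an application of Lemma \ref{lemmeta}(iii).

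First, fix $\alpha'\in(\alpha,1/d)$; since $\alpha'<1/d$ we have $(d\alpha'+d-2)/(d-1)<1$, so we can choose $f_0^-\in\big(f_0(d\alpha'+d-2)/(d-1),\,f_0\big)$. By Lemma \ref{lemsurf}, for all sufficiently large $t$ the reassembled region $\Gamma_t^*\subset\partial\bH$ together with its $r_t$-neighbourhood in $\bH$ fits inside a fixed bounded box $D\subset\bH$ of volume $1/f_0^-$. Let $\tilde\mu$ be the uniform probability measure on $D$ and $\tilde\Po_t$ a Poisson process of intensity $t\tilde\mu$. Since $f_0^-<f_0$, $\tilde\Po_t\ll\cH_{tf_0(1+6dK_1 t^{-\eta})}\cap\bH$ for large $t$, and it suffices to prove the covering statement with $\tilde\Po_t$ in place.

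Next, bound the covering number. Each $\sigma_{t,j,\ell}(Q_{t,j,\ell}\setminus Q_{t,j,\ell}^-)$ is a $(d-1)$-dimensional hypercubic annulus of thickness $O(r_t)$ and diameter $O(t^{-\alpha})$, and hence can be covered by $O((t^{-\alpha}/r_t)^{d-2})$ balls of radius $r_t^+$ (with the convention that this is $O(1)$ when $d=2$). The number of such $(j,\ell)$ is $O(t^{\alpha(d-1)})$, so the total covering number is $O(t^{\alpha}r_t^{-(d-2)})$; using $tr_t^d=O(\log t)$ from \eqref{rt1}, this is $O(r_t^{-(d\alpha'+d-2)})$, and we take $b=d\alpha'+d-2$. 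For $x\in\Gamma_t^*\subset\partial\bH$ and $s\in(0,r_t^+]$, only a half-ball lies in $D$, so $\tilde\mu(B_{\R^d}(x,s))\geq (f_0^-\theta_d/2)s^d$ and we take $a=f_0^-\theta_d/2$. By \eqref{rt1} we have $(r_t^+)^d\sim u\log t/t$ with $u=2(d-1)/(df_0\theta_d)$, and the condition $u>b/(ad)$ of Lemma \ref{lemmeta}(iii) simplifies to $(d-1)f_0^->(d\alpha'+d-2)f_0$, which holds by our choice of $f_0^-$. Invoking Lemma \ref{lemmeta}(iii) then yields the desired conclusion.

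The main bookkeeping point is ensuring that the non-standard scaling $u=2(d-1)/(df_0\theta_d)$ in \eqref{rt1} (reflecting the boundary-dominated centring of $R_{n,k}$) combines properly with the factor of $1/2$ from restricting to $\bH$; once tracked, the inequality $\alpha<1/d$ leaves ample room to satisfy $u>b/(ad)$.
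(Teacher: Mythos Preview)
Your proof is correct and follows essentially the same route as the paper: reduce to a uniform Poisson process on a fixed box in $\bH$ by stochastic domination, bound the covering number of the tartan region as $O(t^{\alpha}r_t^{-(d-2)})=O(r_t^{-(d\alpha'+d-2)})$, note the half-ball lower bound $\tilde\mu(B(x,s))\geq (\theta_d/2)f_0^{-}s^d$ for $x\in\partial\bH$, and invoke Lemma~\ref{lemmeta}(iii) with $u=2(d-1)/(df_0\theta_d)$. The paper does exactly this, except that it reuses the measure $\mu^*$ (with density exactly $f_0$) already constructed in the proof of Lemma~\ref{lemfromCov}, so there is no need to introduce an auxiliary $f_0^{-}<f_0$; with $a=f_0\theta_d/2$ the condition $u>b/(ad)$ becomes simply $\alpha'<1/d$.
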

	\begin{proof}
Let $\mu^*$ and $\Po_t^*$ be as in the proof of Lemma \ref{lemfromCov}.
		Then the pre-limit in the left hand side of \eqref{0619c}
		is bounded below by
		$\Pr[ \cup_{j,\ell}^* 
		\sigma_{t,j,\ell}(Q_{t,j,\ell} \setminus Q_{t,j,\ell}^-)
		\subset F_{r_t}(\Po_t^*)]$.

Let $\alpha' \in (\alpha,1/d)$.  We claim that
 \bea
		\kappa(\cup^*_{j,\ell}  \sigma_{t,j,\ell}
		(Q_{t,j,\ell} \setminus Q_{t,j,\ell}^-), r_t)  
		= O( t^{\ggamma(d-1)} \times (t^{-\ggamma}/r_t)^{d-2} )
		= O(r_t^{-d(\alpha'+1-(2/d))})
		\label{0619a}
 \eea
 Indeed, the number of  sets in the union is $O(t^{\ggamma(d-1)})$, while
		for each $(j,\ell)$ the set 
 $\sigma_{j,\ell} ( Q_{t,j,\ell} \setminus Q_{t,j,\ell}^-) $
 is a $(d-1)$-dimensional hypercubic annular region of
 thickness $O(r_t)$ and diameter $O(t^{-\ggamma})$, and
 therefore can be covered by $O((t^{-\ggamma}/r_t)^{d-2})$ balls of
		radius $r_t$.  This gives the first part of (\ref{0619a}),
		and the second part comes from (\ref{rt1}).

		For all large enough $t$, all $y\in
		\cup_{j,\ell}^*  \sigma_{t,j,\ell} (Q_{t,j,\ell} \setminus
		Q_{t,j,\ell}^- )$ and $s \in (0,r_t]$,
		we have $\mu^*(B_{\R^d}(y,s))\ge f_0\vvol_d s^d/2$.
		The condition $ua>b/d$ of Lemma \ref{lemmeta}(iii)
		holds with $u=2(d-1)/(d f_0 \vvol_d), a = f_0\vvol_d /2$ and
		$b/d=\alpha'+1-2/d$. The result follows.
	\end{proof}

\begin{lemm}[Covering boundary region in $A$: upper bound]
	\label{lem0619}
	It is the case that
	\bea
	\limsup_{t \to \infty}
	\Pr[ B  \cap \partial A \subset F_{r_t}(\Po_t) ]
	\leq \exp(-c_{d,k} \tv( B \cap \partial A ) e^{-\zeta} ).
	\label{0619d}
	\eea
\end{lemm}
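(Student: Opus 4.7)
The plan is to mirror the strategy used in the interior case (Lemma \ref{lem06192}), now with the slabette / mini-slab apparatus of this section replacing the cubette / mini-cube apparatus. There are three main steps.

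First, I would derive an inclusion of events of the form
\begin{align*}
\{B \cap \partial A \subset F_{r_t}(\Po_t)\} \subset \Big\{\,\cup_{j,\ell}\sigma_{t,j,\ell}(Q_{t,j,\ell}^-) \subset F_{r_t^+}\big(\cup_{j,\ell}\sigma_{t,j,\ell}(\psi_{t,j}(\Po_t\cap V)\cap T_{t,j,\ell})\big)\Big\},
\end{align*}
where on both sides the unions run over $j\in[\kappa_t]$, $\ell\in[\nu_{t,j}]$. Indeed, if the left event occurs and $y=\sigma_{t,j_0,\ell_0}(z)$ with $z\in Q_{t,j_0,\ell_0}^-\subset L_{t,j_0}=\psi_{t,j_0}\circ\phi^{-1}(H_{t,j_0})$, write $z=\psi_{t,j_0}(u)$ with $u\in\phi^{-1}(H_{t,j_0})$; since $j_0\in[\kappa_t]$ forces $H_{t,j_0}\subset\phi((B\cap\partial A)^o_{\partial A})$, we have $u\in B\cap\partial A$. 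Hence by hypothesis at least $k$ points $w\in\Po_t$ satisfy $\dist(u,w)\le r_t$; for $t$ large, the compactness of $B$ inside the open set $V$ gives $B(u,r_t)\subset V$, so $w\in V\cap A$. By Lemma \ref{lemcompare2}(c),
\begin{equation*}
\|\psi_{t,j_0}(w)-z\|=\|\psi_{t,j_0}(w)-\psi_{t,j_0}(u)\|\le(1+2dK_1 t^{-\eta})r_t \le r_t^+.
\end{equation*}
By Lemma \ref{lemcompare2}(d)--(e), $\psi_{t,j_0}(w)\in\bH$, so its orthogonal projection onto $\partial\bH$ sits at distance $\le r_t^+<2r_t$ from $z\in\partial\bH$. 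The $2r_t$-buffer built into $Q_{t,j_0,\ell_0}^-$ then places this projection in $Q_{t,j_0,\ell_0}$, and the $d$-th coordinate is at most $r_t^+<4r_t$, so $\psi_{t,j_0}(w)\in T_{t,j_0,\ell_0}$. Thus $\sigma_{t,j_0,\ell_0}(\psi_{t,j_0}(w))$ lies within $r_t^+$ of $y$ and belongs to the right-hand transformed point process, proving the claimed inclusion.

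Second, I would apply Lemma \ref{lemPPdom} and the monotonicity of the coverage event in the point process (together with a tiny intensity inflation to absorb the restriction to $\Gamma_t \oplus [o,4r_te_d]$) to get
\begin{equation*}
\Pr[B\cap\partial A \subset F_{r_t}(\Po_t)] \le \Pr[\cup_{j,\ell}\sigma_{t,j,\ell}(Q_{t,j,\ell}^-) \subset F_{r_t^+}(\cH_{tf_0(1+6dK_1 t^{-\eta})}\cap\bH)].
\end{equation*}
Using $\Gamma_t\setminus\cup_{j,\ell}\sigma_{t,j,\ell}(Q_{t,j,\ell}^-) \subset \cup_{j,\ell}^*\sigma_{t,j,\ell}(Q_{t,j,\ell}\setminus Q_{t,j,\ell}^-)$, a union bound then yields
\begin{align*}
\Pr[\cup_{j,\ell}\sigma(Q^-) \subset F_{r_t^+}(\cH)] &\le \Pr[\Gamma_t \subset F_{r_t^+}(\cH)] \\
&\quad + \Pr[\{\cup_{j,\ell}^*\sigma(Q\setminus Q^-) \subset F_{r_t^+}(\cH)\}^c].
\end{align*}
The first term converges to $\exp(-c_{d,k}\tv(B\cap\partial A)e^{-\zeta})$ by Lemma \ref{lemfromCov}, and the second tends to $0$ by Lemma \ref{lemplaid}. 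Combining these yields \eqref{0619d}.

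The main obstacle, and the step that needs the most care, is the geometric argument that every covering point $w\in\Po_t$ of $u$ gets mapped under $\psi_{t,j_0}$ into the specific mini-slab $T_{t,j_0,\ell_0}$ rather than leaking into a neighbouring one. The $2r_t$-reduction in $Q_{t,j_0,\ell_0}^-$ and the generous slab thickness $5r_t$ (which, via Lemma \ref{lemslabette}, ensures $T_{t,j_0,\ell_0}\subset\psi_{t,j_0}\circ\phi^{-1}(S_{t,j_0})$, so that $w$ still lies in the domain where Lemma \ref{lemcompare2} applies) are precisely what make this work, and these are the only places where the specific choice of buffer constants enters.
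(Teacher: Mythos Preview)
Your proposal is correct and follows essentially the same route as the paper: the event inclusion via the mini-slab geometry, then stochastic domination (Lemma \ref{lemPPdom}) to pass to a homogeneous Poisson process in $\bH$, then the union bound combining Lemmas \ref{lemfromCov} and \ref{lemplaid}. One small fix: in bounding $\|\psi_{t,j_0}(w)-\psi_{t,j_0}(u)\|$ you invoke Lemma \ref{lemcompare2}(c), but that lemma requires both points to lie in $\phi^{-1}(S_{t,j_0})$, and $w$ is not known to; the paper instead applies Lemma \ref{lemcompare3}(VI) directly (valid on all of $V$), which gives the same bound once $\dist(u,x_{t,j_0})=O(t^{-\eta})$ is noted.
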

	\begin{proof}
		Let $t >0$.  Suppose event
		$\{  B \cap \partial A \subset F_{r_t}(\Po_t) \}$
		occurs.
		Let 
		$
		y \in  
		\cup_{j, \ell} 
		\sigma_{t,j,\ell}(Q_{t,j,\ell}^-). 
		$ 

		Take $j_0 \in [\kappa_t]$,
		$\ell_0 \in [\nu_{t,j_0}]$ and $z \in Q_{t,j_0,\ell_0}^-$
		such that $y = \sigma_{t,j_0,\ell_0}(z)$. By \eqref{eqTdef}
	and \eqref{e:0813a} we have
		 $ Q_{t,j_0,\ell_0} \subset T_{t,j_0,\ell_0}
		\subset \psi_{t,j_0} \circ \phi^{-1} (S_{t,j_0})$,
		we can and do take
 $u \in \phi^{-1}(S_{t,j_0})$ such that 
$z = \psi_{t,j_0}(u)$, and hence 
		$y = \sigma_{t,j_0,\ell_0} \circ \psi_{t,j_0}(u)$. 
		Then
%
\bean
		z \in T_{t,j_0,\ell_0} \cap \partial \bH
		\subset \psi_{t,j_0}\circ \phi^{-1}(S_{t,j_0})
		\cap \partial \bH 
		\subset \psi_{t,j_0}(B \cap \partial A),
\eean
so that $u \in  B \cap \partial A$. Hence by our assumption
		$u \in F_{r_t}(\Po_t)$, there are at least $k$ points
$w$ of $\Po_t \cap B(u,r_t)$. 
For each such  $w$, since $u \in B, w \in A$
and $\dist(u,w) \leq r_t$, provided $t $ is large enough $w \in V \cap A$.
		Using Lemma \ref{lemcompare3}(e) and recalling the definition of $x_{t,j}$
		and $\psi_{t,j}$ from just before Lemma \ref{lemcompare2},
we have 
		\begin{align}
			\|\psi_{t,j_0}(w) - \psi_{t,j_0}(u)\| &
			\leq r_t (1+  K_1
		(\dist(w,u) + \dist(u,x_{t,j_0})) )
\nonumber \\
			& \leq r_t ( 1+ K_1 ( r_t +  2d t^{-\eta})),
\label{0616a}
		\end{align}
where the second inequality uses the fact that both $u$ and $x_{t,j_0}$
lie in $\phi^{-1}(S_{t,j_0})$, so $\|\phi(u) - \phi(x_{t,j_0}) \|
\leq d t^{-\eta}$ and so by Lemma  \ref{lemcompare2} again,
$\dist(u,x_{t,j_0}) \leq 2d t^{-\eta}$.

In particular, $\|\psi_{t,j_0}(w) - \psi_{t,j_0}(u)\| \leq 2 r_t$, and
therefore since $z= \psi_{t,j_0}(u) \in T_{t,j_0,\ell_0}^-$ 
		we have 
$\psi_{t,j_0}(w) \in T_{t,j_0,\ell_0}$. Since $\sigma_{t,j_0,\ell_0}$ is
an isometry and $r_t \leq d K_1 t^{-\eta}$ we also have  from (\ref{0616a}) 
		for $t$ large that
$$
\|
\sigma_{t,j_0,\ell_0} \circ \psi_{t_0,j_0}(w) - y
\| \leq r_t(1+ 3dK_1 t^{-\eta}) ,
$$
and hence 
$$
y \in
F_{r_t^+} ( \sigma_{t,j_0,\ell_0} 
( \psi_{t,j_0}(\Po_t \cap V) \cap T_{t,j_0,\ell_0}))
\subset
F_{r_t^+} ( \cup_{j=1}^{\kappa_t} \cup_{\ell=1}^{\nu_{t,j}} \sigma_{t,j,\ell} 
( \psi_{t,j}(\Po_t \cap V) \cap T_{t,j,\ell})).
$$
Therefore we have the event inclusion 
\begin{align*}
	\{ B  \cap  \partial A \subset F_{r_t}(\Po_t) \}
	 \subset & \{
		 \cup_{j,\ell}
	\sigma_{t,j,\ell}(Q_{t,j,\ell}^-)
	 \subset 
	F_{r_t^+} (
	\cup_{j,\ell}
\sigma_{t,j,\ell} ( \psi_{t,j}(\Po_t \cap V) \cap T_{t,j,\ell} ) )
\}
.
\end{align*}
Then using \eqref{eqPPdom} from Lemma \ref{lemPPdom}, we obtain that
\begin{align*}
\limsup_{t \to \infty} \Pr [ B  \cap \partial A \subset
	F_{r_t}(\Po_t) ]  
\leq 
	\limsup_{t \to \infty} \Pr[ & 
(\cup_{j,\ell} \sigma_{t,j,\ell}(Q^-_{t,j,\ell}) )
\nonumber \\
	&
	\subset F_{r_t^+} (\cH_{tf_0(1+6dK_1 t^{-\eta})} \cap (\Gamma_t \oplus 
[o,4r_t e_d] ))
].
\end{align*}

Since
\bean
\Pr[ 
(\cup_{j,\ell} \sigma_{t,j,\ell} (Q_{t,j,\ell}^-) ) 
\subset F_{r_t^+}(\cH_{tf_0(1+6dK_1t^{-\eta}) }  \cap \bH) ]
\leq \Pr[ \Gamma_{t} 
\subset F_{r_t^+}(\cH_{tf_0(1+6dK_1t^{-\eta}) }  \cap \bH) ]
\\
+ \Pr [ \{ 
	\cup_{j,\ell} \sigma_{t,j,\ell} (Q_{t,j,\ell} \setminus 
	Q_{t,j,\ell}^- ) \subset 
	F_{r_t^+} ( \cH_{tf_0(1+ 6dK_1 t^{-\eta})} \cap \bH) \} ^c ],
\eean
it then follows from Lemma \ref{lemplaid}
that
\bean
\limsup_{t \to \infty} \Pr [ B \cap \partial A  \subset
F_{r_t}(\Po_t) ] 
\leq 
\limsup_{t \to \infty} \Pr[ \Gamma_{t}  
\subset F_{r_t^+}(\cH_{tf_0(1+6dK_1 t^{-\eta}) }  \cap \bH) ],
\eean
and therefore by Lemma \ref{lemfromCov}
we obtain (\ref{0619d}).
\end{proof}

	\begin{lemm}[Covering boundary region in $A$ with tartan set removed:
		lower bound]
		\label{lemcovA}
		It is the case that
		\bea
		\liminf_{t \to \infty} \Pr[ \cup_{j,\ell}^* (
		\phi^{-1}(S_{t,j}^-) \cap
		\psi_{t,j}^{-1 }(T_{t,j,\ell}^-)
		)
		\subset F_{r_t}(\Po_t) ] 
		\geq \exp(-c_{d,k} 
		\tv( B \cap \partial A ) e^{-\zeta}).
		\label{0706a}
		\eea
	\end{lemm}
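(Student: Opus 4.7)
The plan is to mirror the proof of Lemma \ref{lemcovA2}, replacing cubes by slabs and replacing the reassembled interior set by a slab of vertical height $2r_t$ erected on $(\Gamma_t^*)^{(2r_t)}$. I would fix an auxiliary homogeneous Poisson process $\cH := \cH_{tf_0(1 - 6dK_1 t^{-\eta})}$ on $\R^d$, independent of $\Po_t$, and define
\begin{align*}
\Xi_t := \cup_{j,\ell}^* \Big( \sigma_{t,j,\ell}[\psi_{t,j}(\Po_t \cap V) \cap T_{t,j,\ell}] \cup [\cH \cap \sigma_{t,j,\ell}(T_{t,j,\ell} \setminus \psi_{t,j} \circ \phi^{-1}(S_{t,j}))] \Big),
\end{align*}
together with the event $E_t := \big\{ (\Gamma_t^*)^{(2r_t)} \oplus [o, 2r_t e_d] \subset F_{r_t^-}(\Xi_t) \big\}$. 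Relation \eqref{eqPPdom2} of Lemma \ref{lemPPdom}, combined with the Superposition theorem applied to the disjoint decomposition $T_{t,j,\ell} = (T_{t,j,\ell} \cap \psi_{t,j}\circ\phi^{-1}(S_{t,j})) \sqcup (T_{t,j,\ell} \setminus \psi_{t,j}\circ\phi^{-1}(S_{t,j}))$, then shows that $\Xi_t$ stochastically dominates $\cH \cap (\Gamma_t^* \oplus [o, 4r_t e_d])$. Since any $r_t^-$-ball in $\bH$ centred on a point of $(\Gamma_t^*)^{(2r_t)} \oplus [o, 2r_t e_d]$ is contained in $\Gamma_t^* \oplus [o, 4r_t e_d]$, coverage of this target by $\cH \cap (\Gamma_t^* \oplus [o, 4r_t e_d])$ is equivalent to coverage by $\cH \cap \bH$, so by the first line of Lemma \ref{lemfromCov} with $a_1 = 2$,
\begin{align*}
\liminf_{t \to \infty} \Pr[E_t] \geq \exp(-c_{d,k} \tv(B \cap \partial A) e^{-\zeta}).
\end{align*}

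Next I would establish the event inclusion $E_t \subset \big\{\cup_{j,\ell}^* (\phi^{-1}(S_{t,j}^-) \cap \psi_{t,j}^{-1}(T_{t,j,\ell}^-)) \subset F_{r_t}(\Po_t)\big\}$, which yields \eqref{0706a}. Assuming $E_t$ holds, I would fix $x$ in the set on the left and choose $(j_0,\ell_0)$ with $x \in \phi^{-1}(S_{t,j_0}^-) \cap \psi_{t,j_0}^{-1}(T_{t,j_0,\ell_0}^-)$. Setting $y := \sigma_{t,j_0,\ell_0}(\psi_{t,j_0}(x))$, the identity $T_{t,j_0,\ell_0}^- = Q_{t,j_0,\ell_0}^- \oplus [o, 2r_t e_d]$ together with the observation that $\sigma_{t,j_0,\ell_0}(Q_{t,j_0,\ell_0}^-)$ is a $2r_t$-interior subset of $\Gamma_t^*$ within $\partial \bH$ places $y$ in the covered set, so $E_t$ supplies at least $k$ points $\tiy$ of $\Xi_t$ within $r_t^-$ of $y$. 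A projection argument using disjointness of the $\sigma_{t,j,\ell}(T_{t,j,\ell})$ puts each such $\tiy$ in $\sigma_{t,j_0,\ell_0}(T_{t,j_0,\ell_0})$. Setting $\tx := \psi_{t,j_0}^{-1}(\sigma_{t,j_0,\ell_0}^{-1}(\tiy))$ and invoking Lemma \ref{lemcompare3}(VI) (with $\dist(x, x_{t,j_0}) = O(t^{-\eta})$, since $\phi(x), \phi(x_{t,j_0}) \in S_{t,j_0}$) gives $\dist(\tx, x) \leq (1 + O(t^{-\eta})) r_t^- \leq r_t$. Finally, $\|\phi(\tx) - \phi(x)\| \leq (1 + \eps')r_t$ combined with $\phi(x) \in S_{t,j_0}^- = H_{t,j_0}^{(2r_t)} \oplus [o, 2r_t e_d]$ forces $\phi(\tx) \in S_{t,j_0}$, so $\sigma_{t,j_0,\ell_0}^{-1}(\tiy) = \psi_{t,j_0}(\tx) \in \psi_{t,j_0}\circ\phi^{-1}(S_{t,j_0})$; because the auxiliary $\cH$ contributes only outside this set, $\tiy$ must come from the $\Po_t$-image and $\tx \in \Po_t \cap V$ with $\dist(\tx, x) \leq r_t$, giving $x \in F_{r_t}(\Po_t)$ as required.

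The main obstacle will be the careful geometric bookkeeping near the boundary: verifying that each pullback $\tx$ lies in the $A$-side of its slabette (and not merely in $V$), and inside $\phi^{-1}(S_{t,j_0})$, so that the covering point cannot originate from the auxiliary process $\cH$. This step relies on combining Lemma \ref{lemcompare2}(e), which identifies $\psi_{t,j}(V \cap A)$ with $\psi_{t,j}(V) \cap \bH$, with the bi-Lipschitz estimates of Lemma \ref{lemcompare3}(VI)--(VII) and the $2r_t$-inset built into $S_{t,j}^-$, in order to exclude every alternative origin for the $k$ covering points $\tiy$ produced by $E_t$.
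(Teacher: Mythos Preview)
Your proposal is correct and follows essentially the same route as the paper's proof: define an event $E_t$ using a mixed process (image of $\Po_t$ on the slabette part, auxiliary homogeneous Poisson on the complement), use \eqref{eqPPdom2} plus superposition to dominate $\cH \cap (\Gamma_t^*\oplus[o,4r_te_d])$, invoke Lemma~\ref{lemfromCov} for the probability lower bound, and then run the same localisation/pull-back argument via Lemma~\ref{lemcompare3}(VI),(VII) together with the $2r_t$-inset in $S_{t,j}^-$ to rule out the auxiliary process. The only differences are cosmetic: you take the auxiliary intensity $tf_0(1-6dK_1t^{-\eta})$ rather than the paper's $tf_0(1-2dK_1t^{-\eta})$, and you spell out explicitly the equivalence between coverage by $\cH\cap(\Gamma_t^*\oplus[o,4r_te_d])$ and by $\cH\cap\bH$, as well as the role of Lemma~\ref{lemcompare2}(e).
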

	\begin{proof}
		Let $t >0$.
For this proof, we redefine the event $E_t$ by
\bea
		E_t : = \{ ((\Gamma_t^*)^{(2r_t)} \oplus [o,2r_te_d]) \subset
		F_{r_t^-} (
		\cup_{j,\ell}^* (\sigma_{t,j,\ell}  
		[\psi_{t,j}(\Po_t \cap \phi^{-1}(S_{t,j})) \cap T_{t,j,\ell}]
\nonumber		\\
		\cup
		[ \cH_{tf_0(1-2 d K_1  t^{-\eta})} \cap \sigma_{t,j,\ell}
		(T_{t,j,\ell} \setminus \psi_{t,j} \circ \phi^{-1}
		(S_{t,j})) ])
		) 
\},
		\label{0820b}
\eea
		where we assume here that the Poisson process 
		$\cH_{tf_0(1- 2d K_1  t^{-\eta})}$ is independent of $\Po_t$.
		By (\ref{eqPPdom2}), and the Superposition theorem,
		the point process
		$$
		\cup_{j,\ell}^* (\sigma_{t,j,\ell}  
		[\psi_{t,j}(\Po_t \cap \phi^{-1}(S_{t,j})) \cap T_{t,j,\ell}]
		\\
		\cup
		[ \cH_{tf_0(1-2 d K_1  t^{-\eta})} \cap \sigma_{t,j,\ell}
		(T_{t,j,\ell} \setminus \psi_{t,j} \circ \phi^{-1}
		(S_{t,j})) ])
		$$
		stochastically dominates 
		$\cH_{tf_0(1- 2d K_1  t^{-\eta})} \cap (\Gamma_t^* \oplus
		[o,4 r_te_d])$. Therefore
		by
		Lemma \ref{lemfromCov},
\bea
\liminf_{t \to \infty} 
\Pr[ E_t ] 
		\geq \exp( -c_{d,k} \tv( B  \cap \partial A )  e^{-\zeta}).
\label{0505a}
\eea

Suppose $E_t$ occurs.
		Let $x \in \cup_{j,\ell}^* (
		\phi^{-1}(S_{t,j}^-)
		\cap
		\psi_{t,j}^{-1 }(T_{t,j,\ell}^-)
		)$.
		Take $j_0 \in [\kappa_t]$ and
		$\ell_0 \in [\nu_{t,j_0}]$
		such that 
		$\phi(x) \in S_{t,j_0}^-$
		and
		$\psi_{t,j_0}(x) \in T_{t,j_0,\ell_0}^-$.
Then
$$
y:= \sigma_{t,j_0,\ell_0} \circ \psi_{t,j_0}(x)
\in \sigma_{t,j_0,\ell_0}(T_{t,j_0,\ell_0}^-)
		\subset (\Gamma_t^*)^{(2r_t)} \oplus [o,2 r_t e_d],
$$
		and since we assume $E_t$ occurs,
		\bean
		\card (
B(y, (1- 6dK_1 t^{-\eta})r_t)
\cap
		\cup_{j,\ell}^* (\sigma_{t,j,\ell}
[\psi_{t,j}(\Po_t \cap \phi^{-1}(S_{t,j}))  \cap T_{t,j,\ell}]
\\
\cup 
		[ \cH_{tf_0(1-2 d K_1  t^{-\eta})} \cap \sigma_{t,j,\ell}
		(T_{t,j,\ell} \setminus \psi_{t,j} \circ \phi^{-1}
		(S_{t,j})) ])
) \geq k.
\eean
Since
		$\psi_{t,j_0}(x) \in T_{t,j_0,\ell_0}^-$,
		and the sets $\sigma_{t,j,\ell}(T_{t,j,\ell})$,
		$j \in [\kappa_t^+]$, $\ell \in [\nu^+_{t,j}]$ are
		disjoint,
$$
B(y,(1- 6dK_1 t^{-\eta})r_t) \cap (
\cup_{j,\ell}^* \sigma_{t,j,\ell}( T_{t,j,\ell} ) ) 
\subset 
\sigma_{t,j_0,\ell_0}( T_{t,j_0,\ell_0} ), 
$$
so that there exist at least $k$ points $\tiy$ satisfying
\bean
\tiy \in B(y,(1- 6dK_1 t^{-\eta})r_t) \cap ( 
\sigma_{t,j_0,\ell_0}[
	\psi_{t,j_0}(\Po_t \cap \phi^{-1}(S_{t,j_0}))
	\cap
	T_{t,j_0,\ell_0}]
~~~~~~~~
\\
~~~~~~~~
\cup 
		[ \cH_{tf_0(1-2 d K_1  t^{-\eta})} \cap \sigma_{t,j_0,\ell_0}
		(T_{t,j_0,\ell_0} \setminus \psi_{t,j_0} \circ \phi^{-1}
		(S_{t,j_0})) ]) .
\eean
Given such a $\tiy$, set
$\tx := \psi_{t,j_0}^{-1} (\sigma_{t,j_0,\ell_0}^{-1}(y))$.
Then $\|\psi_{t,j_0}(x) - \psi_{t,j_0}(\tx)\| =
\|\sigma_{j_0,\ell_0}^{-1}(y) -
\sigma_{j_0,\ell_0}^{-1}(\tiy) \| \leq r_t$, and
by Lemma \ref{lemcompare3}(f), $\dist(x,\tx) \leq 2 r_t$.
By Lemma \ref{lemcompare3}(e) and the definition of
$\psi_{t,j}$ just before Lemma \ref{lemcompare2},
\begin{align}
\left| \frac{\| \psi_{t,j_0}(x)- \psi_{t,j_0}(\tx)\|}{\dist(x,\tx)}
-1 \right|
	& \leq  K_1 (\dist(x,\tx) + \dist(x, x_{t,j_0}) )  
\nonumber \\
	& \leq  K_1 (2 r_t + \dist(x, x_{t,j_0}) ),  
\label{0806c2}
\end{align}
where $x_{t,j_0}$ is an element of $\phi^{-1}(S_{t,j_0})$.

Since $x$ and $x_{t,j_0}$ both lie in $\phi^{-1}(S_{t,j_0})$,
$
\dist(x, x_{t,j_0}) \leq \sqrt{d} \dist(\phi(x),
\phi(x_{t,j_0}) ) \leq d t^{-\eta}.
$
Thus by (\ref{0806c2}) we obtain that
$$
\| \psi_{t,j_0}(x) - \psi_{t,j_0}(\tx) \| \geq (1- 3d K_1 t^{-\eta})
 \dist (x,\tx),
$$
and hence  $\dist(x,\tx) \leq (1+ 6d K_1 t^{-\eta})
\| \psi_{t,j_0}(x) - \psi_{t,j_0}(\tx)\|$. Thus, 
%
\bean
\dist(\tx ,x ) \leq
(1+ 6d K_1 t^{-\eta})
\|\tiy - y\|
\leq r_t,
\eean
so $\|\phi(x) - \phi(\tx)\| \leq 2 r_t$,
and therefore since we assume $x \in \phi^{-1}(S_{t,j_0}^-)$ 
we have $\tx \in \phi^{-1} (S_{t,j_0}) $, 
so that $\tiy$ is not in the second set on the right hand side of 
 (\ref{0820b}),
and hence
$\tx \in \Po_t $. Therefore $ x \in F_{r_t} (\Po_t ), $
so the event on the left side of (\ref{0706a}) occurs.
Then the result follows from (\ref{0505a}).
\end{proof}

Recall from \eqref{eqStjdef} the definitions of $S_{t,j}$, $S_{t,j}^*$ and
$S_{t,j}^-$ for $t >0, j \in [\kappa_t^+]$.  As explained there,
$S_{t,j} \subset \phi(V \cap A)$ for large enough $t$ and all
$j \in [\kappa_t^+]$.  The next two lemmas show that the `tartan` region 
	near $\partial A$ that was left out in Lemma \ref{lemcovA}
	is likely to be covered.

	\begin{lemm}
		\label{lemAug21}
		As $t \to \infty$, $\Pr[ \cup_{j=1}^{\kappa_t^+} 
		\phi^{-1}(S^*_{t,j} \setminus S_{t,j}^-) \subset 
		F_{r_t} ( \Po_{t} ) ] \to 1$. 
	\end{lemm}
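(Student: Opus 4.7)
The plan is to recognize $\cup_{j=1}^{\kappa_t^+} \phi^{-1}(S_{t,j}^* \setminus S_{t,j}^-)$ as a thin ``tartan sleeve'' running along the sides of the slabs, and to apply Lemma~\ref{lemmeta}(iii) in essentially the same way as in the proof of Lemma~\ref{lemplaid}.

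First I would bound the covering number. In $\R^d$, the set
$S_{t,j}^* \setminus S_{t,j}^- = (H_{t,j} \setminus H_{t,j}^{(2r_t)}) \oplus [o,2r_t e_d]$
is the product of a $(d-1)$-dimensional hypercubic annulus of thickness $O(r_t)$ around a cube of side $t^{-\eta}$ with an interval of length $2r_t$, and so can be covered by $O((t^{-\eta}/r_t)^{d-2})$ Euclidean balls of radius $r_t/2$ (with the convention that the exponent is $0$, giving $O(1)$, when $d=2$). Since $\phi^{-1}$ distorts distances by a bounded factor (Lemma~\ref{lemcompare}), the same order of geodetic balls of radius $r_t$ in $\cM$ suffices. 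Summing over $j$, and using $\kappa_t^+ = O(t^{\eta(d-1)})$ together with $r_t \asymp (\log t/t)^{1/d}$ from \eqref{rt1}, one gets
$
\kappa\big(\cup_j \phi^{-1}(S_{t,j}^* \setminus S_{t,j}^-),\, r_t \big) = O(t^\eta r_t^{-(d-2)}) = O(r_t^{-b})
$
with $b = d\eta + (d-2)$.

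Second I would check the volume lower bound. Every $y$ in our set lies in $A$, so by \eqref{e:volLB2} of Lemma~\ref{lemMyBk} we have $v(B(y,s) \cap A) \geq (1-\epsilon)\theta_d s^d/2$ for all small enough $s$. Since $f \equiv f_0$ on $A$ under the assumptions of Theorem~\ref{th:weak}, this gives $\mu(B(y,s)) \geq a s^d$ for all $y$ in our set and $s \in (0,r_t]$, with $a = (1-\epsilon) f_0 \theta_d/2$.

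Finally I would apply Lemma~\ref{lemmeta}(iii). From \eqref{rt1}, $r_t = (1+o(1))(u\log t/t)^{1/d}$ with $u = 2(d-1)/(d f_0\theta_d)$, and
\[
\frac{b}{ad} = \frac{2(d\eta + d-2)}{(1-\epsilon) d f_0\theta_d} < \frac{2(d-1)}{d f_0\theta_d} = u
\]
for $\epsilon$ small enough, since $\eta < 1/(2d)$ forces $d\eta + d - 2 < d - 1$. Thus the hypothesis $u > b/(ad)$ of Lemma~\ref{lemmeta}(iii) is satisfied, and its conclusion gives $\Pr[\cup_{j=1}^{\kappa_t^+} \phi^{-1}(S_{t,j}^* \setminus S_{t,j}^-) \subset F_{r_t}(\Po_t)] \to 1$. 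The only subtlety is that points near $\partial A$ have only half a $\theta_d s^d$ ball inside $A$, costing a factor $1/2$ in $a$; the earlier choice $\eta < 1/(2d)$ is exactly what leaves enough margin in $u > b/(ad)$ to absorb this loss.
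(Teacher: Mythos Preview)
Your proposal is correct and follows essentially the same approach as the paper's proof: bound the covering number of the tartan sleeve, use the half-ball volume lower bound near $\partial A$, and invoke Lemma~\ref{lemmeta}(iii). One small technical point: when you convert $t^{\eta}r_t^{-(d-2)}$ to $r_t^{-b}$ with $b=d\eta+(d-2)$ you have suppressed a factor $(\log t)^{\eta}$, since $t^{\eta}\asymp(\log t)^{\eta}r_t^{-d\eta}$; the paper handles this by introducing an auxiliary $\eta'=1/(2d)>\eta$ and writing the bound as $O(r_t^{-d(\eta'+1-2/d)})$, which absorbs the logarithm via the strict inequality $\eta<\eta'$. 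Your strict inequality $d\eta+d-2<d-1$ leaves exactly the same slack, so replacing $b$ by any $b'\in(d\eta+d-2,\,d-3/2]$ fixes this and the verification of $u>b'/(ad)$ goes through unchanged.
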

	\begin{proof}
		Let $\eta' = 1/(2d)$ (so $0 < \eta < \eta'$)
		and recall $\eps' $ is as at \eqref{e:defeps'}.
		We claim that 
 \bea
		\kappa( \cup_{j=1}^{\kappa_t^+} \phi^{-1} (S^*_{t,j}  
		\setminus S_{t,j}^-)  ,r_t)
		= O( t^{\eta(d-1)} \times (t^{-\eta}/r_t)^{d-2} )
			= O(r_t^{-d(\eta'+1-(2/d))})
		.
		\label{0619a22}
 \eea
	Indeed,  for each  $j$, as discussed in the proof 
	of Lemma \ref{lemplaid}, the set
	 $ (S_{t,j}^* \setminus S_{t,j}^-)$ 
	can be covered by $O((t^{-\eta}/r_t)^{d-2})$ Euclidean balls
	of radius $\eps' r_t/2$.
	The mapping $\phi$ changes  all distances by a factor of
	at most 2, and therefore  also
	$\phi^{-1}(S_{t,j}^* \setminus S_{t,j}^-)$
	can be covered by $O((t^{-\eta}/r_t)^{d-2})$ geodetic
	balls of radius $r_t$.  Since
		$\kappa_t^+ = O(t^{\eta(d-1)})$,  
	the first part of \eqref{0619a22} follows.
		Then the second part comes from (\ref{rt1}).

		Let $y\in\cup_{j=1}^{\kappa_t^+} 
		\phi^{-1}(S^*_{t,j} \setminus S_{t,j}^-)$.
		Then $y \in V \cap A$. 
		By Lemma \ref{lemcompare3}, similarly to \eqref{e:vBLB},
	we have that $\mu(B(y,s))\ge (1- \eps')^{d+1}f_0\vvol_d s^d/2$
		for $0 < s \leq r_t$. Hence, the condition $ua>b/d$ of
		Lemma \ref{lemmeta}(iii) holds with
		$u=2(d-1)/(df_0\vvol_d)$, $a=(1-\eps')^{d+1}
		f_0\vvol_d/2$ and 
		$b/d=1+\eta'-2/d = 1- 3/(2d)$ (note that \eqref{e:defeps'}
		implies that $(1-  \eps')^{d+1} > 1 - (1/(2d-2))$).
		The result follows.
	\end{proof}

	\begin{lemm}
		\label{lemJuly}
		As $t \to \infty$, $\Pr[ \cup_{j,\ell}^* 
		\phi^{-1}(S_{t,j}) \cap  
		\psi_{t,j}^{-1} (T_{t,j,\ell} \setminus T_{t,j,\ell}^-) 
		\subset F_{r_t} ( \Po_{t} ) ] \to 1$. 
	\end{lemm}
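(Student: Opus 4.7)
Partition $T_{t,j,\ell} \setminus T_{t,j,\ell}^-$ into the two disjoint pieces
$C_{t,j,\ell} := \{x \in T_{t,j,\ell} : \langle x, e_d \rangle > 2r_t\}$ (the \emph{top cap}) and
$N_{t,j,\ell} := (Q_{t,j,\ell} \setminus Q_{t,j,\ell}^-) \oplus [o, 2r_t e_d]$ (the \emph{side frame}), and correspondingly split the target union as $\Lambda_t^{(1)} \cup \Lambda_t^{(2)}$, where $\Lambda_t^{(i)} := \cup_{j,\ell}^* \phi^{-1}(S_{t,j}) \cap \psi_{t,j}^{-1}(\ast)$ with $\ast = C_{t,j,\ell}$ or $N_{t,j,\ell}$ according to $i$.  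To each piece I apply Lemma \ref{lemmeta}(iii) with $u = 2(d-1)/(d f_0 \theta_d)$, as forced by the scaling \eqref{rt1}. The treatment of $\Lambda_t^{(2)}$ parallels the proof of Lemma \ref{lemplaid}; the top-cap piece $\Lambda_t^{(1)}$ requires the new observation described next.

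\textbf{Top cap $\Lambda_t^{(1)}$.} The crucial point is that points of $\Lambda_t^{(1)}$ lie in $A^{(r_t)}$ for large $t$, so the \emph{full} (not halved) ball-measure estimate $\mu(B(x, s)) \geq (1-\eps')^{d+1} f_0 \theta_d s^d$ is available.  To see this, Lemma \ref{lemcompare2}(d) tells us that the linear extension $L := \psi_{t,j} \circ \phi^{-1}$ satisfies $L(\bH) = \bH$ and $\|L - I_d\|_{d \times d} < \eps'$; since $L$ preserves $\partial \bH$ as a linear subspace, its matrix has last row $(0, \ldots, 0, L_{d,d})$ with $|L_{d,d} - 1| < \eps'$.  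Hence for $x \in \Lambda_t^{(1)}$, $\phi(x)_d = L_{d,d}^{-1} \psi_{t,j}(x)_d \geq 2r_t/(1+\eps')$, and since $\phi(V \cap \partial A) \subset \partial \bH$, Lemma \ref{lemcompare} gives $\dist(x, V \cap \partial A) \geq 2r_t/(1+\eps')^2 > r_t$ (as $\eps' < \sqrt{2} - 1$ by \eqref{e:defeps'}).  Moreover, for $t$ large, $\cup_j \phi^{-1}(S_{t,j})$ sits in a fixed compact subset of $V$, so also $\dist(x, \partial A \setminus V) > r_t$.  Thus $B(x, s) \subset A$ for $s \leq r_t$, and the claimed measure bound follows via Lemma \ref{lemcompare}.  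For the covering number, each $C_{t,j,\ell}$ is a $d$-dim hyperrectangle of dimensions $(t^{-\gamma})^{d-1} \times 2r_t$, coverable by $O((t^{-\gamma}/r_t)^{d-1})$ Euclidean $r_t$-balls; since $\psi_{t,j}$ is bi-Lipschitz with distortion $1+o(1)$ (Lemma \ref{lemcompare2}(c)), $\psi_{t,j}^{-1}(C_{t,j,\ell})$ is coverable by the same number of geodetic $r_t$-balls.  Summing over the $O(t^{\gamma(d-1)})$ mini-slabs yields $\kappa(\Lambda_t^{(1)}, r_t) = O(r_t^{-(d-1)}) = O(r_t^{-d \cdot (d-1)/d})$.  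The condition $u a > b/d$ of Lemma \ref{lemmeta}(iii) with $a = (1-\eps')^{d+1} f_0 \theta_d$ and $b/d = (d-1)/d$ then becomes $(1-\eps')^{d+1} > 1/2$, comfortably implied by \eqref{e:defeps'}.

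\textbf{Side frame $\Lambda_t^{(2)}$.} Here only the half-ball bound $\mu(B(x, s)) \geq (1-\eps')^{d+1} f_0 \theta_d s^d /2$ is available, derived as in \eqref{e:vBLB} using Lemmas \ref{lemcompare} and \ref{lemcompare2}(e).  However each $N_{t,j,\ell}$ is a $(d-1)$-dim annular shell of thickness $2r_t$ and extent $t^{-\gamma}$ extruded by $2r_t$ in the $e_d$ direction, hence coverable by $O((t^{-\gamma}/r_t)^{d-2})$ $r_t$-balls exactly as in the proof of Lemma \ref{lemplaid}.  Summing gives $\kappa(\Lambda_t^{(2)}, r_t) = O(r_t^{-d(\gamma'+1-2/d)})$ for any $\gamma' \in (\gamma, 1/d)$.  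With $a = (1-\eps')^{d+1} f_0 \theta_d /2$, the condition $u a > b/d$ reduces to $(1-\eps')^{d+1} > 1 - (1 - d\gamma')/(d-1)$; for $\gamma'$ sufficiently close to $\gamma = 3/(4d)$ this is implied by $(1-\eps')^{d+1} > 1 - 1/(8d)$, itself a consequence of \eqref{e:defeps'}.

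\textbf{Main obstacle.} The delicate step is that, unlike the purely $(d-1)$-dimensional tartan of Lemma \ref{lemplaid}, the set $T_{t,j,\ell} \setminus T_{t,j,\ell}^-$ contains a genuinely $d$-dimensional top cap whose covering number is $O(r_t^{-(d-1)})$, and a single application of Lemma \ref{lemmeta}(iii) with the half-ball estimate would force the impossible $(1-\eps')^{d+1} > 1$.  Splitting off the top cap, where the $2r_t$ gap between $T_{t,j,\ell}^-$ and the cap guarantees depth $> r_t$ below $\partial A$ and so upgrades the ball-measure estimate to its full value, is what rescues the argument.
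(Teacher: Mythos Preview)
Your proof is correct, and it is in fact more careful than the paper's own argument. The paper's proof attempts a single application of Lemma~\ref{lemmeta}(iii), asserting the covering bound $\kappa(\cdot,r_t)=O(t^{\alpha}r_t^{2-d})$ ``similarly to the proof of Lemma~\ref{lemplaid}'' and pairing it with the half-ball estimate $a=(1-\eps')^{d+1}f_0\theta_d/2$. But that covering bound only accounts for the side-frame portion $(Q_{t,j,\ell}\setminus Q_{t,j,\ell}^-)\oplus[o,2r_te_d]$; the top cap $Q_{t,j,\ell}\oplus(2r_t,4r_t]e_d$ contributes an additional $O(r_t^{1-d})$ to the covering number, which dominates since $t^{\alpha}r_t = o(1)$. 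With $b=d-1$ and the half-ball $a$, the condition $ua>b/d$ would then demand $(1-\eps')^{d+1}>1$, which fails.

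Your decomposition into $\Lambda_t^{(1)}$ and $\Lambda_t^{(2)}$ is exactly what is needed to repair this. The crucial observation --- that points mapped by $\psi_{t,j}$ above height $2r_t$ lie in $A^{(r_t)}$ because the last row of the near-identity linear map $\psi_{t,j}\circ\phi^{-1}$ is $(0,\ldots,0,L_{dd})$ with $L_{dd}\in(1-\eps',1+\eps')$ --- lets you upgrade to the full-ball estimate on $\Lambda_t^{(1)}$, and then the larger covering exponent $b=d-1$ is harmless. Your side-frame treatment matches the paper's intended argument. So the two proofs share the same skeleton (Lemma~\ref{lemmeta}(iii) with the scaling $u=2(d-1)/(df_0\theta_d)$), but your split and the depth argument for the top cap close a genuine gap in the paper's write-up.
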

	\begin{proof}
		Set $\alpha'= 7/(8d)$ so that $\alpha<\alpha'<1/d$.
		Similarly to in the proof of
	of Lemma \ref{lemplaid}, we have
 \bea
		\kappa( \cup_{j,\ell}^* \phi^{-1}(S_{t,j}) \cap
		\psi_{t,j}^{-1} (T_{t,j,\ell}  \setminus T_{t,j,\ell}^-),
		 r_t)
		= O( t^{\ggamma}  r_t^{2-d} )
			=O(r_t^{-d(\alpha'+1-2/d)}) .
		\label{0619a2}
 \eea
%

		As in the previous lemma, we have 
		$\mu(B(x,s))\ge (1- \eps')^{d+1}f_0\vvol_d 
		s^d$ for $t$ large, $s < r_t$.
		The result follows by applying 
		Lemma \ref{lemmeta}(iii) with
		$u= 2(d-1)/(df_0 \vvol_d)$, $a = (1- \eps')^{d+1} f_0
		\vvol_d/2$ and $b/d = \alpha' + 1 -2/d= 1-(9/(8d))$.
		The condition $u > b/(ad)$
		amounts to $(1-   \eps')^{d+1}>1 -(1/(8d-8))$,
		which follows from \eqref{e:defeps'}.
	\end{proof}

    \begin{lemm}[Covering the boundary region: lower bound] 
	    \label{lemrind}
	    It is the case that
	   $
	    \liminf_{t \to \infty} \Pr[
		    B \setminus A^{(r_t)} \subset F_{r_t}
		    (\Po_t) 
		    ] \geq \exp(-c_{d,k}
		    \tv( B \cap \partial A) e^{-\zeta}).
	$
    \end{lemm}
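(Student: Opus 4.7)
The plan is to write the event $\{B\setminus A^{(r_t)}\subset F_{r_t}(\Po_t)\}$ as (essentially) the intersection of three coverage events, one per preceding lemma: the bulk event of Lemma~\ref{lemcovA}, the top tartan event of Lemma~\ref{lemAug21}, and the side tartan event of Lemma~\ref{lemJuly}. Once this decomposition is in place, a Fr\'echet inequality reduces the claim to the known estimates.

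The first step is to localise every $x\in B\setminus A^{(r_t)}$ in a slab $S_{t,j_0}^*$. Using \eqref{e:gammadef} and compactness I take $y\in B\cap\partial A$ with $\dist(x,y)\le\gamma_t$, and (since $x\notin A^{(r_t)}$) some $y^*\in\partial A$ with $\dist(x,y^*)\le r_t$. The bounds of Lemma~\ref{lemcompare} and the identity $\phi(V\cap A)=\phi(V)\cap\bH$ then force $\phi(x)$ to have $e_d$-component in $[0,(1+\eps')r_t]\subset[0,2r_t]$ and horizontal projection within $(1+\eps')\gamma_t$ of $\phi(B\cap\partial A)$. Since $\{H_{t,j}:j\in[\kappa_t^+]\}$ includes every hypercube of the partition lying within $2r_t+2\gamma_t$ of $\phi(B\cap\partial A)$, this projection sits in some $H_{t,j_0}$, whence $\phi(x)\in S_{t,j_0}^*$.

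The core step is then a case split. If $\phi(x)\in S_{t,j_0}^*\setminus S_{t,j_0}^-$, Lemma~\ref{lemAug21} covers $x$. Otherwise $\phi(x)\in S_{t,j_0}^-$, and I decompose $\phi(x)=p+se_d$ with $p\in H_{t,j_0}^{(2r_t)}$ and $s\in[0,2r_t]$. By the linearity in Lemma~\ref{lemcompare2}(d),
\[
\psi_{t,j_0}(x)=q+sv,\qquad q:=\psi_{t,j_0}\!\circ\!\phi^{-1}(p)\in L_{t,j_0},\qquad \|v-e_d\|<\eps'.
\]
Since $(\psi_{t,j_0}\!\circ\!\phi^{-1})^{-1}$ has operator norm at most $1+2\eps'$, the point $q$ lies at horizontal distance at least $2r_t/(1+2\eps')>r_t$ from $\partial L_{t,j_0}$, while the horizontal part of $sv$ has norm at most $2r_t\eps'$, which by \eqref{e:defeps'} is negligible compared with this margin. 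Hence $q+sv$ keeps its horizontal projection inside $L_{t,j_0}$, and its vertical coordinate is at most $2r_t(1+\eps')\le 4r_t$. Thus $\psi_{t,j_0}(x)\in L_{t,j_0}\oplus[o,4r_te_d]\subset\cup_\ell T_{t,j_0,\ell}$; say $\psi_{t,j_0}(x)\in T_{t,j_0,\ell_0}$. If moreover $\psi_{t,j_0}(x)\in T_{t,j_0,\ell_0}^-$, then $x$ lies in the set of Lemma~\ref{lemcovA}; else it lies in the set of Lemma~\ref{lemJuly}.

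Writing $E_t^{(1)},E_t^{(2)},E_t^{(3)}$ for the three coverage events from Lemmas~\ref{lemcovA}, \ref{lemAug21}, \ref{lemJuly}, the above decomposition gives
\[
\Pr[B\setminus A^{(r_t)}\subset F_{r_t}(\Po_t)]\ge\Pr[E_t^{(1)}]-\Pr[(E_t^{(2)})^c]-\Pr[(E_t^{(3)})^c],
\]
and since the last two terms tend to zero while $\liminf_{t\to\infty}\Pr[E_t^{(1)}]\ge\exp(-c_{d,k}\tv(B\cap\partial A)e^{-\zeta})$, the lemma follows. The main obstacle is the geometric verification in the core step: one must show that $\psi_{t,j_0}\!\circ\!\phi^{-1}$ carries $S_{t,j_0}^-$ into $L_{t,j_0}\oplus[o,4r_te_d]$, which requires balancing the $\eps'$ operator-norm distortion against the $2r_t$ horizontal margin built into $H_{t,j_0}^{(2r_t)}$ and noting that the resulting $O(r_t\eps')$ perturbation stays much smaller both than this margin and than the mini-cube side $t^{-\ggamma}$.
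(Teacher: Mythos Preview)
Your proof is correct and follows essentially the same route as the paper: localise $\phi(x)$ in some $S_{t,j_0}^*$, split into the three cases handled by Lemmas~\ref{lemAug21}, \ref{lemJuly}, and \ref{lemcovA}, and combine via a union (Fr\'echet) bound. Your argument is in fact more detailed than the paper's, which simply writes down the three-set decomposition of $B\setminus A^{(r_t)}$ without explicitly verifying the inclusion $\psi_{t,j_0}\circ\phi^{-1}(S_{t,j_0}^-)\subset\cup_\ell T_{t,j_0,\ell}$ that you justify via the operator-norm bound of Lemma~\ref{lemcompare2}(d).
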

    \begin{proof}
%


	    Let $x \in B \setminus A^{(r_t)}$.
	    Then there exists $\tx \in \partial A$ with
	    $\dist(x,\tx) \leq r_t$.  Provided
	    $t$ is large enough, both $x$ and $\tx$ lie in $V$,
	    and by Lemma \ref{lemcompare3}(f), $\|\phi(x)
	    -\phi(\tx)\| \leq 2r_t$.  Moreover
	    $\phi(\tx) \in  \partial \bH$, so
	    $\dist (\phi(x),\partial \bH) \leq 2r_t$.

	    Let $y$ be the point in $\partial \bH$  closest to
	    $\phi(x)$. By the above $\|\phi(x) -y \| \leq 2r_t$.
	    Also $\dist(x,B \cap \partial A) \leq \gamma_t$
	    by \eqref{e:gammadef}, so
	    $\dist(\phi(x), \phi(B \cap \partial A)) \leq 2 \gamma_t$
	    and $\dist(y, \phi(B \cap \partial A)) \leq 2 (\gamma_t +r_t)$.
	    Provided $t$ is large enough,
	    $\phi(x) \in \cup_{j=1}^{\kappa_t^+} S^*_{t,j}$.
	    Hence by \eqref{eqStjdef},
	    \begin{align*}
		    B \setminus A^{(r_t)} \subset \big( \cup_{j=1}^{\kappa_t^+}
			    \phi^{-1}(S^*_{t,j} \setminus S_{t,j}^-)
			    \big)
			    \cup 
			    \big(
			    \cup_{j,\ell}^*
			    \phi^{-1}(S_{t,j}^-) \cap \psi_{t,j}^{-1}
			    (T_{t,j,\ell} \setminus T_{t,j,\ell}^-)
			    \big)
			    \\
			    \cup \big(
			    \cup_{j,\ell}^*
			    \phi^{-1}(S_{t,j}^{-1})
			    \cap \psi_{t,j}^{-1}(T_{j,\ell}^-)
			    \big).
	    \end{align*}
	    Hence by Lemmas \ref{lemAug21} and \ref{lemJuly},
	    \bean
	    \liminf_{t \to \infty}
	    \Pr[ B \setminus A^{(r_t)} \subset F_{r_t}(\Po_t)]
	    \geq
	    \liminf_{t \to \infty}
	    \Pr[ \cup^*_{j,\ell}( \phi^{-1}(S^-_{t,j}) \cap
	    \psi_{t,j}^{-1} (T_{t,j,\ell}^-) ) \subset F_{r_t}(\Po_t) ],  
	    \eean
	    and then using Lemma \ref{lemcovA}, 
	    we obtain
	    the result.
    \end{proof}

\begin{proof}[Proof of Proposition \ref{propcompare}]
	By Lemmas \ref{lem0619} and \ref{lemrind},
	given $B \in \cB$ with $B \subset V$
	we have  (\ref{eqHausd}).
	Combined with the discussion just after the statement of
	Proposition \ref{propcompare},
	this completes  the proof.
\end{proof}

\begin{lemm}[Any point in $\partial A$ has a neighbourhood with nice boundary] 
	\label{lemWexist}
	Let $x_0 \in \partial A$ and let $V \subset \cM$ be
	open with $x_0 \in V$. Then
	there exists an open set
$W \subset \cM$  with $x_0 \in W$, and
	also
$\overline{W} \subset V$, and also
	$v(\partial W) =0$ and $\tv(\partial_{\partial A}
	(W \cap \partial A)) =0$, and
$\limsup_{r \downarrow 0} r^{d-2} \kappa(
	\partial_{\partial A}( W  \cap \partial A),r) < \infty$.
\end{lemm}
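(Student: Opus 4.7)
The plan is to mimic the strategy of Lemma \ref{lemWexist2}, but now working with a boundary chart supplied by Lemma \ref{lemcompare}. By (possibly shrinking $V$) applying Lemma \ref{lemcompare} at $x_0 \in \partial A$ with $\eps = 1/99$, I obtain an open neighbourhood (still called $V$) of $x_0$ and an injective open map $\phi : V \to \R^d$ such that $\phi(x_0) = o$, $\phi(V \cap A) = \phi(V) \cap \bH$, distances are distorted by at most a factor $1 \pm \eps$, and analogous two-sided bounds hold for both $v/\lambda_d$ on $V$ and $\tv/\lambda_{d-1}$ on $V \cap \partial A$. Since $\phi$ is open, I can choose $r_0 > 0$ so small that the closed Euclidean ball $\overline{B_{\R^d}(o,2r_0)}$ is contained in $\phi(V)$, and then I set
\[
W := \phi^{-1}(B_{\R^d}(o,r_0)).
\]
Since $\phi$ is an injective open map, it is a homeomorphism from $V$ onto $\phi(V)$, so $\overline{W} = \phi^{-1}(\overline{B_{\R^d}(o,r_0)}) \subset V$.

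Next I check the measure-theoretic conclusions. We have $\partial W = \phi^{-1}(\partial B_{\R^d}(o,r_0))$, which is the image of a $(d-1)$-sphere, hence $\lambda_d(\phi(\partial W)) = 0$, so $v(\partial W)=0$ by the $\lambda_d$-versus-$v$ comparison in Lemma \ref{lemcompare}. Using $\phi(V \cap A) = \phi(V) \cap \bH$, I get
\[
W \cap \partial A = \phi^{-1}\bigl(B_{\R^d}(o,r_0) \cap \partial \bH\bigr),
\]
and, because $\phi$ is a homeomorphism, the relative boundary in $\partial A$ pulls back to the relative boundary in $\partial \bH$:
\[
\partial_{\partial A}(W \cap \partial A) = \phi^{-1}\bigl(\partial_{\partial \bH}(B_{\R^d}(o,r_0) \cap \partial \bH)\bigr),
\]
which is the image of a $(d-2)$-sphere in $\partial \bH$ and therefore has $\lambda_{d-1}$-measure zero, so $\tv(\partial_{\partial A}(W \cap \partial A)) = 0$ by the $\lambda_{d-1}$-versus-$\tv$ comparison.

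For the covering bound, I cover the $(d-2)$-sphere $\partial_{\partial \bH}(B_{\R^d}(o,r_0) \cap \partial \bH) \subset \R^{d-1}$ by $O(r^{2-d})$ Euclidean balls of radius $r/2$. The near-isometry of $\phi$ ensures each such Euclidean ball pulls back to a subset of a geodetic ball in $\cM$ of radius at most $r$ (for $r$ small enough). This yields $\kappa(\partial_{\partial A}(W \cap \partial A),r) = O(r^{2-d})$.

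There is no real obstacle here: all the work has already been isolated in Lemma \ref{lemcompare}, and the construction is essentially the boundary analogue of Lemma \ref{lemWexist2}. The only point requiring mild care is identifying the relative boundary $\partial_{\partial A}(W \cap \partial A)$ with $\phi^{-1}$ of the relative boundary in $\partial \bH$, which follows from $\phi$ being a homeomorphism together with $\phi(V \cap \partial A) = \phi(V) \cap \partial \bH$ (a direct consequence of $\phi(V \cap A) = \phi(V) \cap \bH$ and injectivity).
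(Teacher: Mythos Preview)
Your proof is correct and follows essentially the same approach as the paper: both pull back a small Euclidean ball through the boundary chart $\phi$ from Lemma \ref{lemcompare}, and identify $\partial_{\partial A}(W\cap\partial A)$ with the $\phi^{-1}$-image of a $(d-2)$-sphere in $\partial\bH$. Your write-up is in fact slightly more explicit than the paper's in justifying $\overline{W}\subset V$ and the relative-boundary identification via the homeomorphism property of $\phi$.
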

\begin{proof}
	Assume without loss of generality that there exists $\phi:V \to 
	\R^d$ such that $(V,\phi)$ has the properties described in
	Lemma \ref{lemcompare}, taking $\eps = 1/99$ there
	(if not, one can take a smaller
	set $V$).
	Then we can and do choose $r_0>0$ such that  $B_{\R^d}(\phi(x_0),2r_0) \subset
	\phi(V)$. Take $W = \phi^{-1}(B_{\R^d}(\phi(x_0),r_0))$. 
	
	Then  $W $ has the
required properties. Indeed, 
	$\lambda_d(\phi(\partial W)) = 0 $ so $v(\partial W) =0$. Also
	$\phi(\partial_{\partial A}( W  \cap \partial A))
	= \partial B_{\R^d}(\phi(x_0),r_0)
	\cap \partial \bH$, which is a $(d-2)$-dimensional
sphere of radius $r_0$. Therefore 
	$\lambda_{d-1}(\phi(\partial_{\partial A}( W \cap \partial A))) =0 $,
	so that $\tv(\partial_{\partial A}( W \cap \partial A)) =0 $.
	Also
	$\phi(\partial_{\partial A}( W \cap \partial A)) $
	can be covered by $O(r^{2-d})$
balls of radius $r/2$, and the pre-images of these balls 
under $\phi$ are contained in geodetic balls of radius $r$ in $\cM$,
	by Lemma \ref{lemcompare3}(f),
	and cover $ \partial_{\partial A} (W \cap \partial A)$, 
so that
$\limsup_{r \downarrow 0} r^{d-2} \kappa(
	\partial_{\partial A}( W  \cap \partial A),r) < \infty$.
\end{proof}

By Proposition \ref{propcompare}, Lemma \ref{lemWexist} and 
a compactness argument we can and do take a finite collection of
quadruples  $(x_i, V_i,  W_i, \phi_i)$, $1 \leq i \leq m $,
where for each $i \in [m]$ we have $x_i \in  W_i \cap \partial A$
and $V_i$, $W_i$ 
are open sets in $\M$
with $ \partial A \subset \cup_{i=1}^m W_i $,
and with
$\overline{W_i} \subset V_i$, and
$\overline{W_i} \in \cB$, and
$\limsup_{r \downarrow 0} r^{d-2} \kappa (\partial_{\partial A}
(W_i \cap \partial A),r) 
< \infty$,
and $\phi_i: V_i \to \R^d$ is an injection satisfying:

	(a) $\phi_i(V_i \cap A) = \phi_i(V_i) \cap \bH$;

	(b) For all distinct 
	$y,z \in V_i$ we have
	$|(\|\phi_i(y)-\phi_i(z)\|/ \dist(y,z)) - 1| \leq \eps'$;

(c)
	 For all measurable $ F \subset V_i$ we have 
	$|(\lambda_d(\phi_i(F)) / v(F))-1| \leq \eps'$; and 
	
	(d) for all $B \in \cB$ with $B \subset V_i$
	we have (\ref{eqHausd}). \\

	Let $\Delta_t^{\rm bdy}$
	denote the set $\cup_{i=1}^m \cup_{x \in \partial_{\partial A} (W_i
	\cap \partial A)
	} B(x,99r_t)  
	$. We call this the {\em patch boundary region}; it is near the
	boundaries of the `patches' $W_i$, and also near
	the boundary of $A$.

	\begin{lemm}[Covering patch boundary region]
		\label{lemDelta}
		$\lim_{t \to \infty}
		\Pr[ \Delta_t^{\rm bdy} \subset F_{r_t}(\Po_t) ] =1.  $
	\end{lemm}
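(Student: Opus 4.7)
The plan is to apply Lemma \ref{lemmeta}(iii) with the sets $A_{r_t} = \Delta_t^{\rm bdy} \cap A$ (which is what matters for the ultimate application to coverage of $B \subset A$) and appropriately chosen parameters $a$ and $b$, mirroring the structure of the proof of Lemma \ref{lemDelta2} but with dimension reduced by one.

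First I would bound the covering number. By construction of the quadruples $(x_i,V_i,W_i,\phi_i)$, each set $\partial_{\partial A}(W_i\cap\partial A)$ satisfies $\limsup_{r\downarrow 0} r^{d-2}\kappa(\partial_{\partial A}(W_i\cap\partial A),r)<\infty$. Since $\Delta_t^{\rm bdy}$ is the $99 r_t$-geodetic neighborhood of the finite union $\bigcup_{i=1}^{m}\partial_{\partial A}(W_i\cap\partial A)$, and each ball $B(x,100 r_t)$ can be covered by $O(1)$ balls of radius $r_t$ (by a standard volume-comparison argument using Lemma \ref{lemcompare3}), I would conclude $\kappa(\Delta_t^{\rm bdy},r_t) = O(r_t^{2-d})$, so that the hypothesis of Lemma \ref{lemmeta}(iii) holds with $b=d-2$.

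Second, I would establish the lower bound on $\mu(B(y,s))$ for $y\in\Delta_t^{\rm bdy}\cap A$ and $s\in(0,r_t]$. Since each such $y$ is within distance $O(r_t)$ of some $x_i\in\partial A$, provided $t$ is large enough $y\in V_i$, and pulling back via $\phi_i$ as in the proof of Lemma \ref{lemMyBk} (equation \eqref{e:volLB2}) yields, using the uniform density $f\equiv f_0$,
\[
\mu(B(y,s))\ \ge\ (1-\eps')^{d+1}f_0\theta_d s^d/2 \qquad \text{for all } s\in(0,r_t].
\]
This gives $a=(1-\eps')^{d+1}f_0\theta_d/2$.

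Finally, recall from \eqref{rt1} that $r_t^d=(1+o(1))\,u\log t/t$ with $u=2(d-1)/(df_0\theta_d)$. The condition $u>b/(ad)$ required by Lemma \ref{lemmeta}(iii) then reduces to $(d-1)(1-\eps')^{d+1}>d-2$, which holds by \eqref{e:defeps'}, since $(1-\eps')^{d+1}>1-1/(8d)$ implies $(d-1)(1-\eps')^{d+1}>d-1-(d-1)/(8d)>d-2$. Lemma \ref{lemmeta}(iii) then yields $\Pr[\Delta_t^{\rm bdy}\cap A\subset F_{r_t}(\Po_t)]\to 1$, which is all that is needed for the subsequent argument since $B\subset A$. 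The main obstacle is really bookkeeping: verifying that the dimensional budget matches, i.e.\ that the margin $(d-1)-(d-2)=1$ in the denominator-numerator comparison is not eaten up by the $(1-\eps')^{d+1}$ factor, which is guaranteed by the choice of $\eps'$ made at \eqref{e:defeps'}.
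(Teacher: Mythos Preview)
Your proposal is correct and follows essentially the same route as the paper: bound $\kappa(\Delta_t^{\rm bdy},r_t)=O(r_t^{2-d})$ via the $(d-2)$-dimensional covering property of $\partial_{\partial A}(W_i\cap\partial A)$, obtain the half-ball volume lower bound $\mu(B(y,s))\ge (1-\eps')^{d+1}f_0\theta_d s^d/2$ via the chart $\phi_i$, and then invoke Lemma~\ref{lemmeta}(iii) with $u=2(d-1)/(df_0\theta_d)$, $a=(1-\eps')^{d+1}f_0\theta_d/2$, $b=d-2$. Your explicit restriction to $\Delta_t^{\rm bdy}\cap A$ (needed so that $\phi_i(y)\in\bH$ and the half-ball bound applies) is a useful clarification the paper leaves implicit; as you note, only points of $A$ matter for the downstream argument.
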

	\begin{proof}
	We claim that $\kappa(\Delta_t^{\rm bdy}, r_t)=O(r_t^{2-d})$.
		Indeed we can cover $ \cup_{i=1}^m \partial_{\partial A}
		(W_i \cap \partial A)$ with $O(r_t^{2-d})$ balls of radius 
		$r_t$.
		The corresponding balls of radius $100r_t$ cover
		$\Delta_t^{\rm bdy}$, and each ball of radius
		$100r_t$ can be covered by a finite number
		of balls of radius $r_t$.
		
	We claim moreover that $\mu(B(x,s))\ge (1-  \eps')^{d+1}
		f_0 \vvol_d s^d /2$ for all large enough $t$ and
		any $x\in \Delta_t^{\rm bdy}$, $s \in (0,r_t]$.
		Indeed, using properties (c) and (d) above,
		we have
		\begin{align*}
v(B(x, r_t) \cap A) & \geq (1-  \eps') \lambda_d (\phi(B(x, r_t)) \cap \bH)
 \\
	& \geq (1- \eps') \lambda_d( B_{\R^d}(\phi(x),r_t(1-\eps')) \cap \bH),
		\end{align*}
	 and the claim follows.
	We conclude by applying Lemma \ref{lemmeta}(iii)
	with $a= (1 -  \eps')^{d+1} \vvol_d f_0/2$ and
	$b/d = (d-2)/d$ and $u = (2/(\vvol_d f_0)) (d-1)/d$.
		For $u> b/(ad)$ 
		we need $(1-\eps')^{d-2} > 1- 1/(d-1)$, which holds
	by \eqref{e:defeps'}.
	\end{proof}
	\begin{prop}[Covering a macroscopic region near $\partial A$]
		\label{prop:rind}
		Let $B \in \cB$. Then
\bea
		\lim_{t \to \infty} \Pr[ B \setminus A^{(r_t)} \subset 
		F_{r_t}(\Po_t)
		] = \exp (- c_{d,k} \tv(B \cap \partial A)e^{-\zeta}).
\label{0719a}
\eea
	\end{prop}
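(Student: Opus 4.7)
The plan is to adapt the patching argument used at the end of Section \ref{seclastpf} to deduce Proposition \ref{Hallthm} from Proposition \ref{propcompare2}, replacing the interior patches there by the boundary patches $(x_i,V_i,W_i,\phi_i)$, $i \in [m]$, that were constructed just before Lemma \ref{lemDelta}. Set $W_1^* := W_1$ and $W_i^* := W_i \setminus \cup_{j=1}^{i-1} W_j$ for $2 \leq i \leq m$, so that $W_1^*,\ldots,W_m^*$ are pairwise disjoint with $\partial A \subset \cup_{i=1}^m W_i^*$, and put $W_{i,t} := \overline{W_i^*} \setminus \Delta_t^{\mathrm{bdy}}$. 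Since $B \setminus A^{(r_t)}$ lies within distance $r_t$ of $\partial A$, for $t$ large we have the inclusion
\begin{align*}
B \setminus A^{(r_t)} \subset \bigcup_{i=1}^m \bigl( W_{i,t} \cap (B \setminus A^{(r_t)}) \bigr) \cup \Delta_t^{\mathrm{bdy}}.
\end{align*}

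The first step is to verify that $B \cap \overline{W_i^*}$ lies in $\cB$ and is contained in $V_i$, and then apply Proposition \ref{propcompare} (with $V = V_i$) to that set. Containment in $V_i$ is immediate from $\overline{W_i^*} \subset \overline{W_i} \subset V_i$. For membership in $\cB$, I would use $\partial(\overline{W_i^*}) \subset \cup_{j \leq i} \partial W_j$ (all of $v$-measure zero since each $\overline{W_j} \in \cB$) and $\partial_{\partial A}(\overline{W_i^*} \cap \partial A) \subset \cup_{j \leq i} \partial_{\partial A}(W_j \cap \partial A)$ (all of $\tv$-measure zero by the choice of the $W_j$), together with the analogous properties for $B$. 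Combining Proposition \ref{propcompare} with Lemma \ref{lemDelta} then yields, for each $i \in [m]$,
\begin{align*}
\lim_{t \to \infty} \Pr[ W_{i,t} \cap (B \setminus A^{(r_t)}) \subset F_{r_t}(\Po_t) ] = \exp\bigl(-c_{d,k} \tv(\overline{W_i^*} \cap B \cap \partial A) e^{-\zeta}\bigr).
\end{align*}

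The second step is to establish mutual independence of the $m$ events $\{W_{i,t} \cap (B \setminus A^{(r_t)}) \subset F_{r_t}(\Po_t)\}$. The key observation is that any $x \in W_{i,t}$ lies outside $\Delta_t^{\mathrm{bdy}}$, so $B(x, r_t) \cap \partial_{\partial A}(W_j \cap \partial A) = \emptyset$ for every $j$, which (together with $x \in \overline{W_i^*} \cap A$) forces $B(x,r_t) \cap A \subset \overline{W_i^*}$. Hence each of the $m$ events depends only on the restriction of $\Po_t$ to $W_i^*$, and since $W_1^*,\ldots,W_m^*$ are disjoint these restrictions are independent Poisson processes. Multiplying the $m$ limits, using additivity $\tv(B \cap \partial A) = \sum_i \tv(\overline{W_i^*} \cap B \cap \partial A)$ (valid up to the $\tv$-null overlaps $\cup_j \partial_{\partial A}(W_j \cap \partial A)$), and absorbing $\Delta_t^{\mathrm{bdy}}$ via Lemma \ref{lemDelta}, gives \eqref{0719a}. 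The main obstacle I anticipate is the bookkeeping around the boundary-measure conditions, namely checking that $B \cap \overline{W_i^*} \in \cB$ and that the coverage events really do localize to the patches $W_i^*$ once the patch boundary region has been removed; the rest is a straightforward product/independence argument parallel to the one already carried out for Proposition \ref{Hallthm}.
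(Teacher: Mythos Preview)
Your patching strategy is the same as the paper's, and the per-patch limits, the covering of $\Delta_t^{\rm bdy}$, and the final product step are all set up correctly. The gap is in the independence argument.

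You assert that for $x\in W_{i,t}\cap(B\setminus A^{(r_t)})$ the fact that $B(x,r_t)$ misses every $\partial_{\partial A}(W_j\cap\partial A)$ ``forces $B(x,r_t)\cap A\subset\overline{W_i^*}$''. That implication is not immediate: $\Delta_t^{\rm bdy}$ is a neighbourhood only of the \emph{trace} of $\partial W_j$ on $\partial A$, not of the whole of $\partial W_j$. A ball $B(x,r_t)$ could in principle cross $\partial W_j$ at a point of $A^o$ without coming near $\partial_{\partial A}(W_j\cap\partial A)$, and nothing you have said prevents this. This is exactly where the parallel with the interior proof in Section~\ref{seclastpf} breaks down: there $\Delta_t^{\rm int}$ was a neighbourhood of the full $\cup_j\partial W_j$, so avoiding it automatically confined balls to a single patch. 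Your claim can be rescued by using the explicit construction of the $W_j$ in Lemma~\ref{lemWexist}: since $\phi_j(W_j)$ is a Euclidean ball centred on $\partial\bH$, any point of $\partial W_j$ within $O(r_t)$ of $\partial A$ is automatically within $O(r_t)$ of $\partial_{\partial A}(W_j\cap\partial A)$, and then the $99r_t$ buffer suffices. But you do not invoke this geometry, and without it the localisation step is unjustified.

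The paper avoids the issue by a two-step route. It first runs the independence and product argument only for points \emph{on} $\partial A$, where a direct chart computation gives a separation $\dist(W_{i,t}\cap\partial A,\,W_{j,t}\cap\partial A)\ge 50r_t$; this yields the limit for $\Pr[B\cap\partial A\subset F_{r_t}(\Po_t)]$. It then upgrades from $B\cap\partial A$ to $B\setminus A^{(r_t)}$ patch by patch, using that Proposition~\ref{propcompare} already asserts equality of both limits in \eqref{eqHausd}, so the set difference of the two coverage events has vanishing probability. This way one never needs to localise balls centred off $\partial A$.
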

	\begin{proof}
		Set $W_1^* := W_1$, and for $i = 2,3,\ldots,m$ set $W_i^* :=
	W_i \setminus \cup_{j=1}^{i-1} W_j$.
	Then  $\partial A \subset \cup_{i=1}^m W_i^*$,
	and $W_1^*,\ldots,W_m^*$ are disjoint.
	For each $i \in [m]$, and $r>0$, let 
	$$
	(W_i^* \cap \partial A)^{(r)}
	:= ( W_i^* \cap \partial A) \setminus 
		\cup_{x \in \partial_{\partial A} (W^*_i \cap \partial A)}
		B(x,r).
	$$
	Now let $i,j \in [m]$ be distinct. We claim that
	for $t$ sufficiently large,
	\bea
	\dist (
	(W_i^* \cap \partial A)^{(90r_t)} , 
	(W_j^* \cap \partial A)^{(90r_t)}  
	) \geq 50r_t.
	\label{0708a}
	\eea
	Indeed, let $x \in (W_i^* \cap \partial A)^{(90r_t)}$, 
	 $y \in (W_j^* \cap \partial A)^{(90r_t)}$. 
	 Since $W_i^* \subset W_i$, we have that
	 $\dist(W_i^*, \cM \setminus V_i) >0$.
	 Therefore provided $t$ is sufficiently large,
	 $\dist(W_i^*, \cM \setminus V_i) > 50r_t$, so
	 if $y \notin V_i$ then  $\dist (x,y) \geq 50r_t$.
	 On the other hand,
	 if $y \in V_i$, then by Proposition  \ref{propcompare},
	 \bean
		\dist(x,y) & \geq & (1+  \eps')^{-1} \| \phi_i(x) - \phi_i(y)\|
		\\ &
		\geq & (1+ \eps')^{-1} \dist (\phi_i(x), \partial \bH \setminus
	 \phi_i( W^*_i) )
		\\ &
		\geq & (1+  \eps')^{-2} \dist(x, \partial_{\partial A}
		(W_i^*\cap \partial A))
	 \geq 50r_t,
	 \eean
		and hence
		\eqref{0708a}.

		Next, for $i \in [m]$ let $W_{i,t} := W_i^* \setminus 
		\Delta_t^{\rm bdy}$.
	 Then for large enough $t$ we have 
	 \bea
	 \partial A
	 \subset \cup_{i=1}^m W^*_i \subset \cup_{i=1}^m W_{i,t} \cup 
		\Delta_t^{\rm bdy}.
	 \label{0709b}
	 \eea
	 
	  Let $i \in [m]$. 
		Since $B \in \cB$ and $\overline{W_i^*} \in \cB$,
		if $B \cap \overline{W^*_i} \cap \partial A \neq \emptyset $
	then
		$B \cap \overline{W_i^*} \in \cB$.
		By (\ref{eqHausd}) from Proposition \ref{propcompare},
		applied to $B \cap \overline{W_i^*}$ rather
		than to $B$ itself,
	  we have
	 \bea
	 \lim_{t \to \infty}
	 \Pr[B \cap \overline{W_{i}^*} \cap \partial  A  \subset 
	 F_{r_t}(\Po_t)]
		= \exp(- c_{d,k} \tv( B \cap W_i^* \cap \partial A)e^{-\zeta}),
		\label{0915a}
	 \eea
	 and
	 \bea
	 \lim_{t \to \infty}
	 \Pr[\{B \cap \overline{W_{i}^*} \cap \partial  A  \subset F_{r_t}
	 (\Po_t)
	 \} 
	 \setminus
	 \{B \cap \overline{W_{i}^*} \setminus
	 A^{(r_t)}  \subset F_{r_t}
	 (\Po_t)\} ] = 0.
		\label{0915b}
	 \eea
	 Note that \eqref{0915a} and \eqref{0915b} still hold even
		if $B \cap \overline{W^*_i} \cap \partial A = \emptyset.$
	 Using (\ref{0915a}) 
	 and
	 Lemma \ref{lemDelta} we may deduce that
	 \begin{align*}
	 \lim_{t \to \infty}
		 \Pr[B \cap W_{i,t} \cap \partial  A  \subset F_{r_t}
		 (\Po_t)]
		= \exp(- c_{d,k} \tv( B \cap W_i^* \cap \partial A)e^{-\zeta}).
		 \end{align*}

	 Now suppose $i,j \in [m]$ with $i \neq j$ and $t >0$.
	 Let $x \in W_{i,t} \cap \partial  A ,
	 y \in W_{j,t} \cap \partial A $.
	 Since $\partial_{\partial A}( W_i^* \cap \partial A)
	 \subset \cup_{\ell =1}^m \partial_{\partial A}( W_\ell \cap
	 \partial A)$,
	  and $x \notin \Delta_t^{\rm bdy} $, we have 
	  \begin{align*}
		  \dist(x, \partial_{\partial A} (W_i^* \cap \partial A))
		  \geq  \dist(x,
	  \cup_{\ell =1}^m \partial_{\partial A}( W_\ell \cap
		  \partial A))
	  \geq 99 r_t.
	  \end{align*}
	  Thus $x \in (W_i^* \cap \partial A)^{(90r_t)}$.
	  Similarly $y \in (W_j^* \cap \partial A)^{(90r_t)}$. 
	   Therefore by (\ref{0708a}), $\dist(x,y) \geq 50r_t$.

	   This shows that the events $\{(B \cap W_{i,t} \cap \partial A)
	   \subset F_{r_t}
	   (\Po_t)\},
	   1 \leq i \leq m$ are mutually independent. Combined with
	   (\ref{0709b}) and 
	   Lemma \ref{lemDelta}, this shows that
	   \bea
	   \lim_{t \to \infty} \Pr[B \cap \partial A  \subset
	   F_{r_t}
	   (\Po_t) ] = \lim_{t \to \infty}
	   \Pr[ \cap_{i=1}^m \{ B \cap W_{i,t} \cap \partial A 
	   \subset F_{r_t}
	   (\Po_t) \} ]
	   \nonumber \\
	   = \lim_{t \to \infty} \prod_{i=1}^m \Pr [B \cap
		   W_{i,t} \cap \partial A 
		   \subset F_{r_t}
		   (\Po_t)  ]
    \nonumber \\
    = \exp ( - c_{d,k} \tv(B \cap \partial A)e^{-\zeta}).
    \label{0718g}
    \eea

	 Since the sets $W_1,\ldots,W_m$ provide an open cover
	 (in $\cM$) of $\partial A$, by a compactness argument
	 there exists $r_0 >0$ such that $\cup_{x \in \partial A}B(x,r_0)
	 \subset \cup_{i=1}^m W_i
	 \subset \cup_{i=1}^m \overline{W_i^*}$.
	 Hence for large enough $n$
	 we have that $A \setminus A^{(r_t)} \subset
	 \cup_{i=1}^m \overline{W_i^*}$, and hence the event inclusion
	 \begin{align*}
	 \{B \cap \partial A & \subset F_{r_t}
	 (\Po_t) \} \setminus
	 \{ B \setminus A^{(r_t)} \subset F_{r_t}
	 (\Po_t)\}
	 \\
	 & \subset \cup_{i=1}^m( \{ B \cap \overline{W_i^*} \cap \partial A \subset F_{r_t}
	 (\Po_t) \}
	 \setminus \{ B \cap \overline{W_i^*}  \setminus A^{(r_t)} \subset 
	 F_{r_t}
	 (\Po_t) \} ),
	 \end{align*}
	 and hence by (\ref{0915b}), 
	 $\Pr[ \{ B \cap \partial A \subset F_{r_t}
	 (\Po_t) \} \setminus
	 \{  B \setminus A^{(r_t)} \subset F_{r_t}
	 (\Po_t)\}] \to 0$ as $t \to \infty$.
Combined with (\ref{0718g}), this shows that (\ref{0719a}) holds.
\end{proof}

To complete the proof of Theorem \ref{th:weak}, we also
need to deal with the possibility that the interior
region $B \cap A^{(r_t)}$ is not covered. Recall the definition
of the secondary coverage thresholds  $R'_{t,k}$ 
and $\tilde{R}_{Z_t,k}$ at (\ref{Rdashdef}) and
(\ref{eqmaxspacPo}) respectively.


\begin{proof}[Proof of Theorem \ref{th:weak}]
	Let $(r_t)_{t >0}$
	satisfy (\ref{rt1}), as before. Then

	\bea
	\lim_{t \to \infty} (
	t \vvol_d f_0 r_t^d - \log (t f_0) - (d + k -2) \lglg t )
	= \begin{cases}
		2 \zeta ~~ & {\rm if}~ d=2,~ k=1, 
		\\
		+ \infty ~~ & {\rm otherwise.}
	\end{cases}
	\label{0719b}
	\eea
	Therefore 
	if either $d \geq 3$, or $d=2 $ and $k > 1$,
	by
	(\ref{1228a}), as $t \to \infty$ we have 
	$
	\Pr[ \tR_{Z_t,k} \leq r_t ] \to 1.
	$
	Hence by (\ref{0913d}),
	$\Pr[B \cap A^{(r_t)} \subset F_{r_t}
	(\Po_t)]
	\to 1$, and therefore using Proposition \ref{prop:rind},
	we obtain that
	\begin{align*}
	\Pr[ B \subset F_{r_t}
		(\Po_t)] & = \Pr[
			\{ B \setminus A^{(r_t)} \subset F_{r_t}(\Po_t) \}
		\cap \{ B \cap A^{(r_t)}  \subset F_{r_t}(\Po_t) \} ]
	\\
		& \to \exp(- c_{d,k} \tv( B \cap \partial A)e^{-\zeta}),
	\end{align*}
	and hence the second equality of (\ref{eqmain3}), for $d \geq 3$
	or $d=2, k \geq 2$.

	Now suppose $d=2, k=1$, and recall that $c_2=1$. Then by
	(\ref{0913d}), (\ref{0719b}), 
	and Proposition \ref{Hallthm},
	\bea
	\lim_{t \to \infty} \Pr[B \cap A^{(r_t)} \subset F_{r_t}(\Po_t) ]
	= \lim_{t \to \infty} \Pr[ \tR_{Z_t,k} \leq r_t ]
	= \exp ( -  v(B) e^{-2 \zeta} ).
	\label{0719c}
	\eea
	Also by Proposition \ref{Hallthm}, for any fixed $r>0$
	we have
	\bean
	\limsup_{t \to \infty} \Pr[B \cap A^{(6r_t)} \subset F_{r_t}(\Po_t) ]
	\leq 
	\lim_{t \to \infty} \Pr[B \cap A^{(r)} \subset F_{r_t}(\Po_t) ]
	= \exp(- v(B \cap A^{(r)}) e^{-2 \zeta}),
	\eean
	and since $v(B \cap A^{(r)} ) \to v(B)$ as $r \downarrow 0$
	(because we assume $B \subset A$), 
	using (\ref{0719c}) we may deduce 
	that
	\bea
	\lim_{t \to \infty} \Pr[B \cap A^{(6r_t)} \subset F_{r_t}(\Po_t) ]
	= \exp(- v(B ) e^{-2 \zeta}),
	\label{0719d}
	\eea
	and  
	\bea
	\lim_{t \to \infty} \Pr[
		\{	B \cap A^{(6r_t)} \subset F_{r_t}(\Po_t) \}
		\setminus \{	B \cap A^{(r_t)} \subset F_{r_t}(\Po_t) \}
		]
	= 0.
	\label{0719e}
	\eea
	By (\ref{0719e}) we have
	\begin{align*}
	\lim_{t \to \infty} \Pr[ B  \subset F_{r_t}(\Po_t) ]
		& = \lim_{t \to \infty} \Pr[ (B  \setminus A^{(r_t)}) 
	\cup (B \cap A^{(6r_t)}) \subset
	F_{r_t}(\Po_t)   ]
	\\
		& = 
	 \lim_{t \to \infty} \Pr[ B \setminus A^{(r_t)} \subset F_{r_t}(\Po_t)   ]
	\Pr[ B \cap A^{(6r_t)} \subset F_{r_t}(\Po_t)   ],
	\end{align*}
	and by (\ref{0719a}) and  (\ref{0719d}), this limit
	is equal to $\exp( - v(B ) e^{-2 \zeta} - 
	c_{2,1}\tv(B \cap \partial A)e^{-\zeta}) $. Thus we have
	the second equality of (\ref{eqmain3}), for the case
	$d =2, k=1$ too.

	We may then deduce the first equality of (\ref{eqmain3})
	by using  \cite[Lemma 7.1]{ECover}.
\end{proof}


\end{document}